\theoremstyle{plain}
\newtheorem{theorem}{Theorem}[section]
\newtheorem{lemma}[theorem]{Lemma}
\newtheorem{corollary}[theorem]{Corollary}
\newenvironment{solution*}{%
  \par\noindent\textbf{Solution. }\normalfont
}{\qed}
\theoremstyle{remark}
\newtheorem{workingrule}{Working Rule}[section]
\newtheorem{program}{Program}[section]
\theoremstyle{definition}
\newtheorem{example}{Example}[section]
\newtheorem{definition}[theorem]{Definition}
\theoremstyle{remark}
\newtheorem{remark}[theorem]{Remark}
\newtheorem{question}[]{\textbf{Question}}
\newtheorem{answer}{\textbf{Answer}}
\tikzstyle{startstop} = [rectangle, rounded corners, minimum width=3cm, minimum height=1cm,text centered, draw=black, fill=red!20]
\tikzstyle{process} = [rectangle, minimum width=3cm, minimum height=1cm, text centered, draw=black, fill=blue!10]
\tikzstyle{decision} = [diamond, minimum width=3cm, minimum height=1cm, text centered, draw=black, fill=green!20]
\tikzstyle{arrow} = [thick,->,>=stealth]
\tikzstyle{startstop} = [rectangle, rounded corners, minimum width=3cm, minimum height=1cm,text centered, draw=black, fill=red!20]
\tikzstyle{process} = [rectangle, minimum width=3cm, minimum height=1cm, text centered, draw=black, fill=blue!10]
\tikzstyle{decision} = [diamond, minimum width=3cm, minimum height=1cm, text centered, draw=black, fill=green!20]
\tikzstyle{arrow} = [thick,->,>=stealth]
\newcommand{\mcal}[1]{\mathcal{#1}}
\newcommand{\mbb}[1]{\mathbb{#1}}
\newcommand{\mfr}[1]{\mathfrak{#1}}
\newcommand{\mrm}[1]{\mathrm{#1}}
\newcommand{\btext}[1]{\mathbf{#1}}
\newcommand{\R}{{\mathbb R}}
\newcommand{\N}{{\mathbb N}}
\def\@setcopyright{}
\def\serieslogo@{}
\lstdefinestyle{pythonstyle}{
    language=Python,
    basicstyle=\ttfamily\small,
    keywordstyle=\color{blue}\bfseries,
    stringstyle=\color{red},
    commentstyle=\color{gray},
    numbers=left,
    numberstyle=\tiny\color{gray},
    stepnumber=1,
    numbersep=5pt,
    backgroundcolor=\color{white},
    showspaces=false,
    showstringspaces=false,
    showtabs=false,
    frame=single,
    tabsize=4,
    captionpos=b,
    breaklines=true,
    breakatwhitespace=false,
    escapeinside={\%*}{*)}
}
\begin{document}
\marginsize{2.54 cm}{2.54 cm}{2.54 cm}{2.54 cm}
\author{Sanjay Mishra}
\address{Department of Mathematics \newline \indent Amity School of Applied Sciences, Amity University Lucknow Campus, UP India}
\email{drsmishraresearch@gmail.com}
\title[Simplicial Complexes]{Foundations of Simplicial Complexes:From Geometric Independence to Realizations}
\begin{abstract}
This paper develops a complete foundational treatment of simplicial complexes from Euclidean spaces through geometric realizations, emphasizing concrete computations, examples, and practical verification methods. Beginning with finite point sets in finite and infinite-dimensional Euclidean spaces, geometric independence is established via linear independence of relative vectors, with explicit matrix rank tests. $n$-simplices arise as convex hulls of such independent points, proven convex, compact, uniquely spanned, and homeomorphic to unit balls, with detailed barycentric coordinate. Simplicial complexes form through collections closed under faces and with simplex intersections either empty or common faces, verified by necessary and sufficient disjoint interior conditions, illustrated across dimensions from lines to tetrahedra plus non-examples. Derived structures including subcomplexes, $p$-skeletons, vertices, stars, and links lead to geometric realizations as continuous spaces with weak topology, proven Hausdorff and locally compact, alongside ray characterizations of convexity and continuity via simplicial maps.

\end{abstract}
\subjclass [2020] {55U10, 52B1, 54A05}
\keywords{$n$-simplex, simplicial complexes, geometric realization, barycentric coordinates, topological space}
\maketitle
\tableofcontents 
\section{Introduction}

Simplicial complexes provide a powerful combinatorial framework for studying the geometry and topology of spaces using finite building blocks called simplices. A simplex is the convex hull of a finite set of geometrically independent points in Euclidean space, such as a line segment in \(\mathbb{R}^1\), a triangle in \(\mathbb{R}^2\), or a tetrahedron in \(\mathbb{R}^3\).[web:228] By gluing simplices together along their faces in a controlled way, one obtains a simplicial complex, which encodes both the combinatorial structure (how simplices intersect) and the geometric shape of the underlying space.

From a geometric and topological point of view, simplicial complexes serve as discrete models of continuous spaces. Their geometric realizations are built by viewing each abstract simplex as a genuine Euclidean simplex and then identifying faces according to the combinatorial data. This construction yields a topological space, often called the polytope or underlying space of the complex, on which one may study fundamental notions such as continuity, connectedness, compactness, and local finiteness.

Simplicial complexes also play a central role in modern applications. In computational topology and topological data analysis, finite subsets of Euclidean space are used as vertex sets from which one constructs complexes that approximate the shape of data. In numerical analysis and the finite element method, triangulations of domains are modeled as simplicial complexes to support piecewise linear approximations of functions and solutions of differential equations. These examples illustrate how the abstract theory of simplicial complexes connects discrete structures with continuous phenomena.

The goal of this article is to develop an expository treatment of simplicial complexes starting from Euclidean spaces and finite data sets, and then progressing to geometric independence, simplices, simplicial complexes, and their geometric realizations. Along the way, emphasis is placed on explicit examples, computational viewpoints, and structural results such as necessary and sufficient conditions for a collection of simplices to form a simplicial complex, the relationship between geometric and linear independence, and the topological properties of the underlying space.

The article is structured as follows. Section \eqref{s:Euclidean Space} provides a comprehensive foundation on Euclidean spaces, covering both finite-dimensional spaces like the familiar plane and three-dimensional room, and infinite-dimensional spaces, with detailed definitions of finite point sets as the basic building blocks for geometric constructions. It proves these spaces form vector spaces for addition and scaling, normed spaces with length measurements, metric spaces for distances between points, and topological spaces with notions of openness and continuity, illustrated through examples. Section \eqref{s:Simplices} introduces simplices as the convex hulls formed by geometrically independent points, rigorously proving their key properties: convexity showing line segments between any two points stay inside, compactness ensuring every sequence has a convergent subsequence within the simplex, uniqueness of the spanning vertex set up to ordering, properties of the interior as open and convex within its affine plane, and a homeomorphism to the unit ball that maps the boundary precisely to the unit sphere. Section \eqref{s:Simplicial Complex in RN} defines simplicial complexes as collections of simplices closed under taking faces where any two simplices intersect either emptily or along a common face, establishing necessary and sufficient conditions through disjoint interiors of distinct simplices, defining complex dimension as the highest simplex dimension present, and providing concrete examples from one-dimensional line segments through two-dimensional filled triangles to three-dimensional tetrahedra, alongside counterexamples of invalid collections.

\section{Euclidean Space}\label{s:Euclidean Space}
Let us begin our study with some essential preliminaries. Euclidean space is a fundamental concept in mathematics, originating from Euclidean geometry. It refers to a space in which familiar notions such as distance, angle, and geometric shapes are well-defined.

In computational topology, data often originates as a finite collection of points in Euclidean space. These points may represent physical locations, pixel coordinates, measurements, or abstract features in the space. From a topological perspective, such a finite subset of Euclidean space serves as the foundational input for constructing simplices and, ultimately, simplicial complexes. This section introduces the interpretation of a finite subset of Euclidean space as a vertex set, which forms the basis for building higher-dimensional geometric objects. We discuss how to treat such point sets as discrete data, describe the conditions under which they may generate simplices, and prepare this data for computational processing in later sections.

\begin{definition}[Finite-Dimensional Euclidean Space as Data Set]\label{d:[Finite-Dimensional Euclidean Space as Data Set}
The finite-dimensional Euclidean space (denoted by $\R^m$) of dimension $m \in \N$, is defined as
\begin{equation}\label{eq:Finite Dimensional Euclidean Space}
\mbb{R}^m = \{ \btext{x} =  (x_1, x_2, \ldots, x_m) \colon x_i \in \R \text{ for } i = 1, 2, \ldots, m \}    
\end{equation}
\end{definition}

\begin{definition}[Infinite-Dimensional Euclidean Space]\label{d:Infinite Dimensional Euclidean Space}
The infinite dimensional Euclidean space (denoted by $\R^{\N}$) of infinite dimension is defined 
\begin{equation}\label{eq:Infinite Dimensional Euclidean Space}
\mbb{R}^{\N} = \{ \btext{x} =  (x_1, x_2, \ldots) \colon x_i \in \R, \, i \in \N \}  
\end{equation}
\end{definition}

\begin{remark}
Euclidean spaces $\R^m$ and $\R^{\N}$ are both infinite spaces but their dimensions are finite and infinite respectively.
\end{remark}

\begin{definition}[Finite Subset of $\mathbb{R}^m$]\label{d:Finite Subset in Rm}
A subset $A$ of $\mathbb{R}^m$ is said to be a finite subset if it contains only finitely many elements. That is,
\begin{equation}
A = \{ \mathbf{a}_1, \mathbf{a}_2, \ldots, \mathbf{a}_n \} \subseteq \mathbb{R}^m \quad \text{for some } n \in \mathbb{N},
\end{equation}
where $\mathbf{a}_i = (x_{i1}, x_{i2}, \ldots, x_{im}) \in \mathbb{R}^m$ for each $i = 1, 2, \ldots, n$.
\end{definition}

\begin{remark}
For convenience, we consider a finite subset of $(n + 1)$ elements in the space $\R^m$, denoted by $A = \{\btext{a}_0, \btext{a}_1, \ldots, \btext{a}_n\}$.
\end{remark}

\begin{definition}[Finite Subset in $\R^N$]\label{d:Finite Subset in RN}
Let $J = \{0, 1, 2, \ldots, n\}, n \in \N$ is indexed set. The finite subset $A$ of $\R^N$ is 
\begin{equation}\label{eq:Finite Subset in RN}
A = \{\btext{a}_{j} \in \R^N \colon \btext{a}_{j} = (x_1, x_2, \ldots), x_1, x_2, \ldots \in \R, \forall j \in J\}    
\end{equation}
\end{definition}

It can be seen that each element in the set $A$ is an infinite-tuple of real numbers and the number of elements is $n+1$ which is finite. For convenient we use $A = \{\btext{a}_{0}, \ldots \btext{a}_{n}\}$.

\begin{definition}[Generalized Euclidean space]\label{d:Generalized Euclidean space}
The generalized Euclidean space, $\mathbb{E}^J$, is defined as the subset of $\mathbb{R}^J$ consisting of all points $(x_\alpha)_{\alpha \in J}$ such that $x_\alpha = 0$ for all but finitely many values of $\alpha$; it is a vector space called the generalized Euclidean space and is endowed with the metric $|x - y| = \max \{|x_{\alpha} - y_{\alpha}|\}_{\alpha \in J}$.
\end{definition}

So far,  the Euclidean space $\R^n$ has been defined only as a set, but other structure like algebraic, geometric and topological can be imposed on it.

Let us see some elementary example of Euclidean space of various dimensions.

\begin{example}
The 1-dimension Euclidean space $\R^1$ (or $\R$) is the real line. The 2-dimension Euclidean space $\R^2 = \{(x_1, x_2) \colon x_i \in \R, i = 1, 2\}$ is the Cartesian plane. The 3-dimension Euclidean space $\R^3 = \{(x_1, x_2, x_3) \colon x_i \in \R, i = 1, 2, 3\}$ is the 3-dimensional space. 
\end{example}

\begin{example}[Representation of Finite Data Set]\label{eg:Representation of Finite Data Set}
How can the academic performance data of 30 students across 10 subjects, each graded on a scale from 0 to 100, be represented as a subset of the Euclidean space $\mathbb{R}^{10}$, both in set-builder notation and tabular form?
\end{example}

\begin{solution*}
The academic performance data can be viewed as a set of 30 points in $\mathbb{R}^{10}$, where each point corresponds to a student's scores across the 10 subjects. Formally, 
\[
S = \{ \mathbf{a} = (x_1, x_2, \ldots, x_{10}) \in \mathbb{R}^{10} : 0 \leq x_i \leq 100, \, i=1,2,\ldots,10 \},
\]
where each $\mathbf{x}$ represents a student's performance vector. This set $S$ contains exactly 30 such vectors. The tabular representation is given in Table~\ref{tab:academic_performance}.
\begin{table}[h!]
\centering
\small
\begin{tabular}{c|cccccccccc}
\text{Student} & \text{Subj 1} & \text{Subj 2} & \text{Subj 3} & \text{Subj 4} & \text{Subj 5} & \text{Subj 6} & \text{Subj 7} & \text{Subj 8} & \text{Subj 9} & \text{Subj 10} \\
\hline
1 & 75 & 88 & 91 & 69 & 84 & 79 & 85 & 90 & 78 & 82 \\
2 & 83 & 76 & 85 & 90 & 74 & 81 & 69 & 86 & 80 & 77 \\
3 & 79 & 82 & 73 & 87 & 91 & 78 & 84 & 75 & 88 & 90 \\
4 & 92 & 74 & 80 & 85 & 77 & 83 & 75 & 79 & 86 & 81 \\
5 & 88 & 90 & 76 & 80 & 78 & 84 & 72 & 82 & 74 & 85 \\
6 & 70 & 85 & 89 & 77 & 83 & 75 & 80 & 71 & 88 & 79 \\
7 & 85 & 79 & 82 & 74 & 88 & 91 & 77 & 83 & 90 & 72 \\
8 & 83 & 77 & 84 & 80 & 75 & 88 & 79 & 85 & 74 & 81 \\
9 & 76 & 90 & 71 & 83 & 78 & 82 & 84 & 79 & 86 & 80 \\
10 & 88 & 74 & 85 & 78 & 90 & 76 & 83 & 72 & 79 & 84 \\
11 & 84 & 81 & 76 & 87 & 75 & 90 & 88 & 74 & 82 & 79 \\
12 & 78 & 89 & 83 & 80 & 84 & 77 & 85 & 73 & 90 & 75 \\
13 & 90 & 75 & 78 & 82 & 79 & 88 & 73 & 80 & 77 & 85 \\
14 & 83 & 80 & 84 & 75 & 86 & 90 & 72 & 85 & 79 & 77 \\
15 & 77 & 84 & 75 & 89 & 80 & 85 & 74 & 90 & 83 & 78 \\
16 & 85 & 78 & 90 & 73 & 82 & 79 & 88 & 75 & 84 & 77 \\
17 & 79 & 83 & 87 & 80 & 75 & 85 & 90 & 78 & 72 & 84 \\
18 & 88 & 74 & 82 & 86 & 79 & 77 & 83 & 80 & 85 & 90 \\
19 & 84 & 80 & 75 & 79 & 90 & 73 & 85 & 77 & 88 & 74 \\
20 & 75 & 85 & 79 & 84 & 77 & 90 & 82 & 78 & 73 & 86 \\
21 & 90 & 78 & 83 & 75 & 84 & 79 & 87 & 80 & 72 & 85 \\
22 & 83 & 79 & 86 & 72 & 90 & 85 & 74 & 78 & 80 & 77 \\
23 & 76 & 87 & 75 & 80 & 78 & 90 & 83 & 85 & 74 & 82 \\
24 & 88 & 74 & 80 & 85 & 79 & 72 & 84 & 77 & 90 & 78 \\
25 & 85 & 79 & 83 & 78 & 84 & 75 & 87 & 80 & 72 & 90 \\
26 & 78 & 83 & 80 & 75 & 85 & 79 & 90 & 82 & 77 & 85 \\
27 & 72 & 85 & 74 & 79 & 83 & 90 & 75 & 84 & 78 & 80 \\
28 & 90 & 77 & 85 & 80 & 78 & 74 & 82 & 79 & 85 & 83 \\
29 & 84 & 79 & 90 & 75 & 80 & 83 & 77 & 72 & 85 & 79 \\
30 & 79 & 85 & 77 & 83 & 90 & 80 & 84 & 78 & 75 & 82 \\
\end{tabular}
\vspace{0.5cm} 
\caption{Academic performance data for 30 students across 10 subjects.}
\label{tab:academic_performance}
\vspace{0.3cm} 
\end{table}
\end{solution*}

Before presenting fundamental theorems about Euclidean space, it is important to understand that Euclidean space $\R^{n}$ serves as a foundational example connecting various mathematical structures. It is not only a vector space, supporting operations of addition and scalar multiplication, but also naturally equipped with a norm, metric, and topology derived from its algebraic and geometric properties. These layers of structure enable rigorous definitions and proofs across diverse areas such as linear algebra, analysis, and topology. The following theorems summarize these central viewpoints on Euclidean space, laying the groundwork for advanced study and applications in mathematics.

\begin{theorem}[Euclidean Space as Vector Space]\label{t:Euclidean Space as Vector Space}
The Euclidean space $\R^n$ is a vector space over the field $\R$ with vector addition and scalar multiplication 
\[\btext{x} + \btext{y} = (x_1 + y_1, x_2 + y_2, \ldots, x_n + y_n), \, 
\alpha \btext{x} = (\alpha x_1, \alpha x_2, \ldots, \alpha x_n)\]
for all $\btext{x},\btext{y} \in \R^n$ and $\alpha \in \R$.
\end{theorem}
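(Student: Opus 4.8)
The plan is to verify directly that $\R^n$, equipped with the coordinatewise addition and scalar multiplication displayed in the statement, satisfies all of the defining vector-space axioms over the field $\R$. The central observation, which drives the entire argument, is that both operations are defined coordinate by coordinate, so every vector-space axiom reduces to the corresponding field axiom of $\R$ applied separately in each of the $n$ slots. The only structural fact I need is that two vectors in $\R^n$ are equal precisely when all of their coordinates agree; this lets me lift identities valid in $\R$ up to identities valid in $\R^n$.

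First I would confirm closure: since $x_i + y_i \in \R$ and $\alpha x_i \in \R$ for each $i$, both $\btext{x} + \btext{y}$ and $\alpha \btext{x}$ are again $n$-tuples of real numbers and hence lie in $\R^n$. Next I would treat the additive-group axioms. Commutativity and associativity of vector addition follow coordinatewise from commutativity and associativity of addition in $\R$. I would then exhibit the additive identity as the zero vector $\0 = (0, 0, \ldots, 0)$ and check $\btext{x} + \0 = \btext{x}$, and produce the additive inverse of $\btext{x} = (x_1, \ldots, x_n)$ as $-\btext{x} = (-x_1, \ldots, -x_n)$, verifying $\btext{x} + (-\btext{x}) = \0$.

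After that I would dispatch the scalar-multiplication axioms in the same coordinatewise fashion: compatibility $\alpha(\beta \btext{x}) = (\alpha\beta)\btext{x}$ from associativity of multiplication in $\R$; the two distributive laws $\alpha(\btext{x} + \btext{y}) = \alpha\btext{x} + \alpha\btext{y}$ and $(\alpha + \beta)\btext{x} = \alpha\btext{x} + \beta\btext{x}$ from the distributive law in $\R$; and the identity law $1 \cdot \btext{x} = \btext{x}$ from the fact that $1$ is the multiplicative unit of $\R$. Each of these amounts to a short computation comparing the $i$-th coordinate of the two sides and invoking a single property of real arithmetic.

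I do not expect a genuine obstacle here, since no individual step is deep; the only care required is bookkeeping, namely stating each axiom precisely and keeping the newly defined operations on $\R^n$ notationally distinct from the field operations on $\R$ that justify them. If brevity were the goal, one could instead appeal to the general principle that any finite product of copies of a field $\R$ is a vector space over $\R$ under coordinatewise operations, but for this expository treatment the explicit axiom-by-axiom verification is the appropriate route.
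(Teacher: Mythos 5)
Your proposal is correct: the coordinatewise reduction of each vector-space axiom to the corresponding field axiom of $\R$, together with explicit exhibition of the zero vector and additive inverses, is the standard and complete argument. Note that the paper itself states this theorem without proof, treating it as a summary of known structure on $\R^n$, so there is no authorial proof to diverge from; your axiom-by-axiom verification is exactly what a written-out proof of this statement would look like.
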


\begin{theorem}[Euclidean Space as Norm Space]\label{t:Euclidean Space as Norm Space}
The Euclidean space $\R^n$ is norm space over the field $\R$ with norm 
\[\| \btext{x} \| = \sqrt{x_1^2 + x_2^2 + \cdots + x_n^2}\]
for all $\btext{x} \in \R^n$.
\end{theorem}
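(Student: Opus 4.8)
The plan is to verify the three defining axioms of a norm for the map $\btext{x} \mapsto \|\btext{x}\| = \sqrt{x_1^2 + x_2^2 + \cdots + x_n^2}$, using the vector space structure already established in Theorem~\ref{t:Euclidean Space as Vector Space}. The three properties to check are positive definiteness, absolute homogeneity, and the triangle inequality, and I would take them in increasing order of difficulty.

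First I would dispatch the two routine axioms. For positive definiteness, note that $\|\btext{x}\|$ is the square root of a sum of squares, so $\|\btext{x}\| \geq 0$ always; moreover the sum $\sum_{i=1}^n x_i^2$ equals $0$ precisely when each $x_i = 0$, which gives $\|\btext{x}\| = 0$ if and only if $\btext{x} = \0$. For absolute homogeneity, I would pull the scalar out coordinatewise: since $(\alpha x_i)^2 = \alpha^2 x_i^2$, we obtain $\|\alpha \btext{x}\| = \sqrt{\alpha^2 \sum_{i=1}^n x_i^2} = |\alpha|\,\|\btext{x}\|$ for every $\alpha \in \R$.

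The hard part will be the triangle inequality $\|\btext{x} + \btext{y}\| \leq \|\btext{x}\| + \|\btext{y}\|$, which I would reduce to the Cauchy--Schwarz inequality $\bigl| \sum_{i=1}^n x_i y_i \bigr| \leq \|\btext{x}\|\,\|\btext{y}\|$. To establish Cauchy--Schwarz, I would consider the real quadratic $p(t) = \sum_{i=1}^n (x_i + t y_i)^2 = \|\btext{x}\|^2 + 2t\bigl(\sum_{i=1}^n x_i y_i\bigr) + t^2 \|\btext{y}\|^2$, which is nonnegative for all $t \in \R$ since it is a sum of squares. Assuming $\btext{y} \neq \0$ (the case $\btext{y} = \0$ being trivial), a nonnegative quadratic must have nonpositive discriminant, and this condition rearranges directly into $\bigl(\sum_{i=1}^n x_i y_i\bigr)^2 \leq \|\btext{x}\|^2 \|\btext{y}\|^2$.

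Finally I would combine these ingredients: expanding gives $\|\btext{x} + \btext{y}\|^2 = \|\btext{x}\|^2 + 2\sum_{i=1}^n x_i y_i + \|\btext{y}\|^2$, and bounding the middle term via Cauchy--Schwarz yields $\|\btext{x} + \btext{y}\|^2 \leq \|\btext{x}\|^2 + 2\|\btext{x}\|\,\|\btext{y}\| + \|\btext{y}\|^2 = \bigl(\|\btext{x}\| + \|\btext{y}\|\bigr)^2$. Taking square roots (both sides being nonnegative) delivers the triangle inequality. With all three axioms confirmed, $\|\cdot\|$ is a norm on $\R^n$, so $(\R^n, \|\cdot\|)$ is a normed space over $\R$, as claimed.
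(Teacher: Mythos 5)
Your proof is correct and complete: positive definiteness and absolute homogeneity are verified directly, and the triangle inequality is obtained from Cauchy--Schwarz via the standard discriminant argument on the nonnegative quadratic $p(t)$, with the degenerate case $\btext{y} = \0$ handled separately so that the leading coefficient $\|\btext{y}\|^2$ is genuinely positive. Note that the paper itself states Theorem~\ref{t:Euclidean Space as Norm Space} without any proof, as one of several background facts about $\R^n$, so there is no in-paper argument to compare against; your write-up supplies exactly the standard verification such a statement implicitly relies on, and no gaps remain.
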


\begin{theorem}[Euclidean Space as Metric Space]\label{t:Euclidean Space as Metric Space}
The Euclidean space $\R^n$ is metric space with metric
\[d(\btext{x}, \btext{y}) = \| \btext{x} - \btext{y} \| = \sqrt{(x_1 - y_1)^2 + (x_2 - y_2)^2 + \cdots + (x_n - y_n)^2} \]
for all $\btext{x}, \btext{y} \in \R^n$.
\end{theorem}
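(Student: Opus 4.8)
The plan is to verify the four defining axioms of a metric for the function $d(\btext{x}, \btext{y}) = \| \btext{x} - \btext{y} \|$, leveraging the fact that $\R^n$ is already known to be a normed space by Theorem~\ref{t:Euclidean Space as Norm Space}. Since the proposed metric is precisely the norm of the difference, the strategy reduces to the standard observation that every norm induces a metric; the substantive content lives entirely in three key norm properties, namely positive-definiteness ($\| \btext{z} \| \geq 0$ with equality iff $\btext{z} = \0$), absolute homogeneity ($\| \alpha \btext{z} \| = |\alpha| \, \| \btext{z} \|$), and the triangle inequality for the norm ($\| \btext{u} + \btext{v} \| \leq \| \btext{u} \| + \| \btext{v} \|$).

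First I would dispatch the three elementary axioms. Non-negativity, $d(\btext{x}, \btext{y}) = \| \btext{x} - \btext{y} \| \geq 0$, is immediate from positive-definiteness, and the identity of indiscernibles follows since $\| \btext{x} - \btext{y} \| = 0$ holds exactly when $\btext{x} - \btext{y} = \0$, i.e.\ $\btext{x} = \btext{y}$. Symmetry is obtained by writing $\btext{x} - \btext{y} = (-1)(\btext{y} - \btext{x})$ and applying absolute homogeneity with $\alpha = -1$, so that $\| \btext{x} - \btext{y} \| = |-1| \, \| \btext{y} - \btext{x} \| = \| \btext{y} - \btext{x} \|$, giving $d(\btext{x}, \btext{y}) = d(\btext{y}, \btext{x})$.

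The crux is the triangle inequality $d(\btext{x}, \btext{z}) \leq d(\btext{x}, \btext{y}) + d(\btext{y}, \btext{z})$. I would obtain this from the telescoping decomposition $\btext{x} - \btext{z} = (\btext{x} - \btext{y}) + (\btext{y} - \btext{z})$ and then apply the norm triangle inequality with $\btext{u} = \btext{x} - \btext{y}$ and $\btext{v} = \btext{y} - \btext{z}$, yielding $\| \btext{x} - \btext{z} \| \leq \| \btext{x} - \btext{y} \| + \| \btext{y} - \btext{z} \|$, which is exactly the desired inequality after rewriting in terms of $d$.

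The main obstacle is therefore not the metric-level bookkeeping but securing the triangle inequality for the norm itself. If this is taken as already established in Theorem~\ref{t:Euclidean Space as Norm Space}, the present proof is purely formal and I would simply cite that result, keeping the argument self-contained at the level of the three norm axioms. If instead it must be reproved here, the hard part reduces to the Cauchy--Schwarz inequality $|\langle \btext{u}, \btext{v} \rangle| \leq \| \btext{u} \| \, \| \btext{v} \|$, from which $\| \btext{u} + \btext{v} \|^2 = \| \btext{u} \|^2 + 2 \langle \btext{u}, \btext{v} \rangle + \| \btext{v} \|^2 \leq (\| \btext{u} \| + \| \btext{v} \|)^2$ follows by expanding the squared norm and bounding the cross term, and taking square roots gives the norm triangle inequality.
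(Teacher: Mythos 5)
Your proposal is correct and complete: the paper itself states Theorem~\ref{t:Euclidean Space as Metric Space} without any proof, presenting it (together with the vector-space, normed-space, and topological-space theorems) as a standard structural fact about $\R^n$, so there is no argument in the paper to compare against. Your route is the canonical one --- non-negativity, identity of indiscernibles, and symmetry from positive-definiteness and absolute homogeneity of the norm, and the triangle inequality via the decomposition $\btext{x} - \btext{z} = (\btext{x} - \btext{y}) + (\btext{y} - \btext{z})$ together with the norm triangle inequality --- and your fallback derivation of the norm triangle inequality from Cauchy--Schwarz is exactly what would be needed if Theorem~\ref{t:Euclidean Space as Norm Space} (which the paper also leaves unproved) could not be cited. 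In effect you have supplied a proof the paper omits, and it is sound.
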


\begin{theorem}[Euclidean Space as Topological Space]\label{t:Euclidean Space as Topological Space}
The Euclidean space $\R^n$ is topological space with standard topology that is the collection of open sets. The set $U \in \R^n$ is open in this standard topology, if for every point $\btext{x} \in U$ there exists a radius $\epsilon > 0$ such that open ball 
\[B_{\epsilon}(\btext{x}) = \{\btext{y} \in \R^n \colon d(\btext{x}, \btext{y}) < \epsilon\}\]
is entirely contained in $U$. 
\end{theorem}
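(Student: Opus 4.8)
The plan is to verify that the collection $\tau$ of subsets of $\mathbb{R}^n$ satisfying the stated open-ball condition constitutes a topology, by checking the three topology axioms directly: that $\emptyset$ and $\mathbb{R}^n$ belong to $\tau$, that $\tau$ is closed under arbitrary unions, and that $\tau$ is closed under finite intersections. Throughout I would use only the metric $d$ supplied by Theorem~\ref{t:Euclidean Space as Metric Space} and the definition of the open ball $B_{\epsilon}(\mathbf{x})$, so the argument rests entirely on structure already established.

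First I would dispose of the trivial cases. The empty set is open vacuously, since there is no point at which the ball condition must be tested. The whole space $\mathbb{R}^n$ is open because for any $\mathbf{x}$ and any $\epsilon > 0$ the ball $B_{\epsilon}(\mathbf{x})$ consists of points of $\mathbb{R}^n$ and is therefore contained in $\mathbb{R}^n$. Next I would handle arbitrary unions: given a family $\{U_{\alpha}\}_{\alpha \in I}$ of members of $\tau$ and a point $\mathbf{x}$ in $U = \bigcup_{\alpha} U_{\alpha}$, the point lies in some $U_{\beta}$, and openness of $U_{\beta}$ yields $\epsilon > 0$ with $B_{\epsilon}(\mathbf{x}) \subseteq U_{\beta} \subseteq U$, so $U \in \tau$. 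No constraint on the cardinality of $I$ enters here.

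Finally, the finite intersection axiom is where the real content lies, and I expect it to be the main (if modest) obstacle. Given $U_1, \ldots, U_k \in \tau$ and a point $\mathbf{x}$ in $U = \bigcap_{i=1}^{k} U_i$, each $U_i$ supplies a radius $\epsilon_i > 0$ with $B_{\epsilon_i}(\mathbf{x}) \subseteq U_i$. Setting $\epsilon = \min\{\epsilon_1, \ldots, \epsilon_k\}$ gives a strictly positive radius precisely because the index set is finite, and then $B_{\epsilon}(\mathbf{x}) \subseteq B_{\epsilon_i}(\mathbf{x}) \subseteq U_i$ for every $i$, whence $B_{\epsilon}(\mathbf{x}) \subseteq U$ and $U \in \tau$. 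The finiteness is essential: for an infinite family the infimum of the radii could be $0$, which is exactly why arbitrary intersections need not be open, and I would remark on this to clarify the role of the hypothesis.

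If desired, I would also record the auxiliary fact that each open ball $B_{\epsilon}(\mathbf{x})$ is itself a member of $\tau$, proved via the triangle inequality for $d$ by choosing, for an arbitrary $\mathbf{y} \in B_{\epsilon}(\mathbf{x})$, the radius $\delta = \epsilon - d(\mathbf{x}, \mathbf{y}) > 0$ and checking $B_{\delta}(\mathbf{y}) \subseteq B_{\epsilon}(\mathbf{x})$. This legitimizes the terminology \emph{open ball} and shows the balls form a basis, but it is not logically needed for the three axioms above, so I would present it as a supplementary observation rather than part of the core verification.
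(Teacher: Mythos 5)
Your verification is correct, but there is nothing in the paper to compare it against: the paper states this theorem without proof, presenting it (together with the vector-space, normed-space, and metric-space theorems) as standard background on $\mathbb{R}^n$. Your argument is the canonical one — the vacuous case for $\emptyset$, the trivial case for $\mathbb{R}^n$, the pick-one-index argument for arbitrary unions, and the minimum of finitely many positive radii for finite intersections, with the correct remark that finiteness is exactly what keeps $\epsilon = \min\{\epsilon_1,\ldots,\epsilon_k\}$ strictly positive. It also matches in spirit the axiom-by-axiom verifications the paper \emph{does} carry out for analogous statements, namely the subspace topology on a simplex (Theorem \ref{t:Simplex as Topological Space}) and the weak topology on the underlying space $|\mathcal{K}|$, so your proof would sit naturally in the paper's style. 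Your supplementary observation that each ball $B_{\epsilon}(\mathbf{x})$ is itself open, via $\delta = \epsilon - d(\mathbf{x},\mathbf{y}) > 0$ and the triangle inequality, is likewise correct and is the standard justification that the balls form a basis, though, as you say, it is not needed for the three axioms.
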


\subsection{Analytic geometry of Euclidean space} \label{ss:Analytic geometry of Euclidean space}
Analytic geometry, combines algebra and geometry to study geometric objects and their relationships using coordinate systems and algebraic equations. In the context of Euclidean space $\R^n$, it provides a structured way to represent points, lines, planes, and curves in spaces of any dimension using ordered tuples of real numbers. Now we describe some elementary subsets of $\R^n$ from an analytic geometric perspective. We begin with the concept of a geometrically  (or affinely\footnote{Geometric independence and affine independence refer to the same concept; the former is used in topology and geometry, while the latter is used in linear algebra.}) independent sets of points of $\R^N$, which plays a central role in the construction of simplices and simplicial complexes.

\begin{definition}[Geometrically Independent Set]\label{d:Geometrically Independent Set}
A set $A = \{\btext{a}_{0}, \ldots \btext{a}_{n}\}$ of points of $\R^{N}$ is said to be geometrically independent if, for any real scalars $t_{i}$, holds following implication: 
\begin{equation}\label{eq:Geometrically Independent Set of Point of RN}
\sum_{i = 0}^{n}t_{i} = 0 \, \text{and} \, \sum_{i = 0}^{n}t_{i}\btext{a}_{i} = \btext{0} \Rightarrow t_{i} = 0, \, \forall i
\end{equation}
\end{definition}

As we saw from the standard result that Euclidean space is also a vector space, so let us see in the next result how geometrically independence is related to linear independence which will help us in verifying geometrically independence of subsets of $\R^n$.  Geometric independence and linear independence are related but distinct concepts, especially in the context of affine geometry and linear algebra. To better understand first we recall linear independent set in $\R^N$. 

\begin{definition}[Linearly Independent Set]\label{d:Linearly Independent Set}
A set of vectors $V = \{\btext{v}_1, \ldots, \btext{v}_m\}$ in $\R^N$ is said to be linearly independent if, for any real scalars $\lambda_{i}$ holds following implication:  
\begin{equation}\label{eq:Linearly Independent Set}
 \sum_{i = 1}^{m}\lambda_{i}\btext{v}_{i} = \btext{0} \Rightarrow \lambda_{i} = 0, \, \forall i   
\end{equation}
\end{definition}
Let us look at some examples that clarifies the difference between geometric and linear independence.

\begin{example}\label{eg:Geometric and linear independence of the sets1}
Is the set $ A = \{\btext{a}_{0} = (0,0), \btext{a}_{1} = (1,0), \btext{a}_{2} = (2,0)\} $ in $\mathbb{R}^2$ geometrically independent or linearly independent?
\end{example}

\begin{solution*}
A set $ A = \{\btext{a}_{0} = (0,0), \btext{a}_{1} = (1,0), \btext{a}_{2} = (2,0)\} $ is geometrically independent if, for any real scalars $ t_0, t_1, t_2 $, the following implication holds:
\[
\sum_{i=0}^{2} t_i = 0 \quad \text{and} \quad \sum_{i=0}^{2} t_i \btext{a}_i = \btext{0} \quad \Rightarrow \quad t_0 = t_1 = t_2 = 0.
\]

Assume $ t_0 = 1 $, $ t_1 = -2 $, $ t_2 = 1 $; then 
\[
\sum_{i=0}^{2} t_i = 1 + (-2) + 1 = 0
\]
and
\[
\sum_{i=0}^{2} t_i \btext{a}_i = (1)(0,0) + (-2)(1,0) + (1)(2,0) = (0,0) = \btext{0}.
\]

However, not all $ t_0, t_1, t_2 $ are zero; therefore, the set $ A $ is not geometrically independent.

On the other hand, if the points of $ A $ are regarded as position vectors, then the set of vectors $ V = \{\btext{v}_1 = \btext{a}_1 - \btext{a}_0 = (1,0), \btext{v}_2 = \btext{a}_2 - \btext{a}_0 = (2,0)\} $ is said to be linearly independent if, for any real scalars $ \lambda_1, \lambda_2 $, the following implication holds:
\[
\sum_{i=1}^{2} \lambda_i \btext{v}_i = \btext{0} \quad \Rightarrow \quad \lambda_1 = \lambda_2 = 0.
\]

Assume $ \lambda_1 = 2 $, $ \lambda_2 = -1 $; then
\[
\sum_{i=1}^{2} \lambda_i \btext{v}_i = 2(1,0) + (-1)(2,0) = (0,0) = \btext{0}.
\]

But not all $ \lambda_1, \lambda_2 $ are zero; therefore, the set $ A $ is not linearly independent. Hence, $ A $ is neither geometrically independent nor linearly independent.
\end{solution*}

\begin{example}\label{eg:Geometric and linear independence of the sets2}
Is the set $ A = \{\btext{a}_{0} = (0,0), \btext{a}_{1} = (1,0), \btext{a}_{2} = (0,1)\} $ in $\mathbb{R}^2$ both geometrically independent and linearly independent? 
\end{example}

\begin{solution*}
A set $ A = \{\btext{a}_{0} = (0,0), \btext{a}_{1} = (1,0), \btext{a}_{2} = (0,1)\} $ is geometrically independent since the only scalars $ t_0, t_1, t_2 $ satisfying 
\[
\sum_{i=0}^{2} t_i = 0
\]
and
\[
\sum_{i=0}^{2} t_i \btext{a}_i = 0 \cdot (0,0) + 0 \cdot (1,0) + 0 \cdot (0,1) = (0,0) = \btext{0}
\]
are $ t_0 = t_1 = t_2 = 0 $.

Therefore, the set $ A $ is geometrically independent.

On the other hand, the set of vectors $ V = \{\btext{v}_1 = \btext{a}_1 - \btext{a}_0 = (1,0), \btext{v}_2 = \btext{a}_2 - \btext{a}_0 = (0,1)\} $ is linearly independent since the only scalars $ \lambda_1, \lambda_2 $ satisfying 
\[
\sum_{i=1}^{2} \lambda_i \btext{v}_i = 0 \cdot (1, 0) + 0 \cdot (0, 1) = (0,0) = \btext{0}
\]
are $ \lambda_1 = \lambda_2 = 0 $.

Therefore, the set $ A $ is linearly independent. Hence, $ A $ is both geometrically independent and linearly independent.
\end{solution*}

By observing the above two Examples \eqref{eg:Geometric and linear independence of the sets1} and \eqref{eg:Geometric and linear independence of the sets2}, we can establish a useful connection between geometric and linear independence of sets in $\mathbb{R}^N$. The following Theorem \eqref{t:Necessary and Sufficient Condition for GI of Set} provides necessary and sufficient conditions for the geometric independence of a set in terms of linear independence. This result will help solve problems related to geometric independence by applying linear algebra techniques, making the process more time-efficient and computationally convenient.

\begin{theorem}[Necessary and Sufficient Condition for Geometric Independent Set]\label{t:Necessary and Sufficient Condition for GI of Set}
A set $A = \{\btext{a}_{0}, \ldots \btext{a}_{n}\}$ of points of $\R^{N}$ is geometrically independent if and only if the set of vectors 
\[V = \{\btext{a}_{1} - \btext{a}_{0}, \ldots, \btext{a}_{n} - \btext{a}_{0}\}\]
is linearly independent in the sense of linear algebra. 
\end{theorem}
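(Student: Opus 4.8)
The plan is to prove both implications by setting up an explicit correspondence between the coefficient systems appearing in the two definitions. The entire argument rests on the algebraic identity
\[
\sum_{i=0}^{n} t_i \btext{a}_i = \sum_{i=1}^{n} t_i (\btext{a}_i - \btext{a}_0),
\]
which holds precisely when $\sum_{i=0}^{n} t_i = 0$; indeed, under that constraint one has $t_0 = -\sum_{i=1}^{n} t_i$, and substituting this expression into the left-hand side collapses the $\btext{a}_0$ contributions into the relative vectors. Once this identity is in hand, each direction reduces to translating a vanishing relation in one framework into a vanishing relation in the other.

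For the forward direction, I would assume $A$ is geometrically independent and take an arbitrary linear relation $\sum_{i=1}^{n} \lambda_i (\btext{a}_i - \btext{a}_0) = \0$ among the vectors of $V$. Setting $t_0 = -\sum_{i=1}^{n} \lambda_i$ and $t_i = \lambda_i$ for $i \geq 1$ produces scalars with $\sum_{i=0}^{n} t_i = 0$ and, via the identity above, with $\sum_{i=0}^{n} t_i \btext{a}_i = \0$. Geometric independence then forces every $t_i = 0$, so in particular each $\lambda_i = 0$, establishing that $V$ is linearly independent.

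For the converse, I would assume $V$ is linearly independent and take scalars $t_0, \ldots, t_n$ satisfying both defining conditions of geometric independence. The constraint $\sum_{i=0}^{n} t_i = 0$ lets me invoke the identity to rewrite $\sum_{i=0}^{n} t_i \btext{a}_i = \0$ as $\sum_{i=1}^{n} t_i (\btext{a}_i - \btext{a}_0) = \0$. Linear independence of $V$ yields $t_1 = \cdots = t_n = 0$, and the sum constraint then gives $t_0 = 0$ as well, so $A$ is geometrically independent.

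The computations are entirely routine, and there is no genuine obstacle beyond careful bookkeeping. The one point deserving attention is checking that the correspondence $t_0 \leftrightarrow (\lambda_1, \ldots, \lambda_n)$ is an honest bijection between the two solution sets, so that a nontrivial relation on one side always matches a nontrivial relation on the other; this is precisely what upgrades each separate implication into the stated equivalence.
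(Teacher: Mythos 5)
Your proposal is correct and takes essentially the same route as the paper: both proofs hinge on the substitution $t_0 = -\sum_{i=1}^{n} t_i$, which converts an affine relation $\sum_{i=0}^{n} t_i \btext{a}_i = \btext{0}$ with $\sum_{i=0}^{n} t_i = 0$ into the linear relation $\sum_{i=1}^{n} t_i(\btext{a}_i - \btext{a}_0) = \btext{0}$ and back. If anything, your write-up is the cleaner rendering of the shared idea, since in the forward direction you correctly begin with an arbitrary linear relation among the relative vectors and lift it to an affine relation, while the paper's forward direction starts from the affine relation itself.
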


\begin{proof}
First we prove that geometric independence implies linear independence. As given set $A = \{\btext{a}_{0}, \ldots \btext{a}_{n}\}$ of points of $\R^{N}$ is geometrically independent. Now we prove that set of vectors $V = \{\btext{a}_{1} - \btext{a}_{0}, \ldots, \btext{a}_{n} - \btext{a}_{0}\}$ is linear independence. 

By definition, a set of points $A = \{\btext{a}_{0}, \ldots \btext{a}_{n}\}$ in $ \R^N $  is geometrically independent if the only way to express any point in $A$  as an affine combination of the other points in $A$  is the trivial case where one of the coefficients is $1$  and the rest are $0$ . That is, if
\[\sum_{i=0}^{n} t_i \btext{a}_i = \btext{0}, \quad \text{with} \quad \sum_{i=0}^{n} t_i = 0,\]
then necessarily $ t_i = 0 $  for all $ i $ .

Rewriting the sum in terms of the vectors $ \btext{a}_i - \btext{a}_0 $ , we get
\[\sum_{i=1}^{n} t_i (\btext{a}_i - \btext{a}_0) = -t_0 \btext{a}_0.\]
Since $ \sum_{i=0}^{n} t_i = 0 $ , we substitute $ t_0 = -\sum_{i=1}^{n} t_i $ , yielding:
\[\sum_{i=1}^{n} t_i (\btext{a}_i - \btext{a}_0) - \left(\sum_{i=1}^{n} t_i\right) \btext{a}_0 = \btext{0}.\]
This simplifies to:
\[\sum_{i=1}^{n} t_i (\btext{a}_i - \btext{a}_0) = \btext{0}.\]
Since $ A $  is geometrically independent, the only solution to this equation is $ t_1 = t_2 = \dots = t_n = 0 $ .  
Thus, the vectors $ \btext{a}_1 - \btext{a}_0, , \dots, \btext{a}_n - \btext{a}_0 $  are linearly independent.

Conversely, we prove that linear independence implies geometric independence. Now, suppose that the set $V = \{\btext{a}_1 - \btext{a}_0, \dots, \btext{a}_n - \btext{a}_0 \}$ is linearly independent. We need to prove that the set $A = \{\btext{a}_{0}, \ldots \btext{a}_{n}\}$ is geometrically independent. Suppose we have an affine dependence relation among the points in $A$
\[\sum_{i=0}^{n} t_i \btext{a}_i = \btext{0}, \quad \text{with} \quad \sum_{i=0}^{n} t_i = 0.\]

Rewriting,
\[t_0 \btext{a}_0 + \sum_{i=1}^{n} t_i \btext{a}_i = \btext{0}.\]

Rearranging in terms of the vectors,
\[\sum_{i=1}^{n} t_i (\btext{a}_i - \btext{a}_0) = -t_0 \btext{a}_0.\]

Since $ \sum_{i=0}^{n} t_i = 0 $ , we get $ t_0 = -\sum_{i=1}^{n} t_i $ , giving
\[\sum_{i=1}^{n} t_i (\btext{a}_i - \btext{a}_0) - \left(\sum_{i=1}^{n} t_i\right) \btext{a}_0 = \btext{0}.\]

On simplifying, we get
\[\sum_{i=1}^{n} t_i (\btext{a}_i - \btext{a}_0) = \btext{0}.\]

Since the vectors $ \btext{a}_1 - \btext{a}_0, \dots, \btext{a}_n - \btext{a}_0 $  are linearly independent, the only solution is $ t_1 = t_2 = \dots = t_n = 0 $ . From $ t_0 = -\sum_{i=1}^{n} t_i $ , it follows that $ t_0 = 0 $  as well. Thus, the only possible affine combination satisfying the condition is the trivial one, proving that $ A $  is geometrically independent.
\end{proof}

\begin{corollary}\label{cor:One point set is GI}
One point set of $\R^{N}$ is always geometrically independent. 
\end{corollary}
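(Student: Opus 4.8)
The plan is to apply Definition~\eqref{d:Geometrically Independent Set} directly to the degenerate case of a single point. Writing the one-point set as $A = \{\btext{a}_0\}$, the index set reduces to $i = 0$ alone (that is, $n = 0$), so that only the scalar $t_0$ appears. First I would specialize the two defining hypotheses to this case: they become $\sum_{i=0}^{0} t_i = t_0 = 0$ together with $\sum_{i=0}^{0} t_i \btext{a}_i = t_0 \btext{a}_0 = \btext{0}$.

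The key observation is that the first hypothesis alone, $t_0 = 0$, already delivers the required conclusion. Thus the implication in \eqref{eq:Geometrically Independent Set of Point of RN} holds trivially, and it does so without invoking the second hypothesis or any property of the point $\btext{a}_0$ whatsoever. This verifies the definition and settles the claim.

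As an alternative route, I could instead appeal to Theorem~\eqref{t:Necessary and Sufficient Condition for GI of Set}. For $A = \{\btext{a}_0\}$ the associated difference set $V = \{\btext{a}_1 - \btext{a}_0, \ldots, \btext{a}_n - \btext{a}_0\}$ is empty, and the empty set of vectors is vacuously linearly independent; the theorem then yields geometric independence of $A$ immediately.

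I do not anticipate any genuine obstacle, since the statement is merely a boundary case of the definition. The only point requiring mild care is the correct reading of the summation conventions when $n = 0$, ensuring that the (empty) second condition is understood to impose nothing, so that the constraint $t_0 = 0$ carries the entire argument on its own.
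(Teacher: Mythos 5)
Your proposal is correct, and it matches the paper, which states this corollary without any written proof as an immediate consequence of the surrounding material: your alternative route via Theorem~\eqref{t:Necessary and Sufficient Condition for GI of Set} (the difference set $V$ is empty for $n=0$, and an empty set of vectors is vacuously linearly independent) is exactly the derivation the paper's placement implies, while your primary argument checks Definition~\eqref{d:Geometrically Independent Set} directly, which is equally valid and even more elementary. One small correction of wording: when $n=0$ the second hypothesis is not empty --- it is the one-term sum $t_0\btext{a}_0=\btext{0}$ --- but it is redundant rather than vacuous, since the first hypothesis $t_0=0$ already forces it; this does not affect your conclusion, as the implication's consequent follows from the first hypothesis alone.
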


\begin{corollary}\label{cor:More than one point set is GI}
In $\R^{N}$ two distinct points, three non-collinear points, four non-coplanar points and so on form a geometrically independent set.
\end{corollary}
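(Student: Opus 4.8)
The plan is to derive this corollary directly from Theorem \ref{t:Necessary and Sufficient Condition for GI of Set}, which reduces the geometric independence of $\{\btext{a}_0, \ldots, \btext{a}_n\}$ to the linear independence of the difference vectors $V = \{\btext{a}_1 - \btext{a}_0, \ldots, \btext{a}_n - \btext{a}_0\}$. For each of the stated cases I would translate the geometric hypothesis (distinctness, non-collinearity, non-coplanarity, and so on) into the assertion that the corresponding difference vectors are linearly independent, and then invoke the theorem to conclude geometric independence.

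First, for two distinct points $\{\btext{a}_0, \btext{a}_1\}$ the associated vector set is the singleton $V = \{\btext{a}_1 - \btext{a}_0\}$; distinctness gives $\btext{a}_1 - \btext{a}_0 \neq \btext{0}$, and a single nonzero vector is linearly independent, so the theorem yields geometric independence. Next, for three non-collinear points I would argue that non-collinearity is precisely the statement that $\btext{a}_1 - \btext{a}_0$ and $\btext{a}_2 - \btext{a}_0$ are not scalar multiples of one another: if $\btext{a}_2 - \btext{a}_0 = \lambda(\btext{a}_1 - \btext{a}_0)$ for some scalar $\lambda$, then $\btext{a}_2$ would lie on the line through $\btext{a}_0$ and $\btext{a}_1$, contradicting non-collinearity. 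Hence the two difference vectors are linearly independent. The case of four non-coplanar points is parallel: non-coplanarity says that none of $\btext{a}_1 - \btext{a}_0,\, \btext{a}_2 - \btext{a}_0,\, \btext{a}_3 - \btext{a}_0$ lies in the linear span of the other two, which is exactly linear independence of the three vectors.

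To treat the phrase ``and so on'' rigorously, I would formulate the general non-degeneracy hypothesis that the $n+1$ points do not all lie in a common affine subspace of dimension less than $n$, and then show this is equivalent to the $n$ difference vectors being linearly independent, after which the theorem gives the result in full generality. The main obstacle is not computational but definitional: in each case one must verify that a linear dependence among the vectors $\btext{a}_i - \btext{a}_0$ is genuinely the same condition as the points lying in a lower-dimensional affine flat. Pinning down what ``collinear,'' ``coplanar,'' and their higher-dimensional analogues mean in terms of affine spans, and matching them against linear dependence of the difference vectors, is the step requiring the most care, even though each individual verification is short.
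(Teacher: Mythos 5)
Your proposal is correct and takes essentially the same approach the paper intends: the corollary is stated there without proof as an immediate consequence of Theorem \ref{t:Necessary and Sufficient Condition for GI of Set}, and your case-by-case translation of distinctness, non-collinearity, and non-coplanarity into linear independence of the difference vectors $\btext{a}_i - \btext{a}_0$ supplies exactly the intended details. Your plan for the ``and so on'' clause---equating the general hypothesis with the points not lying in an affine flat of dimension less than $n$---is also the right way to make the statement rigorous in full generality.
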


Using the theory of geometric independence of sets as we discussed earlier, we now present a systematic working rule for determining whether a given finite set in $\R^n$ is geometrically independent.

\begin{workingrule}[Determining the Geometric Independence of a Finite Set]\label{wr:Determining the Geometric Independence of a Finite Set} 
Let $A = \{\btext{a}_{0}, \btext{a}_{1}, \ldots \btext{a}_{n}\}$ be the set of $(n + 1)$ points in $\R^m$. To determine geometrically independence of $A$ follow these steps:
\begin{enumerate} 
\item \textbf{Formulate the position vectors:} Suppose the elements $ \btext{a}_0, \btext{a}_1,  \ldots, \btext{a}_n $ of $A$ as position vectors  in $\R^m$.
\item \textbf{Define the set of relative vectors:} Define the set $V$ of relative vectors with respect to $\btext{a}_0$ as
\[V = \{\textbf{v}_{i} \colon \btext{v}_i = \btext{a}_i - \btext{a}_0, \, \text{for} \,  i = 1, 2, \ldots, n\} \]
The total $n$ vectors $\btext{v}_1, \ldots, \btext{v}_n$ will be used to determine independence.
\item \textbf{Form a matrix of relative vectors:} Construct a matrix $M_V$ whose columns are the vectors $ \btext{v}_1, \btext{v}_2, \ldots, \btext{v}_n$
\[
M_V = \begin{bmatrix}
v_{11} & v_{12} & \cdots & v_{1n} \\
v_{21} & v_{22} & \cdots & v_{2n} \\
    \vdots & \vdots & \ddots & \vdots \\
v_{n1} & v_{n2} & \cdots & v_{mn}
\end{bmatrix}_{m \times n}
\]
\item \textbf{Determine the rank of $M_V$:} Calculate the rank of the matrix $M_V$ by any suitable method such as the Echelon method or the normal method. 
 \item \textbf{Apply the Geometric Independence Criterion:}
\begin{enumerate}
\item If $\operatorname{rank}(M_V) = n$, then  $A$ is geometrically independent.
\item If $\operatorname{rank}(M_V) < n$, then $A$ is not geometrically independent.
\end{enumerate}
\end{enumerate}
\end{workingrule}

\begin{example}\label{eg:Example of a Geometrically Independent Set in R2}
Is the set $ A = \{\btext{a}_0 = (0, 0), \btext{a}_1 = (1, 1)\} $ in $\mathbb{R}^2$ geometrically independent?
\end{example}

\begin{solution*}
Since $A$ has distinct elements, it is geometrically independent by Corollary \eqref{cor:More than one point set is GI}.
\end{solution*}

\begin{example}\label{eg:Example of a Geometrically Independent Set in R2}
Is the set $ A = \{\btext{a}_0 = (0, 0), \btext{a}_1 = (1, 0), \btext{a}_2 = (0, 1) \} $ in $\mathbb{R}^2$ geometrically independent?
\end{example}

\begin{solution*} The set $A = \{\btext{a}_0 = (0, 0), \btext{a}_1 = (1, 0), \btext{a}_2 = (0, 1) \}$ in $\R^2$.
\begin{enumerate}
\item  Suppose the elements $\btext{a}_0, \btext{a}_1, \btext{a}_2$ of $A$ as position vectors, so 
\[    
\btext{a}_0 = \begin{bmatrix} 0 \\ 0 \end{bmatrix}, \quad
\btext{a}_1 = \begin{bmatrix} 1 \\ 0 \end{bmatrix}, \quad
\btext{a}_2 = \begin{bmatrix} 0 \\ 1 \end{bmatrix}    
\]
\item The set of relative vectors with respect to position vector $\btext{a}_0$ is $V = \{\btext{v}_1, \btext{v}_2\}$, where 
\[    
\btext{v}_1 = \btext{a}_1 - \btext{a}_0 = \begin{bmatrix} 1 \\ 0 \end{bmatrix}, \quad
\btext{v}_2 = \btext{a}_2 - \btext{a}_0 = \begin{bmatrix} 0 \\ 1 \end{bmatrix}    
\]
\item The matrix of relative vectors $\btext{v}_1, \btext{v}_2$ is 
\[ M_V = \begin{bmatrix}
1 & 0 \\
0 & 1
\end{bmatrix}   
\]
\item Since the number of non-zero rows in the matrix $M_V$ is 2 so by Echelon form $\operatorname{rank}(M_V) = 2$. 
\item Since $\operatorname{rank}(M_V) = 2 = n$, therefore the set $A$ is geometrically independent.
\end{enumerate}
\end{solution*} 

\begin{example}\label{eg:Example of a Geometrically Independent Set in R3}
Is the set $ A = \{\btext{a}_0 = (0, 0, 0), \btext{a}_1 = (1, 0, 0), \btext{a}_2 = (0, 1, 0)\} $ in $\mathbb{R}^3$ geometrically independent?
\end{example}

\begin{solution*}
The set 
$A = \{\btext{a}_0 = (0, 0, 0), \btext{a}_1 = (1, 0, 0), \btext{a}_2 = (0, 1, 0)\} $
in $\R^3$.  
\begin{enumerate}
\item  Suppose the elements $\btext{a}_0, \btext{a}_1, \btext{a}_2$ of $A$ as position vectors, so 
\[ \btext{a}_0 = \begin{bmatrix} 0 \\ 0 \\ 0 \end{bmatrix}, \quad
\btext{a}_1 = \begin{bmatrix} 1 \\ 0 \\ 0 \end{bmatrix}, \quad
    \btext{a}_2 \begin{bmatrix} 0 \\ 1 \\ 0 \end{bmatrix} 
    \]
\item The set of relative vectors with respect to position vector $\btext{a}_0$ is $V = \{\btext{v}_1, \btext{v}_2\}$, where 
\[\btext{v}_1 = \btext{a}_1 - \btext{a}_0 = \begin{bmatrix} 1 \\ 0 \\ 0 \end{bmatrix}, \quad
    \btext{v}_2 = \btext{a}_2 - \btext{a}_0 = \begin{bmatrix} 0 \\ 1 \\ 0 \end{bmatrix}  \]
\item The matrix of relative vectors $\btext{v}_1, \btext{v}_2$ is
\[M_V = \begin{bmatrix}
    1 & 0 \\
    0 & 1 \\
    0 & 0
    \end{bmatrix}_{3 \times 2}\]
\item Since the number of non-zero rows in the matrix $M_V$ is 2 so by Echelon form $\operatorname{rank}(M_V) = 2 = n$, therefore the set $A$ is geometrically independent.
\end{enumerate}
\end{solution*}

\begin{example}\label{eg:Non Example of a Geometrically Independent Set in R3}
Is the set $ A = \{\btext{a}_0 = (0, 0, 0), \btext{a}_1 = (1, 0, 0), \btext{a}_2 = (2, 0, 0)\} $ in $\mathbb{R}^3$ geometrically independent?
\end{example}

\begin{solution*}
The set $A = \{\btext{a}_0 = (0, 0, 0) , \btext{a}_1 = (1, 0, 0), \btext{a}_2 = (2, 0, 0)\}$ in $\R^3$. 
\begin{enumerate}
\item  Suppose the elements $\btext{a}_0, \btext{a}_1, \btext{a}_2$ of $A$ as position vectors, so 
\[ \btext{a}_0 = 
\begin{bmatrix} 
0 \\ 0 \\ 0 
\end{bmatrix}, \quad
\btext{a}_1 = 
\begin{bmatrix}
1 \\ 0 \\ 0 
\end{bmatrix}, \quad
\btext{a}_2 
\begin{bmatrix} 
2 \\ 0 \\ 0 
\end{bmatrix} \]
\item The set of relative vectors with respect to position vector $\btext{a}_0$ is $V = \{\btext{v}_1, \btext{v}_2\}$, where 
\[\btext{v}_1 = \btext{a}_1 - \btext{a}_0 = \begin{bmatrix} 1 \\ 0 \\ 0 \end{bmatrix}, \quad
    \btext{v}_2 = \btext{a}_2 - \btext{a}_0 = \begin{bmatrix} 2 \\ 0 \\ 0 \end{bmatrix}  \]
    \item The matrix of relative vectors $\btext{v}_1, \btext{v}_2$ is
\[M_V = \begin{bmatrix}
    1 & 2 \\
    0 & 0 \\
    0 & 0
    \end{bmatrix}_{3 \times 2}\]
\item Since the number of non-zero rows in the matrix $M_V$ is 1 so by Echelon form $\operatorname{rank}(M_V) = 1  \neq 2 =  n$, therefore the set $A$ is not geometrically independent.
\end{enumerate}
\end{solution*}

\begin{example}\label{eg:Example of a Geometrically Independent Set in R4}
Is the set $ A = \{\btext{a}_0, \btext{a}_1, \btext{a}_2, \btext{a}_3, \btext{a}_4\} $, where the points 
\[
\btext{a}_0 = (1, 1, 1, 1), \quad \btext{a}_1 = (2, 3, 1, 4), \quad \btext{a}_2 = (3, 5, 2, 1), \quad \btext{a}_3 = (4, 6, 3, 7), \quad \btext{a}_4 = (5, 9, 6, 3)
\]
in $\mathbb{R}^4$, geometrically independent?
\end{example}

\begin{solution*}
Consider the set $ A = \{\btext{a}_0, \btext{a}_1, \btext{a}_2, \btext{a}_3, \btext{a}_4\} $, where the points are 
\[
\btext{a}_0 = \begin{bmatrix} 1 \\ 1 \\ 1 \\ 1 \end{bmatrix}, \quad 
\btext{a}_1 = \begin{bmatrix} 2 \\ 3 \\ 1 \\ 4 \end{bmatrix}, \quad
\btext{a}_2 = \begin{bmatrix} 3 \\ 5 \\ 2 \\ 1 \end{bmatrix}, \quad 
\btext{a}_3 = \begin{bmatrix} 4 \\ 6 \\ 3 \\ 7 \end{bmatrix}, \quad
\btext{a}_4 = \begin{bmatrix} 5 \\ 9 \\ 6 \\ 3 \end{bmatrix}.
\]

\begin{enumerate}
\item Define the set of relative vectors with respect to $\btext{a}_0$ as 
\[
V = \{\btext{v}_1, \btext{v}_2, \btext{v}_3, \btext{v}_4\},
\]
where 
\[
\btext{v}_1 = \btext{a}_1 - \btext{a}_0 = \begin{bmatrix} 1 \\ 2 \\ 0 \\ 3 \end{bmatrix}, \quad 
\btext{v}_2 = \btext{a}_2 - \btext{a}_0 = \begin{bmatrix} 2 \\ 4 \\ 1 \\ 0 \end{bmatrix}, \quad
\btext{v}_3 = \btext{a}_3 - \btext{a}_0 = \begin{bmatrix} 3 \\ 5 \\ 2 \\ 6 \end{bmatrix}, \quad 
\btext{v}_4 = \btext{a}_4 - \btext{a}_0 = \begin{bmatrix} 4 \\ 8 \\ 5 \\ 2 \end{bmatrix}.
\]

\item Form the matrix $M_V$ whose columns are the vectors $\btext{v}_1, \btext{v}_2, \btext{v}_3, \btext{v}_4$:
\[
M_V = \begin{bmatrix}
1 & 2 & 3 & 4 \\
2 & 4 & 5 & 8 \\
0 & 1 & 2 & 5 \\
3 & 0 & 6 & 2
\end{bmatrix}.
\]

\item Compute the row-echelon form of $M_V$:
\[
\approx
\begin{bmatrix}
1 & 0 & -1 & 6 \\
0 & 1 & 2 & 5 \\
0 & 0 & -1 & 0 \\
0 & 0 & 0 & 20
\end{bmatrix}.
\]

\item Since the number of nonzero rows in the echelon form of $M_V$ is 4, the rank of $M_V$ is $4 = n$. Therefore, the set $A$ is geometrically independent.
\end{enumerate}
\end{solution*}

\subsection{An $n$-Dimensional Plane Spanned by a Geometrically Independent Set}\label{ss:An n Dimensional Plane Spanned by a Geometrically Independent Set}
To further understand the geometric structure of Euclidean spaces, it is useful to consider the concept of an $n$-plane in $\R^N$. These are the natural generalizations of lines and planes to higher dimensions, and they provide the foundational geometric setting for defining simplices and simplicial complexes. An $n$-plane  (denoted by $P_n$) in $\R^N$ is the affine subspace of dimension $n$ determined by $n + 1$ geometrically independent points in $\R^N$.

\begin{definition}[$n$-plane spanned by a geometrically independent set]\label{d:n-plane spanned by a geometrically independent set}
Let $A = \{\btext{a}_0, \btext{a}_1, \ldots, \btext{a}_n\}$ be a geometrically independent set of points in $\R^N$. The $n$-plane $P_n$ in $\R^N$ spanned by $A$ is the set
\begin{equation}\label{eq:n-plane spanned by a geometrically independent set}
P_{n} = \left\{ \btext{a} \in \R^N \colon \btext{a} = \sum_{i = 0}^{n} t_i \btext{a}_i \ \text{for some scalars} \ t_i \in \R \ \text{such that} \sum_{i = 0}^{n} t_i = 1 \right\}.
\end{equation}
\end{definition}

\begin{remark}
It is observed that, since the points $\btext{a}_0, \ldots, \btext{a}_n$ are geometrically independent, the scalars $t_i$ are uniquely determined by the point $\btext{a}$. That is, for each $\btext{a} \in P_n$, the coefficients $t_i$ are unique.    
\end{remark}
 
\begin{definition}[$n$-plane passing through a point]\label{d:n-plane passing through a point}
Let $A = \{\btext{a}_0, \btext{a}_1, \ldots, \btext{a}_n\}$ be a geometrically independent set of points in $\R^N$. Then the $n$-plane, denoted by $P_n(\btext{a}_0)$, that passes through the point $\btext{a}_0$ and is parallel to the vectors $\btext{v}_i = \btext{a}_i - \btext{a}_0$ for $i = 1, 2, \ldots, n$, is the set
\begin{equation}\label{eq:n-plane passing through a point}
P_n(\btext{a}_0) = \left\{ \btext{a} \in \R^N \colon \btext{a} = \btext{a}_0 + \sum_{i = 1}^{n} t_i(\btext{a}_i - \btext{a}_0),\ \text{for some scalars } t_i \in \R \right\}.
\end{equation}
\end{definition}

\begin{remark}
In case of finite Euclidean space $\R^m$ instead of the space $\R^{N}$, both definitions  \eqref{d:n-plane spanned by a geometrically independent set} and \eqref{d:n-plane passing through a point} of an $n$-plane whether it is spanned by a geometrically independent set or passes through a point the integers $n$ and $m$ must satisfy the condition $1 \leq n \leq m$. This ensures that the set $A$ of $(n+1)$ points in $\R^m$ can be geometrically independent. Geometric independence requires that the $n$ relative vectors formed from these points be linearly independent, which is only possible when $n \leq m$ in the $m$-dimensional Euclidean space $\R^m$.
\end{remark}

\begin{theorem}[Properties of an $n$-Plane]\label{t:Properties of an n-Plane}
Let an $n$-plane in $\mathbb{R}^N$ be defined as an affine subspace spanned by $n+1$ geometrically independent points. Then the following properties hold:
\begin{enumerate}
    \item \textbf{Dimension:} The $n$-plane has dimension $n$, meaning it is parameterized by $n$ independent variables (the coefficients $t_1, \dots, t_n$).
    \item \textbf{Affine Subspace:} The $n$-plane is an affine subspace of $\mathbb{R}^N$, not necessarily passing through the origin unless $\btext{a}_0 = \btext{0}$.
    \item \textbf{Uniqueness:} For $n+1$ geometrically independent points, there exists a unique $n$-plane passing through them.
    \item \textbf{Linear Independence:} The relative vectors $\btext{v}_i = \btext{a}_i - \btext{a}_0$ for $i = 1, 2, \dots, n$ must be linearly independent for the points to span an $n$-plane.
\end{enumerate}
\end{theorem}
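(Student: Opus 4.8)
The plan is to handle the four properties in an order dictated by logical dependence rather than the order listed, relying throughout on the equivalence established in Theorem~\eqref{t:Necessary and Sufficient Condition for GI of Set}. The unifying idea is to pass from the spanning description in Definition~\eqref{d:n-plane spanned by a geometrically independent set} to the translated form in Definition~\eqref{d:n-plane passing through a point}, since the latter exhibits the $n$-plane explicitly as a coset of a linear subspace, from which dimension, affineness, and uniqueness all follow.

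First I would dispatch property~(4). By hypothesis $A = \{\btext{a}_0, \ldots, \btext{a}_n\}$ is geometrically independent, so Theorem~\eqref{t:Necessary and Sufficient Condition for GI of Set} yields at once that the relative vectors $\btext{v}_i = \btext{a}_i - \btext{a}_0$, $i = 1, \ldots, n$, are linearly independent. Next, for property~(2), I would show that the spanning set equals the translated set. Given $\btext{a} = \sum_{i=0}^n t_i \btext{a}_i$ with $\sum_{i=0}^n t_i = 1$, substituting $t_0 = 1 - \sum_{i=1}^n t_i$ rearranges this to $\btext{a} = \btext{a}_0 + \sum_{i=1}^n t_i(\btext{a}_i - \btext{a}_0)$. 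Hence $P_n = \btext{a}_0 + W$, where $W = \operatorname{span}\{\btext{v}_1, \ldots, \btext{v}_n\}$ is a linear subspace; being a translate of a linear subspace, $P_n$ is by definition an affine subspace, and it contains $\btext{0}$ precisely when $-\btext{a}_0 \in W$, in particular when $\btext{a}_0 = \btext{0}$.

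Property~(1) then follows immediately: the dimension of an affine subspace is the dimension of its direction space $W$, and since the $\btext{v}_i$ are linearly independent by property~(4) and span $W$, they form a basis, so $\dim W = n$. Equivalently, the uniqueness remark after Definition~\eqref{d:n-plane spanned by a geometrically independent set} shows that the coefficients $t_1, \ldots, t_n$ serve as $n$ free, unambiguous parameters.

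The main obstacle is property~(3), uniqueness, which requires care in formulating what ``the'' $n$-plane through the points means. I would argue as follows: let $Q$ be any $n$-dimensional affine subspace of $\R^N$ containing all of $\btext{a}_0, \ldots, \btext{a}_n$. Writing $Q = \btext{a}_0 + U$ for its direction space $U$, each difference $\btext{v}_i = \btext{a}_i - \btext{a}_0$ must lie in $U$, so $W = \operatorname{span}\{\btext{v}_i\} \subseteq U$. Because $\dim U = n = \dim W$, this inclusion forces $U = W$, whence $Q = \btext{a}_0 + W = P_n$. The delicate point is ruling out competing affine subspaces by fixing their dimension at $n$ and invoking the basis property of $V$; once the dimension is pinned down, the containment argument closes uniqueness without further computation.
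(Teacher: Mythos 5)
Your proposal is correct, and it is worth noting that the paper itself states this theorem without supplying any proof at all, so there is no in-text argument to compare against; your write-up fills that gap. Your central move — rewriting the spanning description of Definition~\eqref{d:n-plane spanned by a geometrically independent set} as the coset $P_n = \btext{a}_0 + W$ with $W = \operatorname{span}\{\btext{v}_1, \ldots, \btext{v}_n\}$, which is exactly the content of Definition~\eqref{d:n-plane passing through a point} — is the right organizing principle: it reduces all four claims to the single fact, supplied by Theorem~\eqref{t:Necessary and Sufficient Condition for GI of Set}, that the $\btext{v}_i$ form a basis of $W$. Two small matters of completeness: in property~(2) the set equality $P_n = \btext{a}_0 + W$ needs both inclusions (the reverse one, setting $t_i = s_i$ and $t_0 = 1 - \sum_{i=1}^n s_i$ so that $\sum_{i=0}^n t_i = 1$, is immediate and you clearly know it, but it should be stated), and in property~(3) uniqueness should be accompanied by the remark that existence is witnessed by $P_n$ itself. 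Your criterion that $P_n$ passes through the origin precisely when $-\btext{a}_0 \in W$ is in fact sharper than the theorem's phrasing, which only asserts the sufficient condition $\btext{a}_0 = \btext{0}$; for instance $\btext{a}_0 = (1,0)$, $\btext{a}_1 = (2,0)$ spans a $1$-plane through the origin in $\R^2$, so your formulation quietly corrects a looseness in the statement. The dimension-comparison argument for uniqueness ($W \subseteq U$ and $\dim U = \dim W = n$ force $U = W$) is standard and airtight given property~(1).
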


\begin{workingrule}[Determining whether a point lies on an $n$-plane and finding its equation]\label{wr:Determining whether a point lies on an $n$-plane and finding its equation}
Let $A = \{\btext{a}_0, \btext{a}_1, \ldots, \btext{a}_n\}$ be a geometrically independent set in $\R^m$, and let $\btext{w}$ be a point in $\R^m$. To check whether the point $\btext{w}$ lies on the $n$-plane $P_n$ spanned by $A$ and further find the equation of $P_n$, apply the following steps:

\begin{enumerate}
\item \textbf{Choose reference vector:} Let us choose $\btext{a}_0 \in A$ as a fixed reference vector. 

\item \textbf{Compute the set of relative vectors:} Compute the set of relative vectors with respect to $\btext{a}_0$ as follows:
\begin{equation}\label{eq:Set of relative vectors}
V = \{\btext{v}_i \colon \btext{v}_i = \btext{a}_i - \btext{a}_0,\; i = 1, 2, \ldots, n \}.
\end{equation}

\item  \textbf{Compute displacement vector:} Compute displacement vector of given point $\btext{w}$ with respect to $\btext{a}_0$ as follows: 
\begin{equation}\label{eq:Displacement vector}
\btext{u} = \btext{w} - \btext{a}_0   \end{equation}

\item \textbf{Form system of linear equations:} Write displacement vector $\btext{u}$ as a linear combination of the relative vectors $\btext{v}_i \in V$ for all $i = 1, 2, \ldots, n$ that represent the system of linear equations  
\begin{equation}\label{eq:Form system of linear equations}
\btext{u} = s_1 \btext{v}_1 + s_2 \btext{v}_2 + \cdots + s_n \btext{v}_n    
\end{equation}
where $\btext{s} = (s_1, \ldots, s_n)$ is the unknown column vector and $s_{i} \in \R$ for all $i = 1, \ldots, n$. 

\item \textbf{Construct the matrix form of the system of linear equations:} The matrix form of above system of linear equations \eqref{eq:Form system of linear equations} is
\begin{equation}\label{eq:Matrix representation of the system of linear equations}
 \btext{u} = M_V \btext{s} 
\end{equation}
where 
\begin{equation}\label{eq:Matrix M V}
\btext{u} = 
\begin{bmatrix}
u_1 & \cdots & u_m
\end{bmatrix}^{T}, \,
M_V  = 
\begin{bmatrix}
\btext{v}_1 & \btext{v}_2 & \cdots & \btext{v}_n    
\end{bmatrix}_{m \times n} \, \text{and} \, 
\btext{s} = 
\begin{bmatrix}
s_1 & \cdots & s_n
\end{bmatrix}^{T}
\end{equation}

\item \textbf{Solution of system of linear equations:}  Solve the system of equations \eqref{eq:Matrix representation of the system of linear equations} by suitable method.
\item \textbf{Check whether $\btext{w}$ lies on $P_n$:} Apply following:
\begin{enumerate} 
 \item If system of equations \eqref{eq:Matrix representation of the system of linear equations} is consistent and have unique solution $\btext{s} = (s_1, \ldots, s_n)$, then $\btext{w}$ lies on $P_n$.

\item If the system of equations \eqref{eq:Matrix representation of the system of linear equations} is inconsistent, then $\btext{w}$ does not lie on $P_n$.
\end{enumerate}

\item \textbf{Compute affine coefficients:} If $\btext{w}$ lies on $P_n$, then compute the affine coefficients as follows:
\begin{equation}\label{eq:Affine coefficients}
t_i = s_i \text{ for } i = 1, \ldots, n, \quad t_0 = 1 - \sum_{i=1}^n s_i       
\end{equation}

\item \textbf{Verify affine condition:} 
To verify that $\btext{w}$ is an affine combination of points in $A$, check
\begin{equation}\label{eq:Verify affine condition}
\sum_{i=0}^n t_i = 1
\end{equation}

\item \textbf{Find the equation of $P_n$ on which $\btext{w}$ lies:}  
If $\btext{w}$ lies on $P_n$, therefore, $P_n$ is given by
\begin{equation}\label{eq:n-plane}
P_n = \left\{ \btext{a} \in \R^m : \btext{a} = \btext{a}_0 + \sum_{i=1}^{n} s_i \btext{v}_i,\; s_i \in \R \right\}
\end{equation}
\end{enumerate}
\end{workingrule}

\begin{example}
Consider the concept of $n$-planes in $\mathbb{R}^N$:
\begin{enumerate}
    \item A \textbf{0-plane} $P_0$ in $\mathbb{R}^N$ is a single point, i.e., a set containing only one element.
    \item A \textbf{1-plane} $P_1$ in $\mathbb{R}^N$ is a one-dimensional affine subspace, i.e., a straight line parameterized by one parameter.
    \item A \textbf{2-plane} $P_2$ in $\mathbb{R}^N$ is a two-dimensional affine subspace, commonly referred to as a flat surface (for example, a plane embedded in $\mathbb{R}^3$).
    \item A \textbf{3-plane} $P_3$ in $\mathbb{R}^4$ or higher dimensions generalizes the notion of a plane to an affine subspace of dimension three.
\end{enumerate}
\end{example}

\begin{example}[Geometric Independence and Plane Containment in $\mathbb{R}^3$]
Is the set $ A = \{\btext{a}_0, \btext{a}_1, \btext{a}_2\} $ a geometrically independent set of points in $\mathbb{R}^3$, where 
\[
\btext{a}_0 = (1, 0, 0), \quad \btext{a}_1 = (0, 1, 0), \quad \btext{a}_2 = (0, 0, 1),
\]
and does the point 
\[
\btext{w} = \left(\frac{1}{2}, \frac{1}{2}, 0 \right) \in \mathbb{R}^3
\]
lie in the 2-plane 
\[
P_2 = \{ \btext{a} = (1 - s_1 - s_2, s_1, s_2) : s_1, s_2 \in \mathbb{R} \}
\]
spanned by $ A $?
\end{example}

\begin{solution*}
Let $A = \{\btext{a}_0, \btext{a}_1, \btext{a}_2\}$ be a geometrically independent set of points in $\R^3$ given by
\[
\btext{a}_0 = (1, 0, 0), \quad \btext{a}_1 = (0, 1, 0), \quad \btext{a}_2 = (0, 0, 1).
\]

Let $\btext{w} = \left( \frac{1}{2}, \frac{1}{2}, 0 \right)$ be a point in $\R^3$. We apply the following steps to check whether $\btext{w}$ lies in the $2$-plane $P_2$ spanned by $A$, and find the equation of that plane if it does.

\begin{enumerate}
\item \textbf{Choose reference vector:} Let us choose $\btext{a}_0 = (1, 0, 0) \in A$ as a fixed reference vector.

\item \textbf{Compute the set of relative vectors:} The set of relative vectors with respect to $\btext{a}_0$ is $V = \{\btext{v}_1, \btext{v}_2\}$, where
\begin{equation}\label{eg:Set of relative vectors}
\btext{v}_1 = \btext{a}_1 - \btext{a}_0 = (-1, 1, 0), \quad \btext{v}_2 = \btext{a}_2 - \btext{a}_0 = (-1, 0, 1)
\end{equation}

\item  \textbf{Compute displacement vector:} The displacement vector of given point $\btext{w}$ with respect to $\btext{a}_0$ is  
\begin{equation}\label{eg:Displacement vector}
\btext{u} = \btext{w} - \btext{a}_0 = \left( \frac{1}{2}, \frac{1}{2}, 0 \right) - (1, 0, 0) = \left( -\frac{1}{2}, \frac{1}{2}, 0 \right)  \end{equation}

\item \textbf{Form the system of linear equations:}
The displacement vector $\btext{u}$ is a linear combination of the vectors $\btext{v}_1, \btext{v}_2$ of $V$, so the system of linear equations is
\begin{equation}\label{eg:Form system of linear equations}
\btext{u} = s_1 \btext{v}_1 + s_2 \btext{v}_2   
\end{equation}

\item \textbf{Construct the matrix form of the system of linear equations:} The matrix form of above system of linear equations \eqref{eg:Form system of linear equations} is
\begin{equation}\label{eg:Matrix representation of the system of linear equations}
 \btext{u} = M_V \btext{s} 
\end{equation}
where 
\begin{equation}\label{eg:Matrix M V}
\btext{u} = 
\begin{bmatrix}
-\tfrac{1}{2} \\ \tfrac{1}{2} \\ 0    
\end{bmatrix}, \,
M_V  = 
\begin{bmatrix}
-1 & -1 \\ 1 & 0 \\ 0 & 1   
\end{bmatrix} \, \text{and} \,
\btext{s} = 
\begin{bmatrix}
s_1 \\ s_2     
\end{bmatrix}
\end{equation}

\item \textbf{Solution of system of linear equations:}  On solving the system of equations \eqref{eg:Matrix representation of the system of linear equations} by suitable method, we get system is consistent with a unique solution 
\begin{equation}\label{eq:Solution of system}
\btext{s} = (s_1, s_2) = (\tfrac{1}{2}, 0)
\end{equation}

\item \textbf{Check whether $\btext{w}$ lies on $P_n$:} Since the system of linear equations \eqref{eg:Matrix representation of the system of linear equations} is consistent with a unique solution therefore, the given point $\btext{w}$ lies on $P_2$ spanned by $A$.   

\item \textbf{Compute affine coefficients:} Since $\btext{w}$ lies on $P_2$ spanned by $A$, therefore, the affine coefficients are
\begin{equation}\label{eg:Affine coefficients}
t_1 = s_1 = \frac{1}{2}, \quad t_2 = s_2 = 0, \quad t_0 = 1 - (t_1 + t_2) = \frac{1}{2}      
\end{equation}

\item \textbf{Verify affine condition:} From the value of the affine coefficients $t_0, t_1, t_2$ we can see that
\begin{equation}\label{eg:Verify affine condition}
t_0 + t_1 + t_2 = \frac{1}{2} + \frac{1}{2} + 0 = 1 
\end{equation}
therefore, $t_0 = \tfrac{1}{2}, t_1= \tfrac{1}{2}, t_2 = 0$ are affine coefficients.  

\item \textbf{Find the equation of $P_n$ in which $\btext{w}$ lies:}  
Since $\btext{w}$ lies on $P_2$, therefore, $P_2$ is given by
\begin{align}\label{eg:n-plane}
P_2 & = \left\{ \btext{a} \in \R^3 \colon \btext{a} = \btext{a}_0 + s_1 \btext{v}_1 + s_2 \btext{v}_2,\; s_1, s_2 \in \R \right\} \notag \\
& =  \left\{ \btext{a} \in \R^3 \colon \btext{a} = (1, 0, 0) + s_1(-1, 1, 0) + s_2(-1, 0, 1) \right\} \notag \\
& = \left\{ \btext{a} \in \R^3 \colon \btext{a} = (1 - s_1 - s_2,\; s_1,\; s_2), \ s_1, s_2 \in \R \right\} 
\end{align}

This is the explicit description of $P_2$ as a parametric affine subspace of $\R^3$.
\end{enumerate}
\end{solution*}

\begin{theorem}[Extending a Geometrically Independent Set]\label{t:Extending a Geometrically Independent Set}
If $A = \{\btext{a}_0, \ldots, \btext{a}_n\}$ is geometrically independent set of points in $\R^m$ and $\btext{w} \in \R^m$ is arbitrary point lies outside the $n$-plane $P_n$  which is spanned by the points of $A$, then set $B = \{\btext{w}, \btext{a}_0, \ldots, \btext{a}_n\}$ is also geometrically independent. 
\end{theorem}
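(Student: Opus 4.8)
The plan is to reduce this geometric statement to a routine fact of linear algebra by invoking Theorem \eqref{t:Necessary and Sufficient Condition for GI of Set}, which characterizes geometric independence through the linear independence of relative vectors. First I would record that geometric independence, as given in Definition \eqref{d:Geometrically Independent Set}, is manifestly symmetric in the points of the set and so does not depend on which element is singled out as the base point. This freedom lets me order $B$ so that $\btext{a}_0$ serves as the reference, for which the associated family of relative vectors is
\[
\btext{w} - \btext{a}_0, \quad \btext{a}_1 - \btext{a}_0, \ldots, \btext{a}_n - \btext{a}_0 .
\]
By Theorem \eqref{t:Necessary and Sufficient Condition for GI of Set} it then suffices to prove that this enlarged family is linearly independent in $\R^m$.

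Next I would import the two hypotheses in linear-algebraic form. Since $A$ is geometrically independent, the same theorem tells us that $V = \{\btext{a}_1 - \btext{a}_0, \ldots, \btext{a}_n - \btext{a}_0\}$ is already linearly independent. The assumption that $\btext{w}$ lies outside the $n$-plane $P_n$ must then be translated: writing $P_n$ in its parametric form through $\btext{a}_0$ as in Definition \eqref{d:n-plane passing through a point}, a point $\btext{a}$ lies on $P_n$ exactly when $\btext{a} - \btext{a}_0 \in \operatorname{span}(V)$. Hence the hypothesis $\btext{w} \notin P_n$ is equivalent to the assertion $\btext{w} - \btext{a}_0 \notin \operatorname{span}(V)$.

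The core step is then the standard fact that appending to a linearly independent set a vector lying outside its span again yields a linearly independent set. Concretely I would assume a dependence relation
\[
\lambda_0 (\btext{w} - \btext{a}_0) + \sum_{i=1}^{n} \lambda_i (\btext{a}_i - \btext{a}_0) = \btext{0}
\]
and argue on the leading coefficient: if $\lambda_0 \neq 0$, one could solve for $\btext{w} - \btext{a}_0$ as a linear combination of the vectors in $V$, contradicting $\btext{w} - \btext{a}_0 \notin \operatorname{span}(V)$. Therefore $\lambda_0 = 0$, and the relation collapses to $\sum_{i=1}^{n} \lambda_i (\btext{a}_i - \btext{a}_0) = \btext{0}$, whence $\lambda_1 = \cdots = \lambda_n = 0$ by the linear independence of $V$. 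Thus every coefficient vanishes and the enlarged family is linearly independent.

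Finally, one more application of Theorem \eqref{t:Necessary and Sufficient Condition for GI of Set}, now in the converse direction, returns that $B$ is geometrically independent. The only point requiring genuine care — and which I regard as the main obstacle — is the faithful translation of the geometric hypothesis ``$\btext{w}$ lies outside $P_n$'' into the span condition $\btext{w} - \btext{a}_0 \notin \operatorname{span}(V)$, which rests on the equivalence of the two descriptions of $P_n$ noted after Definition \eqref{d:n-plane passing through a point}; everything downstream is the routine out-of-span argument.
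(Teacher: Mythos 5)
Your proposal is correct and follows essentially the same route as the paper's proof: reduce to linear independence of the relative vectors via Theorem \eqref{t:Necessary and Sufficient Condition for GI of Set}, then run the standard out-of-span argument on a putative dependence relation $\lambda_0(\btext{w} - \btext{a}_0) + \sum_{i=1}^{n}\lambda_i(\btext{a}_i - \btext{a}_0) = \btext{0}$. If anything, your explicit translation of the hypothesis $\btext{w} \notin P_n$ into the condition $\btext{w} - \btext{a}_0 \notin \operatorname{span}(V)$ is more careful than the paper's version, which asserts that a scalar multiple of $\btext{w} - \btext{a}_0$ ``does not belong to $P_n$,'' thereby conflating the affine plane with its direction space.
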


\begin{proof}
As given $A = \{\btext{a}_0, \ldots, \btext{a}_n\}$ is geometrically independent in $\R^N$ and a arbitrary point $\btext{w} \in \R^N$ such that $\btext{w} \notin P_{n}$, where $P_n$ the $n$-plane spanned by set $A$. Now our aim to show that $B = \{\btext{w}, \btext{a}_0, \ldots, \btext{a}_n\}$ is also geometrically independent.

Since $A$ is geometrically independent, then by Theorem \eqref{t:Necessary and Sufficient Condition for GI of Set} the vector $\btext{v}_{i} = \btext{a}_{i} - \btext{a}_{0}$ (for $i = 1, \ldots, n$) are linear independent i.e. there exists scalars $t_{i}$ such 
\begin{equation*}\label{eq:Generation of New Geometrical Independent Set form Old}
\sum_{i = 1}^{n} t_{i} \btext{v}_{i} = \btext{0} \Rightarrow t_0 = \ldots = t_{n}  
\end{equation*}

Now, we consider the set $B = \{\btext{w}, \btext{a}_0, \ldots, \btext{a}_n\}$. We check whether the vectors
\[\btext{a}_1 - \btext{a}_0, \btext{a}_2 - \btext{a}_0, \dots, \btext{a}_n - \btext{a}_0, \btext{w} - \btext{a}_0
\]
are linearly independent. Suppose there exist scalars  $t_1, t_2, \dots, t_n, s$ such that
\begin{equation}\label{eq:Generation of New Geometrical Independent Set form Old1}
t_1 (\btext{a}_1 - \btext{a}_0) + t_2 (\btext{a}_2 - \btext{a}_0) + \dots + t_n (\btext{a}_n - \btext{a}_0) + s (\btext{w} - \btext{a}_0) = \btext{0}    
\end{equation}

Rewriting, we obtain
\begin{equation}\label{eq:Generation of New Geometrical Independent Set form Old2}
\sum_{i=1}^{n} t_i (\btext{a}_i - \btext{a}_0) = -s (\btext{w} - \btext{a}_0)    
\end{equation}

The left-hand side of Equation \eqref{eq:Generation of New Geometrical Independent Set form Old2} is a linear combination of the vectors $ \btext{a}_1 - \btext{a}_0, \dots, \btext{a}_n - \btext{a}_0 $ , which lie in the $ n $ -plane $ P_n $ . The right-hand side of Equation \eqref{eq:Generation of New Geometrical Independent Set form Old2} is a scalar multiple of $ \btext{w} - \btext{a}_0 $ , which does not belong to $ P_n $  since $ \btext{w} \notin P_n $ .

Since $ P_n $  is an $ n $ -dimensional affine plane, the only way the above equation can hold is if
\[t_1 = t_2 = \dots = t_n = s = 0\]

This proves that the vectors $ \btext{a}_1 - \btext{a}_0, \btext{a}_2 - \btext{a}_0, \dots, \btext{a}_n - \btext{a}_0, \btext{w} - \btext{a}_0 $  are linearly independent. Hence, the set $ B $  is geometrically independent.
\end{proof}

\begin{workingrule}[Extending a Geometrically Independent Set]\label{wr:Extending a Geometrically Independent Set}
Let $A = \{\btext{a}_0, \btext{a}_1, \ldots, \btext{a}_n\}$ be a geometrically independent set of points in $\R^m$, and let $\btext{w} \in \R^m$ be a point not lying in the $n$-plane $P_n$ spanned by $A$. To check whether the extended set  
\[
B = \{\btext{w}, \btext{a}_0, \ldots, \btext{a}_n\}
\]  
is geometrically independent, follow these steps:

\begin{enumerate}
\item \textbf{Choose a reference point:} Choose $\btext{a}_0 \in A$ as a fixed reference point.

\item \textbf{Compute the set of relative vectors:} Compute the set of relative vectors with respect to $\btext{a}_0$ as follows:
\begin{equation}\label{eq:Set of relative vectors}
V = \{\btext{v}_i \colon \btext{v}_i = \btext{a}_i - \btext{a}_0,\; i = 1, 2, \ldots, n \}.
\end{equation}

\item \textbf{Compute the relative vector for $\btext{w}$:}  
Compute the relative vector for $\btext{w}$ with respect to $\btext{a}_0$ as follows:
\begin{equation}\label{eq:Relative vector for w}
\btext{v}_{\btext{w}} = \btext{w} - \btext{a}_0.    
\end{equation}
 
\item \textbf{Construct the matrices:} 
\begin{enumerate}
\item Construct the matrix $M_V$ whose columns are the vectors $\btext{v}_1, \btext{v}_2, \ldots, \btext{v}_n$ of $V$:
\begin{equation}\label{eq:Matrix MV}
M_V = \begin{bmatrix}
\btext{v}_1 & \btext{v}_2 & \ldots & \btext{v}_n
\end{bmatrix}_{m \times n}
\end{equation}
\item Construct the matrix $M_{V_{\btext{w}}}$ whose columns are $\btext{v}_1, \btext{v}_2, \ldots, \btext{v}_n, \btext{v}_{\btext{w}}$:
\begin{equation}\label{eq:Matrix MVw}
M_{V_{\btext{w}}} = \begin{bmatrix}
\btext{v}_1 & \btext{v}_2 & \ldots & \btext{v}_n & \btext{v}_{\btext{w}}
\end{bmatrix}_{m \times (n + 1)}
\end{equation}
\end{enumerate}  
\item \textbf{Compute the ranks of the matrices:} Compute the ranks of $M_V$ and $M_{V_{\btext{w}}}$ using any suitable method.
\item \textbf{Check geometric independence of the set $B$:}  
If 
\[
\operatorname{rank}(M_{V_{\btext{w}}}) = \operatorname{rank}(M_V) + 1,
\]
then the set $B$ is geometrically independent. 
\end{enumerate}
\end{workingrule}

\begin{example}[Generation of a New Geometrically Independent Set from an Old Set]\label{eg:generation of New Geometrical Independent Set form Old}
Let $ A = \{\btext{a}_0 = (2, 3, 1), \btext{a}_1 = (3, 5, 2),  \btext{a}_2 = (4, 4, 3)\} $ be a geometrically independent set of points in $\mathbb{R}^3$. If the point $\btext{w} = (5, 6, 7)$ lies outside the 2-plane $P_2$ spanned by $A$, is the new set 
$B = \{\btext{w}, \btext{a}_0, \btext{a}_1, \btext{a}_2 \}$ also geometrically independent?
\end{example}

\begin{solution*}
Let $A = \{\btext{a}_0 = (2, 3, 1), \btext{a}_1 = (3, 5, 2),  \btext{a}_2 = (4, 4, 3)\}$ be a geometrically independent set of points in $\R^3$. Let $\btext{w} = (5, 6, 7)$ be a point that lies outside the 2-plane $P_2$ spanned by $A$. Apply following steps to check whether the new set $B = \{\btext{w}, \btext{a}_{0}, \btext{a}_{1}, \btext{a}_{2}\}$ is also geometrically independent.
\begin{enumerate}
\item \textbf{Choose a reference point:}  
We choose $\btext{a}_0 = (2, 3, 1)$ as the fixed reference point.

\item \textbf{Compute the set of relative vectors:} The set of relative vectors with respect to $\btext{a}_0$ is $V = \{\btext{v}_1, \btext{v}_2\}$, where   
\[\btext{v}_1 = \btext{a}_1 - \btext{a}_0 = (1, 2, 1), \, \btext{v}_2 = \btext{a}_2 - \btext{a}_0 = (2, 1, 2) \]

\item \textbf{Compute the relative vector for $\btext{w}$:}  The relative vector for $\btext{w}$ with respect to $\btext{a}_0$ is
\[
\btext{v}_{\btext{w}} = \btext{w} - \btext{a}_0 = (3, 3, 6)
\]

\item \textbf{Construct the matrices:} 
\begin{enumerate}
\item The matrix $M_V$ whose columns are the vectors $\btext{v}_1, \btext{v}_2$ of $V$ is
\[M_V = 
\begin{bmatrix}
1 & 2 \\ 2 & 1 \\ 1 & 2
\end{bmatrix}\]

\item The matrix $M_{V_{\btext{w}}}$ whose columns are the vectors $\btext{v}_1, \btext{v}_2, \btext{v}_{\btext{w}}$ is
\[M_{V_{\btext{w}}} = 
\begin{bmatrix}
1 & 2 & 3 \\ 2 & 1 & 3 \\ 1 & 2 & 6
\end{bmatrix} \]
\end{enumerate}
\item \textbf{Compute the ranks of the matrices:} On applying elementary row operations for Reduced Row Echelon Form (RRFE), we get 
\begin{gather*}
M_V \xrightarrow{\text{RREF}} 
\begin{bmatrix}
1 & 2 \\
0 & -3 \\
0 & 0
\end{bmatrix}
\Rightarrow \operatorname{rank}(M_V) = 2 \\
M_{V_{\btext{w}}} \xrightarrow{\text{RREF}} 
\begin{bmatrix}
1 & 2 & 3 \\
0 & -3 & -3 \\
0 & 0 & 3
\end{bmatrix}
\Rightarrow \operatorname{rank}(M_{V_{\btext{w}}}) = 3
\end{gather*}

\item \textbf{Check geometric independence:} Since
\[
\operatorname{rank}(M_{V_{\btext{w}}}) = \operatorname{rank}(M_V) + 1 = 2 + 1 = 3,
\]
therefore, $B$ is geometrically independent.
\end{enumerate}
\end{solution*}

\subsection{Affine Transformation on $\R^{N}$}\label{ss:Affine Transformation on RN}
Affine transformations are a fundamental class of mappings in geometry and linear algebra that generalize linear transformations by including translations. Unlike purely linear maps, which always fix the origin, affine transformations can shift, rotate, scale, or shear objects while preserving the structure of straight lines and parallelism. More formally, an affine transformation on $\R^m$ is a function that maps points to points in a way that preserves affine combinations—that is, combinations of the form $t\btext{x} + (1 - t)\btext{y}$ for $t \in \R$. These transformations play a crucial role in various areas such as computer graphics, robotics, topology, and optimization, as they capture geometric transformations of figures without altering their fundamental shape or relative configuration.

\begin{definition}[Matrix as a Linear Transformation]\label{d:Matrix as a Linear Transformation}
Let $A \in \R^{m \times m}$ be a real square matrix. Then $A$ defines a linear transformation 
\[
T_A \colon \R^m \to \R^m, \, \btext{x} \mapsto A\btext{x},
\]
where the output $A\btext{x}$ is obtained by standard matrix-vector multiplication. The transformation $T_A$ is called a linear transformation because it satisfies the following properties for all $\btext{x}, \btext{y} \in \R^m$ and scalars $\alpha, \beta \in \R$:
\begin{equation}\label{eq:Linear tranformation properties}
T_A(\alpha \btext{x} + \beta \btext{y}) = \alpha T_A(\btext{x}) + \beta T_A(\btext{y}),\ \text{and} \
T_A(\btext{0}) = \btext{0}
\end{equation}
\end{definition}

The above Definition \eqref{d:Matrix as a Linear Transformation} tells us that any real $m \times m$ matrix $A$ defines a function $T_A \colon \R^m \to \R^m$ via matrix multiplication. This function is called a linear transformation because it preserves vector addition and scalar multiplication. That is, the transformation $T_A$ respects the algebraic structure of $\R^m$. Linear transformations map the origin to the origin and preserve the straightness of lines, although they may change lengths and angles. Examples include rotations, scalings, reflections, and shear transformations. To illustrate this, we now consider a concrete example in $\R^2$.

\begin{example}[Linear Transformation Induced by a Matrix]
Let $ A $ be the $ 2 \times 2 $ matrix
\[
A = \begin{bmatrix}
2 & 1 \\
0 & 3
\end{bmatrix}.
\]
Consider the function $ T_A : \mathbb{R}^2 \to \mathbb{R}^2 $ defined by $ T_A(\mathbf{x}) = A\mathbf{x} $.

Is $ T_A $ a linear transformation, and what are its algebraic and geometric effects on vectors in $\mathbb{R}^2$?
\end{example}

\begin{solution*}
Let $\mathbf{x} = \begin{bmatrix} x_1 \\ x_2 \end{bmatrix} \in \mathbb{R}^2$. Then the image of $\mathbf{x}$ under $T_A$ is:
\[
T_A(\mathbf{x}) =
\begin{bmatrix}
2 & 1 \\
0 & 3
\end{bmatrix}
\begin{bmatrix}
x_1 \\
x_2
\end{bmatrix}
=
\begin{bmatrix}
2x_1 + x_2 \\
3x_2
\end{bmatrix}.
\]
\begin{itemize}
\item \textbf{Linearity check:}  Let $\mathbf{y} = \begin{bmatrix} y_1 \\ y_2 \end{bmatrix} \in \mathbb{R}^2$ and let $\alpha, \beta \in \mathbb{R}$. Then:
\[
T_A(\alpha \mathbf{x} + \beta \mathbf{y}) =
A(\alpha \mathbf{x} + \beta \mathbf{y}) =
\alpha A\mathbf{x} + \beta A\mathbf{y} =
\alpha T_A(\mathbf{x}) + \beta T_A(\mathbf{y}),
\]
confirming that $T_A$ preserves linear combinations.

\item \textbf{Zero vector check:}  Let $\mathbf{0} = \begin{bmatrix} 0 \\ 0 \end{bmatrix}$. Then:
\[
T_A(\mathbf{0}) = A\mathbf{0} =
\begin{bmatrix}
2 & 1 \\
0 & 3
\end{bmatrix}
\begin{bmatrix}
0 \\
0
\end{bmatrix}
=
\begin{bmatrix}
0 \\
0
\end{bmatrix} = \mathbf{0}.
\]
So, $T_A$ maps the zero vector to itself, as required by linearity.

\item \textbf{Geometric interpretation:}  
The transformation $T_A$ scales the $y$-coordinate by a factor of 3, and applies a shear in the $x$-direction by adding $x_2$ to $2x_1$. The origin remains fixed, and straight lines and parallelism are preserved, as expected from a linear transformation.
\end{itemize}
\end{solution*}

\begin{definition}[Affine Transformation on $\R^{N}$]\label{d:Affine Transformation on RN}
A function  $T \colon \R^N \to \R^N$  define by $T(\btext{a}) = A\btext{a} + \btext{b}$ is called affine transformation on $\R^N$, where $A$ is $N \times N$ is a real matrix (representing a linear transformation), $\btext{b}$ is a fixed a vector (representing a translation vector).
\end{definition}

\begin{theorem}[Properties of Affine Transformation]\label{t:Properties of Affine Transformation}
Affine transformation preserves collinearity, it preserves ratio i.e. it maintains the ratios of distances along a straight line. Unlike linear transformations, affine transformations do not necessarily fix the origin unless a fix vector $\btext{b}$.
\end{theorem}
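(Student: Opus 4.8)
The plan is to reduce all three assertions to a single structural fact: the affine map $T(\btext{a}) = A\btext{a} + \btext{b}$ preserves affine combinations, meaning combinations $\sum_i \lambda_i \btext{p}_i$ whose coefficients satisfy $\sum_i \lambda_i = 1$. First I would verify this claim directly. Given points $\btext{p}_0, \ldots, \btext{p}_k$ and scalars $\lambda_i$ with $\sum_i \lambda_i = 1$, linearity of $A$ gives $A(\sum_i \lambda_i \btext{p}_i) = \sum_i \lambda_i A\btext{p}_i$, and the constraint $\sum_i \lambda_i = 1$ lets me absorb the translation as $\btext{b} = (\sum_i \lambda_i)\btext{b}$, so that $T(\sum_i \lambda_i \btext{p}_i) = \sum_i \lambda_i (A\btext{p}_i + \btext{b}) = \sum_i \lambda_i T(\btext{p}_i)$. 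This one identity is the engine for everything else.

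For collinearity, I would parameterise the line through distinct points $\btext{p}, \btext{q}$ as $\btext{p} + t(\btext{q}-\btext{p})$ with $t \in \R$, which is the affine combination $(1-t)\btext{p} + t\btext{q}$. Applying the identity above yields $T(\btext{p} + t(\btext{q}-\btext{p})) = (1-t)T(\btext{p}) + tT(\btext{q}) = T(\btext{p}) + t(T(\btext{q}) - T(\btext{p}))$, so the image of the line is exactly the line through $T(\btext{p})$ and $T(\btext{q})$, traversed with the same parameter $t$. Hence any three collinear points map to three collinear points.

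The ratio claim then follows with almost no extra work in its affine form: if $\btext{r} = (1-\lambda)\btext{p} + \lambda\btext{q}$ divides the segment in the ratio $\lambda:(1-\lambda)$, the identity gives $T(\btext{r}) = (1-\lambda)T(\btext{p}) + \lambda T(\btext{q})$, so the dividing coefficient $\lambda$ is unchanged. If instead one reads ``ratio of distances'' metrically, I would observe that the point at parameter $t$ maps to $T(\btext{p}) + tA(\btext{q}-\btext{p})$, so its distance from $T(\btext{p})$ equals $|t|\,\|A(\btext{q}-\btext{p})\|$; the common scale factor $\|A(\btext{q}-\btext{p})\|$ cancels in any quotient $|t_1 - t_2|/|t_2 - t_3|$ of collinear points. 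This is the one spot needing care and the main obstacle: the cancellation, and indeed the collinearity of the images itself, is only meaningful when $A(\btext{q}-\btext{p}) \neq \btext{0}$, i.e. when $A$ does not collapse the direction of the line. I would therefore flag the standing assumption that $A$ is nonsingular (or at least injective on the relevant direction), under which the scale factor is a nonzero constant along the line and the metric ratio is genuinely preserved.

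Finally, the origin statement is a one-line evaluation: $T(\btext{0}) = A\btext{0} + \btext{b} = \btext{b}$, which equals $\btext{0}$ precisely when $\btext{b} = \btext{0}$. This confirms that an affine map fixes the origin if and only if its translation vector vanishes, in contrast to the linear case of Definition~\eqref{d:Matrix as a Linear Transformation}, which always fixes $\btext{0}$.
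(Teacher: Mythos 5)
Your proof is correct, but there is nothing in the paper to compare it against: the paper states Theorem~\eqref{t:Properties of Affine Transformation} without any proof, moving directly to a worked example in $\R^2$. Your approach is the standard one and would fill this gap cleanly: the single identity that $T(\btext{a}) = A\btext{a} + \btext{b}$ preserves affine combinations (using $\sum_i \lambda_i = 1$ to absorb $\btext{b}$) immediately yields collinearity, preservation of the dividing coefficient $\lambda$, and, via the parameterisation $T(\btext{p}) + tA(\btext{q}-\btext{p})$, preservation of metric ratios after the common factor $\|A(\btext{q}-\btext{p})\|$ cancels. Two remarks. First, your flag about the degenerate case $A(\btext{q}-\btext{p}) = \btext{0}$ is a genuine improvement on the paper's statement: the paper's Definition~\eqref{d:Affine Transformation on RN} does not require $A$ to be invertible, yet the metric ratio claim is only meaningful when $A$ does not collapse the direction of the line; note that the paper does quietly assume invertibility of the matrix later, in the proofs of Theorems~\eqref{t:Preservation of Geometrically Independency during Affine Transformation} and~\eqref{t:Preservation of Plane during Affine Transformation}, so your standing assumption is consistent with the paper's usage. (Strictly, collinearity in the weak sense survives even when $A$ is singular, since the images remain affine combinations of $T(\btext{p})$ and $T(\btext{q})$; only the ratio statement genuinely needs the nondegeneracy.) Second, your one-line evaluation $T(\btext{0}) = \btext{b}$ is the right reading of the theorem's awkwardly phrased final clause, pinning down that the origin is fixed precisely when $\btext{b} = \btext{0}$, in contrast with Definition~\eqref{d:Matrix as a Linear Transformation} where $T_A(\btext{0}) = \btext{0}$ always holds.
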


\begin{example}[Affine Transformation on \(\mathbb{R}^2\)]
Is the function \( T: \mathbb{R}^2 \to \mathbb{R}^2 \) defined by
\[
T(x, y) = 
\begin{bmatrix} 2 & 1 \\ 1 & 3 \end{bmatrix} 
\begin{bmatrix} x \\ y \end{bmatrix} 
+ \begin{bmatrix} 1 \\ -2 \end{bmatrix}
\]
an affine transformation on \(\mathbb{R}^2\)? 

How does this transformation, consisting of a linear transformation by the matrix 
\[
A = \begin{bmatrix} 2 & 1 \\ 1 & 3 \end{bmatrix},
\]
which scales, rotates, and shears the space, combined with a translation by the vector \(\btext{b} = (1, -2)\), affect specific points such as the origin and the unit vectors along the axes?
\end{example}

\begin{solution*}
The function \(T\) is indeed an affine transformation on \(\mathbb{R}^2\) as it can be expressed as a linear transformation followed by a translation. For example, the points transform as follows:
\begin{enumerate}
    \item The origin \((0,0)\) is mapped to \((1,-2)\):
    \[
    T(0,0) = A(0,0) + (1,-2) = (1,-2).
    \]
    \item The unit vector \((1,0)\) along the x-axis is mapped to \((3,-1)\), showing scaling and shearing:
    \[
    T(1,0) = A(1,0) + (1,-2) = 
    \begin{bmatrix} 2 \\ 1 \end{bmatrix} + \begin{bmatrix} 1 \\ -2 \end{bmatrix} = \begin{bmatrix} 3 \\ -1 \end{bmatrix}.
    \]
    \item The unit vector \((0,1)\) along the y-axis is mapped to \((2,1)\), indicating distortion and rotation:
    \[
    T(0,1) = A(0,1) + (1,-2) = 
    \begin{bmatrix} 1 \\ 3 \end{bmatrix} + \begin{bmatrix} 1 \\ -2 \end{bmatrix} = \begin{bmatrix} 2 \\ 1 \end{bmatrix}.
    \]
\end{enumerate}
\end{solution*}

\begin{example}[Examples of Affine Transformations]\label{eg:Examples of Affine Transformations}
There are following example of affine transformation $T \colon \R^N \to \R^N$ defined by 
\begin{enumerate}
\item \textbf{Translation:} $T(\btext{a}) = \btext{a} + \btext{b}$
\item \textbf{Scaling:}
$T(\btext{a}) = c\btext{a}$, where $c$ is scalar. 
\item \textbf{Rotation with Translation:} $T(\btext{a}) = R\btext{a} + \btext{b}$, $R$ is rotation matrix. 
\item \textbf{Shear Transformation:} A transformation that skews the coordinate system while preserving parallelism.
\end{enumerate}
\end{example}

\begin{theorem}[Preservation of Geometrically Independency during Affine Transformation]\label{t:Preservation of Geometrically Independency during Affine Transformation}
Let $A = \{\btext{a}_{0}, \ldots, \btext{a}_n\}$ is geometrically independent set in $\R^N$ and $T$
is any affine transformation on $A$, then $T(A)$ is also  geometrically independent. That is, the Affine transformation preserves geometrically independency of set.
\end{theorem}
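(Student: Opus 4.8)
The plan is to reduce the statement to a question about linear independence of relative vectors by invoking Theorem \eqref{t:Necessary and Sufficient Condition for GI of Set}, and then to exploit the fact that the translation part of an affine map cancels when one forms differences of image points, so that only the linear part acts on the relative vectors.

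First I would write the affine transformation in its defining form $T(\btext{x}) = M\btext{x} + \btext{b}$, where $M$ denotes the $N \times N$ matrix of the linear part (written $A$ in Definition \eqref{d:Affine Transformation on RN}, renamed here to avoid clashing with the set $A$) and $\btext{b}$ is the translation vector. Since $A = \{\btext{a}_0, \ldots, \btext{a}_n\}$ is geometrically independent, Theorem \eqref{t:Necessary and Sufficient Condition for GI of Set} guarantees that the relative vectors $\btext{v}_i = \btext{a}_i - \btext{a}_0$, for $i = 1, \ldots, n$, are linearly independent. The goal then becomes to show that the relative vectors attached to the image set $T(A)$ are again linearly independent.

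Next I would compute the relative vectors of $T(A) = \{T(\btext{a}_0), \ldots, T(\btext{a}_n)\}$ taken with respect to the base point $T(\btext{a}_0)$. The key computation is
\[
T(\btext{a}_i) - T(\btext{a}_0) = (M\btext{a}_i + \btext{b}) - (M\btext{a}_0 + \btext{b}) = M(\btext{a}_i - \btext{a}_0) = M\btext{v}_i,
\]
so the translation $\btext{b}$ cancels and the new relative vectors are precisely $M\btext{v}_1, \ldots, M\btext{v}_n$. To finish, I would verify these stay linearly independent: assuming a relation $\sum_{i=1}^n \lambda_i M\btext{v}_i = \btext{0}$, linearity of $M$ gives $M\left(\sum_{i=1}^n \lambda_i \btext{v}_i\right) = \btext{0}$, and since $M$ is nonsingular this forces $\sum_{i=1}^n \lambda_i \btext{v}_i = \btext{0}$, whence $\lambda_i = 0$ for all $i$ by the independence of the $\btext{v}_i$. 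A final application of Theorem \eqref{t:Necessary and Sufficient Condition for GI of Set} then yields that $T(A)$ is geometrically independent.

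The main obstacle is the invertibility of the linear part $M$. As phrased, Definition \eqref{d:Affine Transformation on RN} allows $M$ to be an arbitrary real $N \times N$ matrix, and if $M$ is singular the conclusion genuinely fails: for instance, the zero map collapses $A$ to a single point, which cannot be geometrically independent once $n \geq 1$. The argument therefore requires $M$ to be invertible, which is exactly the property that makes the map preserve linear independence and is the standing convention for affine transformations (consistent with the translation, scaling, and rotation examples of Example \eqref{eg:Examples of Affine Transformations}). I would make this nonsingularity hypothesis explicit at the outset, since it is the load-bearing assumption of the proof.
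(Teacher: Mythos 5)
Your proposal is correct and takes essentially the same route as the paper's proof: the translation part cancels when forming differences $T(\btext{a}_i) - T(\btext{a}_0) = M(\btext{a}_i - \btext{a}_0)$, a linear relation among the image vectors is pushed through $M$, and invertibility of the linear part forces all coefficients to vanish, with geometric independence converted to linear independence via Theorem \eqref{t:Necessary and Sufficient Condition for GI of Set} on both ends. Your one genuine addition is making the nonsingularity of the linear part an explicit hypothesis with a counterexample when it fails; the paper needs this too but handles it silently, declaring $A'$ invertible mid-proof even though Definition \eqref{d:Affine Transformation on RN} permits an arbitrary matrix, so your flagging of this as the load-bearing assumption is an improvement in rigor rather than a different argument.
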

\begin{proof}

By definition a set of points $A = \{\btext{a}_0, \dots, \btext{a}_n\}$ in $\R^N$ is said to be geometrically independent if the vectors  
\[
\btext{a}_1 - \btext{a}_0, \btext{a}_2 - \btext{a}_0, \dots, \btext{a}_n - \btext{a}_0
\]
are linearly independent in $\R^N$. Therefore by Definition of linear independency of vectors
\begin{equation}\label{eq:Preservation of Geometrically Independency during Affine Transformation1}
\sum_{i=1}^{n} t_i (\btext{a}_i - \btext{a}_0) = 0 \Rightarrow t_i =  0, \forall i    
\end{equation}
An affine transformation $T \colon \R^N \to \R^N$ is given by
\[
T(\btext{a}) = A' \btext{a} + \btext{b},
\]
where $A'$ is an $N \times N$ invertible matrix (a linear transformation), and $\btext{b}$ is a translation vector. 
Applying $T$ to the set $A$, we obtain the set of transformed points
\[T(A) = \{T(\btext{a}_i)\} = \{A' \btext{a}_i + \btext{b}\colon \btext{a}_{i} \in A \, \forall i\}\]
Now our aim is to show that set $T(A)$ is geometrically independent for this we prove that it is linearly independent. 

The transformed difference vectors are:
\[T(\btext{a}_i) - T(\btext{a}_0) = (A' \btext{a}_i + \btext{b}) - (A' \btext{a}_0 + \btext{b})
= A' (\btext{a}_i - \btext{a}_0)\]

Now, if the original vectors $\btext{a}_1 - \btext{a}_0, \dots, \btext{a}_n - \btext{a}_0$ are linearly independent, then the equation
\[
\sum_{i=1}^{n} t_i A'(\btext{a}_i - \btext{a}_0) = 0
\]
implies
\[
A' \left( \sum_{i=1}^{n} t_i (\btext{a}_i - \btext{a}_0) \right) = 0.
\]

Since $A'$ is invertible, the only solution is:
\[
\sum_{i=1}^{n} t_i (\btext{a}_i - \btext{a}_0) = 0,
\]
then by Equation \eqref{eq:Preservation of Geometrically Independency during Affine Transformation1} implies $t_i = 0,  \, \forall i$. Therefore, vectors $T(\btext{a}_1) - T(\btext{a}_0), \ldots, T(\btext{a}_n) - T(\btext{a}_0)$ are linear independent.  Thus, the transformed set $T(A) = \{T(\btext{a}_0), T(\btext{a}_1), \dots, T(\btext{a}_n)\}$ remains geometrically independent.

Since an affine transformation is a composition of a linear transformation and a translation, and we have shown that it preserves linear independence of difference vectors, it follows that an affine transformation preserves the geometrical independence of a set.
\end{proof}

\begin{theorem}[Preservation of Plane during Affine Transformation]\label{t:Preservation of Plane during Affine Transformation}
Let $P_n$ is the $n$-plane spanned by the geometrically independent set $A = \{\btext{a}_0, \ldots, \btext{a}_n\}$. Let $T \colon \R^N \to \R^N$ is affine transformation $T$ carries the plane $P_n$ spanned by $\btext{a}_0, \ldots, \btext{a}_n$ onto the plane spanned by $T(\btext{a}_0), \ldots, T(\btext{a}_n)$.
\end{theorem}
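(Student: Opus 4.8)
The plan is to exploit the single structural fact that an affine transformation commutes with affine combinations, and then verify the set equality $T(P_n) = Q_n$ by double inclusion, where $Q_n$ denotes the $n$-plane spanned by $T(\btext{a}_0), \ldots, T(\btext{a}_n)$. First I would record the key identity: writing $T(\btext{a}) = A'\btext{a} + \btext{b}$, for any scalars $t_0, \ldots, t_n$ with $\sum_{i=0}^n t_i = 1$ one has
\[
T\Big(\sum_{i=0}^n t_i \btext{a}_i\Big) = \sum_{i=0}^n t_i A'\btext{a}_i + \Big(\sum_{i=0}^n t_i\Big)\btext{b} = \sum_{i=0}^n t_i\big(A'\btext{a}_i + \btext{b}\big) = \sum_{i=0}^n t_i T(\btext{a}_i),
\]
where the affine constraint $\sum_{i=0}^n t_i = 1$ is exactly what lets the translation vector $\btext{b}$ be absorbed back into each term. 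This identity is the engine of the whole proof.

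For the forward inclusion $T(P_n) \subseteq Q_n$, I would take an arbitrary point $\btext{x} \in P_n$, use Definition \eqref{d:n-plane spanned by a geometrically independent set} to write $\btext{x} = \sum_{i=0}^n t_i \btext{a}_i$ with $\sum_{i=0}^n t_i = 1$, and apply the key identity to obtain $T(\btext{x}) = \sum_{i=0}^n t_i T(\btext{a}_i)$, which is an affine combination of $T(\btext{a}_0), \ldots, T(\btext{a}_n)$ and hence lies in $Q_n$. For the reverse inclusion $Q_n \subseteq T(P_n)$, I would start from an arbitrary $\btext{y} \in Q_n$, write $\btext{y} = \sum_{i=0}^n t_i T(\btext{a}_i)$ with $\sum_{i=0}^n t_i = 1$, set $\btext{x} = \sum_{i=0}^n t_i \btext{a}_i$, observe that $\btext{x} \in P_n$ because the very same coefficients sum to $1$, and invoke the identity again to conclude $T(\btext{x}) = \btext{y}$. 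Together these two inclusions give $T(P_n) = Q_n$, which is precisely the claimed ``onto'' statement; note that the reverse inclusion supplies surjectivity directly by exhibiting a preimage, without needing invertibility of $A'$.

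The one conceptual point that needs care, rather than a genuine obstacle, is making sure that $Q_n$ really is an $n$-plane in the sense of Definition \eqref{d:n-plane spanned by a geometrically independent set}, that is, that its spanning set $\{T(\btext{a}_0), \ldots, T(\btext{a}_n)\}$ is itself geometrically independent, so that ``the plane spanned by $T(\btext{a}_0), \ldots, T(\btext{a}_n)$'' is well defined and genuinely $n$-dimensional. This is supplied directly by Theorem \eqref{t:Preservation of Geometrically Independency during Affine Transformation}, which requires the linear part $A'$ to be invertible; I would cite it explicitly at the outset so that the dimension of the image plane is not silently assumed. With that citation in place, the double-inclusion argument above is routine and no further computation is required.
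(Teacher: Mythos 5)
Your proposal is correct and takes essentially the same route as the paper: both arguments rest on the single fact that an affine map commutes with affine combinations, the paper writing points of $P_n$ in the base-point form $\btext{a}_0 + \sum_{i=1}^{n} t_i(\btext{a}_i - \btext{a}_0)$ and you in the equivalent barycentric form $\sum_{i=0}^{n} t_i \btext{a}_i$ with $\sum_{i=0}^{n} t_i = 1$. If anything, your write-up is slightly more careful than the paper's, since you make the reverse inclusion $Q_n \subseteq T(P_n)$ explicit (so surjectivity does not depend on invertibility of $A'$) and you cite Theorem \eqref{t:Preservation of Geometrically Independency during Affine Transformation} to guarantee that the image plane is a genuine $n$-plane, two points the paper's proof leaves implicit.
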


\begin{proof}
Let $ P_n $  be the $ n $ -plane spanned by the geometrically independent set $ A = \{\btext{a}_0, \btext{a}_1, \dots, \btext{a}_n\} $  in $ \R^N $ .  
By definition, every point $ \btext{a} \in P_n $  can be written as an affine combination of the points in $ A $  
\begin{equation}\label{eq:Preservation of Plane during Affine Transformation1}
\btext{a} = \btext{a}_0 + \sum_{i=1}^{n} t_i (\btext{a}_i - \btext{a}_0), t_{i} \in \R, \forall i    
\end{equation}

Since $ T\colon \R^N \to \R^N $  is an affine transformation, it has the form:  
\begin{equation}\label{eq:Preservation of Plane during Affine Transformation2}
T(\btext{a}) = A' \btext{a} + \btext{b}    
\end{equation}
where $ A' $  is an $ N \times N $  invertible matrix (a linear transformation), and $ \btext{b} $  is a translation vector.

By Equation \eqref{eq:Preservation of Plane during Affine Transformation1}, applying $ T $  to both sides of the affine combination, we get 
\[T(\btext{a}) = T \left( \btext{a}_0 + \sum_{i=1}^{n} t_i (\btext{a}_i - \btext{a}_0) \right)\]

Using the property of affine transformations
\[T(\btext{a}) = T(\btext{a}_0) + \sum_{i=1}^{n} t_i (T(\btext{a}_i) - T(\btext{a}_0))\]

This equation shows that every point $ T(\btext{a}) $  is an affine combination of the transformed points $ T(\btext{a}_0), T(\btext{a}_1), \dots, T(\btext{a}_n) $, which means the set $ T(P_n) $  is precisely the plane spanned by the transformed points. Since $ T(P_n) $  consists of all affine combinations of $ T(\btext{a}_0), \dots, T(\btext{a}_n)$, we conclude that $ T $  maps the plane $ P_n $  onto the plane spanned by $ T(\btext{a}_0), \dots, T(\btext{a}_n)$.  
\end{proof}

\section{Simplices}\label{s:Simplices}
Having established the essential background in Euclidean geometry and affine transformations, we are now prepared to introduce the fundamental building blocks of simplicial complexes—simplices. The word simplices is used as the plural of simplex.  A simplex is the simplest possible polytope in any given dimension: a point in dimension 0, a line segment in dimension 1, a triangle in dimension 2, a tetrahedron in dimension 3, and so on. Simplices generalize the notion of ``flat shapes" to arbitrary dimensions and play a central role in the study of both geometric and algebraic topology. 

There are two approaches to studying simplex - geometric and abstract (axiom based). In this section we will first study its geometrical approach and later consider the abstract approach. In what follows, we define simplex formally, illustrate them with examples, and explore their basic properties essential for constructing simplicial complexes.

\begin{definition}[$n$-simplex]\label{d:n-simplex}
Let $A = \{\btext{a}_0, \ldots, \btext{a}_n\}$ is geometrically independent set in $\R^N$. The $n$-simplex (denoted by $\sigma_n$) spanned by $\btext{a}_0, \ldots, \btext{a}_n$ is the set of all points $ \btext{x} \in \R^N$ such that 
\[\btext{x} = \sum_{i = 0}^{n}t_{i}\btext{a}_{i}, \, \text{where} \, \sum_{i = 0}^{n}t_{i} = 1 \, \text{and} \, t_{i} \geq 0, \, \forall i\]
Symbolically, 
\begin{equation}\label{eq:n-simplex}
 \sigma_n = \Big \{\btext{x} \in \R^N \colon \btext{x} = \sum_{i = 0}^{n}t_{i}\btext{a}_{i}, \, \text{where} \, \sum_{i = 0}^{n}t_{i} = 1 \, \text{and} \, t_{i} \geq 0, \, \forall i \Big\} 
\end{equation}
\end{definition}

Now we are going to define some important terms related to simplex which will help us understand some other properties of simplex.

\begin{definition}[Set of Vertices of Simplex]\label{d:Set of Vertices of Simplex}
If simplex $\sigma_n$ is spanned by the geometrically independent set $A = \{\btext{a}_0, \btext{a}_1, \ldots, \btext{a}_n\}$ in $\R^N$, then set $A$ is called set of vertices of the simplex $\sigma_n$.     
\end{definition}

\begin{definition}[Dimension of Simplex]\label{d:Dimension of Simplex}
The dimension of the simplex $\sigma_n$ spanned by $A = \{\btext{a}_0, \ldots, \btext{a}_n\}$ is defined by $\dim (\sigma_n) = n$, where $n$ is one less than the number of geometrically independent points in $A$.
\end{definition}

\begin{definition}[Barycentric Coordinate of Point in Simplex]\label{d:Barycentric Coordinate of Point in Simplex}
The simplex $\sigma_n$ spanned by the geometrically independent set $A = \{\btext{a}_0, \btext{a}_1, \ldots, \btext{a}_n\}$ in $\R^N$ is 
\[ \sigma_n = \Big \{\btext{x} \in \R^N \colon \btext{x} = \sum_{i = 0}^{n}t_{i}\btext{a}_{i}, \, \text{where} \, \sum_{i = 0}^{n}t_{i} = 1 \, \text{and} \, t_{i} \geq 0, \, \forall i \Big\} \]
The real numbers $t_i$ that is uniquely determined by $\btext{x}$ is called \textbf{barycentric coordinate} of the point $\btext{x}$ of the simplex $\sigma_n$ w.r.t. vertices $\btext{a}_0, \ldots, \btext{a}_n$. 
\end{definition}

After understanding the definition of an $n$-simplex, the set of its vertices, and the concept of barycentric coordinates, it is natural to examine the topological structure of a simplex. Beyond its geometric description, a simplex can be viewed as a topological space with specific properties that make it a fundamental object in topology. In particular, we are interested in how a simplex behaves as a subset of Euclidean space, its topological dimension, its boundary and interior, and its role in constructing more complex topological spaces. This perspective is essential for developing simplicial complexes and understanding their use in algebraic topology.

\begin{theorem}[Simplex as Topological Space]\label{t:Simplex as Topological Space}
If $\sigma_n$ is a $n$-simplex spanned by the geometrically independent set  $A = \{\btext{x}_0, \ldots, \btext{x}_n\}$ in $\R^N$, then $\sigma_n$ is topological space with subspace topology.   
\end{theorem}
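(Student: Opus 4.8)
The plan is to recognize that this statement is not really about the special geometry of a simplex at all, but simply about the fact that $\sigma_n$ sits inside a space already known to be topological. By Theorem~\eqref{t:Euclidean Space as Topological Space}, the ambient space $\R^N$ carries the standard topology $\tau$ whose open sets are the unions of open balls. Since Definition~\eqref{d:n-simplex} realizes $\sigma_n$ as a subset $\sigma_n \subseteq \R^N$, the natural move is to equip it with the \emph{subspace topology} inherited from $\R^N$ and then verify that this inherited collection genuinely satisfies the three axioms of a topology.

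First I would write down the candidate collection explicitly as
\[
\tau_{\sigma_n} = \{\, U \cap \sigma_n \colon U \in \tau \,\},
\]
and then check the axioms in turn. For the first, $\emptyset = \emptyset \cap \sigma_n$ and $\sigma_n = \R^N \cap \sigma_n$ both lie in $\tau_{\sigma_n}$ because $\emptyset, \R^N \in \tau$. For closure under arbitrary unions, given any family $\{U_\alpha\}_{\alpha \in I}$ of sets in $\tau$, the set-theoretic identity
\[
\bigcup_{\alpha \in I}\bigl(U_\alpha \cap \sigma_n\bigr) = \Bigl(\bigcup_{\alpha \in I} U_\alpha\Bigr) \cap \sigma_n
\]
together with closure of $\tau$ under arbitrary unions places the left-hand side in $\tau_{\sigma_n}$. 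For closure under finite intersections, the analogous identity
\[
\bigcap_{i=1}^{k}\bigl(U_i \cap \sigma_n\bigr) = \Bigl(\bigcap_{i=1}^{k} U_i\Bigr) \cap \sigma_n,
\]
combined with the fact that $\bigcap_{i=1}^{k} U_i \in \tau$, yields membership in $\tau_{\sigma_n}$. This exhausts the axioms and shows $(\sigma_n, \tau_{\sigma_n})$ is a topological space.

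The main obstacle here is conceptual rather than technical: the only temptation to resist is trying to invoke the convexity, compactness, or barycentric structure of $\sigma_n$, none of which is needed. The entire content reduces to the general principle that \emph{any} subset of a topological space is again a topological space under the relative topology, so the argument is really just a verification of the subspace-topology axioms specialized to $\sigma_n \subseteq \R^N$. If one wished to make the write-up self-contained rather than citing the subspace construction as known, the cleanest route would be to state and prove the general subspace lemma once and then apply it; otherwise the three-line axiom check above suffices.
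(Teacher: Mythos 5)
Your proposal is correct and follows essentially the same route as the paper's own proof: both define $\tau_{\sigma_n} = \{U \cap \sigma_n \colon U \text{ open in } \R^N\}$ and verify the three topology axioms (empty set and whole space, arbitrary unions via the distributive identity, finite intersections) directly. Your closing observation that the argument uses nothing specific to simplices and is just the general subspace construction is accurate, and the paper's proof indeed invokes no convexity or barycentric structure either.
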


\begin{proof}
If $\sigma_n$ is a $n$-simplex spanned by the geometrically independent set  $A = \{\btext{x}_0, \ldots, \btext{x}_n\}$ in $\R^N$, then by Definition
\[\sigma_n = \Big \{\btext{x} \in \R^N \colon \btext{x} = \sum_{i = 0}^{n}t_{i}\btext{a}_{i}, \, \text{where} \, \sum_{i = 0}^{n}t_{i} = 1 \, \text{and} \, t_{i} \geq 0, \, \forall i \Big\} \]
Since $ \sigma_n $ is a subset of $ \R^N $, it inherits the subspace topology from the standard topology on $ \R^N $. The subspace topology on $ \sigma_n $ is defined as follows
\[\mathcal{T}_{\sigma_n} = \{ U \cap \sigma_n \colon U \, \text{is open in} \, \R^N \}\]
This means that a set $ V \subset \sigma_n $ is open in $ \sigma_n $ if and only if there exists an open set $ U $ in $ \R^N $ such that $V = U \cap \sigma_n$. To show that $ (\sigma_n, \mathcal{T}_{\sigma_n}) $ is a topological space, we check the axioms of a topology:

\begin{enumerate}
    \item \textbf{Empty Set and Full Space}:
    \begin{enumerate}
        \item The empty set $ \emptyset $ is open because $ \emptyset = \emptyset \cap \sigma_n $.
        \item The entire simplex $ \sigma_n $ is open because $ \sigma_n = \mathbb{R}^N \cap \sigma_n $, where $ \R^N $ is open in itself.
    \end{enumerate}
    \item \textbf{Arbitrary Unions:} If $ \{V_\alpha\} $ is a collection of open sets in $ \sigma_n $, then each $ V_\alpha = U_\alpha \cap \sigma_n $ for some open set $ U_\alpha $ in $ \R^N $. Their union is
        \[ \bigcup_{\alpha} V_\alpha = \bigcup_{\alpha} (U_\alpha \cap \sigma_n) = \left( \bigcup_{\alpha} U_\alpha \right) \cap \sigma_n        \]
        Since $ \bigcup_{\alpha} U_\alpha $ is open in $ \mathbb{R}^N $, the resulting set is open in $ \sigma_n $.
    \item \textbf{Finite Intersections:} If $ V_1, V_2 $ are open in $ \sigma_n $, then $ V_1 = U_1 \cap \sigma_n $ and $ V_2 = U_2 \cap \sigma_n $ for some open sets $ U_1, U_2 $ in $ \mathbb{R}^N $. Their intersection is
        \[V_1 \cap V_2 = (U_1 \cap \sigma_n) \cap (U_2 \cap \sigma_n) = (U_1 \cap U_2) \cap \sigma_n        \]
        Since $ U_1 \cap U_2 $ is open in $ \mathbb{R}^N $, the resulting set is open in $ \sigma_n $.
\end{enumerate}
Since all three conditions hold, $ \mathcal{T}_{\sigma_n} $ is a valid topology, meaning $ \sigma_n $ is a topological space with the \textbf{subspace topology}.
\end{proof}


To gain a deeper understanding of the structure of a simplex, it is important to explore how its subsets are organized and interpreted within topology and geometry. Concepts such as faces, boundaries, interiors, and closures of a simplex provide crucial insights into how simplices connect and combine to form more complex geometric structures like simplicial complexes. These notions not only describe the geometric composition of a simplex but also play a central role in defining chains, boundaries, and homology groups in algebraic topology. We now formally define and illustrate each of these concepts in order to establish a solid foundation for the study of simplicial complexes.

\begin{definition}[Face of Simplex]\label{d:Face of Simplex}
Let $n$-simplex $\sigma_n$ is spanned by the geometrically independent set $A = \{\btext{a}_0, \btext{a}_1, \ldots, \btext{a}_n\}$ in $\R^N$. Any $k$-simplex $\sigma_k$ ($0 \leq k < n$) spanned by the subset $B$ of $A$ is called face of $\sigma_n$. Symbolically, 
\begin{equation}\label{eq:Face of Simplex}
\sigma_k = \Big \{\btext{x} \in \R^N \colon \btext{x} = \sum_{j = 0}^{k}t_{j}\btext{a}_{j}, \, \text{where} \, \sum_{j = 0}^{k}t_{j} = 1 \, \text{and} \, t_{j} \geq 0, \, \forall j \Big\} 
\end{equation}
\end{definition}
In particular, the face $\sigma_k$ of $\sigma_n$ spanned by the set $B = \{\btext{a}_1, \ldots, \btext{a}_n\} \subset A$ is called \textbf{face opposite to $\btext{a}_0$}. The faces different from $\sigma_n$ itself are called \textbf{proper faces} of $\sigma_n$.

\begin{definition}[Boundary of Simplex]\label{d:Boundary of Simplex}
The boundary of $\sigma_n$, denoted by $\operatorname{Bd}(\sigma_n)$ (or $\operatorname{Bd} \sigma_{n}$), is the union of all its $(n-1)$-faces. Symbolically, 
\begin{equation}\label{eq:Boundary of Simplex}
\operatorname{Bd}(\sigma_n)= \bigcup_{i = 0}^{n}\sigma_{n - 1}^{(i)}
\end{equation}   
where $\sigma_{n - 1}^{(i)}$ is the $(n-1)$-face obtained by omitting the vertex $\btext{a}_i$ from $A$.
\end{definition}

\begin{definition}[Interior of Simplex]\label{d:Interior of Simplex}
The interior of $\sigma_n$, denoted by $\operatorname{Int}(\sigma_n)$, consists of all points $\mathbf{x} \in \sigma_n$ whose barycentric coordinates are strictly positive. Symbolically, 
\begin{equation}\label{eq:Interior of Simplex1}
\operatorname{Int}(\sigma_n) = \Big \{\btext{x} \in \sigma_n \colon \btext{x} = \sum_{i = 0}^{n}t_{i}\btext{a}_{i}, \, \text{where} \, \sum_{i = 0}^{n}t_{i} = 1 \, \text{and} \, t_{i} > 0, \, \forall i \Big\} 
\end{equation}
\end{definition}
Interior of the simplex can also be defined by the equation
\begin{equation}\label{eq:Interior of Simplex2}
\operatorname{Int}(\sigma_n) = \sigma_{n} -  \operatorname{Bd}(\sigma_n)
\end{equation}

Geometrically, the interior excludes the boundary of the simplex. The set $\operatorname{Int}(\sigma_n)$ is also called \textbf{open simplex}.

\begin{definition}[Closure of Simplex]\label{d:Closure of Simplex}
The closure of $\sigma_n$, denoted by $\operatorname{Cl}(\sigma_n)$, is the smallest closed set in $\R^N$ that contains $\sigma_n$.
\end{definition}

\begin{remark}\hfill
\begin{enumerate}
\item The set $\operatorname{Bd}(\sigma_n)$ contains all point $\btext{x}$ such that at least one of the barycentric coordinates $t_{i}$ of $\btext{x}$ is zero. 
\item The set $\operatorname{Int}(\sigma_n)$ contains all point $\btext{x}$ such that  the barycentric coordinates $t_{i}(x) > 0$ for all $i$.
\item From above two observation, we can say that given $\btext{x} \in \sigma_n$, there is exactly one face $\sigma_k$ ($0 \leq k < n$) of $\sigma_n$ such that $\btext{x} \in \operatorname{Int}(\sigma_k)$, for $\sigma_k$ must be the face of $\sigma_n$ spanned by the those $\btext{a}_i$, for which $t_{i}$ is positive.
\end{enumerate}
\end{remark}

\begin{table}[h!]
    \centering
    \renewcommand{\arraystretch}{1.5}
    \begin{tabular}{|p{2cm}|p{3cm}|c|p{3cm}|}
        \hline
        \textbf{Type} & \textbf{Definition} & \textbf{Mathematical Representation} & \textbf{Properties} \\
        \hline
        \textbf{Simplex} $ \sigma_n $ & Convex hull of $ n+1 $ geometrically independent points. & 
        $ \sigma_n = \left\{ x \in \mathbb{R}^N \mid x = \sum_{i=0}^{n} t_i \mathbf{x}_i, \ t_i \geq 0, \ \sum t_i = 1 \right\} $ & Convex, compact, contains interior and boundary. \\
        \hline
        \textbf{Open Simplex} $ \sigma_n^\circ $ & Interior of $ \sigma_n $, excluding boundary points. & 
        $ \sigma_n^\circ = \left\{ x \in \sigma_n \mid t_i > 0, \quad \forall i \right\} $ & Convex, non-compact, contains only points with strictly positive barycentric coordinates. \\
        \hline
        \textbf{Closed Simplex} $ \bar{\sigma}_n $ & Closure of $ \sigma_n $, containing all faces. & 
        $ \bar{\sigma}_n = \sigma_n $ (since $ \sigma_n $ is already closed in $ \mathbb{R}^N $) & Convex, compact, includes boundary and interior. \\
        \hline
        \textbf{Interior of $ \sigma_n $} & Set of points strictly inside $ \sigma_n $, excluding its boundary. & 
        $ \text{Int}(\sigma_n) = \left\{ x \in \sigma_n \mid t_i > 0 \text{ for all } i \right\} $ & Open in the subspace topology. \\
        \hline
        \textbf{Closure of $ \sigma_n $} & Smallest closed set containing $ \sigma_n $. & 
        $ \text{Cl}(\sigma_n) = \sigma_n $ (since $ \sigma_n $ is closed in $ \mathbb{R}^N $) & Convex, compact, contains all boundary points. \\
        \hline
        \textbf{Boundary of $ \sigma_n $} & Set of points on lower-dimensional faces of $ \sigma_n $. & 
        $\operatorname{Bd}(\sigma_n) = \left\{ x \in \sigma_n \mid t_i = 0 \text{ for at least one } i \right\} $ & Compact, convex, union of all $ (n-1) $-faces. \\
        \hline
    \end{tabular}
    \caption{Subsets of Simplices in $\R^N$}
    \label{tab:Subsets of Simplices RN}
\end{table}


To consolidate our understanding of the foundational concepts related to simplices, we now present some examples of simplices of dimension in finite Euclidean space $\R^n$ in a concrete setting. These examples will help illustrate the definitions of a simplex, its dimension, barycentric coordinates, faces, interior, boundary, and closure.      

\begin{example}[0-simplex]\label{eg:0-simplex}
Define 0-simplex $\sigma_0$ in $\R^n$ and further determine vertex set, dimension, faces, interior, closure and boundary and also display their geometric representation.
\end{example}

\begin{solution*}
A 0-simplex $\sigma_0$ in $\R^n$ is spanned by $A = \{\btext{a}_0\}$ is defined by 
\begin{equation}\label{eq:0-simplex}
 \sigma_0 = \{\btext{x} \in \R \colon \btext{x} = t_{0}\btext{a}_{0}, \, \text{where} \, t_{0} = 1 \, \}   
\end{equation}
\begin{enumerate}
\item \textbf{Vertex Set of $\sigma_0$:} The vertex set of the $\sigma_0$ is $V(\sigma_0) = \{\btext{a}_0\}$. 
\item \textbf{Dimension of $\sigma_0$:} Since dimension of a simplex is defined as one less than the number of its geometrically independent vertices. So, $\dim (\sigma_0) = 1- 1 = 0$. 
\item \textbf{Geometric Picture of $\sigma_0$:} In $\mathbb{R}^n$, a 0-simplex is just a single point 
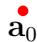
\begin{figure}[h!]
\centering
\begin{tikzpicture}
 \fill [red] (0,0) circle (2pt);
    \node[below] at (0,0) {\textcolor{black}{$\btext{a}_0$}};
\end{tikzpicture}
\caption{0-Simplex $\sigma_0$}\label{fig:0-simplex}
\end{figure}
\item \textbf{Face of $\sigma_0$:} A face of a simplex is any simplex formed by a non-empty subset of its vertex set. Since the only non-empty subset of $A = \{\btext{a}_0\}$ is itself,
the only face of $\sigma_0$ is $\sigma_0$ itself. There are no proper faces of a 0-simplex.
\item \textbf{Interior of $\sigma_0$:} The interior of a simplex consists of all points with strictly positive barycentric coordinates. In this case, the only point is $\btext{a}_0$, and its barycentric coordinate is 1. Since $1 > 0$, $\btext{a}_0$ lies in the interior. Thus, $\text{Int}(\sigma_0) = \{\btext{a}_0\}$. 
\item \textbf{Closure  of $\sigma_0$:} Since $A = \{\sigma_0\}$ is a singleton and already closed as a set in $\mathbb{R}^n$, thus $\text{Cl}(\sigma_0) = \sigma_0$.
\item \textbf{Boundary of $\sigma_0$:} The boundary of an $n$-simplex is the union of all its $(n-1)$-faces. A 0-simplex has no $(0-1) = -1$-faces, so its boundary is empty. Thus $\operatorname{Bd}(\sigma_0) = \emptyset$.
\end{enumerate}
\end{solution*}

\begin{example}[1-simplex]\label{eg:1-simplex}
Define 1-simplex $\sigma_1$ in $\R^n$ and further determine vertex set, dimension, faces, interior, closure and boundary and also display their geometric representation.
\end{example}

\begin{solution*}
A $1$-simplex $\sigma_1$ spanned by $A = \{\btext{a}_0, \btext{a}_1\}$ is 
\begin{equation}\label{eq:1-simplex1}
 \sigma_1 = \{\btext{x} \in \R^n \colon \btext{x} = t_{0}\btext{a}_{0} + t_{1}\btext{a}_{1}, \, \text{where} \, t_{0} + t_1 = 1\, \text{and} \, t_0, t_1 \geq 0 \}   
\end{equation}
Or we can say 
\begin{equation}\label{eq:1-simplex2}
 \sigma_1 = \{\btext{x} \in \R^n \colon \btext{x} = t\btext{a}_{0} + (1 - t)\btext{a}_{1}\, \forall t \in [0,1] \}   
\end{equation}
\begin{enumerate}
\item \textbf{Vertex Set of $\sigma_1$:} The vertex set of the $\sigma_1$ is $V(\sigma_1) = \{\btext{a}_0, \btext{a}_1\}$. 
\item \textbf{Dimension of $\sigma_1$:} Since dimension of a simplex is defined as one less than the number of its geometrically independent vertices. So, $\dim (\sigma_1) = 2 - 1 = 1$. 
\item \textbf{Geometric Picture of $\sigma_1$:} In $\mathbb{R}^n$, a 1-simplex is just a line segment joining the vertices $\btext{a}_0$ and $\btext{a}_1$. 
\begin{figure}[h!]
\centering
\begin{tikzpicture}
        \coordinate (A0) at (0,0);
        \coordinate (A1) at (4,0);
        \draw[thick, blue] (A0) -- (A1);
        \filldraw[red] (A0) circle (2pt) node[below] {\textcolor{black}{$\btext{a}_0$}};
        \filldraw[red] (A1) circle (2pt) node[below] {\textcolor{black}{$\btext{a}_1$}};
    \end{tikzpicture}
\caption{1-Simplex $\sigma_1$}\label{fig:1-simplex}
\end{figure}
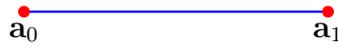
\item \textbf{Faces of $\sigma_1$:} There are two types of faces of $\sigma_1$ are possible according to their dimension 0 and 1. 
\begin{enumerate}
\item \textbf{Faces of $\sigma_1$ of 0-dimension:} The faces of $\sigma_1$ of 0-dimension consists of only vertices of $\sigma_1$ and that is $\{\sigma_{0}^{1}= \{\btext{a}_0\}, \sigma_{0}^{2} = \{\btext{a}_1\}\}$.
\item \textbf{Faces of $\sigma_1$ of 1-dimension:} The face of $\sigma_1$ of 1-dimension consists only a line segment of joining the vertices $\btext{a}_0$ and $\btext{a}_1$ of $\sigma_1$ and that is $\sigma_1 = \{\btext{a}_0, \btext{a}_1\}$.
\end{enumerate}
So, finally collection of all faces of 0 and 1 dimension of a $\sigma_1$ is 
\[\{\{\btext{a}_0\}, \{\btext{a}_1\}, \{\btext{a}_0, \btext{a}_1\}\}\]
\begin{figure}[h!]
\centering
\begin{tikzpicture}[scale=1]
        \coordinate (A0) at (0,0);
        \coordinate (A1) at (10,0);

        \draw[thick, blue] (A0) -- (A1) node[midway, above] {\textcolor{black}{$\sigma_1 = \{\btext{a}_0, \btext{a}_1 \}$}};

        \filldraw[red] (A0) circle (3pt) node[below] {\textcolor{black}{$\sigma_{0}^{1}=\{\mathbf{a}_0\}$}};

        \filldraw[red] (A1) circle (3pt) node[below] {\textcolor{black}{$\sigma_{0}^{2}=\{\mathbf{a}_1\}$}};

    \end{tikzpicture}
\caption{Collection of all faces of 1-Simplex $\sigma_1$}\label{fig:All faces of 1-simplex}
\end{figure}
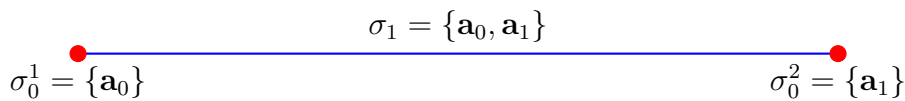
\item \textbf{Interior of $\sigma_1$:} Interior points lie strictly inside the segment joining the vertices $\btext{a}_0$ and $\btext{a}_1$, excluding the endpoints (vertices). 
\begin{equation}\label{eq:1-simplex3}
 \operatorname{Int}(\sigma_1) = \{\btext{x} \in \sigma_1 \colon \btext{x} = t\btext{a}_{0} + (1 - t)\btext{a}_{1}, \, \forall t \in (0,1) \}   
\end{equation}
\begin{figure}[h!]
\centering
    \begin{tikzpicture}[scale=1]
        \coordinate (A0) at (0,0);
        \coordinate (A1) at (10,0);

        \draw[thick, blue] (A0) -- (A1);

        \filldraw[red] (A0) circle (2.5pt) node[below] {\textcolor{black}{$\btext{a}_0$}};
       \filldraw[red] (A1) circle (2.5pt) node[below] {\textcolor{black}{$\btext{a}_1$}};

        \draw[blue, fill=white] (1,0) circle (2.5pt);
        \draw[blue, fill=white] (2.5,0) circle (2.5pt);
        \draw[blue, fill=white] (4.0,0) circle (2.5pt);
        \draw[blue, fill=white] (5.5,0) circle (2.5pt);
        \draw[blue, fill=white] (7.0,0) circle (2.5pt);
        \draw[blue, fill=white] (8.5,0) circle (2.5pt);
        \node[below] at (5,0) {\textcolor{black}{ \textbf{Interior points}}};
    \end{tikzpicture}
\caption{Interior of $\sigma_1$}\label{fig:Interior of 1-simplex}
\end{figure}
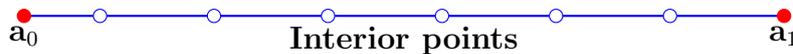
\item \textbf{Closure  of $\sigma_1$:} Since the 1-simplex $\sigma_1$ is already a closed set in $\R^n$, so its closure is the set itself, thus $\text{Cl}(\sigma_1) = \sigma_1$.
\item \textbf{Boundary of $\sigma_1$:} The boundary of an 1-simplex is the union of all its $(1 - 1) = 0$-faces i.e. 
\begin{equation}\label{eq:1-simplex3}
\operatorname{Bd} (\sigma_1) = \sigma_{0}^{1} \cup \sigma_{0}^{2} = \{\btext{a}_{0}\} \cup \{\btext{a}_{0}\} \end{equation}  
\begin{figure}[h!]
\centering
\begin{tikzpicture}[scale=1]
        \coordinate (A0) at (0,0);
        \coordinate (A1) at (10,0);

        \draw[thick, blue] (A0) -- (A1);

        \filldraw[red] (A0) circle (3pt) node[below] {\textcolor{black}{$\sigma_{0}^{1}=\{\mathbf{a}_0\}$ = \textbf{Boundary point}}};

        \filldraw[red] (A1) circle (3pt) node[below] {\textcolor{black}{$\sigma_{0}^{2}=\{\mathbf{a}_1\}$ = \textbf{Boundary point}}};

    \end{tikzpicture}
\caption{Boundaries of 1-Simplex $\sigma_1$}\label{fig:Boundaries of 1-simplex}
\end{figure}
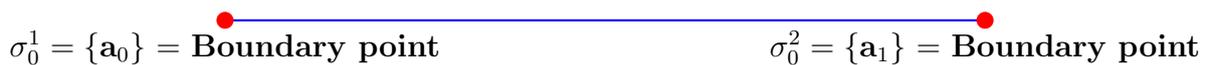
\end{enumerate}
\end{solution*}

\begin{example}[2-simplex]\label{eg:2-simplex}
Define 2-simplex $\sigma_2$ in $\R^n$ and further determine vertex set, dimension, faces, interior, closure and boundary and also display their geometric representation. 
\end{example}

\begin{solution*}
A $2$-simplex $\sigma_2$ spanned by $A = \{\btext{a}_0, \btext{a}_1, \btext{a}_2\}$ is 
\begin{equation}\label{eq:2-simplex1}
 \sigma_2 = \{\btext{x} \in \R^n \colon \btext{x} = t_{0}\btext{a}_{0} + t_{1}\btext{a}_{1} + t_{2}\btext{a}_{2}, \, \text{where} \, t_{0} + t_1 + t_2= 1\, \text{and} \, t_0, t_1, t_2 \geq 0 \}   
\end{equation}
Or we can say 
\begin{equation}\label{eq:2-simplex2}
 \sigma_2 = \Big \{\btext{x} \in \R^n \colon \btext{x} = t_{0}\btext{a}_{0} + (1 - t_{0}) \left[ \dfrac{t_{1}}{\lambda}\btext{a}_{1}+ \dfrac{t_{2}}{\lambda}\btext{a}_{2}\right],\, \text{where}\,  \lambda = 1 - t_{0} \Big \} 
\end{equation}
where the term
\[(1 - t_{0}) \left[ \dfrac{t_{1}}{\lambda}\btext{a}_{1}+ \dfrac{t_{2}}{\lambda}\btext{a}_{2}\right]\]
represent a point $\btext{p} \in \sigma_2$ of the line segment joining the point $\btext{a}_1$ and $\btext{a}_2$ and $\dfrac{t_{1}+ t_{2}}{\lambda} = 1$ and $\dfrac{t_{1}}{\lambda}, \dfrac{t_{2}}{\lambda} \geq 0 $. 
\begin{enumerate}
\item \textbf{Vertex Set of $\sigma_2$:} The vertex set of the $\sigma_2$ is $V(\sigma_2) = \{\btext{a}_0, \btext{a}_1, \btext{a}_2\}$. 
\item \textbf{Dimension of $\sigma_2$:} Since dimension of a simplex is defined as one less than the number of its geometrically independent vertices. So, $\dim (\sigma_2) = 3 - 1 = 2$. 
\item \textbf{Geometric Picture of $\sigma_2$:} In $\mathbb{R}^n$, a 2-simplex is the filled triangle made by the points $\btext{a}_0, \btext{a}_1, \btext{a}_2$.
\begin{figure}[h!]
  \centering
  \begin{tikzpicture}[scale=1]
        \coordinate (A0) at (0,0);
        \coordinate (A1) at (4,0);
        \coordinate (A2) at (2,3);

        \filldraw[red] (A0) circle (2pt) node[below] {\textcolor{black}{$\mathbf{a}_0$}};
        \filldraw[red] (A1) circle (2pt) node[below] {\textcolor{black}{$\mathbf{a}_1$}};
        \filldraw[red] (A2) circle (2pt) node[above] {\textcolor{black}{$\mathbf{a}_2$}};
        \fill[blue!20] (A0) -- (A1) -- (A2) -- cycle;
        
        \draw[thick, black] (A0) -- (A1) -- (A2) -- cycle;
        
        \node[above] at (A2) {$\mathbf{a}_2$};
    \end{tikzpicture}
  \caption{2-Simplex $\sigma_2$}\label{fig:2-simplex}
\end{figure}
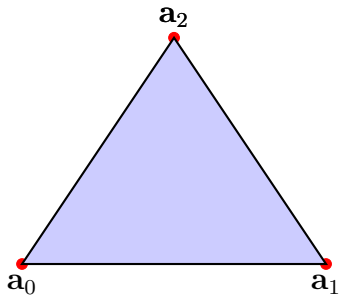
\item \textbf{Faces of $\sigma_2$:} There are three types of faces of $\sigma_2$ are possible according to their dimensions of 0, 1 and 2. 
\begin{enumerate}
\item \textbf{Faces of $\sigma_2$ of 0-dimension:} The faces of $\sigma_2$ of 0-dimension consists of only vertices of $\sigma_2$ and that is $\{\sigma_{0}^{1}= \{\btext{a}_0\}, \sigma_{0}^{2} = \{\btext{a}_1\}, \sigma_{0}^{3} = \{\btext{a}_2\}\}$.
\item \textbf{Faces of $\sigma_2$ of 1-dimension:} The face of $\sigma_2$ of 1-dimension consists only a line segment of joining the vertices $\btext{a}_0$ and $\btext{a}_1$; $\btext{a}_1$ and $\btext{a}_2$; $\btext{a}_2$ and $\btext{a}_1$  that is $\{\sigma_{1}^{1} = \{\btext{a}_{0}, \btext{a}_{1} \},  \sigma_{1}^{2} = \{\btext{a}_{1}, \btext{a}_{2}\}, \sigma_{1}^{3} = \{\btext{a}_{2}, \btext{a}_{0} \}\}$.
\item \textbf{Faces of $\sigma_2$ of 2-dimension:} The face of $\sigma_2$ of 2-dimension is filled triangle $\sigma_{2}^{1} = \{\btext{a}_{0}, \btext{a}_{1}, \btext{a}_2 \}$.
\end{enumerate}
So, finally collection of all faces of 0, 1 and 2 dimension of a $\sigma_2$ is
\[\{\{\btext{a}_0\}, \{\btext{a}_1\},\{\btext{a}_2\}, \{\btext{a}_0, \btext{a}_{1}\}, \{\btext{a}_1, \btext{a}_{2}\}, \{\btext{a}_2, \btext{a}_{1}\}, \{\btext{a}_0, \btext{a}_1, \btext{a}_2\}\}\]
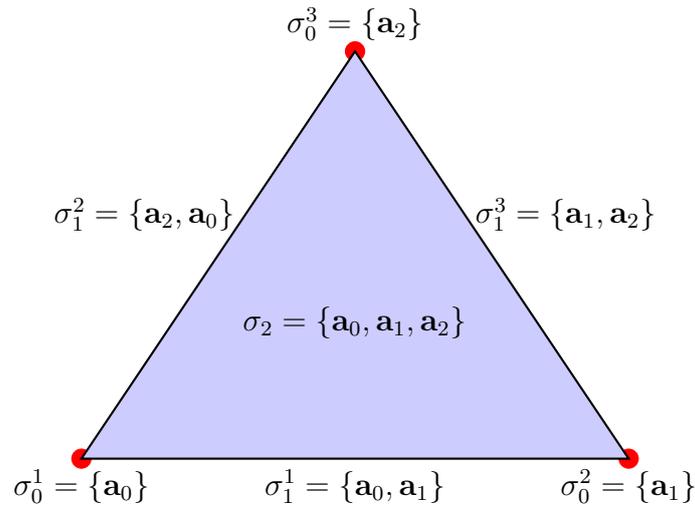
\begin{figure}[h!]
\centering
\begin{tikzpicture}[scale=1.8]
        \coordinate (A0) at (0,0);
        \coordinate (A1) at (4,0);
        \coordinate (A2) at (2,3);

        \filldraw[red] (A0) circle (2pt) node[below] {\textcolor{black}{$\sigma_{0}^{1} = \{\mathbf{a}_0$\}}};
        \filldraw[red] (A1) circle (2pt) node[below] {\textcolor{black}{$\sigma_{0}^{2} = \{\mathbf{a}_1$\}}};
        \filldraw[red] (A2) circle (2pt) node[above] {\textcolor{black}{$\sigma_{0}^{3} = \{\mathbf{a}_2$\}}};
        \fill[blue!20] (A0) -- (A1) -- (A2) -- cycle;
        \draw[thick, black] (A0) -- (A1) -- (A2) -- cycle;
        \node[below] at (2,0) {$\sigma_{1}^{1} = \{\btext{a}_0, \btext{a}_1\}$};
        \node[left] at (1.2,1.8) {$\sigma_{1}^{2} = \{\btext{a}_2, \btext{a}_0\}$};
         \node[right] at (2.8,1.8) {$\sigma_{1}^{3} = \{\btext{a}_1, \btext{a}_2\}$};
         \node[right] at (1.1,1) {$\sigma_{2} = \{\btext{a}_0, \btext{a}_1, \btext{a}_2 \}$};
    \end{tikzpicture}
\caption{Collection of all faces of 2-Simplex $\sigma_2$}\label{fig:Faces of 2-simplex}
\end{figure}
\item \textbf{Interior of $\sigma_2$:} The interior of the $\sigma_2$ consists of points strictly inside the triangle, excluding edges and vertices. This happens when all barycentric coordinates of any $\btext{x} \in \sigma_2$ are strictly positive. Thus 
\begin{equation}\label{eq:2-simplex3}
 \operatorname{Int}(\sigma_2) = \{\btext{x} \in \sigma_2 \colon \btext{x} = t_0\btext{a}_{0} + t_1\btext{a}_{1} + t_2\btext{a}_{2}, \, 0 < t_0, t_1, t_2 < 1, t_0 +t_1 + t_2 = 1 \}   
\end{equation}
\begin{figure}[h!]
  \centering
  \begin{tikzpicture}[scale=1.4]
        \coordinate (A0) at (0,0);
        \coordinate (A1) at (4,0);
        \coordinate (A2) at (2,3);

        \filldraw[red] (A0) circle (2pt) node[below] {\textcolor{black}{$\mathbf{a}_0$}};
        \filldraw[red] (A1) circle (2pt) node[below] {\textcolor{black}{$\mathbf{a}_1$}};
        \filldraw[red] (A2) circle (2pt) node[above] {\textcolor{black}{$\mathbf{a}_2$}};
        \fill[blue!20] (A0) -- (A1) -- (A2) -- cycle;
        
        \draw[thick, black] (A0) -- (A1) -- (A2) -- cycle;
        \filldraw[black] (2,1) circle (2pt);
        \filldraw[black] (1,1.2) circle (2pt);
        \filldraw[black] (2.3,1.6) circle (2pt);
        \node[above] at (A2) {$\mathbf{a}_2$};
        \node[below] at (2,0.6) {\textcolor{black}{ \textbf{Interior points}}};
    \end{tikzpicture}
  \caption{Interior points of 2-Simplex $\sigma_2$}\label{fig:Interior points of 2-simplex}
\end{figure}
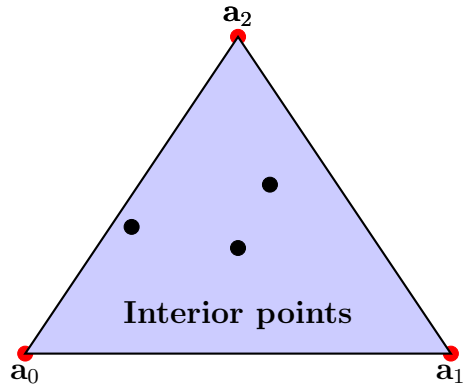
\item \textbf{Closure  of $\sigma_2$:} Since the 2-simplex $\sigma_2$ is already a closed set in $\R^n$, so its closure is the set itself, thus $\text{Cl}(\sigma_2) = \sigma_2$.
\item \textbf{Boundary of $\sigma_2$:} The boundary of $\sigma_2$ is union of its all faces of $1$-dimension
\begin{equation}\label{eq:2-simplex4}
\operatorname{Bd}(\sigma_2) = \sigma_{1}^{1} \cup  \sigma_{1}^{2} \cup \sigma_{1}^{3} = \{\btext{a}_0, \btext{a}_1\} \cup \{\btext{a}_1, \btext{a}_2\} \cup\{\btext{a}_2, \btext{a}_3\} 
\end{equation}
\begin{figure}[h!]
\centering
\begin{tikzpicture}[scale=1.8]
        \coordinate (A0) at (0,0);
        \coordinate (A1) at (4,0);
        \coordinate (A2) at (2,3);

        \filldraw[red] (A0) circle (2pt) node[below] {\textcolor{black}{$\sigma_{0}^{1} = \{\mathbf{a}_0$\}}};
        \filldraw[red] (A1) circle (2pt) node[below] {\textcolor{black}{$\sigma_{0}^{2} = \{\mathbf{a}_1$\}}};
        \filldraw[red] (A2) circle (2pt) node[above] {\textcolor{black}{$\sigma_{0}^{3} = \{\mathbf{a}_2$\}}};
        \draw[thick, black] (A0) -- (A1) -- (A2) -- cycle;
        \node[below] at (2,0) {$\sigma_{1}^{1} = \{\btext{a}_0, \btext{a}_1\}$};
        \node[left] at (1.2,1.8) {$\sigma_{1}^{2} = \{\btext{a}_2, \btext{a}_0\}$};
         \node[right] at (2.8,1.8) {$\sigma_{1}^{3} = \{\btext{a}_1, \btext{a}_2\}$};
    \end{tikzpicture}
\caption{Boundaries of 2-Simplex $\sigma_2$}\label{fig:Boundaries of 2-Simplex}
\end{figure}
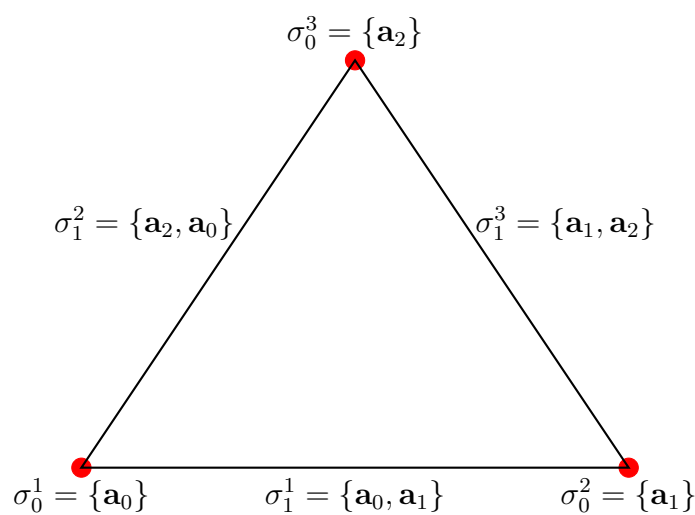
\end{enumerate}
\end{solution*}

\begin{example}[3-simplex]\label{eg:3-simplex}
Define 3-simplex $\sigma_3$ in $\R^n$ and further determine vertex set, dimension, faces, interior, closure and boundary and also display their geometric representation. 
\end{example}

\begin{solution*}
A 3-simplex $\sigma_3$ spanned by  $A = \{\btext{a}_0, \btext{a}_1, \btext{a}_2, \btext{a}_3\}$ is 
\begin{equation}\label{eq:3-simplex1}
 \sigma_3 = \Big \{\btext{x} \in \R^n \colon \btext{x} = \sum_{i = 0}^{3}t_{i}\btext{a}_{i}, \, \text{where} \, \sum_{i = 0}^{3}t_{i} = 1\, \text{and} \, t_i \geq 0, \, i = 0, 1, 2, 3  \Big \} 
\end{equation}
\begin{enumerate}
\item \textbf{Vertex Set of $\sigma_3$:} The vertex set of the $\sigma_3$ is $V(\sigma_3) = \{\btext{a}_0, \btext{a}_1, \btext{a}_2, \btext{a}_3\}$. 
\item \textbf{Dimension of $\sigma_3$:} Since dimension of a simplex is defined as one less than the number of its geometrically independent vertices. So, $\dim (\sigma_3) = 4 - 1 = 3$.
\item \textbf{Geometric Picture of $\sigma_3$:} In $\mathbb{R}^n$, a 3-simplex is the tetrahedron made by four filled triangles.
\begin{figure}[h!]
    \centering
\begin{tikzpicture}[scale=5]
    \coordinate (A0) at (0,0,0);
    \coordinate (A1) at (1,0,0);
    \coordinate (A2) at (0,0,1.5);
    \coordinate (A3) at (0,1,0); 

    \draw[ultra thick, blue] (A0) -- (A1);
    \draw[ultra thick, blue] (A1) -- (A2);
    \draw[ultra thick, blue] (A2) -- (A0);
    \draw[ultra thick, blue] (A0) -- (A3);
    \draw[ultra thick, blue] (A1) -- (A3);
    \draw[ultra thick, blue] (A2) -- (A3);

    \fill[blue!20,opacity=0.3] (A0) -- (A1) -- (A2) -- cycle;
    \fill[green!20,opacity=0.3] (A0) -- (A1) -- (A3) -- cycle;
    \fill[red!20,opacity=0.3] (A1) -- (A2) -- (A3) -- cycle;
    \fill[yellow!20,opacity=0.3] (A2) -- (A0) -- (A3) -- cycle;
    \filldraw[red] (A0) circle (0.7pt) node[below]{\textcolor{black} {$\btext{a}_0$}};
    \filldraw[red] (A1) circle (0.7pt) node[right] {\textcolor{black}{$\mathbf{a}_1$}};
    \filldraw[red] (A2) circle (0.7pt) node[below]{\textcolor{black} {$\mathbf{a}_2$}};
    \filldraw[red] (A3) circle (0.7pt) node[above]{\textcolor{black} {$\mathbf{a}_3$}};
\end{tikzpicture}
\caption{3-simplex $\sigma_3$}  \label{fig:3-simplex}
\end{figure}
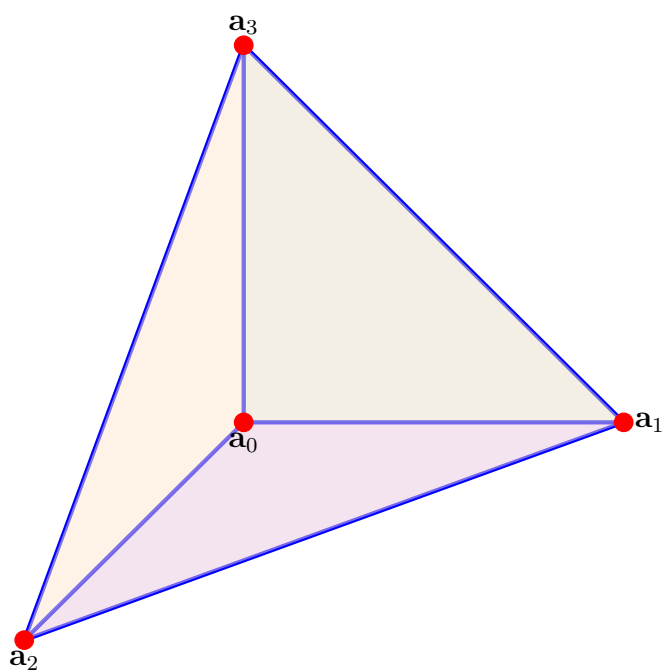
\item \textbf{Faces of $\sigma_3$:} There are four types of faces of $\sigma_3$ are possible according to their dimensions of 0, 1, 2 and 3.
\begin{enumerate}
\item \textbf{Faces of $\sigma_3$ of 0-dimension:} The faces of $\sigma_3$ of 0-dimension consists of only vertices of $\sigma_3$ and that is $\{\sigma_{0}^{1}= \{\btext{a}_0\}, \sigma_{0}^{2} = \{\btext{a}_1\}, \sigma_{0}^{3} = \{\btext{a}_2\}, \sigma_{0}^{4} = \{\btext{a}_3\}\}$.
\item \textbf{Faces of $\sigma_3$ of 1-dimension:} The face of $\sigma_3$ of 1-dimension consists only all line segments of joining the any two vertices that is 
\begin{multline*}
\{\sigma_{1}^{1} = \{\btext{a}_{0}, \btext{a}_{1} \},  \sigma_{1}^{2} = \{\btext{a}_{0}, \btext{a}_{2}\}, \\ \sigma_{1}^{3} = \{\btext{a}_{0}, \btext{a}_{3} \},  \sigma_{1}^{4} = \{\btext{a}_{1}, \btext{a}_{2} \}, \sigma_{1}^{5} = \{\btext{a}_{1}, \btext{a}_{3} \}, \sigma_{1}^{6} = \{\btext{a}_{3}, \btext{a}_{2} \}\}    
\end{multline*}
\item \textbf{Faces of $\sigma_2$ of 2-dimension:} The face of $\sigma_2$ of 2-dimension is filled triangle 
\[
\sigma_{2}^{1} = \{\btext{a}_{0}, \btext{a}_{1}, \btext{a}_2 \}, \sigma_{2}^{2} = \{\btext{a}_{0}, \btext{a}_{1}, \btext{a}_3 \}, \sigma_{2}^{3} = \{\btext{a}_{0}, \btext{a}_{2}, \btext{a}_3 \}, \sigma_{2}^{4} = \{\btext{a}_{1}, \btext{a}_{1}, \btext{a}_3 \}\]
\item \textbf{Faces of $\sigma_3$ of 3-dimension:} The face of $\sigma_3$ of 3-dimension is tetrahedron $\sigma_{3} = \{\btext{a}_{0}, \btext{a}_{1}, \btext{a}_2,  \btext{a}_{3}, \btext{a}_{4}\}$.
\end{enumerate}
So, finally collection of all faces of 0, 1, 2 and 3 dimension of $\sigma_3$ is
\begin{multline*}
\{\{\btext{a}_0\}, \{\btext{a}_1\}, \{\btext{a}_2\}, \{\btext{a}_3\}, \{\btext{a}_{0}, \btext{a}_{1} \}, \{\btext{a}_{0}, \btext{a}_{2} \}, \\ \{\btext{a}_{0}, \btext{a}_{3} \}, \{\btext{a}_{1}, \btext{a}_{2}\}, \{\btext{a}_{1}, \btext{a}_{3}\}, \{\btext{a}_{3}, \btext{a}_{2}\},  \{\btext{a}_{0}, \btext{a}_{1}, \btext{a}_2 \},  \\ \{\btext{a}_{0}, \btext{a}_{1}, \btext{a}_3 \}, \{\btext{a}_{0}, \btext{a}_{2}, \btext{a}_3 \}, \{\btext{a}_{1}, \btext{a}_{1}, \btext{a}_3 \} \}
\end{multline*}
\item \textbf{Interior of $\sigma_3$:} The interior of $\sigma_3$, consists of all points $\btext{x}$ in $\sigma_3$ for which all the barycentric coordinates $t_i > 0$
\begin{equation}\label{eq:3-simplex3}
 \operatorname{Int}(\sigma_3) = \left \{\btext{x} \in \sigma_3 \colon \btext{x} = \sum_{i = 0}^{3}t_{i}\btext{a}_{i}, \, 0 < t_{i}< 1, \sum_{i = 0}t_{i} = 1, \forall i \right \}   
\end{equation}
\item \textbf{Closure  of $\sigma_3$:} Since the 3-simplex $\sigma_3$ is already a closed set in $\R^n$, so its closure is the set itself, thus $\text{Cl}(\sigma_3) = \sigma_3$.
\item \textbf{Boundary of $\sigma_3$:} The boundary of $\sigma_3$ is union of its all faces of $2$-dimension
\begin{equation}\label{eq:2-simplex4}
\operatorname{Bd}(\sigma_3) = \sigma_{2}^{1} \cup  \sigma_{2}^{2} \cup \sigma_{2}^{3} \cup \sigma_{2}^{4} = \{\btext{a}_{0}, \btext{a}_{1}, \btext{a}_2 \}\cup   \{\btext{a}_{0}, \btext{a}_{1}, \btext{a}_3 \}\cup \{\btext{a}_{0}, \btext{a}_{2}, \btext{a}_3 \}\cup \{\btext{a}_{1}, \btext{a}_{1}, \btext{a}_3 \} 
\end{equation}
\end{enumerate}
\end{solution*}



Now we are going to discuss the important properties of simplifications which are very useful for the study of simplification complexes.

For convenience, let us recall some important notations related to simplices. Consider a geometrically independent set of points 
\( A = \{\btext{a}_0, \btext{a}_1, \btext{a}_2, \ldots, \btext{a}_n\} \subset \mathbb{R}^N \), which determines an \( n \)-plane \( P_n \). The simplex spanned by these points is denoted as \(\sigma_n\). For an arbitrary point \(\btext{x} \in \sigma_n\), there exist unique barycentric coordinates \(\{t_i(\btext{x})\}\) satisfying
\[
\btext{x} = \sum_{i=0}^n t_i \btext{a}_i, \quad \text{where} \quad \sum_{i=0}^n t_i = 1 \quad \text{and} \quad t_i \geq 0 \quad \text{for all } i.
\]

The barycentric coordinates not only describe the position of $\btext{x}$ within the simplex, but also vary continuously as $\btext{x}$ moves within the simplex. This continuity plays a fundamental role in topology and geometry, ensuring that simplicial maps and constructions behave well under limits and preserve structure.
\begin{theorem}[Continuity of Barycentric Coordinates]\label{t:Continuity of Barycentric Coordinates}
Let $n$-simplex $\sigma_n$ is spanned by the set $A = \{\btext{a}_0, \btext{a}_1, \ldots, \btext{a}_n\}$ in $\R^N$. The barycentric coordinate $t_{i}(\btext{x})$ of $\btext{x} \in \sigma_n$ w.r.t. set $A$ are continuous functions of $\btext{x}$.
\end{theorem}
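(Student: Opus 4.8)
The plan is to exhibit each barycentric coordinate $t_i$ as the restriction to $\sigma_n$ of an affine function on $\R^N$; since affine maps are continuous and continuity is inherited under restriction to the subspace $\sigma_n$, this will suffice. The whole argument rests on converting the affine condition into a linear one by means of the relative vectors $\btext{v}_i = \btext{a}_i - \btext{a}_0$, exactly as in Theorem \eqref{t:Necessary and Sufficient Condition for GI of Set}.

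First I would fix $\btext{a}_0$ as a reference point and, for $\btext{x} \in \sigma_n$, rewrite the defining relation $\btext{x} = \sum_{i=0}^n t_i \btext{a}_i$ together with $\sum_{i=0}^n t_i = 1$ in the reduced form
\[
\btext{x} - \btext{a}_0 = \sum_{i=1}^n t_i \btext{v}_i .
\]
Collecting $\btext{v}_1, \ldots, \btext{v}_n$ as the columns of a matrix $M \in \R^{N \times n}$ and writing $\btext{t} = (t_1, \ldots, t_n)^T$, this becomes $\btext{x} - \btext{a}_0 = M \btext{t}$.

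Next, since $A$ is geometrically independent, Theorem \eqref{t:Necessary and Sufficient Condition for GI of Set} guarantees that the columns of $M$ are linearly independent, so $M$ has full column rank $n$. Consequently the Gram matrix $G = M^T M$ is an invertible $n \times n$ matrix (linearly independent columns make $G$ positive definite, hence nonsingular). Multiplying $\btext{x} - \btext{a}_0 = M\btext{t}$ on the left by $M^T$ and inverting yields the explicit formula
\[
\btext{t} = G^{-1} M^T (\btext{x} - \btext{a}_0).
\]
The right-hand side is the composition of the continuous translation $\btext{x} \mapsto \btext{x} - \btext{a}_0$ with the linear maps $M^T$ and $G^{-1}$, so each of $t_1(\btext{x}), \ldots, t_n(\btext{x})$ is continuous on $\R^N$, and restriction to $\sigma_n$ preserves continuity. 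Finally $t_0(\btext{x}) = 1 - \sum_{i=1}^n t_i(\btext{x})$ is continuous as a finite sum of continuous functions.

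The main obstacle is a verification rather than a genuine difficulty: I must confirm that the tuple produced by $G^{-1} M^T(\btext{x} - \btext{a}_0)$ really is the barycentric coordinate vector, i.e.\ that it solves $\btext{x} - \btext{a}_0 = M\btext{t}$ and not merely the normal equations. This holds because $M\btext{t}$ is the orthogonal projection of $\btext{x} - \btext{a}_0$ onto the column space of $M$, and for $\btext{x} \in \sigma_n \subseteq P_n$ the vector $\btext{x} - \btext{a}_0$ already lies in that column space, so the projection is exact; the uniqueness of barycentric coordinates (guaranteed by geometric independence) then identifies this $\btext{t}$ with the true coordinates. An alternative, more topological route would observe that the parameterization map from the standard closed simplex (a compact set) onto $\sigma_n$ (a Hausdorff space) is a continuous bijection, hence a homeomorphism, so its inverse—returning the barycentric coordinates—is automatically continuous. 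I would nonetheless prefer the explicit affine-formula approach, since it depends only on the already-established equivalence between geometric and linear independence and does not require invoking compactness.
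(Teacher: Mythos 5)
Your proposal is correct and takes essentially the same route as the paper's proof: both fix $\btext{a}_0$, reduce the affine relation to $\btext{x} - \btext{a}_0 = M\btext{t}$ with $M$ the matrix of relative vectors, and solve by the pseudoinverse, your $G^{-1}M^{T} = (M^{T}M)^{-1}M^{T}$ being precisely the $M^{\dagger}$ the paper invokes, after which continuity follows from affinity together with $t_0(\btext{x}) = 1 - \sum_{i=1}^{n} t_i(\btext{x})$. If anything, your explicit check that the normal-equations solution is exact because $\btext{x} - \btext{a}_0$ lies in the column space of $M$ tightens a step the paper leaves implicit.
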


\begin{proof}
By definition of $n$-simplex, any point $\btext{x} \in \sigma_n$ can be expressed uniquely as a convex combination of the points in $A = \{\btext{a}_0, \btext{a}_1, \ldots, \btext{a}_n\}$, so
\begin{equation}\label{eq:Continuity of barycentric coordinates1}
 \btext{x} = \sum_{i=0}^n t_i(\btext{x}) \btext{a}_i, \quad \text{where} t_i(\btext{x}) \geq 0, \quad \sum_{i=0}^n t_i(\btext{x}) = 1 
\end{equation}    
where scalars $t_i(\btext{x})$ are called the barycentric coordinates of $\btext{x}$ with respect to the set $A$. The uniqueness of the barycentric coordinates follows from the geometric independence of $A$, which ensures that the affine hull of $A$ is $n$-dimensional.

Let us define the matrix $M$ whose columns are $\btext{a}_1 - \btext{a}_0, \btext{a}_2 - \btext{a}_0, \ldots, \btext{a}_n - \btext{a}_0$
\[  M = \begin{bmatrix}
    | & | & \cdots & | \\
    \btext{a}_1 - \btext{a}_0 & \btext{a}_2 - \btext{a}_0 & \cdots & \btext{a}_n - \btext{a}_0 \\
    | & | & \cdots & |
    \end{bmatrix}
    \]
    
The matrix $M$ has size $ N \times n$, and its rank is $n$ due to the geometric independence of $A$. For $\btext{x} \in \sigma_n$, rewrite $\btext{x}$ in terms of $t_0, t_1, \ldots, t_n$
\[ \btext{\btext{x}} - \btext{a}_0 = \sum_{i=1}^n t_i(\btext{x})(\btext{a}_i - \btext{a}_0)  \]

In matrix form
\[\btext{x} - \btext{a}_0 = M \cdot \btext{t},\]
where $\btext{t} = [t_1(x), t_2(x), \ldots, t_n(x)]^\top$. 

Now solve for $\btext{t}$, we get 
\[\btext{t} = M^\dagger (\btext{x} - \btext{a}_0), \]
where $M^\dagger$ is the pseudoinverse of $M$  (or the inverse if $n = N$). The coordinate $t_0(\btext{x})$ is determined using the condition
\[t_0(\btext{x}) = 1 - \sum_{i=1}^n t_i(\btext{x})\]

Since $M$ is constant and non-singular, the entries of $M^\dagger$ depend continuously on $M$ (which is fixed) and linearly on $\btext{x}$. Thus, the mapping:
\[\btext{x} \mapsto M^\dagger (\btext{x} - \btext{a}_0)    \]
is a continuous function of $\btext{x}$. The barycentric coordinates $ (t_1(\btext{x}), t_2(\btext{x}), \ldots, t_n(\btext{x})) $ are linear transformations of $\btext{x}$ and hence are continuous functions. Finally, $t_0(\btext{x}) = 1 - \sum_{i=1}^n t_i(\btext{x})$ is also a continuous function since it is a linear combination of continuous functions.
\end{proof}
\begin{workingrule}[Determining the Barycentric Coordinates of a Point in a Simplex]
Let $\sigma_n$ be an $n$-simplex in $\mathbb{R}^m$ with vertex set $V(\sigma_n) = \{\btext{a}_0, \btext{a}_1, \ldots, \btext{a}_n\}$. To determine the barycentric coordinates $(t_0, t_1, \ldots, t_n)$ of a point $\btext{x} \in \sigma_n$, follow these steps:

\begin{enumerate}
\item \textbf{Formulate position vector:} Suppose the vertices of simplex $\sigma_n$ are position vectors in $\R^m$. 
\item \textbf{Define relative vectors:} Form the relative vectors w.r.t. $\btext{a}_0$ 
\[    \btext{v}_i = \btext{a}_i - \btext{a}_0 \quad \text{for } i = 1, 2, \ldots, n    \]
Also define $\btext{v} = \btext{x} - \btext{a}_0$.
\item \textbf{Construct the matrix equation:}     Express vector $\btext{v}$ as a linear combination of the $\btext{v}_i$, then write
\[\btext{v} = \sum_{i=1}^n t_i \btext{v}_i\]
This leads to the matrix equation
\[\btext{v} = M \cdot \btext{t'},\]
where $M$ is the $m \times n$ matrix with columns $\btext{v}_1, \ldots, \btext{v}_n$ and 
$\btext{t'} = \begin{bmatrix} t_1 \\ \vdots \\ t_n \end{bmatrix}$.
\item \textbf{Solve the linear system:} Solve the matrix equation
\[ \btext{v} = M \cdot \btext{t'} \]
using Gaussian elimination or another method, we will obtain $t_1, \ldots, t_n$. 
\item \textbf{Determine $t_0$:} Compute
\[t_0 = 1 - \sum_{i=1}^n t_i\]
    
\item \textbf{Verify:} Confirm these condition for all $t_i$
\[ \sum_{i=0}^n t_i = 1 \quad \text{and} \quad t_i \geq 0 \text{ for all } i\]
If above conditions are true, then $(t_0, t_1, \ldots, t_n)$ are the barycentric coordinates of $\btext{x}$ with respect to $\sigma_n$.
\end{enumerate}
\end{workingrule}

\begin{example}
Let $\sigma_2$ be a $2$-simplex in $\mathbb{R}^2$ with vertex set
\[
V(\sigma_2) = \{\btext{a}_0 = (0,0), \, \btext{a}_1 = (2,0), \, \btext{a}_2 = (0,2)\}
\]
Determine the barycentric coordinates of the point $\btext{x} = (1,1)$ with respect to $\sigma_2$.
\end{example}

\begin{solution*}
We follow the working rule
\begin{enumerate}
\item \textbf{Formulate position vectors:} Suppose the vertices of simplex $\sigma_2$ are position
vectors in $\R^2$.
    \item \textbf{Define relative vectors:} The relative vectors w.r.t. $\btext{a}_0$ as follows 
    \[
    \btext{v}_1 = \btext{a}_1 - \btext{a}_0 = (2,0), \quad
    \btext{v}_2 = \btext{a}_2 - \btext{a}_0 = (0,2),
    \]
    \[
    \btext{v} = \btext{x} - \btext{a}_0 = (1,1).
    \]

    \item \textbf{Construct the matrix equation:} The expression of vector $\btext{v}$ as a linear combination of $\btext{v}_{1}$ and $\btext{v}_{2}$ is 
    \[    \btext{v} = t_1 \btext{v}_1 + t_2 \btext{v}_2 = t_1(2,0) + t_2(0,2) = (2t_1, 2t_2)
    \]
This leads the matrix equation 
\begin{align*}
\btext{v} & = M \cdot \btext{t'} \\
\begin{bmatrix} 1 \\  1\end{bmatrix} & = 
\begin{bmatrix} 2 &  0 \\  0 & 2 \end{bmatrix} \begin{bmatrix} t_1 \\  t_2\end{bmatrix}
\end{align*}
    \item \textbf{Solve the linear system:}
    On solving of abolve matrix equation of linear system, we get 
    \[
    (2t_1, 2t_2) = (1,1) \Rightarrow
    \begin{cases}
    2t_1 = 1 \Rightarrow t_1 = \frac{1}{2} \\
    2t_2 = 1 \Rightarrow t_2 = \frac{1}{2}
    \end{cases}
    \]
    \item \textbf{Determine $t_0$:}
    \[
    t_0 = 1 - t_1 - t_2 = 1 - \frac{1}{2} - \frac{1}{2} = 0.
    \]
    \end{enumerate}
Now we can see that all $t_{i} \geq 0$ and $\sum_{i = 0}^{2}t_{i} = 1$, therefore, the barycentric coordinates of $\btext{x} = (1,1)$ with respect to $\sigma_2$ are
\[    (t_0, t_1, t_2) = \left(0, \frac{1}{2}, \frac{1}{2}\right)\]
\end{solution*}

\begin{example}
Let $\sigma_3$ be a 3-simplex in $\mathbb{R}^4$ with vertex set
\[
V(\sigma_3) = \{\btext{a}_0 = (0, 0, 0, 0), \, \btext{a}_1 = (1, 0, 0, 0), \, \btext{a}_2 = (0, 1, 0, 0), \, \btext{a}_3 = (0, 0, 1, 1)\}.
\]
Determine the barycentric coordinates of the point $\btext{x} = \left(\frac{1}{4}, \frac{1}{4}, \frac{1}{4}, \frac{1}{4}\right)$.
\end{example}

\begin{solution*}
We follow the working rule step-by-step:
\begin{enumerate}
\item Suppose the vertices of simplex $\sigma_3$ are position vectors in $\R^4$. 
    \item The relative vectors w.r.t. $\btext{a}_0$ as follows
    \begin{gather*}
    \btext{v}_1 = \btext{a}_1 - \btext{a}_0 = (1, 0, 0, 0), \,
    \btext{v}_2 = \btext{a}_2 - \btext{a}_0 = (0, 1, 0, 0), \,
    \btext{v}_3 = \btext{a}_3 - \btext{a}_0 = (0, 0, 1, 1), \\
    \btext{v} = \btext{x} - \btext{a}_0 = \left(\frac{1}{4}, \frac{1}{4}, \frac{1}{4}, \frac{1}{4}\right)     
    \end{gather*}
    
    \item The expression of vector $\btext{v}$ as linear combination of $\btext{v}_1, \btext{v}_2$ and $\btext{v}_3$ is 
    \[    \btext{v} = t_1 \btext{v}_1 + t_2 \btext{v}_2 + t_3 \btext{v}_3    \]
    This leads the matrix equation
    \begin{align*}
\btext{v} & = M \cdot \btext{t'} \\
\begin{bmatrix} \tfrac{1}{4} \\  \tfrac{1}{4} \\ \tfrac{1}{4} \\ \tfrac{1}{4} \end{bmatrix} & = 
\begin{bmatrix} 1 &  0 & 0 \\  0 & 1 & 0 \\ 0 & 0 & 1 \\ 0 & 0 & 1 \end{bmatrix} \begin{bmatrix} t_1 \\  t_2 \\ t_3 \end{bmatrix}
\end{align*}
    \item On solving of above matrix equation of linear system we, 
    \[t_1 = \frac{1}{4}, t_2 = \frac{1}{4}, t_3 = \frac{1}{4}\]
    and 
    \[    t_0 = 1 - t_1 - t_2 - t_3 = 1 - \frac{1}{4} - \frac{1}{4}  - \frac{1}{4} = \frac{1}{4}   \]
\end{enumerate}
Now we can see that all $t_{i} \geq 0$ and $\sum_{i = 0}^{3}t_{i} = 1$, therefore, the barycentric coordinates of  $\btext{x} = \left(\frac{1}{4}, \frac{1}{4}, \frac{1}{4}, \frac{1}{4}\right)$ with respect to $\sigma_3$ are
\[\left(t_0, t_1, t_2, t_3\right) = \left( \frac{1}{4}, \frac{1}{4}, \frac{1}{4}, \frac{1}{4} \right)\]
\end{solution*}

The structure of an $n$-simplex can be better understood by studying how it is built from lower-dimensional simplices. One particularly intuitive and foundational idea is that an $n$-simplex can be constructed as a cone over an $(n-1)$-simplex. The following theorem captures this recursive geometric nature of simplices. It allows us to decompose higher-dimensional simplices into simpler parts and visualize them through familiar structures like line segments and lower-dimensional simplices.

\begin{theorem}[Conical Construction of an $n$-Simplex from an $(n-1)$-Simplex]\label{t:Conical Construction of an n Simplex}
The $n$-simplex $\sigma_n$ is equal to union of all line segments joining $\btext{a}_0$ to the points of the simplex $\sigma_{n-1}$ spanned by $\btext{a}_1, \btext{a}_2, \ldots, \btext{a}_n$. Two such line segments intersect only in the point $\btext{a}_0$.
\end{theorem}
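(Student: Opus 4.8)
The plan is to prove the asserted set equality by double inclusion and then treat the intersection claim separately, with the uniqueness of barycentric coordinates doing the essential work throughout. Write $L = \bigcup_{\btext{p} \in \sigma_{n-1}} [\btext{a}_0, \btext{p}]$ for the union of all the segments, where $[\btext{a}_0, \btext{p}] = \{(1-s)\btext{a}_0 + s\btext{p} : s \in [0,1]\}$, and recall from Definition \eqref{d:Face of Simplex} that $\sigma_{n-1}$ is the face spanned by $\btext{a}_1, \ldots, \btext{a}_n$, which is geometrically independent as a subset of $A$.

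For the inclusion $L \subseteq \sigma_n$, I would take an arbitrary $\btext{p} = \sum_{i=1}^n u_i \btext{a}_i \in \sigma_{n-1}$, so that $u_i \geq 0$ and $\sum_{i=1}^n u_i = 1$, and a point $\btext{y} = (1-s)\btext{a}_0 + s\btext{p}$ on the segment. Substituting gives $\btext{y} = (1-s)\btext{a}_0 + \sum_{i=1}^n (s u_i)\btext{a}_i$; the coefficients are nonnegative and sum to $(1-s) + s = 1$, so $\btext{y} \in \sigma_n$ directly from Definition \eqref{d:n-simplex}. This step is routine.

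For the reverse inclusion $\sigma_n \subseteq L$, I would start from $\btext{x} = \sum_{i=0}^n t_i \btext{a}_i \in \sigma_n$ and rescale the coordinates indexed $1, \ldots, n$. If $t_0 = 1$ then all other coefficients vanish and $\btext{x} = \btext{a}_0 \in L$. Otherwise set $s = 1 - t_0 > 0$ and $u_i = t_i / s$; using $\sum_{i=1}^n t_i = 1 - t_0 = s$ one checks $\sum_{i=1}^n u_i = 1$ and $u_i \geq 0$, so $\btext{p} = \sum_{i=1}^n u_i \btext{a}_i \in \sigma_{n-1}$ and $\btext{x} = (1-s)\btext{a}_0 + s\btext{p}$ lies on $[\btext{a}_0, \btext{p}]$. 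This normalization is the only slightly delicate computation, the point to watch being the degenerate case $t_0 = 1$ where no genuine rescaling is available and $\btext{x}$ must be identified with the cone apex directly.

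The intersection claim is where uniqueness of barycentric coordinates is indispensable. Given distinct $\btext{p} = \sum_{i=1}^n u_i \btext{a}_i$ and $\btext{q} = \sum_{i=1}^n w_i \btext{a}_i$ in $\sigma_{n-1}$ together with a common point $\btext{z} = (1-s)\btext{a}_0 + s\btext{p} = (1-s')\btext{a}_0 + s'\btext{q}$, I would expand both expressions in terms of $\btext{a}_0, \ldots, \btext{a}_n$ and invoke geometric independence, via the uniqueness remark following Definition \eqref{d:n-plane spanned by a geometrically independent set}, to equate coefficients: matching the $\btext{a}_0$-coefficients forces $s = s'$, and matching the remaining coefficients forces $s u_i = s w_i$ for every $i$. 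If $\btext{z} \neq \btext{a}_0$ then $s \neq 0$, whence $u_i = w_i$ for all $i$ and $\btext{p} = \btext{q}$, a contradiction; thus the only shared point is $\btext{a}_0$. The main obstacle is purely bookkeeping, namely keeping the two parametrizations' coefficients straight and recognizing that the entire argument collapses to the uniqueness of the barycentric expansion, which holds precisely because $A$ is geometrically independent.
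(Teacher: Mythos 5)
Your proposal is correct and follows essentially the same route as the paper: the set equality comes from rescaling barycentric coordinates to write $\btext{x} = t_0\btext{a}_0 + (1-t_0)\btext{p}$ with $\btext{p} \in \sigma_{n-1}$, and the intersection claim comes from equating affine coefficients, which the paper phrases as geometric independence of $\btext{a}_0, \btext{y}_1, \btext{y}_2$ and you phrase, equivalently, as uniqueness of the barycentric expansion with respect to the full vertex set $A$. If anything, your treatment is slightly more careful than the paper's at the degenerate apex: you explicitly handle $t_0 = 1$ in the decomposition and the case $\btext{z} = \btext{a}_0$ (where $s = 0$ and no conclusion about $\btext{p}, \btext{q}$ is possible) in the intersection argument, whereas the paper's ``if and only if'' assertion silently overlooks this case.
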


\begin{proof}
The $n$-simplex $\sigma_n$ is the convex hull of $(n+1)$ geometrically independent points $A = \{\btext{a}_0, \btext{a}_1, \ldots, \btext{a}_n\}$
\begin{equation}\label{eq:Simplex is unions of other simplex1}
\sigma_n = \text{conv}(A) = \left\{\btext{x} \in \R^N \colon \btext{x} = \sum_{i=0}^n t_{i} \btext{a}_i, \; t_i \geq 0, \; \sum_{i=0}^n t_i = 1 \right\}
\end{equation}

The subset $\sigma_{n-1} \subset \sigma_n$ is the simplex spanned by the $n$ points $\{\btext{a}_1, \btext{a}_2, \ldots, \btext{a}_n\}$
\begin{equation}\label{eq:Simplex is unions of other simplex2}
 \sigma_{n-1} = \text{conv}(\{\btext{a}_1, \btext{a}_2, \ldots, \btext{a}_n\}) = \left\{\btext{y}\in \R^N \colon \btext{y} = \sum_{i=1}^n t_{i}\btext{a}_i, \; t_i \geq 0, \; \sum_{i=1}^n t_i = 1 \right\}   
\end{equation}

Any point $\btext{x} \in \sigma_n$ can be expressed as
\[\btext{x} = t_0 \btext{a}_0 + (1 - t_0)\btext{y}, \, \text{where}\, \btext{y} \in \sigma_{n-1}, \; t_0 \in [0, 1].   \]

Rearranging, we have
\[\btext{x} = (1 - t_0)\btext{y} + t_0 \btext{a}_0,\]
which is a convex combination of $\btext{y} \in \sigma_{n-1}$ and $\btext{a}_0$. Thus, $\btext{x}$ lies on the line segment joining $\btext{a}_0$ to some point $\btext{y} \in \sigma_{n-1}$.

Taking the union of all such line segments for $\btext{y} \in \sigma_{n-1}$, we obtain
\[ \sigma_n = \bigcup_{\btext{y} \in \sigma_{n-1}} \text{conv}(\{\btext{a}_0, \btext{y}\})\]

Consider two distinct points $\btext{y}_1, \btext{y}_2 \in \sigma_{n-1}$ such that $\btext{y}_1 \neq \btext{y}_2$.  The line segment joining $\btext{a}_0$ to $\btext{y}_1$ is:
\[L_1 = \{\btext{x} \in \R^N \colon \btext{x} = t \btext{a}_0 + (1 - t)\btext{y}_1, \; 0 \leq t \leq 1\}\]

Similarly, the line segment joining $\btext{a}_0$ to $\btext{y}_2$ is:
\[L_2 = \{\btext{x} \in \R^N \colon \btext{x} = t \btext{a}_0 + (1 - t)\btext{y}_2, \; 0 \leq t \leq 1\}\]

If $L_1 \cap L_2 \neq \emptyset$, then there exists some $\btext{x} \in L_1 \cap L_2$ such that
\[\btext{x} = t_1 \btext{a}_0 + (1 - t_1)\btext{y}_1 = t_2 \btext{a}_0 + (1 - t_2)\btext{y}_2,   \]
where $0 \leq t_1, t_2 \leq 1$. Rearranging, we find
\[(t_1 - t_2)\btext{a}_0 = (1 - t_2)\btext{y}_2 - (1 - t_1)\btext{y}_1\]

Since $\btext{a}_0, \btext{y}_1,$ and $\btext{y}_2$ are geometrically independent, this equation holds if and only if
\[t_1 = t_2 \quad \text{and} \quad \btext{y}_1 = \btext{y}_2\]

Hence, the only point of intersection between $L_1$ and $L_2$ is $\btext{a}_0$.

The $n$-simplex $\sigma_n$ is the union of all line segments joining $\btext{a}_0$ to the points of $\sigma_{n-1}$
\[\sigma_n = \bigcup_{\btext{y} \in \sigma_{n-1}} \text{conv}(\{\btext{a}_0, \btext{y}\}). \]

Two such line segments intersect only at $\btext{a}_0$. 
\end{proof}
\begin{example}
Let the 2-simplex $\sigma_2$ in $\mathbb{R}^3$ with vertex set is $V(\sigma_2) = \{\btext{a}_0 = (0, 0, 0), \btext{a}_1= (1, 0, 0), \btext{a}_2 = (0, 1, 0)\}$. Show that $\sigma_2$ can be viewed as the union of all line segments connecting $\btext{a}_0$ to every point on the 1-simplex $\sigma_1$ spanned by vertex set $V(\sigma_1) = \{\btext{a}_1, \btext{a}_2\}$.
\end{example}

\begin{solution*}
\begin{figure}[h]
\centering
\begin{tikzpicture}[scale=5]

\coordinate (A0) at (0, 0);
\coordinate (A1) at (1.2, 0.2);
\coordinate (A2) at (0.6, 1.1);

\fill[blue!10] (A0) -- (A1) -- (A2) -- cycle;

\draw[thick] (A0) -- (A1);
\draw[thick] (A0) -- (A2);
\draw[thick] (A1) -- (A2);

\foreach \t in {0.2, 0.4, 0.6, 0.8} {
    \coordinate (P) at ($ (A1)!\t!(A2) $);
    \draw[dashed, gray] (A0) -- (P);
}

\filldraw[red] (A0) circle (0.7pt) node[below left] {$\bm{a}_0$};
\filldraw[black] (A1) circle (0.7pt) node[below right] {$\bm{a}_1$};
\filldraw[black] (A2) circle (0.7pt) node[above] {$\bm{a}_2$};
\node at (0.5, 0.3) {$\sigma_2$};

\end{tikzpicture}
\caption{Constructing the 2-simplex $\sigma_2$ from $\bm{a}_0$ and the edge $[\bm{a}_1, \bm{a}_2]$}
\label{fig:simplex2}
\end{figure}
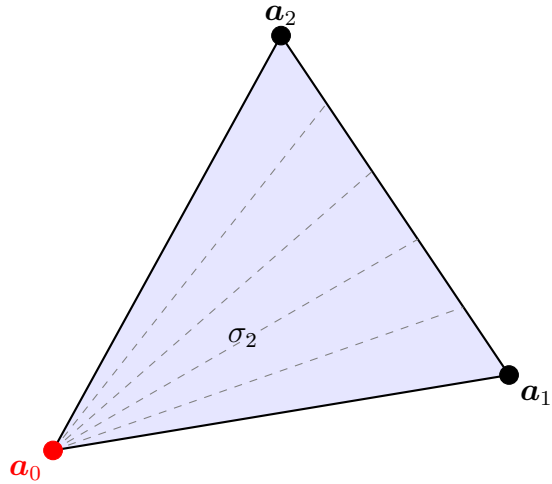
The 1-simplex is the set of all convex combinations defined as 
\begin{align*}
\sigma_1 & = \{\btext{y} \in \R^3 \colon  \btext{y} = t\btext{a}_1 + (1 - t)\btext{a}_2, \, t \in [0, 1]\} \\   
& = \{\btext{y} \in \R^3 \colon  \btext{y} = t(1,0, 0)+ (1 - t)(0, 1, 0), \, t \in [0, 1]\} \\
& = \{\btext{y} \in \R^3 \colon  \btext{y} = (t, 1-t, 0), \, t \in [0, 1]\}
\end{align*}
A line segment from $\btext{a}_0$ to a point $\btext{y}_{0} \in \sigma_1$ can write as
\begin{align*}
L_{\btext{a}_0 \to \btext{y}_0} & = \{\btext{x} \in \R^3 \colon \btext{x} = s\btext{a}_{0} + (1 - s)\btext{y}_0, \, s \in [0, 1]\}\\
& = \{\btext{x} \in \R^3 \colon \btext{x} = s(1, 0, 0) + (1 - s)(t, 1-t, 0), \, s, t \in [0, 1]\} \\
& = \{\btext{x} \in \R^3 \colon \btext{x} = (s+t-ts, 1- s - t + st, 0), \, s, t \in [0, 1]\}
\end{align*}
Therefore, collection of all line segments from $\btext{a}_0$ to each point $\btext{y} \in \sigma_1$ actual gives the 2-simplex $\sigma_2$. So,
\[\sigma_2 = \bigcup_{t \in [0, 1]} \{s\btext{a}_0 + (1 - s)\btext{y}\colon \btext{y}= (t, 1- t, 0), s \in [0, 1]\}\]
\end{solution*}

The concept of convexity plays a foundational role in the geometry and topology of simplices. A set in $\R^n$ is said to be convex if, for any two points in the set, the entire line segment joining them also lies within the set. This property ensures that the shape has no ``dents" or ``holes" and is geometrically well-behaved. The $n$-simplex, defined as the set of all convex combinations of its $n+1$ geometrically independent vertices, naturally inherits this convex structure. Understanding the convexity of simplices not only confirms their geometric coherence but also establishes a crucial bridge to more advanced constructions in simplicial complexes, convex polytopes, and topological spaces.

\begin{definition}[Convex set in $\R^N$]\label{d:Convex set in R^N}
A subset $A$ of $\R^N$ is called convex if for each pair $\btext{x}, \btext{y}$ of points $A$, the line segments joining them lies in $A$. Symbolically, $A$ is convex if 
\begin{equation}\label{eq:Condition for convexity of set in R^N}
(1 - t)\btext{x} + t\btext{y} \in A, \forall \, \btext{x}, \btext{y} \in A \, \text{and} \, \forall \, t \in [0, 1]
\end{equation}
where the expression $(1 - t)\btext{x} + t\btext{y}$ represents a point on the line segment between points $\btext{x}$ and $\btext{y}$ of $A$. The parameter $t$ determines how far along the line segment the point lies. If $t = 0$, the point is at $\btext{x}$, if $t = 1$, the point is at $\btext{y}$ and if $t \in (0, 1)$, the point lies somewhere between $\btext{x}$ and $\btext{y}$.
\end{definition}

\begin{example}[Examples of Convex Sets in $\R^N$]\label{eg:Examples of Convex Sets in RN}
The set $\R^N$, line segments, triangle  (including its interior), a circle or sphere (including the interior) and  convex polygon or polyhedron are convex sets in $\R^N$. 
\end{example}

\begin{example}[Non-Examples of Convex Sets in $\R^N$]\label{eg:Non-Examples of Convex Sets in RN}
A crescent or ``moon-shaped" region, a set with a ``hole" or gap (e.g., a circle with its interior removed) and two disjoint sets.
\end{example}

\begin{theorem}[Convexity of $n$-simplex]\label{t:Convexity of n-simplex}
The $n$-simplex $\sigma_n$ spanned by the points of the $A = \{\btext{a}_0, \btext{a}_1, \ldots, \btext{a}_n\}$ is convex in $\R^N$. 
\end{theorem}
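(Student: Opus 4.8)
The plan is to give a direct proof straight from the definition of convexity (Definition \ref{d:Convex set in R^N}) and the barycentric description of the simplex. Since $\sigma_n$ is defined as the set of all convex combinations of the vertices $\btext{a}_0, \ldots, \btext{a}_n$, I expect the whole argument to reduce to checking that a convex combination of two convex combinations is again a convex combination. There is no genuine obstacle here; the only thing requiring care is keeping the two sets of barycentric coefficients straight and verifying the two defining constraints (nonnegativity and summing to $1$) survive the operation.

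First I would fix two arbitrary points $\btext{x}, \btext{y} \in \sigma_n$ and write them in barycentric form as $\btext{x} = \sum_{i=0}^n t_i \btext{a}_i$ and $\btext{y} = \sum_{i=0}^n s_i \btext{a}_i$, where both coefficient families satisfy $t_i, s_i \geq 0$, $\sum_{i=0}^n t_i = 1$, and $\sum_{i=0}^n s_i = 1$. Then for an arbitrary parameter $\lambda \in [0,1]$ I would form the point on the joining segment,
\[
\btext{z} = (1-\lambda)\btext{x} + \lambda \btext{y} = \sum_{i=0}^n \big[(1-\lambda)t_i + \lambda s_i\big]\,\btext{a}_i,
\]
and set $r_i = (1-\lambda)t_i + \lambda s_i$ as the candidate barycentric coordinates of $\btext{z}$.

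The next step is to verify that $(r_0, \ldots, r_n)$ satisfies the two conditions in the definition of $\sigma_n$. Nonnegativity is immediate: each of $1-\lambda$, $\lambda$, $t_i$, $s_i$ is nonnegative, so $r_i \geq 0$ for every $i$. For the normalization I would compute
\[
\sum_{i=0}^n r_i = (1-\lambda)\sum_{i=0}^n t_i + \lambda \sum_{i=0}^n s_i = (1-\lambda)\cdot 1 + \lambda \cdot 1 = 1.
\]
Hence $\btext{z}$ is expressed as a convex combination of the vertices $\btext{a}_0, \ldots, \btext{a}_n$ with coefficients summing to $1$, so $\btext{z} \in \sigma_n$. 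Since $\btext{x}, \btext{y} \in \sigma_n$ and $\lambda \in [0,1]$ were arbitrary, the segment joining any two points of $\sigma_n$ lies in $\sigma_n$, which is exactly the convexity condition \eqref{eq:Condition for convexity of set in R^N}. I would close by remarking that geometric independence of $A$ is not needed for this argument; it only guarantees uniqueness of the coordinates, whereas convexity follows purely from the closure of the constraints $t_i \geq 0$ and $\sum t_i = 1$ under taking convex combinations.
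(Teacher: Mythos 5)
Your proof is correct and follows essentially the same route as the paper's: write $\btext{x}$ and $\btext{y}$ in barycentric form, combine the coefficient families as $(1-\lambda)t_i + \lambda s_i$, and verify nonnegativity and normalization to conclude the segment lies in $\sigma_n$. Your closing remark that geometric independence of $A$ is not needed for convexity (only for uniqueness of the coordinates) is a correct observation that the paper does not make explicit.
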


\begin{proof}
Let $A = \{\btext{a}_0, \btext{a}_1, \ldots, \btext{a}_n\}$ is geometrically independent subset in $\R^N$ and $\sigma_n$ is $n$-simplex spanned by the points $\btext{a}_0, \btext{a}_1, \ldots, \btext{a}_n$ of $A$, then by Definition \eqref{d:n-simplex}
\begin{equation}\label{eq:Convexity of n-simplex1}
 \sigma_n = \big \{\btext{x} \in \R^N \colon \btext{x} = \sum_{i = 0}^{n}t_{i}\btext{a}_{i}, \, \text{where} \, \sum_{i = 0}^{n}t_{i} = 1 \, \text{and} \, t_{i} \geq 0, \, \forall i \big\} 
\end{equation}

Let $\btext{x}, \btext{y} \in \sigma_n$, then by \eqref{eq:Convexity of n-simplex1} 
\begin{equation}\label{eq:Convexity of n-simplex2}
\btext{x} = \sum_{i = 0}^{n}\alpha_{i}\btext{a}_{i}, \, \text{where} \, \sum_{i = 0}^{n}\alpha_{i} = 1 \, \text{and} \, \alpha_{i} \geq 0, \, \forall i    
\end{equation}

\begin{equation}\label{eq:Convexity of n-simplex3}
\btext{y} = \sum_{i = 0}^{n}\beta_{i}\btext{a}_{i}, \, \text{where} \, \sum_{i = 0}^{n}\beta_{i} = 1 \, \text{and} \, \beta_{i} \geq 0, \, \forall i \\
\end{equation}

Let us consider a point $\btext{z}$ on the line segment between $\btext{x}$ and $\btext{y}$, given by 
\begin{equation}\label{eq:Convexity of n-simplex4}
\btext{z} = (1- t)\btext{x} + t\btext{y}, \, t \in [0, 1]
\end{equation}

Substituting $\btext{x}$ and $\btext{y}$ into this equation \eqref{eq:Convexity of n-simplex4}, we get
\begin{align}\label{eq:Convexity of n-simplex5}
  \btext{z} & = (1 - t)\sum_{i = 0}^{n}\alpha_{i}\btext{a}_{i} + t\sum_{i = 0}^{n}\beta_{i}\btext{a}_{i} \notag \\  
  \btext{z} & = \sum_{i = 0}^{n}[(1 - t)\alpha_{i} + t\beta_{i}]\btext{a}_{i} 
\end{align}

Let $\gamma_{i} = (1 - t)\alpha_{i} + t\beta_{i}$ for all $i$, then \eqref{eq:Convexity of n-simplex5}, gives 
\begin{equation}\label{eq:Convexity of n-simplex6}
\btext{z} = \sum_{i = 0}^{n}\gamma_{i}\btext{a}_{i}
\end{equation}

By by \eqref{eq:Convexity of n-simplex2} and \eqref{eq:Convexity of n-simplex3} $\alpha_{i}, \beta_{i} \geq 0$ for all $i$ and $t \in [0, 1]$, therefore $\gamma_{i} \geq 0$ for all $i$.

\begin{align}\label{eq:Convexity of n-simplex7}
    \sum_{i = 0}^{n}\gamma_{i} & = \sum_{i = 0}^{n}[(1 - t)\alpha_{i} + t\beta_{i}] \notag \\
    & = (1 - t) \sum_{i = 0}^{n}\alpha_{i} + t\sum_{i = 0}^{n}\beta_{i} \notag \\
    & = (1 - t) \cdot 1 + t \cdot 1 \quad (\text{by} \, \eqref{eq:Convexity of n-simplex2} \, \text{and}\, \eqref{eq:Convexity of n-simplex3}) \notag \\
    & = 1- t + t \notag \\
 \sum_{i = 0}^{n}\gamma_{i}   & = 1
\end{align}

So, we have a point $\btext{z} = \sum_{i = 0}^{n}\gamma_{i}\btext{a}_{i}$ such that $\sum_{i = 0}^{n}\gamma_{i}  = 1$ and $\gamma \geq 0$ for all $i$, therefore by Definition \eqref{eq:Convexity of n-simplex1}, $\btext{z} \in \sigma_n$. Now, if $\btext{x}, \btext{y} \in \sigma_{n}$, then there exist a arbitrary point $\btext{z} \in \sigma_n$ such that $\btext{z} = (1 - t)\btext{x} + t\btext{y}$, where $t \in [0, 1]$. Therefore by the Definition of convexity of set \eqref{d:Convex set in R^N}, the $n$-simplex $\sigma_n$ is convex in $\R^N$. 
\end{proof}

\begin{theorem}[Compactness of $n$-simplex]\label{t:Compactness of n-simplex}
The $n$-simplex $\sigma_n$ spanned by the points of the $A = \{\btext{a}_0, \btext{a}_1, \ldots, \btext{a}_n\}$ is compact in $\R^N$.
\end{theorem}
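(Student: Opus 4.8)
The plan is to reduce the statement to the Heine--Borel criterion: since $\R^N$ is a metric space (Theorem~\eqref{t:Euclidean Space as Metric Space}), a subset is compact precisely when it is both closed and bounded. I would therefore verify these two properties of $\sigma_n$ in turn.

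Boundedness is the routine half. For any point $\btext{x} = \sum_{i=0}^n t_i \btext{a}_i \in \sigma_n$, the triangle inequality for the Euclidean norm together with the constraints $t_i \geq 0$ and $\sum_{i=0}^n t_i = 1$ gives
\[
\|\btext{x}\| = \Big\| \sum_{i=0}^n t_i \btext{a}_i \Big\| \leq \sum_{i=0}^n t_i \|\btext{a}_i\| \leq \max_{0 \leq i \leq n} \|\btext{a}_i\|.
\]
Writing $M = \max_i \|\btext{a}_i\|$, this shows $\sigma_n$ is contained in the closed ball of radius $M$ about $\btext{0}$, so $\sigma_n$ is bounded.

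The main obstacle will be closedness, and here I would lean on the Continuity of Barycentric Coordinates (Theorem~\eqref{t:Continuity of Barycentric Coordinates}). Two facts combine: first, the affine plane $P_n$ spanned by $A$ is an affine subspace and hence closed in $\R^N$; second, on $P_n$ each barycentric coordinate $t_i(\btext{x})$ is a continuous function of $\btext{x}$. Since $\sigma_n$ consists exactly of the points of $P_n$ all of whose barycentric coordinates are nonnegative, I can write $\sigma_n = \bigcap_{i=0}^n t_i^{-1}\big([0,\infty)\big)$, a finite intersection of preimages of the closed set $[0,\infty)$ under continuous maps, each such preimage being closed in $P_n$. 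A finite intersection of closed sets is closed in $P_n$, and $P_n$ is closed in $\R^N$, so $\sigma_n$ is closed in $\R^N$. With both properties in hand, Heine--Borel yields compactness.

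As an alternative worth noting, one may instead realize $\sigma_n$ as a continuous image of a manifestly compact set: the standard parameter simplex $\Delta = \{\btext{t} \in \R^{n+1} : \sum_{i=0}^n t_i = 1,\ t_i \geq 0\}$ is closed and bounded, hence compact by Heine--Borel, and the affine map $\phi(\btext{t}) = \sum_{i=0}^n t_i \btext{a}_i$ is continuous with $\phi(\Delta) = \sigma_n$. Since continuous images of compact sets are compact, this gives a second, coordinate-free route to the same conclusion, and I would likely record it as a remark.
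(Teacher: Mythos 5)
Your proposal is correct, and while the boundedness half coincides with the paper's (same triangle-inequality estimate with $M = \max_i \|\btext{a}_i\|$), your closedness argument takes a genuinely different route. The paper proves closedness sequentially: given $\btext{x}_k \to \btext{x}$ with $\btext{x}_k \in \sigma_n$, it extracts convergent subsequences of the barycentric coordinate sequences $t_i^{(k)}$ via Bolzano--Weierstrass and passes to the limit. You instead argue topologically: the affine plane $P_n$ is closed in $\R^N$, the barycentric coordinates are continuous on it, and $\sigma_n = \bigcap_{i=0}^n t_i^{-1}\bigl([0,\infty)\bigr)$ is a finite intersection of closed preimages. Your version buys two things: it reuses Theorem~\eqref{t:Continuity of Barycentric Coordinates} rather than redoing a limit argument, and it sidesteps a bookkeeping subtlety in the paper's proof, namely that one must extract a \emph{single} subsequence along which all $n+1$ coordinate sequences converge simultaneously (e.g.\ by working in $\R^{n+1}$ or diagonalizing), which the paper glosses over by treating each coordinate separately. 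One caveat: the paper states continuity of $t_i(\btext{x})$ only for $\btext{x} \in \sigma_n$, whereas you need it on all of $P_n$; this is harmless, since the explicit formula $\btext{t} = M^\dagger(\btext{x} - \btext{a}_0)$ in the paper's proof is linear and valid on the whole plane, but you should say so explicitly. Your alternative route --- exhibiting $\sigma_n$ as the image of the standard parameter simplex $\Delta \subset \R^{n+1}$ under the continuous affine map $\phi(\btext{t}) = \sum_{i=0}^n t_i \btext{a}_i$ and invoking that continuous images of compact sets are compact --- is the cleanest of the three arguments and entirely correct; it trades the Heine--Borel verification on $\sigma_n$ itself for one on the simpler set $\Delta$, and it is worth recording as you suggest.
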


\begin{proof}\hfill
\begin{enumerate}
\item \textbf{Boundedness of $\sigma_n$:} Since $\btext{x} \in \sigma_n$ is a convex combination of the points $\btext{a}_0, \btext{a}_1, \ldots, \btext{a}_n$, we can write
\[
\btext{x} = \sum_{i=0}^n t_{i}\btext{a}_i, \,\text{where} \, \sum_{i=0}^n t_i = 1, \; t_i \geq 0.
\]
Let $M = \max\{\|\btext{a}_0\|, \|\btext{a}_1\|, \ldots, \|\btext{a}_n\|\}$, where $\|\cdot\|$ denotes the Euclidean norm in $\R^N$. Then
\[\|\btext{x}\| \leq \sum_{i=0}^n t_i \|\btext{a}_i\| \leq \sum_{i=0}^n t_i M = M \sum_{i=0}^n t_i = M\]
Thus, $\sigma_n$ is bounded because all points $\btext{x} \in \sigma_n$ lie within a ball of radius $M$ centered at the origin.
\item \textbf{Closeness of $\sigma_n$:} Let $\{\btext{x}_k\}_{k=1}^\infty$ be a sequence of points in $\sigma_n$ that converges to some $\btext{x} \in \R^N$. For each $\btext{x}_k$, there exist barycentric coordinates $\{t_i^{(k)}\}_{i=0}^n$ such that
\[\btext{x}_k = \sum_{i=0}^n t_i^{(k)} \btext{a}_i, \, t_i^{(k)} \geq 0, \quad \sum_{i=0}^n t_i^{(k)} = 1\]
Since $t_i^{(k)} \in [0, 1]$, the sequence $\{t_i^{(k)}\}_{k=1}^\infty$ is bounded. By the Bolzano-Weierstrass Theorem, there exists a convergent subsequence $\{t_i^{(k_j)}\}_{j=1}^\infty$ with limit $t_i$. The limit $\{t_i\}_{i=0}^n$ satisfies
\[t_i \geq 0, \quad \sum_{i=0}^n t_i = 1\]
Thus, the limit point $\btext{x}$ can be written as
\[\btext{x} = \sum_{i=0}^n t_i \btext{a}_i\]
Since $\btext{x} \in \sigma_n$, the set $\sigma_n$ is closed.
\end{enumerate}
The $n$-simplex $\sigma_n$ is a subset of $\R^N$ that is both bounded and closed. By the Heine-Borel Theorem, any closed and bounded subset of $\R^N$ is compact. Therefore the $n$-simplex $\sigma_n$ is compact in $\R^N$. 
\end{proof}

\begin{theorem}\label{t:Simplex is equal to intersection of all convex in Rn}
Let $\sigma_n$ is $n$-simplex spanned by the geometrically independent set $A = \{\btext{a}_0, \btext{a}_1, \ldots, \btext{a}_n\} \subset \R^N$. If $\sigma_n$ is convex and compact in $\R^N$, then $\sigma_n$ is equal to the intersection of all convex sets in $\R^N$ containing $\btext{a}_0, \btext{a}_1, \ldots, \btext{a}_n$.
\end{theorem}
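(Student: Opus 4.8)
The plan is to prove the set equality by a double inclusion. Write $C$ for the intersection of all convex subsets of $\R^N$ that contain the vertices, that is, $C = \bigcap \{K \subseteq \R^N : K \text{ is convex and } \{\btext{a}_0, \ldots, \btext{a}_n\} \subseteq K\}$. This family is nonempty (for instance $\R^N$ belongs to it), so $C$ is well defined; and since an arbitrary intersection of convex sets is again convex, $C$ is itself a convex set containing every vertex. Only the convexity hypothesis on $\sigma_n$ is actually needed for the argument; compactness will not enter.

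First I would establish $C \subseteq \sigma_n$. By Theorem \eqref{t:Convexity of n-simplex} the simplex $\sigma_n$ is convex, and each vertex $\btext{a}_j$ lies in $\sigma_n$, since choosing $t_j = 1$ and $t_i = 0$ for $i \neq j$ gives barycentric coordinates meeting the conditions of Definition \eqref{d:n-simplex}. Hence $\sigma_n$ is one of the convex sets in the defining family of $C$, and because $C$ is contained in every member of that family, we obtain $C \subseteq \sigma_n$.

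The substantive inclusion is $\sigma_n \subseteq C$, for which it suffices to show that every convex set $K$ containing $\btext{a}_0, \ldots, \btext{a}_n$ also contains each convex combination $\btext{x} = \sum_{i=0}^n t_i \btext{a}_i$ with $t_i \geq 0$ and $\sum_{i=0}^n t_i = 1$. The key lemma is that a convex set is closed under convex combinations of arbitrarily many of its points, which I would prove by induction on the number of points involved. The base case of two points is exactly the defining property in Definition \eqref{d:Convex set in R^N}. For the inductive step, given a combination $\btext{x} = \sum_{i=0}^{k} t_i \btext{a}_i$ of $k+1$ points, if $t_k = 1$ then $\btext{x} = \btext{a}_k \in K$; otherwise set $s = 1 - t_k > 0$ and write
\[
\btext{x} = s \left( \sum_{i=0}^{k-1} \frac{t_i}{s}\, \btext{a}_i \right) + t_k \btext{a}_k.
\]
The inner point $\btext{y} = \sum_{i=0}^{k-1} \frac{t_i}{s}\, \btext{a}_i$ has nonnegative coefficients summing to $1$, so by the inductive hypothesis $\btext{y} \in K$; then $\btext{x} = s\btext{y} + t_k \btext{a}_k$ lies on the segment joining $\btext{y}, \btext{a}_k \in K$, whence $\btext{x} \in K$ by convexity. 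Applying the lemma to the vertices gives $\sigma_n \subseteq K$ for every such $K$, and intersecting over all of them yields $\sigma_n \subseteq C$.

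I expect the main obstacle to be the careful bookkeeping in the inductive step, in particular the renormalization by $s = 1 - t_k$: one must separate out the degenerate case $t_k = 1$ so that division is legitimate, and then verify that the rescaled coefficients $t_i/s$ are again nonnegative and sum to $1$ so that the inductive hypothesis genuinely applies. Once this lemma is secured, combining the inclusions $C \subseteq \sigma_n$ and $\sigma_n \subseteq C$ gives $\sigma_n = C$, identifying the simplex with the intersection of all convex sets containing its vertices, that is, with the convex hull of $\btext{a}_0, \ldots, \btext{a}_n$.
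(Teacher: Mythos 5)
Your proof is correct and follows essentially the same double-inclusion route as the paper: one inclusion comes from observing that $\sigma_n$ is itself a convex set containing the vertices $\btext{a}_0, \ldots, \btext{a}_n$, and the other from the fact that any convex set containing the vertices must contain all their convex combinations. The only differences are gains in rigor on your side --- you actually prove the closure of convex sets under finite convex combinations by induction (with the needed case split at $t_k = 1$), a fact the paper merely asserts, and you correctly note that the compactness hypothesis is never used.
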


\begin{proof}
As given $\sigma_n$ is $n$-simplex spanned by the $A = \{\btext{a}_0, \btext{a}_1, \ldots, \btext{a}_n\} \subset \R^N$. Let $\mcal{C}$ is collection of all convex set in $\R^N$ that contain the points $ \{\btext{a}_0, \btext{a}_1, \ldots, \btext{a}_n\}$. Now our aim is to show that $\sigma_n = \bigcap \mcal{C}$ i.e. $\sigma_n \subseteq \bigcap \mcal{C}$ and $\bigcap \mcal{C} \subseteq \sigma_n$.

By definition, $\sigma_n$  is a convex set, and it contain $\btext{a}_0, \btext{a}_1, \ldots, \btext{a}_n$. Any convex set $C \in \mcal{C}$  that contain $\btext{a}_0, \btext{a}_1, \ldots, \btext{a}_n$ must also contain all convex combinations of these points, because convex sets are closed under convex combinations. Thus, $\sigma_n \in C$ for every $C \in \mcal{C}$. Since this is true for every $C \in \mcal{C}$, therefore $\sigma_n \subseteq \bigcap \mcal{C}$.

Consider any point $\btext{x} \in \bigcap \mcal{C}$. By definition, $\btext{x} \in C$ for every $C \in \mcal{C}$ that contains $\btext{a}_0, \btext{a}_1, \ldots, \btext{a}_n$. As $\sigma_n$ is smallest convex set that contain $\btext{a}_0, \btext{a}_1, \ldots, \btext{a}_n$ i.e. any other convex set $C$ that contains $\btext{a}_0, \btext{a}_1, \ldots, \btext{a}_n$ must also contain $\sigma_n$. Therefore $\btext{x} \in \sigma_n$. Thus, $\bigcap \mcal{C} \subseteq \sigma_n$.
\end{proof}

\begin{theorem}\label{t:Uniqness of GI set that spaning simplex}
Given a simplex $\sigma_n$, there is one and only one geometrically independent set of points spanning $\sigma_n$. 
\end{theorem}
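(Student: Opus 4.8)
The plan is to show that the spanning set of a simplex is an \emph{intrinsic} invariant of $\sigma_n$, by characterizing its vertices purely in terms of the point set $\sigma_n$ itself, with no reference to any particular spanning set. The characterization I would use is that of an \textbf{extreme point}: call $\btext{p} \in \sigma_n$ extreme if it cannot be written as $\btext{p} = (1-t)\btext{x} + t\btext{y}$ for two distinct points $\btext{x}, \btext{y} \in \sigma_n$ and some $t \in (0,1)$. Since this notion depends only on the set $\sigma_n$ together with the ambient convex structure of $\R^N$, the collection of extreme points is determined by $\sigma_n$ alone. Thus it suffices to prove that, for every geometrically independent set $A = \{\btext{a}_0, \ldots, \btext{a}_n\}$ spanning $\sigma_n$, the set of extreme points of $\sigma_n$ equals $A$; uniqueness then follows, since any two such spanning sets must both coincide with this one fixed coordinate-free collection.

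First I would show every vertex $\btext{a}_i$ is extreme. Writing $\btext{x} = \sum_j \alpha_j \btext{a}_j$ and $\btext{y} = \sum_j \beta_j \btext{a}_j$ in barycentric coordinates, a relation $\btext{a}_i = (1-t)\btext{x} + t\btext{y}$ becomes $\sum_j [(1-t)\alpha_j + t\beta_j]\btext{a}_j = \btext{a}_i$. By the uniqueness of barycentric coordinates guaranteed by the geometric independence of $A$, the coefficients must match those of $\btext{a}_i$, so $(1-t)\alpha_j + t\beta_j = 0$ for every $j \neq i$. Since $\alpha_j, \beta_j \geq 0$ and $t \in (0,1)$, each such summand forces $\alpha_j = \beta_j = 0$ for $j \neq i$, whence $\alpha_i = \beta_i = 1$ and $\btext{x} = \btext{y} = \btext{a}_i$. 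This contradicts $\btext{x} \neq \btext{y}$, so $\btext{a}_i$ is extreme.

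Next I would show that no other point of $\sigma_n$ is extreme. If $\btext{p} = \sum_j t_j \btext{a}_j \in \sigma_n$ is not a vertex, then at least two coordinates are positive, say $t_i, t_k > 0$ with $i \neq k$. Setting $\epsilon = \tfrac{1}{2}\min\{t_i, t_k\} > 0$ and $\btext{p}^{\pm} = \btext{p} \pm \epsilon(\btext{a}_i - \btext{a}_k)$, one checks that $\btext{p}^{+}$ and $\btext{p}^{-}$ have nonnegative barycentric coordinates summing to $1$, hence lie in $\sigma_n$, are distinct, and satisfy $\btext{p} = \tfrac{1}{2}\btext{p}^{+} + \tfrac{1}{2}\btext{p}^{-}$. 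Thus $\btext{p}$ is a nontrivial convex combination of two distinct points of $\sigma_n$ and is not extreme. Combining the two directions identifies the extreme points of $\sigma_n$ exactly with $A$, and the argument is complete.

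The main obstacle, and the step deserving the most care, is the first direction: correctly leveraging the uniqueness of barycentric coordinates without circularity. The subtlety is that uniqueness holds only \emph{relative to} a fixed geometrically independent spanning set, so I would carry out the extreme-point computation separately for each spanning set and only afterward tie the results together through the coordinate-free definition of an extreme point. Once that is handled cleanly, the perturbation argument in the second direction is routine, requiring only the verification that the perturbed coordinates remain nonnegative, which the choice $\epsilon = \tfrac{1}{2}\min\{t_i, t_k\}$ guarantees.
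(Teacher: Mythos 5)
Your proposal is correct, and it takes a genuinely different route from the paper. The paper argues in two steps: first, that any spanning set must be geometrically independent (else some point would lie in the convex hull of the others and the set ``would not span $\sigma_n$''), and second, that two geometrically independent spanning sets $A$ and $B$ yield unique convex representations of each $\btext{x} \in \sigma_n$ and determine the same affine hull, from which the paper concludes $A = B$ up to relabeling. That second step is the weak point of the paper's argument: two different geometrically independent $(n+1)$-point sets can perfectly well have the same affine hull (any two triangles in the same plane do), so agreement of affine hulls does not by itself pin down the vertex set. What actually rigidifies the vertices is exactly the property you isolate: they are the \emph{extreme points} of the convex body $\sigma_n$, a notion intrinsic to the set $\sigma_n$ itself. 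Your two computations --- vertices are extreme, via uniqueness of barycentric coordinates forcing $(1-t)\alpha_j + t\beta_j = 0$ for $j \neq i$ and hence $\btext{x} = \btext{y} = \btext{a}_i$; non-vertices are non-extreme, via the perturbation $\btext{p}^{\pm} = \btext{p} \pm \epsilon(\btext{a}_i - \btext{a}_k)$ with $\epsilon = \tfrac{1}{2}\min\{t_i,t_k\}$ --- are both sound, and your handling of the potential circularity (running the computation separately for each spanning set, then comparing through the coordinate-free extreme-point set) is precisely the right discipline. Your approach buys two things the paper's does not: it closes the affine-hull gap, and it makes no a priori assumption that competing spanning sets have the same cardinality, since the identity $A = \mathrm{ext}(\sigma_n)$ holds for each spanning set regardless of its size.
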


\begin{proof}
An $n$-simplex $\sigma_n$ is the convex hull of $n+1$ geometrically independent points $A = \{\btext{a}_0, \btext{a}_1, \ldots, \btext{a}_n\} \subset \R^N$. A set of points $A = \{\btext{a}_0, \btext{a}_1, \ldots, \btext{a}_n\}$ is geometrically independent if the vectors $\{\btext{a}_1 - \btext{a}_0, \btext{a}_2 - \btext{a}_0, \ldots, \btext{a}_n - \btext{a}_0\}$ are linearly independent, meaning they span an $n$-dimensional affine subspace. We aim to show that the $n + 1$ points of $A$ that define $\sigma_n$ are unique up to relabeling.
\begin{enumerate}
\item \textbf{Any $n+1$ points spanning $\sigma_n$ must be geometrically independent:} Assume $\sigma_n = \text{conv}(A)$ is defined by $A = \{\btext{a}_0, \btext{a}_1, \ldots, \btext{a}_n\}$. If $A$ were not geometrically independent, at least one point in $A$, say $a_k$, could be written as a linear combination of the other points
\[ \btext{a}_k = \sum_{\substack{i=0 \\ i \neq k}}^n t_i \btext{a}_i, \, \text{where} \, \sum_{\substack{i=0 \\ i \neq k}}^n t_i = 1, \, t_i \geq 0\]
In this case, $\btext{a}_k$ would lie within the convex hull of the remaining points. Hence, $A$ would not span the entire simplex $\sigma_n$, contradicting the definition of an $n$-simplex. Therefore, $A$ must consist of $n+1$ geometrically independent points.

\item \textbf{Any two geometrically independent sets of points spanning $\sigma_n$ must be identical:}  Suppose $B = \{\btext{b}_0, \btext{b}_1, \ldots, \btext{b}_n\}$ is another set of $n+1$ points that spans $\sigma_n$. Since both $A$ and $B$ define the same simplex $\sigma_n$, every point $x \in \sigma_n$ can be uniquely expressed as a convex combination of the points in $A$
\[\btext{x} = \sum_{i=0}^n t_i \btext{b}_i, \, t_i \geq 0, \, \sum_{i=0}^n t_i = 1\]
Similarly, $\btext{x}$ can be uniquely expressed as a convex combination of the points in $B$
\[\btext{x} = \sum_{i=0}^n s_i \btext{b}_i, \, s_i \geq 0, \, \sum_{i=0}^n s_i = 1\]
Since the representation of $\btext{x}$ is unique, the sets $A$ and $B$ must define the same affine subspace of $\R^N$. Because both sets contain $n+1$ points, and the affine hull of $n+1$ geometrically independent points is unique, it follows that $A$ and $B$ must represent the same set of points up to relabeling.
\end{enumerate}
The $n$-simplex $\sigma_n$ is spanned by exactly one geometrically independent set of $n+1$ points, up to relabeling.
\end{proof}

\begin{theorem}[Properties of Interior of Simplex]\label{t:Properties of Interior of Simplex}
Let $n$-simplex $\sigma_n$ is spanned by the geometrically independent set $A = \{\btext{a}_0, \btext{a}_1, \ldots, \btext{a}_n\}$ in $\R^N$. The set $\operatorname{Int}(\sigma_n)$ is convex and open in the $n$-plane $P_n$. The closure of $\operatorname{Int}(\btext{a}_n)$ is denoted by $\mathrm{Cl}(\operatorname{Int}(\btext{a}_n))$ is $\sigma_n$. Furthermore,  $\operatorname{Int}(\btext{a}_n)$ equal to the union of all open line segment joining $\btext{a}_0$ to points of $\operatorname{Int}(\sigma_k)$, where $\sigma_k$ $(0 \leq k < n)$ is the face of $\sigma_n$ opposite $\btext{a}_0$.
\end{theorem}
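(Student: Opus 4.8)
The plan is to verify the assertions in turn, drawing on the convexity of $\sigma_n$ (Theorem \eqref{t:Convexity of n-simplex}), its compactness (Theorem \eqref{t:Compactness of n-simplex}), and the continuity of the barycentric coordinate functions (Theorem \eqref{t:Continuity of Barycentric Coordinates}). Throughout I would write a point $\btext{x} \in P_n$ in its unique affine coordinates $(t_0(\btext{x}), \ldots, t_n(\btext{x}))$ subject to $\sum_{i=0}^n t_i(\btext{x}) = 1$; these extend the barycentric coordinates to the whole plane $P_n$ and remain continuous by the same matrix/pseudoinverse argument as in Theorem \eqref{t:Continuity of Barycentric Coordinates}.

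First I would dispatch convexity and openness. For convexity, take $\btext{x}, \btext{y} \in \operatorname{Int}(\sigma_n)$ with coordinates $\alpha_i, \beta_i > 0$; the $i$-th coordinate of $(1-t)\btext{x} + t\btext{y}$ is $(1-t)\alpha_i + t\beta_i \geq \min\{\alpha_i, \beta_i\} > 0$ for every $t \in [0,1]$, so the whole segment stays in $\operatorname{Int}(\sigma_n)$. For openness in $P_n$, I would note that $\operatorname{Int}(\sigma_n) = \bigcap_{i=0}^{n} t_i^{-1}\big((0,\infty)\big)$; since each $t_i \colon P_n \to \R$ is continuous, this finite intersection of preimages of an open half-line is open in $P_n$.

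Next I would show $\operatorname{Cl}(\operatorname{Int}(\sigma_n)) = \sigma_n$. As $\sigma_n$ is compact, hence closed, and contains $\operatorname{Int}(\sigma_n)$, one inclusion is immediate. For the reverse, let $\btext{b} = \tfrac{1}{n+1}\sum_{i=0}^{n}\btext{a}_i$ be the barycenter, whose coordinates all equal $\tfrac{1}{n+1} > 0$, so $\btext{b} \in \operatorname{Int}(\sigma_n)$. Given $\btext{x} \in \sigma_n$, the point $(1-s)\btext{x} + s\btext{b}$ has $i$-th coordinate $(1-s)t_i(\btext{x}) + \tfrac{s}{n+1}$, which is strictly positive for every $s \in (0,1]$; this point therefore lies in $\operatorname{Int}(\sigma_n)$ and tends to $\btext{x}$ as $s \to 0^+$, so $\btext{x} \in \operatorname{Cl}(\operatorname{Int}(\sigma_n))$. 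The substantive part is the cone description. Let $s = \sigma_{n-1}$ denote the face opposite $\btext{a}_0$, spanned by $\btext{a}_1, \ldots, \btext{a}_n$, so $\btext{y} \in \operatorname{Int}(s)$ means $\btext{y} = \sum_{i=1}^{n} u_i \btext{a}_i$ with all $u_i > 0$ and $\sum_{i=1}^{n} u_i = 1$. For the forward inclusion, a point $(1-r)\btext{a}_0 + r\btext{y}$ on the open segment ($r \in (0,1)$) has coordinates $t_0 = 1 - r > 0$ and $t_i = r u_i > 0$, hence lies in $\operatorname{Int}(\sigma_n)$. Conversely, given $\btext{x} \in \operatorname{Int}(\sigma_n)$ with coordinates $t_i > 0$, set $r = 1 - t_0 = \sum_{i=1}^{n} t_i \in (0,1)$ and $u_i = t_i / r$; then $\btext{y} = \sum_{i=1}^{n} u_i \btext{a}_i \in \operatorname{Int}(s)$ and $(1-r)\btext{a}_0 + r\btext{y} = \btext{x}$, exhibiting $\btext{x}$ on the required open segment. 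The two inclusions yield the equality.

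The main obstacle is conceptual rather than computational: openness must be read relative to $P_n$, never to $\R^N$, because $\operatorname{Int}(\sigma_n)$ has empty interior in $\R^N$ whenever $n < N$. This forces me to justify that the coordinate functions $t_i$ are defined and continuous on all of $P_n$, not merely on $\sigma_n$; I would handle this by observing that geometric independence makes the affine coordinate system on $P_n$ well defined and that the inversion argument of Theorem \eqref{t:Continuity of Barycentric Coordinates} applies verbatim there. A minor point to watch is the convention that an \emph{open} segment excludes both endpoints, which is precisely what keeps $t_0 = 1 - r$ and each $t_i = r u_i$ strictly positive and makes the assignment $r \mapsto (1-r)\btext{a}_0 + r\btext{y}$ a bijection onto the interior points of the cone.
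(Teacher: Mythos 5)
Your proposal is correct and follows essentially the same barycentric-coordinate approach as the paper: identical convexity and openness arguments, approximation of boundary points from the interior for the closure, and the cone description via the decomposition $\btext{x} = (1-r)\btext{a}_0 + r\btext{y}$. In fact your write-up tightens two spots the paper leaves sketchy — the explicit barycenter segment $(1-s)\btext{x} + s\btext{b}$ replacing the paper's informal "let one coordinate tend to $0$" sequence argument, and the full two-inclusion verification of the cone equality (with the correct reading that the relevant face is $\sigma_{n-1}$, spanned by $\btext{a}_1,\ldots,\btext{a}_n$) where the paper only asserts the representation — so no gaps remain.
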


\begin{proof}
Let $n$-simplex $\sigma_n$ be spanned by the geometrically independent set $A = \{\mathbf{a}_0, \mathbf{a}_1, \ldots, \mathbf{a}_n\}$ in $\mathbb{R}^N$.
\begin{enumerate}
\item \textbf{$\operatorname{Int}(\sigma_n)$ is convex:} As we know that a set is convex if for any two points in the set, the line segment joining them is also contained in the set. By Definition, the interior of the simplex consists of points that can be written as a convex combination with strictly positive barycentric coordinates
\[\btext{x} = \sum_{i=0}^{n} t_i \btext{a}_i, \, \text{where} \, t_i > 0 \, \text{and} \, \sum_{i=0}^{n} t_i = 1 \]
Take two points $\btext{x}, \btext{y} \in \mrm{Int}(\sigma_n)$, written as
\[\btext{x} = \sum_{i=0}^{n} t_i \btext{a}_i, \, y = \sum_{i=0}^{n} s_i \btext{a}_i \]
where $t_i > 0$ and $s_i > 0$ for all $i$. 

Consider the convex combination $\btext{z} = \lambda \btext{x} + (1 - \lambda)\btext{y}$ for $\lambda \in (0,1)$.
Substituting $\btext{x}$ and $\btext{y}$, we get
\[\btext{z} = \sum_{i=0}^{n} (\lambda t_i + (1 - \lambda) s_i) \btext{a}_i \]

Since $\lambda t_i + (1 - \lambda) s_i > 0$ for all $i$, it follows that $\btext{z}$ is still an interior point. Thus, $\mrm{Int}(\sigma_n)$ is convex.
\item \textbf{$\mrm{Int}(\sigma_n)$ is open in the $n$-plane $P_n$:} The simplex $\sigma_n$ is contained in the affine subspace $P_n$ of $\mbb{R}^N$ defined by the affine hull of $A$. The plane $P_n$ is a topological subspace of $\mbb{R}^N$. The interior of $\sigma_n$ is the set of points with strictly positive barycentric coordinates. Since the barycentric coordinate functions are continuous, the set where all coordinates are strictly positive is an open set in $P_n$. Thus, by Definition, $\mrm{Int}(\sigma_n)$ is open in $P_n$.
\item \textbf{ Closure of $\mrm{Int}(\sigma_n)$ is $\sigma_n$:} As we know that any boundary point of $\sigma_n$ has at least one barycentric coordinate equal to $0$. A sequence in $\mrm{Int}(\sigma_n)$ can approach any point in $\sigma_n$ by letting one of the barycentric coordinates tend to $0$. Since every boundary point can be approximated by a sequence in $\mrm{Int}(\sigma_n)$, it follows that
\[\mrm{Cl}(\mrm{Int}(\sigma_n)) = \sigma_n\]
\item \textbf{Expressing $\mrm{Int}(\sigma_n)$ in terms of faces:} Consider the face opposite to $\btext{a}_0$, denoted by $\sigma_k$ where $0 \leq k < n$. The interior of $\sigma_n$ consists of points of the form
\[\btext{x} = (1 - \lambda) \btext{a}_0 + \lambda \btext{y}, \, \text{where} \, \btext{y} \in \mrm{Int}(\sigma_k), \, 0 < \lambda < 1\]

The above expression shows that each interior point is obtained by taking an open segment from $\btext{a}_0$ to some point in the interior of $\sigma_k$. Since every face $\sigma_k$ is itself a simplex, we can apply this argument recursively until reaching $k = 0$, which consists of single points. Thus, $\mrm{Int}(\sigma_n)$ is precisely the union of all open line segments joining $\btext{a}_0$ to points in $\mrm{Int}(\sigma_k)$.
\end{enumerate}
\end{proof}

\begin{theorem}\label{t:}
Let $n$-simplex $\sigma_n$ is spanned by the geometrically independent set $A = \{\btext{a}_0, \btext{a}_1, \ldots, \btext{a}_n\}$ in $\R^N$ and $\mbb{B}^n$ is unit ball in $\R^n$. There is a homeomorphism of $\sigma_n$ with the unit ball $\mbb{B}^n$ that carries $\operatorname{Bd}(\sigma_n)$ onto the unit sphere $\mbb{S}^{n-1}$. 
\end{theorem}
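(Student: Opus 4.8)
The plan is to realize the homeomorphism geometrically by radial rescaling from an interior point, exploiting that $\sigma_n$ is a compact convex body that is full-dimensional inside its spanning $n$-plane $P_n$. First I would reduce to a clean coordinate setting: since $P_n$ is an $n$-dimensional affine subspace of $\R^N$, fix an affine isometry $\phi \colon P_n \to \R^n$. By Theorem \eqref{t:Convexity of n-simplex} and Theorem \eqref{t:Compactness of n-simplex}, the image $K = \phi(\sigma_n)$ is a compact convex subset of $\R^n$, and by Theorem \eqref{t:Properties of Interior of Simplex} its interior (taken in $P_n$, equivalently in $\R^n$ after applying $\phi$) is nonempty. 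So it suffices to produce a homeomorphism $K \to \mbb{B}^n$ carrying $\operatorname{Bd}(K)$ onto $\mbb{S}^{n-1}$; transporting back through $\phi^{-1}$ then yields the stated homeomorphism on $\sigma_n$.

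Next I would fix a center. The barycenter $\btext{b} = \tfrac{1}{n+1}\sum_{i=0}^n \btext{a}_i$ has all barycentric coordinates equal to $\tfrac{1}{n+1} > 0$, so $\btext{b} \in \operatorname{Int}(\sigma_n)$; after translating, I assume the center is the origin $\btext{0} \in \operatorname{Int}(K)$. Because $K$ is convex, compact, and contains $\btext{0}$ as an interior point, for each unit vector $\btext{u} \in \mbb{S}^{n-1}$ the ray $\{t\btext{u} \colon t \geq 0\}$ meets $\operatorname{Bd}(K)$ in exactly one point, which I write as $r(\btext{u})\btext{u}$ with $r(\btext{u}) > 0$. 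Existence and uniqueness of this crossing is where the hypotheses are essential: an interior ball around $\btext{0}$ bounds $r(\btext{u})$ below, compactness bounds it above, and convexity guarantees that $t\btext{u} \in K$ precisely for $0 \leq t \leq r(\btext{u})$, forcing the crossing point to be unique.

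Then I would define $f \colon K \to \mbb{B}^n$ by $f(\btext{0}) = \btext{0}$ and, for $\btext{x} \neq \btext{0}$, $f(\btext{x}) = \btext{x}/r(\btext{x}/\|\btext{x}\|)$, which rescales each radial segment $[\btext{0}, r(\btext{u})\btext{u}]$ linearly onto $[\btext{0}, \btext{u}]$. Its inverse is $g(\btext{y}) = r(\btext{y}/\|\btext{y}\|)\,\btext{y}$ (with $g(\btext{0}) = \btext{0}$). Since $\|f(\btext{x})\| = \|\btext{x}\|/r(\btext{x}/\|\btext{x}\|)$, the map $f$ is a bijection carrying $\{\btext{x} \colon \|\btext{x}\| = r(\btext{x}/\|\btext{x}\|)\} = \operatorname{Bd}(K)$ onto $\{\|\btext{y}\| = 1\} = \mbb{S}^{n-1}$, as required. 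To finish I would verify continuity: away from the origin $f$ is continuous once $r$ is, and $r$ is continuous by a standard convexity argument (equivalently, $r(\btext{u})^{-1}$ is the Minkowski gauge of $K$, a continuous positively homogeneous convex function); continuity at $\btext{0}$ follows from $\|f(\btext{x})\| = \|\btext{x}\|/r(\btext{x}/\|\btext{x}\|) \leq \|\btext{x}\|/r_{\min} \to 0$, where $r_{\min} = \min_{\btext{u}} r(\btext{u}) > 0$ exists by compactness of $\mbb{S}^{n-1}$ and positivity of $r$. Finally, since $\sigma_n$ is compact and $\mbb{B}^n$ is Hausdorff, the continuous bijection $f$ is automatically a closed map and hence a homeomorphism, so continuity of $f^{-1}$ need not be checked separately.

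The main obstacle is establishing the continuity of the radial function $r$ (equivalently, of the Minkowski gauge) together with the well-definedness of a single boundary crossing along each ray; both rest squarely on the convexity and compactness of $\sigma_n$ rather than on any simplicial combinatorics. Everything else—the boundary-to-sphere matching and the upgrade from continuous bijection to homeomorphism—follows formally, the latter purely from compactness of the domain and the Hausdorff property of the target.
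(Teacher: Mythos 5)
Your proposal is correct, and it is a genuinely different argument from the one the paper gives. The paper's proof works directly in barycentric coordinates: it sends $\btext{x}$ with coordinates $(t_1,\ldots,t_n)$ to $(s_1,\ldots,s_n)$ with $s_i = 2t_i - 1$, and then claims $\sum_i s_i^2 \leq 1$. That map is affine, so it carries $\sigma_n$ onto another simplex sitting inside the cube $[-1,1]^n$, not onto the round ball: for the vertex with $t_1 = \cdots = t_n = 0$ one gets $(-1,\ldots,-1)$, of norm $\sqrt{n} > 1$ when $n \geq 2$, and indeed the paper's verification contains an algebraic slip (the correct expansion is $\sum_i (2t_i-1)^2 = 4\sum_i t_i^2 - 4\sum_i t_i + n$, which is bounded by $n$, not by $1$). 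The same defect breaks the claimed surjectivity onto $\mbb{B}^n$ and the claimed correspondence $\Phi(\operatorname{Bd}(\sigma_n)) = \mbb{S}^{n-1}$, since $t_i = 0$ forces only $s_i = -1$, a single face of the cube-like image rather than the sphere. Your radial construction --- barycenter as center, exactly one boundary crossing per ray, rescaling by the radial function $r$ (equivalently the Minkowski gauge), continuity at the origin via $r_{\min} > 0$, and the compact-to-Hausdorff upgrade --- is the standard correct route, and it is in fact the approach the paper itself gestures toward immediately afterwards in Lemma \eqref{l:Property of Convex Subset of R^n} and Lemma \eqref{l:Homemorphism between Closure of Convex Set with Unit Ball}, except that you apply it directly to the simplex instead of to an open convex set. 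What your version buys is robustness: it never uses the simplicial structure beyond convexity, compactness, and nonempty interior in the spanning plane, so it proves the statement for every compact convex body at once. The one step worth making explicit is the identification of the simplicial boundary $\operatorname{Bd}(\sigma_n)$ (points with some $t_i = 0$) with the topological boundary of $K = \phi(\sigma_n)$ in $\R^n$; this follows from the paper's own description of $\operatorname{Int}(\sigma_n)$ as the set of points with all $t_i > 0$, which is open in the $n$-plane $P_n$ and has closure $\sigma_n$, so the two notions of boundary coincide and your equality $\{\btext{x} \colon \|\btext{x}\| = r(\btext{x}/\|\btext{x}\|)\} = \operatorname{Bd}(K)$ is legitimate.
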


\begin{proof}
Let $\sigma_n$ be the $n$-simplex in $\mathbb{R}^N$ spanned by the geometrically independent set $A = \{\btext{a}_0, \btext{a}_1, \ldots, \btext{a}_n\}$. Let $\mbb{B}^n$ be the unit ball in $\mbb{R}^n$. Then our aim is to show that there exists a homeomorphism  $\Phi \colon \sigma_n \to \mbb{B}^n$
that carries the boundary $\mrm{Bd}(\sigma_n)$ onto the unit sphere $\mbb{S}^{n-1}$ in following steps. 

\begin{enumerate}
\item \textbf{Define the Barycentric Coordinates of $\sigma_n$:} By Definition, any point $\btext{x} \in \sigma_n$ can be uniquely written as a convex combination of its vertices
\[\btext{x} = \sum_{i=0}^{n} t_i \btext{a}_i, \, \text{where} \, \sum_{i=0}^{n} t_i = 1, \, t_i \geq 0 \, \text{for all} \, i\]
Where the coefficients $\{t_0, t_1, \dots, t_n\}$ are known as the barycentric coordinates of $x$ with respect to $A$. Since $\sum_{i=0}^{n} t_i = 1$, we can eliminate $t_0$ and express points using $t_1, \dots, t_n$
\[t_0 = 1 - \sum_{i=1}^{n} t_i, \, 0 \leq t_i \leq 1, \, \sum_{i=1}^{n} t_i \leq 1\]
Thus, $\sigma_n$ can be seen as a subset of $\mbb{R}^n$
\[\left\{ (t_1, t_2, \dots, t_n) \colon 0 \leq t_i \leq 1, \, \sum_{i=1}^{n} t_i \leq 1 \right\}\]

\item \textbf{Mapping $\sigma_n$ to the Unit Ball $\mbb{B}^n$:} Let us define $\Phi \colon \sigma_n \to \mbb{B}^n$ by
\[\Phi(\btext{x}) = (s_1, s_2, \dots, s_n), \,\text{where} \, s_i = 2t_i - 1, \, \text{for} \, i = 1, 2, \dots, n\]
Let us see that map $\Phi$ is well defined. The function $\Phi$ takes a point $\btext{x} \in \sigma_n$ and maps it to a point in $\mbb{B}^n$ using the barycentric coordinates $t_i$ of $\btext{x}$ with respect to the vertices $\{\btext{a}_0, \btext{a}_1, \dots, \btext{a}_n\}$. The transformation $s_i = 2t_i - 1$ shifts and rescales each $t_i$, mapping the standard barycentric coordinate system of $\sigma_n$ to the Euclidean coordinates of the unit ball. A function is well-defined if every input $\btext{x} \in \sigma_n$ produces a unique output $\Phi(\btext{x}) \in \mbb{B}^n$. 

Since $t_i$ are barycentric coordinates, they satisfy $0 \leq t_i \leq 1$ and $\sum_{i=0}^{n} t_i = 1$ for all $i = 1, 2 \ldots, n$. Since $0 \leq t_i \leq 1$ ensure that  $-1 \leq s_i = 2t_i - 1 \leq 1$. Therefore, the transformation $s_i = 2t_i - 1$ for all $i = 1, 2 \ldots, n$ ensures that each $s_i$ lies in the interval $[-1,1]$. 
Since $s_i = 2t_i - 1$, therefore 
\begin{align*}
\sum_{i=0}^{n} s_i^2 & = \sum_{i=0}^{n} (2t_i - 1) ^2 \\ 
& \leq 4 \sum_{i=1}^{n}t_{i}^2 + 1 - 4\sum_{i=0}^{n}t_{i} \\ 
& \leq 4 \cdot 1 + 1 - 4\ \cdot 1  \\ 
& \leq 1
\end{align*}
Thus, $s_{i} \in [-1, 1]$ and $\sum_{i=0}^{n} s_i^2 \leq 1 $ for all $i = 1, \ldots, n$, therefore by Definition of unit ball $\mbb{B}^n$, $(s_{1}, s_{2}, \ldots, s_{n}) \in \mbb{B}^n$ and finally by Definition of $\Phi$, $\Phi(\btext{x}) \in \mbb{B}^n$. 

\item \textbf{$\Phi$ is a Homeomorphism:} Since $\Phi$ is linear by Definition, so it is continuous. If $\Phi(\btext{x}) = \Phi(\btext{y})$, then $s_i(\btext{x}) = s_i(\btext{y})$ for all $i$, implying $t_i(\btext{x}) = t_i(\btext{y})$. Hence, $\btext{x} = \btext{y}$. Thus, $\Phi$ is injective.  Any $(s_1, \dots, s_n) \in \mbb{B}^n$ maps back to $(t_1, \dots, t_n)$ i.e. $\Phi^{-1} \colon \mbb{B}^n \to \sigma_n$ defined by
\[ t_i = \frac{s_i + 1}{2}, \, \sum_{i = 1}^n t_i \leq 1\]
Therefore, $\Phi$ is surjective. The inverse function $\Phi^{-1}$ is also linear, so it is continuous. Thus, $\Phi$ is a continuous bijection between compact spaces, it is a homeomorphism.
\item \textbf{ Mapping the Boundary $\operatorname{Bd}(\sigma_n)$ to the Unit Sphere $\mathbb{S}^{n-1}$:}
The boundary consists of points where at least one $t_i = 0$. This implies some $s_i = -1$ or $1$, placing the image on the unit sphere
\[\sum_{i=1}^{n} s_i^2 = 1\]
Thus, $\Phi(\operatorname{Bd}(\sigma_n)) = \mbb{S}^{n-1}$.
\end{enumerate}
\end{proof}

\begin{definition}[Ray from a Point in $\R^n$]\label{d:Ray from a Point in Rn}
A ray $\mfr{R}_{\btext{w} \to \btext{p}}$ emanating from a point $\btext{w} \in \R^n$ is the set of all points of the form $\btext{w} + t\btext{p}$, where $\btext{p}$ (direction vector) is fixed point of $\R^n - \{\btext{0}\}$ and $t$ ranges over the non-negative reals that is a scalar parameter controls the distance from $\btext{w}$ along the direction $\btext{p}$. Symbolically, 
\begin{equation}\label{eq:Ray from a Point in Rn}
\mfr{R}_{\btext{w} \to \btext{p}} = \{\btext{x} \in \R^n \colon \btext{x} = \btext{w} + t\btext{p}, \btext{p} \in \R^n - \{\btext{0}\}, t \geq 0\}   
\end{equation}
\end{definition}

\begin{example}[Ray from a Point in $\R^2$]\label{d:Ray from a Point in Rn}
Find the ray $\mfr{R}_{\btext{w} \to \btext{p}}$ in $\R^2$ from a point $\btext{w} = (1, 2)$ in the direction vector $\btext{p} = (3, 1)$. Interpreted geometrically. 

By Definition of ray \eqref{eq:Ray from a Point in Rn} 
\[\btext{x} = \btext{w} + t\btext{p} = (1, 2) + t(3,1 ) = (1 + 3t, 2 + t)\]
Therefore 
\[\mfr{R}_{\btext{w} \to \btext{p}} = \{(1 + 3t, 2 + t) \colon t \geq 0\}\]

For different values of $t\geq 0$, we can compute points on the ray. For $t = 0, 1, 0.5, 2$ we get $\btext{x}_{1}= (1, 2), \btext{x}_{2}= (4, 3), \btext{x}_{3}= (2.5, 2.5), \btext{x}_{4}= (7, 4)$ respectively. 

If we plot these points $\btext{x}_1, \btext{x}_2, \btext{x}_3$, and $\btext{x}_4$, and so on, we get a straight line starting at $\btext{w} = (1,2)$ and extending infinitely in the direction of the vector $\btext{p} = (3,1)$. 
\end{example}

Now we are going to discuss about the properties of convex subsets of $\R^n$. In the next result we will find the condition under which any subset $U$ of $\R^n$ is convex in the term of a ray from an arbitrary point in $U$. Furthermore we will also discuss the homeomorphism between the closure of convex subset $\overline{U}$ of $\R^n$ and the unit ball $\mbb{B}^{n}$.

\begin{lemma}[Property of Convex Subset of $\R^n$]\label{l:Property of Convex Subset of R^n}
Let $\btext{w}$ be a  arbitrary point in bounded and open subset $U$ of $\R^n$. If $U$ is convex,  then each ray emanating from the point $\btext{w}$ intersect at single point with the boundary of the $U$. Converse is also true. 
\end{lemma}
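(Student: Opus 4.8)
The plan is to reduce the $n$-dimensional geometry entirely to the one-dimensional behaviour of $U$ along a single ray. Fix $\btext{w}\in U$ and a direction $\btext{p}\in\R^n-\{\btext{0}\}$, and study the scalar set $T=\{t\ge 0 : \btext{w}+t\btext{p}\in U\}$, the pullback of $U$ under the continuous map $g(t)=\btext{w}+t\btext{p}$. Everything follows from how the three hypotheses on $U$ act on $T$: openness of $U$ forces $T$ to be open in $[0,\infty)$; the membership $\btext{w}\in U$ forces $0\in T$; convexity forces $T$ to be downward closed (if $t\in T$ then $[0,t]\subseteq T$, since $\btext{w}+s\btext{p}$ lies on the segment from $\btext{w}\in U$ to $\btext{w}+t\btext{p}\in U$); and boundedness of $U$ forces $T$ to be bounded above. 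Together these say $T=[0,\tau)$ with $0<\tau<\infty$.

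For the forward direction I would then identify the intersection point as $\btext{q}=\btext{w}+\tau\btext{p}$, where $\tau=\sup T$. It lies in $\overline{U}$, being the limit of $g(t)\in U$ as $t\uparrow\tau$, and it lies outside $U$ because $\tau\notin T$ (if $\btext{q}\in U$, openness of $U$ would push points slightly beyond $\tau$ into $T$, contradicting $\tau=\sup T$). Hence $\btext{q}\in\operatorname{Bd}(U)=\overline{U}\setminus U$, so the ray meets the boundary. Uniqueness amounts to ruling out a boundary point for any $t>\tau$: if $\btext{w}+t\btext{p}\in\overline{U}$ for some $t>\tau$, then the intermediate point $\btext{q}=\btext{w}+\tau\btext{p}$ would be forced back into $U$, contradicting $\btext{q}\notin U$.

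That last implication is the technical heart of the argument, and I expect it to be the main obstacle: the standard convexity lemma that the half-open segment from an interior point to a closure point stays in the interior. Concretely, if $\btext{a}\in U$ (here $U$ is open, so $\btext{a}$ is interior), $\btext{b}\in\overline{U}$ and $\lambda\in[0,1)$, then $(1-\lambda)\btext{a}+\lambda\btext{b}\in U$. I would prove this by choosing a ball $B(\btext{a},r)\subseteq U$, approximating $\btext{b}$ by a genuine point $\btext{b}'\in U$ with $\|\btext{b}-\btext{b}'\|$ small, and using the Definition of convexity (Definition~\ref{d:Convex set in R^N}) to see that the convex combinations of $B(\btext{a},r)$ with $\btext{b}'$ fill a whole ball of radius $(1-\lambda)r$ about $(1-\lambda)\btext{a}+\lambda\btext{b}'$; choosing $\btext{b}'$ close enough then places $(1-\lambda)\btext{a}+\lambda\btext{b}$ inside $U$. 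Applying this with $\btext{a}=\btext{w}$, $\btext{b}=\btext{w}+t\btext{p}$ for the hypothetical $t>\tau$, and the value of $\lambda$ that selects $\btext{q}$, gives $\btext{q}\in U$, the desired contradiction.

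For the converse I would read the hypothesis as holding for every base point $\btext{w}\in U$ (the natural reading of ``$\btext{w}$ arbitrary''); this quantifier is genuinely necessary, since from a single base point the condition only yields star-shapedness, and a non-convex star-shaped region --- for instance the interior of a filled pentagram --- meets every ray from its centre exactly once. Granting the hypothesis at all base points, fix $\btext{x},\btext{y}\in U$ and run the ray $g(t)=\btext{x}+t(\btext{y}-\btext{x})$. The preimages $g^{-1}(U)$ and $g^{-1}(\R^n\setminus\overline{U})$ are disjoint open subsets of $[0,\infty)$ whose union is $[0,\infty)\setminus g^{-1}(\operatorname{Bd}(U))=[0,\infty)\setminus\{\tau\}$; since the connected component $[0,\tau)$ contains $0$ and $g(0)=\btext{x}\in U$, it must lie entirely in $g^{-1}(U)$, so $g([0,\tau))\subseteq U$. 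Finally $\btext{y}=g(1)\in U$ forces $1<\tau$, whence $[\btext{x},\btext{y}]=g([0,1])\subseteq U$; as $\btext{x},\btext{y}$ were arbitrary, $U$ is convex by Definition~\ref{d:Convex set in R^N}. The delicate points here are the correct handling of the quantifier over base points and the connectedness argument that upgrades ``exactly one boundary crossing'' into ``the initial segment stays inside $U$''.
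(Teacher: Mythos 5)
Your proof is correct, and its skeleton matches the paper's: existence of the boundary point via $\tau = \sup\{t \ge 0 : \btext{w} + t\btext{p} \in U\}$ (openness of $U$ forbids $g(\tau) \in U$, boundedness makes $\tau$ finite), uniqueness by excluding boundary points at parameters $t \neq \tau$, and the converse by running rays through arbitrary pairs of points of $U$. Where you genuinely depart from the paper is precisely at the two places where its argument is loose, and in both you supply the standard repair. For uniqueness, the paper assumes two boundary points $\btext{x}_1, \btext{x}_2$ on the ray and asserts that convexity places the segment joining them inside $U$ --- but convexity of $U$ only controls segments between points \emph{of} $U$, and $\btext{x}_1, \btext{x}_2 \notin U$ since $U$ is open; your interior-to-closure lemma (if $\btext{a} \in U$, $\btext{b} \in \overline{U}$ and $\lambda \in [0,1)$ then $(1-\lambda)\btext{a} + \lambda\btext{b} \in U$), proved by the ball-plus-approximation argument and applied from the interior point $\btext{w}$ rather than between the two boundary points, is exactly what makes this step sound. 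For the converse, the paper states the hypothesis for a single $\btext{w}$ but then silently runs a ray from a different base point $\btext{a} \in U$; your explicit universal quantification over base points, with the pentagram example showing that a single base point yields only star-shapedness, names and fixes this slip, and your connectedness argument ($[0,\tau)$ is connected, is covered by the disjoint relatively open preimages of $U$ and $\R^n \setminus \overline{U}$, and contains $0 \in g^{-1}(U)$) is a cleaner direct proof than the paper's informal ``the ray could re-enter $U$'' contradiction. One small step deserves a line in your write-up: the claim that $g(1) = \btext{y} \in U$ forces $1 < \tau$. Since $U$ is bounded, $g(t) \notin \overline{U}$ for all large $t$, so the connected set $(\tau, \infty)$ lies entirely in $g^{-1}(\R^n \setminus \overline{U})$, ruling out $1 > \tau$; and $1 = \tau$ is impossible because $g(\tau) \in \operatorname{Bd}(U)$ while $U$ is open and hence disjoint from its boundary.
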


\begin{proof}
As given that the subset $U$ of $\R^n$ is convex, bounded and open, therefore from their definitions, we can say  if $\btext{x}$ and $\btext{y} \in U$, then entire line joining these two points is contained in $U$; there exists a finite radius $\epsilon > 0$ such that $U \subset B_{\btext{0}}(\epsilon)$, where $B_{\btext{0}}(\epsilon)$ is ball centered at origin with radius $\epsilon$; and $U$ does not contain boundary i.e. $\operatorname{Bd} U$ consist of limit points that are not in $U$, respectively. 

Let us consider a ray $\mfr{R}_{\btext{w} \to \btext{p}}$ emanating from arbitrary point $\btext{w} \in U$ in the direction of point $\btext{p}$, then 
\[\mfr{R}_{\btext{w} \to \btext{p}} = \{\btext{w} + t\btext{p} \colon t \geq 0\}\]

Since $U$ is bounded, then ray $\mfr{R}_{\btext{w} \to \btext{p}}$ cannot extended indefinitely  within $U$; instead, it must extend $U$ at some finite point on its boundary. 

First we establish the existence of intersection of ray $\mfr{R}_{\btext{w} \to \btext{p}}$ with the boundary $\operatorname{Bd}(U)$. Let us define 
\[t^{\ast} = \sup \{t \geq 0 \colon \btext{w} + t \btext{p} \in U\}\]

Since $U$ is bounded so $t^*$ is finite. Since $U$ is open, we claim that $\btext{w} + t \btext{p} \in \operatorname{Bd}(U)$. There are two cases arise as follows 
\begin{enumerate}
\item If  $\btext{w} + t^\ast \btext{p} \in U$, then by hypothesis, there exists small neighborhood around the point $\btext{w} + t \btext{p}$ is still in $U$, that contradict the definition of $t^\ast$ as a supremum. 
\item If $\btext{w} + t^\ast \btext{p} \notin U$, then it must be a limit point of $U$ i.e. $\btext{w} + t^\ast \btext{p} \in \operatorname{Bd}(U)$. 
\end{enumerate}
Thus every ray $\mfr{R}_{\btext{w} \to \btext{p}}$ must intersect $\operatorname{Bd}(U)$ at $\btext{w} + t^\ast \btext{p}$. 

Now we show uniqueness of the intersection point
$\btext{w} + t^\ast \btext{p}$ in $\operatorname{Bd}(U)$. Let us assume that ray $\mfr{R}_{\btext{w} \to \btext{p}}$ intersect $\operatorname{Bd} U$ at two distinct point $\btext{x}_1 = \btext{w} + t_1 \btext{p}$ and $\btext{x}_2 = \btext{w} + t_2 \btext{p}$ in $\operatorname{Bd} U$ with $t_1 < t_2$. Therefore, convexity implies that line segment joining these two points is entirely contain in $U$. But $\btext{x}_1$ would be interior point, contradict the assumption $\btext{x}_1 \in \operatorname{Bd} U$. Hence intersection of ray with boundary is unique. 

Conversely, every ray emanating from $\btext{w} \in U$ intersect the boundary $\operatorname{Bd} U$ at unique point, then our aim is to show that $U$ is convex i.e. we show that for any arbitrary $\btext{a}, \btext{b} \in U$, then entire line segment joining these two points must contained in $U$. Therefore, by definition for $t \in (0, 1)$, there exists  $\btext{x}_t = (1 - t) \btext{a} + t \btext{b} \in U$.  

Let us assume $U$ is not convex. Then, there exists two points 
$\btext{a}, \btext{b} \in U$ such that line segment joining these two points is not entirely contained in $U$ i.e. there exists $t^\ast \in (0, 1)$ such that $\btext{x}_{t^\ast} = (1 - t^\ast)\btext{a} + t^\ast \btext{b} \notin U$. Since $U$ is open, the point $\btext{x}_{t^\ast}$, must outside the $U$ i.e. $\btext{x}_{t^\ast} \in \operatorname{Bd}(U)$. 

Consider the ray $\mfr{R}_{\btext{w} \to \btext{p}}$ given by $\btext{x}(t) = (1 - t) \btext{a} + t\btext{b}, t \geq 0$. As given every ray from any point in $U$ must intersect $\operatorname{Bd}(U)$ at a unique point, we must check wether this happen. As we already shown that $\btext{x}_{t^\ast} \in \operatorname{Bd}(U)$ i.e. ray intersect with $\operatorname{Bd}(U)$ at $\btext{x}_{t^\ast}$. However if $t^\ast$ is not the only such intersection, we get a contradiction if $U$ is not convex, some point of the ray could re-enter $U$ before reaching $\operatorname{Bd}(U)$. This is against the uniqueness condition in the hypothesis. Those our assumption is wrong, therefore $U$ is convex. 
\end{proof}

The above Lemma \eqref{l:Property of Convex Subset of R^n}  is the necessary and sufficient condition for any bounded and open subset $U$ of $\R^n$ to be a convex in term of ray. That is, helps us to determine the convexity of bounded and open subset $U$ of $\R^n$ in terms of ray from any point in $U$. 

\begin{lemma}[Homeomorphism between Closure of Convex Set with Unit Ball]\label{l:Homemorphism between Closure of Convex Set with Unit Ball}
Let $U$ be a convex, bounded, and open subset in $\R^n$. Then there is homeomorphism between the closure of $U$, $\mrm{Cl}(U)$ with $\mbb{B}^n$ that carry the boundary of $U$ onto the unit sphere $\mbb{S}^{n - 1}$.
\end{lemma}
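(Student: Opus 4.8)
The plan is to reduce the whole statement to a single radial rescaling map and then invoke a compactness argument. First I would use that $U$ is nonempty to fix a point $\btext{w} \in U$ and translate coordinates so that $\btext{w} = \btext{0}$; this is harmless topologically and places the centre of the construction at the origin, with $\btext{0} \in U$. Since $U$ is bounded, $\mrm{Cl}(U)$ is closed and bounded in $\R^n$, hence compact by the Heine-Borel Theorem, while $\mbb{B}^n$ is Hausdorff. Consequently it suffices to produce a \emph{continuous bijection} $\Phi \colon \mrm{Cl}(U) \to \mbb{B}^n$ carrying $\operatorname{Bd}(U)$ onto $\mbb{S}^{n-1}$, because a continuous bijection from a compact space to a Hausdorff space is automatically a homeomorphism (its inverse is continuous).

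The central object is the radial boundary function supplied by the previous lemma. By Lemma \eqref{l:Property of Convex Subset of R^n}, for each direction $\btext{u} \in \mbb{S}^{n-1}$ the ray $\mfr{R}_{\btext{0} \to \btext{u}}$ meets $\operatorname{Bd}(U)$ in exactly one point, which I write as $\rho(\btext{u})\,\btext{u}$ with $\rho(\btext{u}) > 0$. Equivalently I would work with the Minkowski gauge $\mu(\btext{x}) = \inf\{\lambda > 0 \colon \btext{x}/\lambda \in U\}$, so that $\rho(\btext{u}) = 1/\mu(\btext{u})$ on unit vectors, and record the level-set descriptions $U = \{\btext{x} \colon \mu(\btext{x}) < 1\}$, $\operatorname{Bd}(U) = \{\btext{x} \colon \mu(\btext{x}) = 1\}$, and $\mrm{Cl}(U) = \{\btext{x} \colon \mu(\btext{x}) \le 1\}$. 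These descriptions use openness of $U$ (to get strict inequality inside), boundedness (so $\mu > 0$ off the origin), and the unique-intersection conclusion of the previous lemma (to identify the boundary precisely with the level set $\mu = 1$).

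With the gauge in hand I would set $\Phi(\btext{0}) = \btext{0}$ and, for $\btext{x} \neq \btext{0}$, $\Phi(\btext{x}) = \mu(\btext{x})\,\btext{x}/\|\btext{x}\|$, sending each point to the vector in the same direction whose length equals the fraction of the way to the boundary. Then $\|\Phi(\btext{x})\| = \mu(\btext{x}) \le 1$ on $\mrm{Cl}(U)$, so $\Phi$ lands in $\mbb{B}^n$ and carries $\operatorname{Bd}(U) = \{\mu = 1\}$ onto $\mbb{S}^{n-1}$. Bijectivity follows from the explicit radial inverse $\btext{y} \mapsto (\|\btext{y}\|/\mu(\btext{y}))\,\btext{y}$ for $\btext{y} \neq \btext{0}$ (and $\btext{0} \mapsto \btext{0}$), since along each fixed ray both maps are strictly increasing rescalings. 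Continuity of $\Phi$ away from the origin is clear once $\mu$ is continuous, and continuity at the origin follows from the estimate $\|\Phi(\btext{x})\| = \mu(\btext{x}) \le C\|\btext{x}\| \to 0$.

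The hard part will be establishing that the gauge $\mu$ (equivalently the radial function $\rho$) is continuous; once that is done the rest is bookkeeping. The key properties are: positive homogeneity $\mu(s\btext{x}) = s\,\mu(\btext{x})$ for $s \ge 0$, which is immediate from the definition; subadditivity $\mu(\btext{x} + \btext{y}) \le \mu(\btext{x}) + \mu(\btext{y})$, which I would derive from convexity of $U$; and the two-sided bound $c\|\btext{x}\| \le \mu(\btext{x}) \le C\|\btext{x}\|$, where the upper bound comes from $U$ containing a ball $B_{\delta}(\btext{0})$ and the lower bound from $U \subset B_{\btext{0}}(\epsilon)$. Subadditivity together with the upper bound then gives $\mu(\btext{x}) - \mu(\btext{y}) \le \mu(\btext{x} - \btext{y}) \le C\|\btext{x} - \btext{y}\|$ and symmetrically, so $\mu$ is Lipschitz, hence continuous on all of $\R^n$. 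Verifying subadditivity carefully from the convexity hypothesis is the genuine technical content of the argument.
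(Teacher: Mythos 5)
Your proposal is correct, but there is nothing in the paper to compare it against: the paper states this lemma without any proof at all (it appears between Lemma \eqref{l:Property of Convex Subset of R^n} and the remark on star-convexity, and the argument is never supplied). What you have written is the standard Minkowski-gauge argument, and it is the right route. Your reduction to producing a continuous bijection via compactness of $\mrm{Cl}(U)$ (Heine--Borel) plus the compact-to-Hausdorff homeomorphism criterion is sound; the gauge $\mu(\btext{x}) = \inf\{\lambda > 0 \colon \btext{x}/\lambda \in U\}$ with the radial map $\Phi(\btext{x}) = \mu(\btext{x})\,\btext{x}/\|\btext{x}\|$ is exactly the classical construction; your inverse formula checks out (along a ray through $\btext{u} \in \mbb{S}^{n-1}$ one has $\Phi(t\btext{u}) = t\,\mu(\btext{u})\,\btext{u}$, a strictly increasing rescaling); and you correctly attribute the two-sided bound $c\|\btext{x}\| \le \mu(\btext{x}) \le C\|\btext{x}\|$ to boundedness and to $U$ containing a small ball about the (translated) origin respectively. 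You also connect the construction to Lemma \eqref{l:Property of Convex Subset of R^n} in the natural way, identifying $\rho(\btext{u}) = 1/\mu(\btext{u})$ with the unique ray-boundary intersection point, which is more than the paper itself does. Compared with the paper's nearby proof of the simplex-ball homeomorphism (the theorem preceding the ray definition), which relies on an ad hoc coordinate substitution, your argument is both more robust and actually applicable here, since a general convex body carries no barycentric coordinates.

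One small ordering caveat, which you partly acknowledge: the level-set identifications $\mrm{Cl}(U) = \{\mu \le 1\}$ and $\operatorname{Bd}(U) = \{\mu = 1\}$ are not immediate from the definition of $\mu$; the inclusion $\mrm{Cl}(U) \subseteq \{\mu \le 1\}$ needs either the continuity of $\mu$ (which you establish only afterwards, via subadditivity and the Lipschitz estimate) or the standard convexity fact that $t\,\mrm{Cl}(U) \subseteq U$ for $0 \le t < 1$ when $U$ is open and convex with $\btext{0} \in U$. In the written-out version you should prove continuity of $\mu$ first and then derive the three level-set descriptions, or insert the $t\,\mrm{Cl}(U) \subseteq U$ argument explicitly. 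With that reordering the proof is complete, and it would in fact fill a genuine gap in the paper.
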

\begin{remark}
In case of star-convexity relative to origin $\btext{0}$ of the subset $U$ of $\R^n$, a ray from $\btext{0}$ may  intersect $\operatorname{Bd}(U)$ in more than one point and $\mrm{Cl}(U)$ need not be homeomorphic to $\mbb{B}^n$. 
\end{remark}

\section{Simplicial Complex in $\R^N$}\label{s:Simplicial Complex in RN}
A simplicial complex is a fundamental concept in algebraic topology that provides a way to study topological spaces through simpler geometric objects called simplices. A simplex is a generalization of a triangle, and it can exist in any dimension. For example, a 0-simplex is a point, a 1-simplex is a line segment, a 2-simplex is a triangle, and a 3-simplex is a tetrahedron. A simplicial complex is essentially a collection of these simplices that are glued together in a specific way, where every face of a simplex is also included in the complex, and the intersection of any two simplices is itself a simplex (or possibly empty).

Simplicial complexes serve as a powerful tool for approximating and studying the properties of more complex topological spaces. They are used to define and compute various topological invariants, such as homology groups, which help classify spaces based on their shape and connectivity. Through the use of simplicial complexes, it becomes possible to break down and understand intricate topological spaces in terms of simpler, discrete elements.

\begin{definition}[Simplicial Complex in $\R^N$]\label{d:Simplicial Complex in RN}
A non-empty collection $\mcal{K}$ of simplices in $\R^N$ is called a simplicial complex if the following conditions are met:
\begin{enumerate}
\item \textbf{Closure under faces:} If $\sigma \in \mcal{K}$, then every face of $\sigma$ is also in $\mcal{K}$.
\item \textbf{Intersection of simplices:} If $\sigma, \tau \in \mcal{K}$, then  $\sigma \cap \tau$ is either empty or a face of both $\sigma$ and $\tau$.
\end{enumerate}
\end{definition}

\begin{definition}[Dimension of Simplicial Complex]\label{d:Dimension of Simplicial Complex}
The dimension of a simplicial complex $\mcal{K}$ is largest dimension of its simplices i.e. the highest number $n$ such that there exists at least one 
$n$-simplex in $\mcal{K}$. The dimension of $\mcal{K}$ is denoted by $\dim(\mcal{K})$.
\end{definition}

\begin{example}[Simplicial Complex in \(\mathbb{R}\)]\label{eg:Simplicial Complex in R}
Is the set \( A = \{\btext{a}_0, \btext{a}_1\} \) a geometrically independent set in \(\mathbb{R}\), and does the collection of simplices
\[
\mcal{K} = \{\{\btext{a}_0\}, \{\btext{a}_1\}, \{\btext{a}_0, \btext{a}_1\}\}
\]
form a simplicial complex of dimension 1?
\end{example}

\begin{solution*}
The collection $\mcal{K}$ has the following simplices: 
\begin{enumerate}
\item \textbf{0-simplexes:} $\sigma_{0}^{1} = \{\btext{a}_{0}\}$ and $\sigma_{0}^{2} = \{\btext{a}_{1}\}$.
\item \textbf{1-simplex:} $\sigma_{1}^{1} = \{\btext{a}_{0}, \btext{a}_{1}\}$.
\end{enumerate}

Let us verify the conditions for simplical complex as follows:
\begin{enumerate}
\item \textbf{Closure under faces:} There is only 1-simplex $\sigma_{1}^{1} = \{\btext{a}_{0}, \btext{a}_{1}\}$ in $\mcal{K}$ whose all faces $\sigma_{0}^{1} = \{\btext{a}_{0}\}, \sigma_{0}^{2} = \{\btext{a}_{1}\}$ are in $\mcal{K}$. 
\item \textbf{Intersection of simplices:} The intersection of any two simplexes of $\mcal{K}$ is either empty or face of each of them. For example:
\[\sigma_{0}^{1} \cap \sigma_{0}^{2} = \{\btext{a}_{0}\} \cap \{\btext{a}_{1}\} = \emptyset\]
\end{enumerate}

Therefore, $\mcal{K}$ is simplical complex. Since the largest dimension of simplex $\sigma_{1}^{1}$ is 1, therefore $\dim(\mcal{K}) = 1$. See the Figure \eqref{fig:1d-simplex}.
\end{solution*}

\begin{example}[Simplicial Complex in \(\mathbb{R}^2\)]\label{eg:Simplicial Complex in R2 1}
Is the set \( A = \{\btext{a}_0, \btext{a}_1, \btext{a}_2\} \) a geometrically independent set in \(\mathbb{R}^2\), and does the collection of simplices 
\[
\mcal{K} = \{\{\btext{a}_0\}, \{\btext{a}_1\}, \{\btext{a}_2\}, \{\btext{a}_0, \btext{a}_1\}, \{\btext{a}_1, \btext{a}_2\}, \{\btext{a}_2, \btext{a}_0\}, \{\btext{a}_0, \btext{a}_1, \btext{a}_2\}\}
\]
form a simplicial complex of dimension 2?
\end{example}

\begin{solution*}
The collection $\mcal{K}$ has the following simplices: 
\begin{enumerate}
\item \textbf{0-simplices:} $\sigma_{0}^{1} = \{\btext{a}_{0}\}, \sigma_{0}^{2} = \{\btext{a}_{1}\}$ and $\sigma_{0}^{3} = \{\btext{a}_{1}\}$.
\item \textbf{1-simplices:} $\sigma_{1}^{1} = \{\btext{a}_{0}, \btext{a}_{1}\}, \sigma_{1}^{2} = \{\btext{a}_{1}, \btext{a}_{2}\}$ and $\sigma_{1}^{3} = \{\btext{a}_{2}, \btext{a}_{0}\}$.
\item \textbf{2-simplex:} $\sigma_{2}^{1} = \{\btext{a}_{0}, \btext{a}_{1}, \btext{a}_{2}\}$
\end{enumerate}

Let us verify the conditions for simplical complex as follows:
\begin{enumerate}
\item \textbf{Closure under faces:} All faces of a 2-simplex are 1-simplices that lies in $\mcal{K}$. All faces of each 1-simplices are 0-simplices that lies in $\mcal{K}$.  
\item \textbf{Intersection of simplices:} The intersection of any two simplexes of $\mcal{K}$ is either empty or face of each of them. For example:
\begin{align*}
\sigma_{0}^{1} \cap \sigma_{0}^{2} & = \{\btext{a}_{0}\} \cap \{\btext{a}_{1}\} = \emptyset, \\
\sigma_{1}^{1} \cap \sigma_{1}^{2} & = \{\btext{a}_{0}, \btext{a}_{1}\} \cap \{ \btext{a}_{1}, \btext{a}_{2}\} =  \{\btext{a}_{1}\} = \text{face of}\, \sigma_{1}^{1}
\end{align*}
\end{enumerate}

Therefore, $\mcal{K}$ is simplical complex. Since the largest dimension of simplex $\sigma_{2}^{1}$ is 2, therefore $\dim(\mcal{K}) = 2$. This complex is filled triangle. See the Figure \eqref{fig:2d-simplex}.
\end{solution*}

\begin{example}[Simplicial Complex in $\R^3$]\label{eg:Simplicial Complex in R3}
Let $A = \{\btext{a}_0, \btext{a}_1, \btext{a}_2, \btext{a}_3\}$ is geometrical independent set in $\R^3$, then the collection of simplices 
\begin{multline*}\label{eq:Simplicial Complex in R3}
\mathcal{K} = \{
\{\mathbf{a}_0\}, \{\mathbf{a}_1\}, \{\mathbf{a}_2\}, \{\mathbf{a}_3\},  
\{\mathbf{a}_0, \mathbf{a}_1\}, \{\mathbf{a}_0, \mathbf{a}_2\}, \{\mathbf{a}_0, \mathbf{a}_3\}, \{\mathbf{a}_1, \mathbf{a}_2\}, \{\mathbf{a}_1, \mathbf{a}_3\}, \{\mathbf{a}_2, \mathbf{a}_3\}, \\ 
\{\mathbf{a}_0, \mathbf{a}_1, \mathbf{a}_2\}, \{\mathbf{a}_0, \mathbf{a}_1, \mathbf{a}_3\}, \{\mathbf{a}_0, \mathbf{a}_2, \mathbf{a}_3\}, \{\mathbf{a}_1, \mathbf{a}_2, \mathbf{a}_3\},  
\{\mathbf{a}_0, \mathbf{a}_1, \mathbf{a}_2, \mathbf{a}_3\} 
\}.
\end{multline*}
is simplicial complex of dimension 3. This complex is filled tetrahedron.  See the Figure \eqref{fig:3d-simplex}.
\end{example}

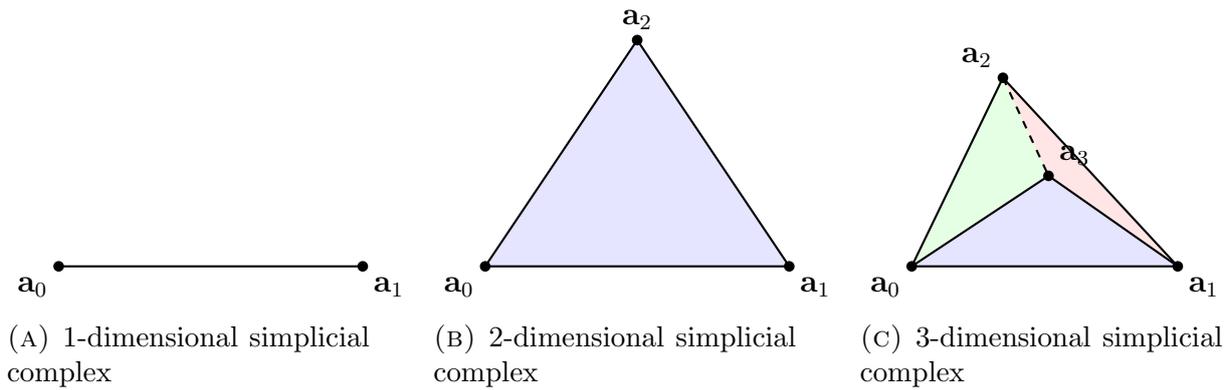
\begin{figure}[h!]
    \centering
    \begin{subfigure}{0.3\textwidth}
        \centering
        \begin{tikzpicture}
            \coordinate (A) at (0,0);
            \coordinate (B) at (4,0);
            \fill[blue!20, opacity=0.5] (A) -- (B);   
            \draw[thick] (A) -- (B);
            \fill[black] (A) circle (2pt) node[below left] {$\mathbf{a}_0$};
            \fill[black] (B) circle (2pt) node[below right] {$\mathbf{a}_1$};
        \end{tikzpicture}
        \caption{1-dimensional simplicial complex}
        \label{fig:1d-simplex}
    \end{subfigure}
    \hfill
    \begin{subfigure}{0.3\textwidth}
        \centering
        \begin{tikzpicture}
            \coordinate (A) at (0,0);
            \coordinate (B) at (4,0);
            \coordinate (C) at (2,3);
            \fill[blue!20, opacity=0.5] (A) -- (B) -- (C) -- cycle;   
            \draw[thick] (A) -- (B);
            \draw[thick] (B) -- (C);
            \draw[thick] (C) -- (A);
            \fill[black] (A) circle (2pt) node[below left] {$\mathbf{a}_0$};
            \fill[black] (B) circle (2pt) node[below right] {$\mathbf{a}_1$};
            \fill[black] (C) circle (2pt) node[above] {$\mathbf{a}_2$};  
        \end{tikzpicture}
        \caption{2-dimensional simplicial complex}
        \label{fig:2d-simplex}
    \end{subfigure}
    \hfill
    \begin{subfigure}{0.3\textwidth}
        \centering
        \begin{tikzpicture}
            \coordinate (A0) at (0,0);        
            \coordinate (A1) at (3.5,0);      
            \coordinate (A2) at (1.2,2.5);    
            \coordinate (A3) at (1.8,1.2);    

            \fill[blue!20, opacity=0.5] (A0) -- (A1) -- (A3) -- cycle;  
            \fill[green!20, opacity=0.5] (A0) -- (A2) -- (A3) -- cycle; 
            \fill[red!20, opacity=0.5] (A1) -- (A2) -- (A3) -- cycle;   

            \draw[thick] (A0) -- (A1);
            \draw[thick] (A0) -- (A2);
            \draw[thick] (A1) -- (A2);
            \draw[thick] (A0) -- (A3);
            \draw[thick] (A1) -- (A3);
            \draw[thick,dashed] (A2) -- (A3);  

            \fill[black] (A0) circle (2pt) node[below left] {$\mathbf{a}_0$};
            \fill[black] (A1) circle (2pt) node[below right] {$\mathbf{a}_1$};
            \fill[black] (A2) circle (2pt) node[above left] {$\mathbf{a}_2$};
            \fill[black] (A3) circle (2pt) node[above right] {$\mathbf{a}_3$};
        \end{tikzpicture}
        \caption{3-dimensional simplicial complex}
        \label{fig:3d-simplex}
    \end{subfigure}
    \caption{Simplicial Complexes of Increasing Dimension}
    \label{fig:simplicial-complexes}
\end{figure}

\begin{example}\label{eg:Two filled triangles with a common edge}
Let $A = \{\btext{a}_0, \btext{a}_1, \btext{a}_2, \btext{a}_3\}$ is geometrical independent set in $\R^2$, then the collection of simplices
\begin{multline*}
\mcal{K} = \{\{\btext{a}_0\}, \{\btext{a}_1\}, \{\btext{a}_2\}, \{\btext{a}_3\}, 
\{\btext{a}_0, \btext{a}_1\}, \{\btext{a}_1, \btext{a}_3\}, \{\btext{a}_3, \btext{a}_2\}, \{\btext{a}_2, \btext{a}_0\}, \{\btext{a}_2, \btext{a}_1\}, \\ 
\{\btext{a}_0, \btext{a}_1, \btext{a}_2\}, \{\btext{a}_1, \btext{a}_2, \btext{a}_3\}
\}        
\end{multline*}
is simplicial complex of dimension 2. See the Figure \eqref{fig:Two filled triangles with a common edge}.
\end{example}

\begin{example}\label{eg:Two filled triangles with a common vertex}
Let $A = \{\btext{a}_0, \btext{a}_1, \btext{a}_2, \btext{a}_3\}$ is geometrical independent set in $\R^2$, then the collection of simplices

\begin{multline*}
\mcal{K} = \{\{\btext{a}_0\}, \{\btext{a}_1\}, \{\btext{a}_2\}, \{\btext{a}_3\}, \{\btext{a}_4\}
\{\btext{a}_0, \btext{a}_1\}, \{\btext{a}_1, \btext{a}_2\}, \{\btext{a}_2, \btext{a}_0\}, \\ \{\btext{a}_2, \btext{a}_3\}, \{\btext{a}_3, \btext{a}_4\}, \{\btext{a}_4, \btext{a}_2\}  \}  \end{multline*}
is simplicial complex of dimension 2. See the Figure \eqref{fig:Two filled triangles with a common vertex}.
\end{example}

\begin{figure}[h!]
    \centering
    \begin{subfigure}{0.48\textwidth}
        \centering
        \begin{tikzpicture}
            \coordinate (A0) at (0,0);
            \coordinate (A1) at (4,0);
            \coordinate (A2) at (2,3);
            \coordinate (A3) at (6,3);
            \fill[red!20, opacity=0.3] (A0) -- (A1) -- (A2) -- cycle;
            \fill[green!20, opacity=0.5] (A1) -- (A2) -- (A3) -- cycle;
            \draw[thick] (A0) -- (A1);
            \draw[thick] (A1) -- (A2);
            \draw[thick] (A2) -- (A0);
            \draw[thick] (A1) -- (A3);
            \draw[thick] (A2) -- (A3);
            \fill[black] (A0) circle (2pt) node[below left] {$\mathbf{a}_0$};
            \fill[black] (A1) circle (2pt) node[below right] {$\mathbf{a}_1$};
            \fill[black] (A2) circle (2pt) node[above left] {$\mathbf{a}_2$};
            \fill[black] (A3) circle (2pt) node[above right] {$\mathbf{a}_3$};
        \end{tikzpicture}
        \caption{Two filled triangles with a common edge}
        \label{fig:Two filled triangles with a common edge}
    \end{subfigure}
    \hfill
    \begin{subfigure}{0.48\textwidth}
        \centering
        \begin{tikzpicture}
            \coordinate (A0) at (0,0);
            \coordinate (A1) at (4,0);
            \coordinate (A2) at (2,3);
            \coordinate (A3) at (7,4);
            \coordinate (A4) at (4,2);
            \fill[blue!20, opacity=0.5] (A0) -- (A1) -- (A2) -- cycle;
            \fill[green!20, opacity=0.5] (A2) -- (A3) -- (A4) -- cycle;
            \draw[thick] (A0) -- (A1);
            \draw[thick] (A1) -- (A2);
            \draw[thick] (A2) -- (A0);
            \draw[thick] (A2) -- (A3);
            \draw[thick] (A3) -- (A4);
            \draw[thick] (A4) -- (A2);   
            \fill[black] (A0) circle (2pt) node[below left] {$\mathbf{a}_0$};
            \fill[black] (A1) circle (2pt) node[below right] {$\mathbf{a}_1$};
            \fill[black] (A2) circle (2pt) node[above left] {$\mathbf{a}_2$};
            \fill[black] (A3) circle (2pt) node[above right] {$\mathbf{a}_3$};
            \fill[black] (A4) circle (2pt) node[below right] {$\mathbf{a}_4$};
        \end{tikzpicture}
        \caption{Two filled triangles with a common vertex}
        \label{fig:Two filled triangles with a common vertex}
    \end{subfigure}
    \caption{Two examples of 3-dimensional simplicial complexes}
    \label{fig:3d-simplicial-complexes}
\end{figure}
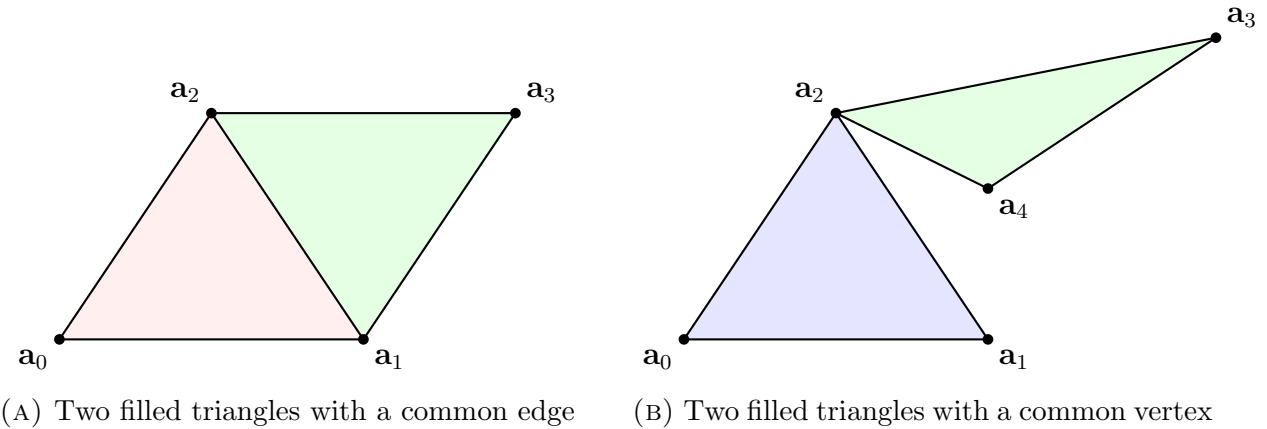

\begin{example}\label{eg:Non-Simplicial Complex in R2 2c}
Is the set \( A = \{\btext{a}_0, \btext{a}_1, \btext{a}_2, \btext{a}_3\} \), which is geometrically independent in \(\mathbb{R}^2\), such that the collection
\[
\mcal{K} = \left\{\{\btext{a}_0, \btext{a}_1, \btext{a}_2\}, \{\btext{a}_1, \btext{a}_2, \btext{a}_3\}, \{\btext{a}_0, \btext{a}_3\}\right\}
\]
forms a simplicial complex?
\end{example}

\begin{solution*}
The collection $\mcal{K}$ is not simplicial complex in $\R^2$. Since the 2-simplex as $ \{\btext{a}_0, \btext{a}_1, \btext{a}_2\} \in \mcal{K}$, but its face $\{\btext{a}_0, \btext{a}_1\} \notin \mcal{K}$. The 2-simplices $\{\btext{a}_0, \btext{a}_1, \btext{a}_2\}, \{\btext{a}_1, \btext{a}_2, \btext{a}_3\} \in \mcal{K}$, but $\{\btext{a}_0, \btext{a}_1, \btext{a}_2\} \cap \{\btext{a}_1, \btext{a}_2, \btext{a}_3\} = \{\btext{a}_1, \btext{a}_2\} \notin \mcal{K}$. Therefore, the collection $\mcal{K}$ does not satisfy the closure condition as well as the intersection condition. See the Figure \eqref{fig:Non-simplicial Complex}.

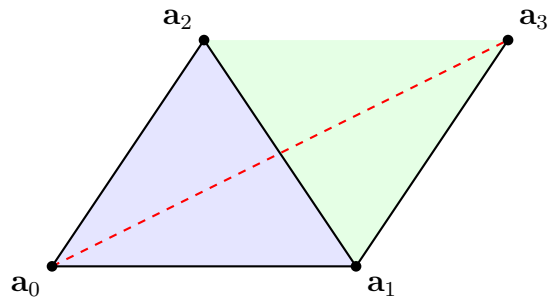
\begin{figure}[h!]
    \centering
\begin{tikzpicture}
    \coordinate (A0) at (0,0);
    \coordinate (A1) at (4,0);
    \coordinate (A2) at (2,3);
    \coordinate (A3) at (6,3);
    \fill[blue!20, opacity=0.5] (A0) -- (A1) -- (A2) -- cycle;
    \fill[green!20, opacity=0.5] (A1) -- (A2) -- (A3) -- cycle;
    \draw[thick] (A0) -- (A1);  
    \draw[thick] (A1) -- (A2);  
    \draw[thick] (A2) -- (A0);  
    \draw[thick] (A1) -- (A3);  
    \draw[thick, red, dashed] (A0) -- (A3);  
    \fill[black] (A0) circle (2pt) node[below left] {$\btext{a}_0$};
    \fill[black] (A1) circle (2pt) node[below right] {$\btext{a}_1$};
    \fill[black] (A2) circle (2pt) node[above left] {$\btext{a}_2$};
    \fill[black] (A3) circle (2pt) node[above right] {$\btext{a}_3$};
\end{tikzpicture}
   \caption{Non-simplicial Complex}
    \label{fig:Non-simplicial Complex}
\end{figure}
\end{solution*}

Since it is not an easy task to verify whether a given collection is simplicial complex or not, let us look at the next important Lemma \eqref{l:Necessary and sufficient conditions for any collection to be simplicial complex} which gives necessary and sufficient conditions for any collection to be simplicial complex which ultimately also helps in verification.

\begin{lemma}\label{l:Necessary and sufficient conditions for any collection to be simplicial complex}
A collection $\mcal{K}$ of simplicies is a simplicial complex if and only if following conditions hold:
\begin{enumerate}
\item \textbf{Closure under faces:} Every face of a simplex of $\mcal{K}$ is in $\mcal{K}$.
\item \textbf{Disjoint Interiors for distinct simplicies:} Every pair of distinct simplicies of $\mcal{K}$ have disjoint interiors. 
\end{enumerate}
\end{lemma}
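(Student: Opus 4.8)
The plan is to observe that the closure-under-faces condition is common to both the original Definition \eqref{d:Simplicial Complex in RN} and the statement of this lemma, so the entire content reduces to proving, \emph{under the standing assumption that $\mcal{K}$ is closed under faces}, that the intersection condition ``$\sigma \cap \tau$ is empty or a common face'' is equivalent to the disjoint-interiors condition. Two earlier facts will do the heavy lifting: first, the decomposition of a simplex as the disjoint union of the (relative) interiors of its faces, guaranteed by the remark following Definition \eqref{d:Interior of Simplex} (item 3), which asserts that every point of a simplex lies in $\operatorname{Int}(\rho)$ for exactly one face $\rho$ (the simplex itself included); and second, the uniqueness of the spanning vertex set, Theorem \eqref{t:Uniqness of GI set that spaning simplex}, which lets me identify faces with subsets of the vertex set.

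For the forward direction I would take two distinct simplices $\sigma, \tau \in \mcal{K}$ and show $\operatorname{Int}(\sigma) \cap \operatorname{Int}(\tau) = \emptyset$. If $\sigma \cap \tau = \emptyset$ this is immediate; otherwise the intersection condition gives a common face $\rho = \sigma \cap \tau$, and since $\sigma \neq \tau$ the face $\rho$ cannot equal both, so it is a \emph{proper} face of at least one of them, say of $\sigma$. Then $\rho \subseteq \operatorname{Bd}(\sigma)$, so $\operatorname{Int}(\sigma) \cap \rho = \emptyset$; chaining $\operatorname{Int}(\sigma)\cap\operatorname{Int}(\tau) \subseteq \operatorname{Int}(\sigma) \cap \tau \subseteq \sigma \cap \tau = \rho$ then forces the interiors to be disjoint.

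For the reverse direction I would assume disjoint interiors and recover the intersection condition. Decomposing both simplices into interiors of faces and intersecting gives
\[
\sigma \cap \tau = \bigcup_{\rho' \preceq \sigma}\ \bigcup_{\rho'' \preceq \tau}\big(\operatorname{Int}(\rho') \cap \operatorname{Int}(\rho'')\big).
\]
By closure under faces every $\rho'$ and $\rho''$ lies in $\mcal{K}$, so the disjoint-interiors hypothesis annihilates every term with $\rho' \neq \rho''$ and leaves exactly the interiors of the \emph{common} faces of $\sigma$ and $\tau$. Using Theorem \eqref{t:Uniqness of GI set that spaning simplex}, a simplex is a common face of $\sigma$ and $\tau$ precisely when its (unique) vertex set is a subset of $V(\sigma) \cap V(\tau)$; hence if $\sigma \cap \tau \neq \emptyset$ then $V(\sigma) \cap V(\tau) \neq \emptyset$, and letting $\rho_0$ be the face spanned by $V(\sigma) \cap V(\tau)$, the union above is precisely the decomposition of $\rho_0$ into the interiors of its own faces. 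Therefore $\sigma \cap \tau = \rho_0$, a common face of both, which establishes the intersection condition.

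The main obstacle is this reverse direction, and specifically the passage from ``$\sigma \cap \tau$ is a \emph{union} of interiors of common faces'' to ``$\sigma \cap \tau$ is a \emph{single} face''. This is exactly where uniqueness of the spanning vertex set is essential: it guarantees that the common faces are the simplices on subsets of $V(\sigma)\cap V(\tau)$, so they possess a largest element $\rho_0$ whose interior-decomposition reassembles the whole intersection. I would also take care with the paper's formal convention restricting ``face'' to $k<n$: to run the decomposition the simplex must count as a face of itself, so I would use the improper-face convention throughout and note that the case $\sigma\cap\tau\in\{\sigma,\tau\}$ is covered by it.
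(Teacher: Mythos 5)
Your proof is correct, and while the forward direction matches the paper's (the paper argues contrapositively that a point in $\operatorname{Int}(\sigma) \cap \operatorname{Int}(\tau)$ forces $\rho = \sigma \cap \tau$ to be an improper face of both, hence $\sigma = \tau$; you argue directly via $\rho \subseteq \operatorname{Bd}(\sigma)$, which is the same content), your reverse direction takes a genuinely different route. The paper fixes the candidate face $\rho$ spanned by the vertices of $\sigma$ that \emph{lie in} $\tau$, proves $\rho \subseteq \sigma \cap \tau$ by convexity of $\sigma \cap \tau$, and then, for an arbitrary $\btext{x} \in \sigma \cap \tau$, invokes the unique-interior-face decomposition together with condition (2) to force $\sigma' = \tau'$ and conclude $\btext{x} \in \rho$. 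You instead intersect the two disjoint-interior decompositions globally, let condition (2) annihilate all cross terms $\rho' \neq \rho''$, and read off $\sigma \cap \tau$ as the union of interiors of the common faces, which reassembles into the single simplex $\rho_0$ on $V(\sigma) \cap V(\tau)$ via Theorem \eqref{t:Uniqness of GI set that spaning simplex}. Your packaging buys two things: both inclusions $\rho_0 \subseteq \sigma \cap \tau$ and $\sigma \cap \tau \subseteq \rho_0$ fall out of a single set identity, so you never need the convexity argument the paper uses for the easy inclusion; and your maximal common face is exhibited intrinsically as the simplex on the common \emph{vertex} set, whereas the paper's $\rho$ is spanned by vertices of $\sigma$ merely lying in $\tau$ --- the two descriptions agree only because disjoint interiors force a vertex of $\sigma$ contained in $\tau$ to be a vertex of $\tau$, a point the paper leaves implicit and your formulation sidesteps entirely. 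The paper's pointwise version is lighter on notation and avoids the double union. Finally, your remark about admitting the improper face in the decomposition is well taken: the paper's Definition \eqref{d:Face of Simplex} restricts faces to $k < n$, yet its own proof quietly uses faces $\sigma' = \sigma$ and $\tau' = \tau$, so the convention you adopt is the one the argument actually requires.
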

\begin{proof}
Let us assume that $\mcal{K}$ is simplicial complex, then we would like to prove that $\mcal{K}$ satisfies both the given conditions.

The first condition is clearly satisfied by $\mcal{K}$ since it is the same as the definition of the simplicial complex. 

Now the second condition, if $\mcal{K}$ is simplicial complex, then we would like to prove that every pair of distinct simplices of $\mcal{K}$ has disjoint interiors i.e. if $\sigma, \tau \in \mcal{K}$ such that $\sigma \neq \tau$, then $\mrm{Int}(\sigma) \cap \mrm{Int}(\tau) = \emptyset$. We will prove its contrapositive statement which is: If $\btext{x} \in \mrm{Int}(\sigma) \cap \mrm{Int}(\tau)$ is arbitrary, then $\sigma = \tau$. As $\btext{x} \in \mrm{Int}(\sigma) \cap \mrm{Int}(\tau)$ is given, this implies that $\btext{x} \in \mrm{Int}(\sigma)$ i.e. $\btext{x} \in \sigma$. Let $\rho = \sigma \cap \tau$. Let $\rho \subset \sigma$ be a proper face of $\sigma$, then $\btext{x} \in \operatorname{Bd} \sigma$, but $\btext{x} \notin \operatorname{Bd} \sigma$ since $\btext{x} \in \mrm{Int}(\sigma)$. Therefore, it is not possible for $\rho$ to be proper face so $\rho = \sigma$. Similarly, we can prove that $\rho = \tau$. Thus,  $\rho = \sigma = \tau$. 

Conversely, if the collection $\mcal{K}$ of simplices holds both conditions $(1)$ and $(2)$, then we would like to prove that $\mcal{K}$ is simplicial complex. This will be proved directly from the definition of simplicial complex. This first condition of the simple complex clearly matches $(1)$ because both are the same. We will now prove that the intersection of any two simplexes  $ \sigma, \tau \in \mcal{K}$ is either empty of a face of each of them.

Let us suppose $\sigma \cap \tau \neq \emptyset$. Then we claim $\sigma \cap \tau = \rho$ is the face of $\sigma$ that is spanned by those vertices $\btext{b}_0, \btext{b}_1, \ldots, \btext{b}_m$ of $\sigma$ that lies in $\tau$. Since $\sigma \cap \tau$ is convex and contains the vertices $\btext{b}_0, \btext{b}_1, \ldots, \btext{b}_m$, which means that $\rho \subset \sigma \cap \tau$. Now let $\btext{x} \in \sigma \cap \tau$ be arbitrary, this means that $\btext{x} \in \mrm{Int}(\sigma) \cap \mrm{Int}(\tau)$ for some faces $\sigma'$ of $\sigma$ and $\tau'$ of $\tau$. But according to the given condition $(2)$, this implies that $\sigma' = \tau'$. Therefore, the vertices of $\sigma'$ are contained in $\tau'$ and they are elements of $\{\btext{b}_0, \btext{b}_1, \ldots, \btext{b}_m\}$. Thus, $\sigma'$ is a face of $\rho$ which means $\btext{x} \in \rho$ and this implies $\sigma \cap \tau \subset \rho$. Therefore, $\sigma \cap \tau = \rho$.
\end{proof}
Let us see following observation from the Lemma \eqref{l:Necessary and sufficient conditions for any collection to be simplicial complex}.

\begin{corollary}
If $\sigma$ is any simplex, the collection consisting of $\sigma$ and its proper faces is a simplicial complex. 
\end{corollary}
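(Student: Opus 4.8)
The plan is to invoke Lemma \eqref{l:Necessary and sufficient conditions for any collection to be simplicial complex}, which replaces the intersection axiom of Definition \eqref{d:Simplicial Complex in RN} by the more convenient pair of conditions: closure under faces, and disjoint interiors for distinct simplices. First I would fix notation: let $\sigma = \sigma_n$ be spanned by the geometrically independent set $A = \{\btext{a}_0, \ldots, \btext{a}_n\}$, and let $\mcal{K}$ be the collection consisting of $\sigma_n$ together with all of its proper faces. By Definition \eqref{d:Face of Simplex}, every member of $\mcal{K}$ is the simplex spanned by some nonempty subset $B \subseteq A$, and conversely each such subset yields a member of $\mcal{K}$; this reformulation in terms of subsets of $A$ is what makes both conditions easy to check.

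For closure under faces, I would take an arbitrary $\tau \in \mcal{K}$ spanned by a nonempty subset $B \subseteq A$ and an arbitrary face $\rho$ of $\tau$. By definition $\rho$ is spanned by a nonempty subset $C \subseteq B$, hence by a nonempty subset $C \subseteq A$; therefore $\rho$ is itself a face of $\sigma_n$ (or equals $\sigma_n$ when $C = A$), so $\rho \in \mcal{K}$. The essential point is the transitivity ``a face of a face is a face,'' which is immediate from the nesting of subsets of $A$.

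For disjoint interiors, I would exploit the uniqueness of barycentric coordinates guaranteed by the geometric independence of $A$, together with the observation recorded in the remark following Definition \eqref{d:Closure of Simplex}: every point $\btext{x} \in \sigma_n$ lies in $\operatorname{Int}(\sigma_k)$ for exactly one face $\sigma_k$, namely the one spanned by those $\btext{a}_i$ whose barycentric coordinate $t_i(\btext{x})$ is strictly positive. Arguing by contrapositive, I would suppose $\btext{x} \in \operatorname{Int}(\tau_1) \cap \operatorname{Int}(\tau_2)$ for $\tau_1, \tau_2 \in \mcal{K}$. Then each of $\tau_1$ and $\tau_2$ must coincide with the unique face spanned by the vertices having positive barycentric coordinate at $\btext{x}$, forcing $\tau_1 = \tau_2$. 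Hence distinct members of $\mcal{K}$ have disjoint interiors.

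With both hypotheses of the Lemma verified, $\mcal{K}$ is a simplicial complex. The only step requiring genuine care is the disjoint-interiors condition: the main obstacle is not a difficult computation but the correct invocation of the ``unique face'' property, which itself rests on the uniqueness of barycentric coordinates established through the geometric independence of $A$. Everything else reduces to bookkeeping about nonempty subsets of the vertex set $A$.
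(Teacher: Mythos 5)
Your proposal is correct and follows essentially the same route as the paper: both verify the two conditions of Lemma \eqref{l:Necessary and sufficient conditions for any collection to be simplicial complex}, with the disjoint-interiors condition resting on the fact that each $\btext{x} \in \sigma$ lies in $\operatorname{Int}(\sigma')$ for exactly one face $\sigma'$ (the one spanned by the vertices with positive barycentric coordinates). The only difference is that you spell out the bookkeeping --- the subset-of-$A$ description of faces and the uniqueness of barycentric coordinates --- that the paper's proof dismisses as obvious.
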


\begin{proof}
The condition $(1)$ of Lemma \eqref{l:Necessary and sufficient conditions for any collection to be simplicial complex} is obvious. Condition $(2)$ of Lemma \eqref{l:Necessary and sufficient conditions for any collection to be simplicial complex} satisfied  since for each element $\btext{x} \in \sigma$, there is exactly one face $\sigma'$ of $\sigma$ exist such that $\btext{x} \in \mrm{Int}(\sigma')$. 
\end{proof}

\begin{example}\label{eg:Verification of Lemma}
Let $A = \{\btext{a}_0, \btext{a}_1, \btext{a}_2\}$ is geometrical independent set in $\R^2$ and let the collection of all simplices (2-simplies, 1-simplices and 0-simplices) in $\R^2$ is the simplicial complex given by $\mcal{K}_1$
\[\mcal{K}_1 = \{\{\btext{a}_0\}, \{\btext{a}_1\}, \{\btext{a}_2\}, \{\btext{a}_0, \btext{a}_1\},     \{\btext{a}_1, \btext{a}_2\}, \{\btext{a}_2, \btext{a}_0\}, \{\btext{a}_0, \btext{a}_1, \btext{a}_2\} \}    \] 
is simplicial complex. Verify this example by the Lemma \eqref{l:Necessary and sufficient conditions for any collection to be simplicial complex}.
\end{example}
\begin{solution*}
According to Lemma \eqref{l:Necessary and sufficient conditions for any collection to be simplicial complex} we will verify both conditions as follows: 
\begin{enumerate}
\item \textbf{Closure under faces:}A simplicial complex must contain all faces (lower-dimensional simplices) of each simplex it contains. The 2-simplex $\{\btext{a}_0, \btext{a}_1, \btext{a}_2\}$ has the faces; 1-simplicies as $\{\btext{a}_0, \btext{a}_1\}, \{\btext{a}_1, \btext{a}_2\}, \{\btext{a}_2, \btext{a}_0\}$ and 1-simplicies as $\{\btext{a}_0\}, \{\btext{a}_2\}, \{\btext{a}_2\}$. All of these are explicitly listed in 
$\mcal{K}$, proving that $\mcal{K}$ is closed under taking faces.  
\item \textbf{Disjoint Interiors for distinct simplicies:}The interior of a simplex refers to the points inside it, excluding its lower-dimensional faces. The interiors of the edges (1-simplicies) as $\{\btext{a}_0, \btext{a}_1\}, \{\btext{a}_1, \btext{a}_2\}, \{\btext{a}_2, \btext{a}_0\}$  
do not contain any vertices or other edges. The interior of the triangle 2-simplex $\{\btext{a}_0, \btext{a}_1, \btext{a}_2\}$
does not include any edges (1-simplicies) or vertices (0-simplicies); it only includes the points inside the triangle. Since the interiors of different simplices are disjoint, this condition holds.
\end{enumerate}
Since both necessary conditions hold, we conclude that $\mcal{K}_1$ is simplicial complexes.
\end{solution*}

\begin{question}
Is the simplicial complex is topological space ? 
\end{question}
\subsection{Structures derived from Simplicial Complexes}\label{ss:Structures derived from Simplicial Complexes}
Now we are going to describe some important structures which are derived from simplicial complexes like sub-complexes, skeletons, vertices, underlying spaces, polytopes, polyhedrons.

\begin{definition}[Sub-complex of Simplicial Complex ]\label{d:Sub-complex of Simplicial Complex}
Let $\mcal{K}$ be a simplicial complex. A sub-complex $\mcal{L}$ of $\mcal{K}$ is a subset of  $\mcal{K}$ that is itself a simplicial complex.  
\end{definition}

\begin{example}[Subcomplex of a Simplicial Complex]\label{eg:Example of Sub-complex}
Let \(\mcal{K}\) be the simplicial complex in \(\mathbb{R}^2\) defined by
\[
\mcal{K} = \{\{\btext{a}_0\}, \{\btext{a}_1\}, \{\btext{a}_2\}, \{\btext{a}_0, \btext{a}_1\}, \{\btext{a}_1, \btext{a}_2\}, \{\btext{a}_2, \btext{a}_0\}, \{\btext{a}_0, \btext{a}_1, \btext{a}_2\}\}.
\]
Is the subset \(\mcal{L}\) of \(\mcal{K}\),
\[
\mcal{L} = \{\{\btext{a}_0\}, \{\btext{a}_1\}, \{\btext{a}_0, \btext{a}_1\}\},
\]
a subcomplex of \(\mcal{K}\)?
\end{example}

\begin{solution*}
A simplicial complex $\mcal{K}$ in $\R^2$ is
\[\mcal{K} = \{\{\btext{a}_0\}, \{\btext{a}_1\}, \{\btext{a}_2\}, \{\btext{a}_0, \btext{a}_1\},     \{\btext{a}_1, \btext{a}_2\}, \{\btext{a}_2, \btext{a}_0\}, \{\btext{a}_0, \btext{a}_1, \btext{a}_2\} \}    \]
and subset $\mcal{L}$ of $\mcal{K}$ is
\[\mcal{L} = \{\{\btext{a}_0\}, \{\btext{a}_1\}, \{\btext{a}_0, \btext{a}_1\} \}    \] 
that is itself a simplicial complex. Therefore, $\mcal{L}$ is sub-simplicial complex of $\mcal{K}$.
\end{solution*}

\begin{definition}[$p$-Skeleton of Simplicial Complex]\label{d:p keleton of Simplicial Complex}
The $p$-skeleton of a simplicial complex $\mcal{K}$, denoted as 
$\mcal{K}^{(p)}$, is the sub-complex of $\mcal{K}$ consisting of all simplices in $\mcal{K}$ whose dimension is at most $p$. Symbolically, 
\begin{equation}\label{eq :pskeleton}
\mcal{K}^{(p)} = \{\sigma \in \mcal{K} \colon \dim (\sigma) \leq p\}
\end{equation}
\end{definition}

\begin{definition}[Vertices of Simplicial Complex]\label{d:Vertices of Simplicial Complex}
The elements of collection $\mcal{K}^{(0)}$  are called vertices of $\mcal{K}$.
\end{definition}

\begin{example}[The $p$-Skeleton of a Simplicial Complex]\label{eg:Example of p skelton}
What are the skeletons of the simplicial complex \(\mcal{K}\) in \(\mathbb{R}^2\),
\[
\mcal{K} = \{\{\btext{a}_0\}, \{\btext{a}_1\}, \{\btext{a}_2\}, \{\btext{a}_0, \btext{a}_1\}, \{\btext{a}_1, \btext{a}_2\}, \{\btext{a}_2, \btext{a}_0\}, \{\btext{a}_0, \btext{a}_1, \btext{a}_2\}\}?
\]

Can we describe the 0-, 1-, and 2-skeletons \(\mcal{K}^{(0)}, \mcal{K}^{(1)}, \mcal{K}^{(2)}\), and identify the vertices of \(\mcal{K}\)?
\end{example}

\begin{solution*}
The given simplicial complex $\mcal{K}$ is
\[\mcal{K} = \{\{\btext{a}_0\}, \{\btext{a}_1\}, \{\btext{a}_2\}, \{\btext{a}_0, \btext{a}_1\},     \{\btext{a}_1, \btext{a}_2\}, \{\btext{a}_2, \btext{a}_0\}, \{\btext{a}_0, \btext{a}_1, \btext{a}_2\} \}   \]

According to Definition \eqref{d:p keleton of Simplicial Complex}, $\mcal{K}^{(p)} = \{\sigma \in \mcal{K} \colon \dim (\sigma) \leq p\}$. Therefore, 
\[\mcal{K}^{(0)} = \{\{\btext{a}_0\}, \{\btext{a}_1\}, \{\btext{a}_2\} \},
\mcal{K}^{(1)} = \{\{\btext{a}_0, \btext{a}_1\},     \{\btext{a}_1, \btext{a}_2\}, \{\btext{a}_2, \btext{a}_0\}\},
\mcal{K}^{(2)} = \mcal{K}.\]

According to Definition \eqref{d:Vertices of Simplicial Complex} elements of 0-skeleton $\mcal{K}^{(0)}$ is the vertices of $\mcal{K}$, therefore, the vertices of the simplicial complex $\mcal{K}$ are $\{\btext{a}_0\}, \{\btext{a}_1\}, \{\btext{a}_2\}$.
\end{solution*}

\subsection{Geometric Realization of Simplicial Complex}\label{ss:Geometric Realization of Simplicial Complex}
As we have already studied about simplicial complexs which are combinatorial structures as a collection of vertices and how they are grouped into simplices. Since simplicial complexes are discrete in nature, the question arises: how can we study the continuous properties of such structures? The answer is the geometric realization of simplicial complexes that serve as a bridge between discrete models and continuous shapes. The geometric realization of a simplicial complex gives a continuous topological space corresponding to its combinatorial structure. It bridges the gap between discrete and continuous mathematics, enabling the application of topological, geometric, and analytical techniques. By establishing a concrete space where points are convex combinations of vertices, it allows for the study of homotopy, homology, and topological invariants in algebraic topology. Additionally, it facilitates metric geometry, differential geometry, and computational methods such as finite element analysis and topological data analysis. The realization provides a natural embedding in Euclidean space, making abstract complexes more tangible and useful for approximating smooth manifolds and complex topological spaces. Overall, geometric realization serves as a crucial tool for analyzing and computing properties of spaces that originate from purely combinatorial simplicial structures.

The geometric realization of simplicial complexes has many popular or synonymous names such as underlying space, polyhedron, and simplicial space, but in our study we use only the geometric realization. If $\mcal{K}$ is a simplicial complex then its geometric realization is denoted by $|\mcal{K}|$.

\begin{definition}[Geometric Realization of Simplicial Complex]\label{d:Geometric Realization of Simplicial Complex}
The geometric realization of a simplicial complex $\mcal{K}$ is the subset of $\R^N$ that is union of simplices corresponding to each simplex in $\mcal{K}$, glued together along their common faces. Symoblically, 
\begin{equation}\label{eq:Underlying Space of Simplicial Complex}
|\mcal{K}| = \bigcup_{\sigma_n \in \mcal{K}} \sigma_n
\end{equation}
where
\begin{equation}\label{eq:n-simplex}
 \sigma_n = \Big \{\btext{x} \in \R^N \colon \btext{x} = \sum_{i = 0}^{n}t_{i}\btext{a}_{i}, \, \text{where} \, \sum_{i = 0}^{n}t_{i} = 1 \, \text{and} \, t_{i} \geq 0, \, \forall i \Big\} 
\end{equation}
\end{definition}
The geometric realization $|\mcal{K}|$ is the actual geometric realization of $\mcal{K}$ in $\R^N$, capturing its shape in a continuous way rather than just as an abstract combinatorial structure. 

\begin{remark}
The geometric realization of any simplicial complex also called \textbf{polytope}  or \textbf{geometric realization} of the simplicial complex. A space that is polytope of a simplicial complex is called \textbf{polyhedron}. Some times polyhedron is use instead of polytope of finite simplicial complex.  
\end{remark}

\begin{example}[Geometric Realization of a Simplicial Complex]\label{eg:Example of Underlying Space of Simplicial Complex}
Is the geometric realization of the simplicial complex \(\mcal{K}\) in \(\mathbb{R}^2\), 
\[
\mcal{K} = \{\{\btext{a}_0\}, \{\btext{a}_1\}, \{\btext{a}_2\}, \{\btext{a}_0, \btext{a}_1\}, \{\btext{a}_1, \btext{a}_2\}, \{\btext{a}_2, \btext{a}_0\}, \{\btext{a}_0, \btext{a}_1, \btext{a}_2\}\},
\]
a geometrical 2-simplex, i.e. a filled or solid triangle including edges and vertices with vertices \(\btext{a}_0, \btext{a}_1, \btext{a}_2 \in \mathbb{R}^2\)? Further, is it topologically homeomorphic to a closed 2-dimensional disk?
\end{example}

\begin{solution*}
The simplicial complex $\mcal{K}$ in $\R^2$ is given by
\[\mcal{K} = \{\{\btext{a}_0\}, \{\btext{a}_1\}, \{\btext{a}_2\}, \{\btext{a}_0, \btext{a}_1\},     \{\btext{a}_1, \btext{a}_2\}, \{\btext{a}_2, \btext{a}_0\}, \{\btext{a}_0, \btext{a}_1, \btext{a}_2\} \}   \]

The simplicies of $\mcal{K}$ are as follows:
\begin{enumerate}
\item \textbf{0-simplices:} $\sigma_{0}^{1} = \{\btext{a}_{0}\}, \sigma_{0}^{2} = \{\btext{a}_{1}\}$ and $\sigma_{0}^{3} = \{\btext{a}_{1}\}$.
\item \textbf{1-simplices:} $\sigma_{1}^{1} = \{\btext{a}_{0}, \btext{a}_{1}\}, \sigma_{1}^{2} = \{\btext{a}_{1}, \btext{a}_{2}\}$ and $\sigma_{1}^{3} = \{\btext{a}_{2}, \btext{a}_{0}\}$.
\item \textbf{2-simplex:} $\sigma_{2}^{1} = \{\btext{a}_{0}, \btext{a}_{1}, \btext{a}_{2}\}$
\end{enumerate}

According to Definition \eqref{d:Geometric Realization of Simplicial Complex}, the geometric realization $|\mcal{K}|$ of $\mcal{K}$ is given by
\begin{align*}
|\mcal{K}| & = \bigcup_{\sigma_n \in \mcal{K}} \sigma_n \\
& = \sigma_{0}^{1} \cup \sigma_{0}^{2} \cup \sigma_{0}^{3} \cup \sigma_{1}^{1} \cup \sigma_{1}^{2} \cup \sigma_{1}^{3} \cup \sigma_{2}^{1} \\
& = \{\btext{a}_0\} \cup \{\btext{a}_1\} \cup \{\btext{a}_2\} \cup \{\btext{a}_0, \btext{a}_1\}\cup     \{\btext{a}_1, \btext{a}_2\}\cup \{\btext{a}_2, \btext{a}_0\}\cup \{\btext{a}_0, \btext{a}_1, \btext{a}_2\} \\
& = \{\btext{a}_0, \btext{a}_1, \btext{a}_2\} 
\end{align*}

Thus $|\mcal{K}| = \{\btext{a}_0, \btext{a}_1, \btext{a}_2\}$ which represent the geometrically 2-simplex i.e. filled or solid triangle (including edges and vertices) with vertices $\btext{a}_0, \btext{a}_1, \btext{a}_2 \in \R^2$. Topologically, it is homeomorphic to a closed 2-dimensional disk.  
\begin{figure}[h!]
    \centering
\begin{tikzpicture}
    \coordinate (A0) at (0,0);
    \coordinate (A1) at (4,0);
    \coordinate (A2) at (2,3);
    \fill[blue!30, opacity=0.5] (A0) -- (A1) -- (A2) -- cycle;
    \draw[thick] (A0) -- (A1);
    \draw[thick] (A1) -- (A2);
    \draw[thick] (A2) -- (A0);
    \filldraw[black] (A0) circle (2pt) node[below left] {$\mathbf{a}_0$};
    \filldraw[black] (A1) circle (2pt) node[below right] {$\mathbf{a}_1$};
    \filldraw[black] (A2) circle (2pt) node[above] {$\mathbf{a}_2$};
\end{tikzpicture}    
    \caption{Geometric Realization $|\mcal{K}|$ of Simplicial Complex $\mcal{K}$ in $\R^2$}
    \label{fig:Underlying Space of Simplicial Complex in R2}
\end{figure}
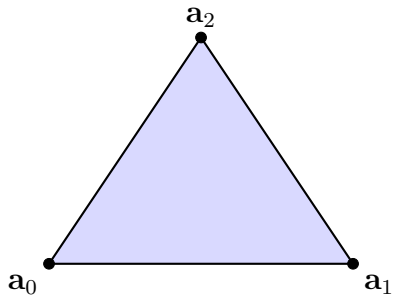
We can interpreted geometrically as follows. The geometric realization $|\mcal{K}|$ consists of the filled triangle formed by the three vertices $\btext{a}_0, \btext{a}_1, \btext{a}_2$. It include three edges (1-simplices) as line segments, The three vertices (0-simplices) as points.The interior of the triangle (the 2-simplex), making it a solid shape rather than just a collection of edges. Thus, the $|\mcal{K}|$  is the filled triangular region in $\R^2$.
\end{solution*}

As we saw that simplicial complex is not a topological space but its underlying space is a topological space which will be proved in the next result.

\begin{theorem}[Underlying space as Topological Space]\label{d:Underlying space as Topological Space}
Let $|\mcal{K}|$ is underlying space of the simplicial complex $\mcal{K}$, then the collection 
\begin{equation}\label{eq:Topology on Underlying space}
\mcal{T} = \{U \subset |\mcal{K}| \colon U \cap \sigma \, \text{is open in} \, \sigma \, \forall \sigma \in \mcal{K}  \}    
\end{equation}
The topology $\mcal{T}$ on the underlying space $|\mcal{K}|$ is the weakest (coarse) topology that ensures that the natural inclusion map $\iota \colon \sigma \to |\mcal{K}|$ defined by $\iota(\btext{x}) = \btext{x}$ for all $\btext{x} \in \sigma$, is continuous for every $\sigma \in \mcal{K}$ under the topology $\mcal{T}$.
\end{theorem}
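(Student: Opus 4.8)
The plan is to verify in turn three things: that $\mcal{T}$ is genuinely a topology on $|\mcal{K}|$, that every inclusion $\iota\colon\sigma\to|\mcal{K}|$ is continuous with respect to $\mcal{T}$, and finally that $\mcal{T}$ is the smallest (weakest) topology carrying this continuity property. First I would confirm the three topology axioms directly from the defining condition. Both $\emptyset$ and $|\mcal{K}|$ qualify, since $\emptyset\cap\sigma=\emptyset$ and $|\mcal{K}|\cap\sigma=\sigma$ are open in $\sigma$ for every $\sigma\in\mcal{K}$, where each $\sigma$ carries the subspace topology inherited from $\R^N$ by Theorem~\eqref{t:Simplex as Topological Space}. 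For a family $\{U_\alpha\}\subset\mcal{T}$ one has $\left(\bigcup_\alpha U_\alpha\right)\cap\sigma=\bigcup_\alpha(U_\alpha\cap\sigma)$, a union of sets open in $\sigma$, and for $U_1,U_2\in\mcal{T}$ one has $(U_1\cap U_2)\cap\sigma=(U_1\cap\sigma)\cap(U_2\cap\sigma)$, a finite intersection of sets open in $\sigma$. Hence $\mcal{T}$ is closed under arbitrary unions and finite intersections and is a valid topology.

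Next I would establish the continuity of the inclusions, which is essentially immediate from the way $\mcal{T}$ is defined. Fix $\sigma\in\mcal{K}$ and let $U\in\mcal{T}$ be arbitrary. Since $\iota$ is the identity on points, its preimage is $\iota^{-1}(U)=U\cap\sigma$, which is open in $\sigma$ precisely by the membership of $U$ in $\mcal{T}$. As $U$ ranges over all $\mcal{T}$-open sets, this shows $\iota$ is continuous for every $\sigma$, so $\mcal{T}$ does ensure the continuity of every inclusion. These first two steps are routine and self-contained.

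The substance of the theorem, and the step I expect to be the main obstacle, is the extremal claim that $\mcal{T}$ is the \emph{weakest} such topology: that $\mcal{T}\subseteq\mcal{T}'$ for every topology $\mcal{T}'$ on $|\mcal{K}|$ under which all the inclusions $\iota\colon\sigma\to|\mcal{K}|$ are continuous. The strategy is to fix such a $\mcal{T}'$, take an arbitrary $U\in\mcal{T}$, and show that $U$ is forced to lie in $\mcal{T}'$. Here I would exploit that the simplices cover the space, writing $U=\bigcup_{\sigma\in\mcal{K}}(U\cap\sigma)$, together with closure under faces (condition (1) of Definition~\eqref{d:Simplicial Complex in RN}), which makes the locally open pieces $U\cap\sigma$ and $U\cap\tau$ agree coherently on each overlap $\sigma\cap\tau$. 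The delicate point is to promote this coherent family $\{U\cap\sigma\}_{\sigma\in\mcal{K}}$ into a single set certified open in $\mcal{T}'$: one reconstructs $U$ from its simplexwise restrictions and invokes the continuity hypothesis on $\mcal{T}'$ along the gluings of simplices over common faces to conclude that the reassembled set is $\mcal{T}'$-open. Pushing this reconstruction through a possibly infinite family of simplices, while keeping the face-compatibility controlled, is where the genuine difficulty lies, and it is the crux on which the minimality, and hence the whole theorem, rests.
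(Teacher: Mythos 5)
Your first two steps are correct and coincide almost verbatim with the paper's proof: the verification that $\mcal{T}$ satisfies the topology axioms and the observation that $\iota^{-1}(U)=U\cap\sigma$ is open in $\sigma$ for every $U\in\mcal{T}$ are exactly the paper's items (1) and (2). The genuine gap is your third step, which you explicitly leave as a sketch --- and it cannot be completed, because the extremal claim in the direction you are attempting is false. Continuity of $\iota_\sigma\colon\sigma\to(|\mcal{K}|,\mcal{T}')$ means precisely that for every $V\in\mcal{T}'$ the preimage $\iota_\sigma^{-1}(V)=V\cap\sigma$ is open in $\sigma$; by the very definition of $\mcal{T}$ this says $\mcal{T}'\subseteq\mcal{T}$. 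So \emph{every} topology making all inclusions continuous is coarser than $\mcal{T}$, i.e.\ $\mcal{T}$ is the \emph{finest} such topology, not the weakest. The indiscrete topology $\{\emptyset,|\mcal{K}|\}$ is a concrete counterexample to what you set out to show: it makes every map into $|\mcal{K}|$ continuous, yet it certainly does not contain $\mcal{T}$. Consequently your proposed reconstruction of $U$ from the coherent family $\{U\cap\sigma\}_{\sigma\in\mcal{K}}$, with face-compatibility from condition (1) of Definition \eqref{d:Simplicial Complex in RN}, has no mechanism by which to certify $U\in\mcal{T}'$ for an arbitrary coarser $\mcal{T}'$: the continuity hypothesis on $\mcal{T}'$ only transports $\mcal{T}'$-open sets \emph{into} the simplices, never the reverse, so the information flows in the wrong direction no matter how carefully the gluing over common faces is controlled.

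You should also know that the paper's own proof of this step commits the same reversal: it takes $U\in\mcal{T}$, notes $U\cap\sigma$ is open in $\sigma$, and concludes that ``by Definition of $\mcal{T}'$, $U\in\mcal{T}'$'' --- but an arbitrary topology $\mcal{T}'$ has no such defining property, so that inference is vacuous. The statement that is actually true (and that the paper itself endorses later, in the remark following Definition \eqref{d:Coherent Topology} and in Theorem \eqref{t:Coherent Topology on |K|}) is that $\mcal{T}$ is the finest topology for which all inclusions are continuous. With ``weakest'' replaced by ``finest,'' your steps (1) and (2) together with the one-line equivalence above --- continuity of all $\iota_\sigma$ with respect to $\mcal{T}'$ if and only if $\mcal{T}'\subseteq\mcal{T}$ --- give a complete proof immediately; no decomposition over a possibly infinite family of simplices and no face-gluing argument is needed at all, so the step you anticipated as the main obstacle dissolves once the direction of the extremality is corrected.
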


\begin{proof}
As $|\mcal{K}|$ is the underlying space of the simple complex $\mcal{K}$ and a collection $\mcal{T}$ is defined as
\begin{equation}\label{eq:Topology on Underlying space}
\mcal{T} = \{U \subset |\mcal{K}| \colon U \cap \sigma \, \text{is open in} \, \sigma \, \forall \sigma \in \mcal{K}  \}    
\end{equation} 
We also know that $\sigma \in \R^N$ is topological space under subspace topology. Now we will prove 
\begin{enumerate}
\item The collection $\mcal{T}$ is topology on $|\mcal{K}|$. To prove this we show:
\begin{enumerate}
\item \textbf{$\emptyset$ and $|\mcal{K}| \in \mcal{T}$:} Since $\emptyset \cap \sigma = \sigma$ is open in every simplex $\sigma$, we have $\emptyset \in \mcal{T}$. Since $|\mcal{K}| \cap \emptyset = \emptyset$, which is open in itself, we have $|\mcal{K}| \in \mcal{T}$. 
\item \textbf{Closure under arbitrary unions:}
Let $\{U\}_{\alpha \in I}$ be an arbitrary collection of sets in $\mcal{T}$, then our aim is to show that $U = \cup_{\alpha \in I}U_{\alpha} \in \mcal{T}$. For every $\sigma \in \mcal{K}$, 
\begin{align*}
U \cap \sigma & = \Big ( \bigcup_{\alpha \in I} U_{\alpha}\Big)  \cap \sigma \\
& = \bigcup_{\alpha \in I} (U_{\alpha} \cap \sigma) \quad (\text{By set property})
\end{align*}
Since $U_{\alpha} \cap \sigma$ is open in $\sigma$ for all $\alpha \in I$ (because $U_{\alpha} \in \mcal{T}$), their union is also open in $\sigma$ (because $\sigma$ is itself topological space with subspace topology). Thus $U \cap \sigma$ is open in $\sigma$ for all $\sigma$, which means $U \in \mcal{T}$. So, $\mcal{T}$ is closed under arbitrary union. 
\item \textbf{Closure under finite intersections:}
Let $U, V \mcal{T}$, then our aim is to show that $U \cap V \in \mcal{T}$. For each $\sigma \in \mcal{K}$, 
\[(U \cap V) \cap \sigma = (U \cap \sigma) \cap (V \cap \sigma)\]
Since $(U \cap \sigma)$ and $(V \cap \sigma)$ are open in $\sigma$, their intersection is also open in $\sigma$. Thus, $(U \cap V) \cap \sigma$ is open for all $\sigma$, which means $U \cap V \in \mcal{T}$. So, $\mcal{T}$ is closed under finite intersection.  
\end{enumerate}

\item The inclusion map $\iota \colon \sigma \to |\mcal{K}|$ for all $\sigma \in \mcal{K}$ remain continuous. Since $\sigma$ is subspace of the space $|\mcal{K}|$ under the subspace topology and the so by stander result the inclusion map $\iota \colon \sigma \to |\mcal{K}|$ defined by $\iota(\btext{x}) = \btext{x}$ is continuous. 

\item The topology $\mcal{T}$ is weakest topology. Let $\mcal{T}'$ is another topology on $|\mcal{K}|$ such $\iota_{\sigma}$ is continuous. Now our aim is to show that $\mcal{T} \subseteq \mcal{T}'$. Let $U \in \mcal{T}$ be arbitrary, then by Definition of $\mcal{T}$, $U \cap \sigma$ is open in $\sigma$ for every $\sigma$. Since $\iota_{\sigma}$ is continuous for $\mcal{T}'$ therefore $\iota^{-1}_{\sigma}(U) = U \cap \sigma$ must be open in $\sigma$. So, by Definition of $\mcal{T}'$, $U \in \mcal{T}'$. Thus, $\mcal{T} \subseteq \mcal{T}'$.
\end{enumerate}
\end{proof}

The following two questions arise when defining the topology $\mcal{T}$ on the underlying space $|\mcal{K}|$.
\begin{question}
Why did we choose the topology $\mcal{T}$ as the weakest topology on the underlying space $|\mcal{K}|$?
\end{question}
\begin{answer}
If we took a weaker topology (with even fewer open sets), some open sets inside the simplices 
$\sigma$ would no longer be open in $|\mcal{K}|$, breaking the continuity of $\iota$. If we took a stronger topology (with more open sets), it would still make the inclusions continuous, but we want the smallest such topology. Thus, this topology $\mcal{T}$ is the weakest (coarsest) topology ensuring that every inclusion map $\iota$ is continuous.
\end{answer}

\begin{question}
Why is continuity of inclusion map $\iota \colon \sigma \to |\mcal{K}|$ for all $\sigma \in \mcal{K}$ is important?
\end{question}

\begin{answer}
A simplicial complex is built by gluing simplices together. If the topology is too weak, the gluing might not behave well under continuous mappings.
Ensuring that the inclusion 
$\iota \colon \sigma \to |\mcal{K}|$ is continuous means that any function defined on individual simplices can be naturally extended to the whole space $|\mcal{K}|$.
\end{answer}

\begin{theorem}\label{t:Topology of underlying space finer than subspace topology}
In general the topology of the underlying space $|\mcal{K}|$ is finer than the topology $|\mcal{K}|$ inherits as a subspace of $\R^N$. 
\end{theorem}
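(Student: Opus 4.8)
The plan is to unravel what ``finer'' means and then verify a single set inclusion. Writing $\mcal{T}$ for the weak topology on $|\mcal{K}|$ introduced in Theorem~\eqref{d:Underlying space as Topological Space}, and writing $\mcal{T}_{s}$ for the subspace topology that $|\mcal{K}|$ inherits from $\R^N$ (so that $W \in \mcal{T}_{s}$ exactly when $W = V \cap |\mcal{K}|$ for some open $V \subset \R^N$), the assertion that $\mcal{T}$ is finer than $\mcal{T}_{s}$ is precisely the inclusion $\mcal{T}_{s} \subseteq \mcal{T}$. So the goal reduces to showing that every set open in the subspace topology is also open in the weak topology, and no reverse inclusion is claimed.

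The core step is to take an arbitrary $W \in \mcal{T}_{s}$ and check the defining condition for membership in $\mcal{T}$, namely that $W \cap \sigma$ is open in $\sigma$ for every $\sigma \in \mcal{K}$. Writing $W = V \cap |\mcal{K}|$ with $V$ open in $\R^N$, and using $\sigma \subseteq |\mcal{K}|$, the set-theoretic identity $W \cap \sigma = (V \cap |\mcal{K}|) \cap \sigma = V \cap \sigma$ holds. By Theorem~\eqref{t:Simplex as Topological Space} each simplex $\sigma$ carries the subspace topology from $\R^N$, so $V \cap \sigma$ is by definition open in $\sigma$. Hence $W \cap \sigma$ is open in $\sigma$ for every $\sigma \in \mcal{K}$, which is exactly the condition for $W \in \mcal{T}$. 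This establishes $\mcal{T}_{s} \subseteq \mcal{T}$ and therefore that $\mcal{T}$ is finer, as required.

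The inclusion above is essentially a one-line verification, so the real substance lies in the qualifier ``in general'': one should exhibit a complex for which the inclusion is strict, confirming that the two topologies genuinely differ and that the result is not vacuous. The natural candidate is an infinite, non-locally-finite complex, such as a countable bouquet of edges (infinitely many $1$-simplices sharing a single common vertex $\btext{v}$), where a set meeting each edge in a relatively open neighborhood of $\btext{v}$ is weak-open even when the admissible radii shrink to zero, so that it is the trace of no single Euclidean-open ball around $\btext{v}$. Constructing and verifying such an example rigorously is the main obstacle, since for finite complexes the two topologies in fact coincide; thus the strictness hinges precisely on the failure of local finiteness of $\mcal{K}$.
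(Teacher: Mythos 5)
Your core argument --- reducing ``finer'' to the single inclusion $\mcal{T}_{s} \subseteq \mcal{T}$ and verifying it through the identity $W \cap \sigma = (V \cap |\mcal{K}|) \cap \sigma = V \cap \sigma$, which is open in $\sigma$ because each simplex carries the subspace topology from $\R^N$ --- is exactly the paper's proof of this theorem. The strictness issue you flag under the phrase ``in general'' is not part of this statement's burden (finer, unlike strictly finer, is fully established by the inclusion) and is treated separately by the paper in Theorem \eqref{t:Two topologies on underlying space}, where an infinite complex is used for exactly the purpose your bouquet-of-edges sketch anticipates.
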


\begin{proof}
As given, let us denote the topology on $|\mcal{K}|$ by $\mcal{T}$ and the topology is obtained as a subspace of $\R^N$ i.e. by the subspace topology $\mcal{T}'$. Both topologies are defined as 
\begin{gather}
\mcal{T} = \{U \subset |\mcal{K}| \colon U \cap \sigma \, \text{is open in} \, \sigma,  \, \forall \sigma \in \mcal{K}  \} \\
\mcal{T}' = \{U \subset |\mcal{K}| \colon U = V \cap |\mcal{K}|,  \, V \, \text{is open in} \, \R^N \}
\end{gather}
Our aim is to show that $\mcal{T}' \subseteq \mcal{T}$. Let $U \in \mcal{T}'$ be arbitrary, then there exists open $V \subseteq \R^N$ such that  $U = V \cap |\mcal{K}|$. Since each simplex $\sigma$ is itself a subset of $\R^N$, then $U \cap \sigma = (V \cap |\mcal{K}|) \cap \sigma = V \cap \sigma$. Since $V$ is open in $\R^N$, its intersection with $\sigma$ (a topological subspace of $\R^N$ ) is open in $\sigma$ (by the definition of the subspace topology). Thus, for every simplex $\sigma$, $U \cap \sigma$ is open in $\sigma$, meaning that $U \in \mcal{T}$. Thus,  $\mcal{T}' \subseteq \mcal{T}$.
\end{proof}

\begin{theorem}\label{t:Two topologies on underlying space}
In general, the two topologies are different on the underlying space $|\mcal{K}|$ but the same for finite simplicial  spaces $\mcal{K}$. In other words, the topology $\mcal{T}$ of the underlying space $|\mcal{K}|$ is strictly finer than the subspace topology $\mcal{T}'$ from $\R^N$. However, if $\mcal{K}$ is finite, then these two topologies coincide.
\end{theorem}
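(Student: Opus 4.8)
The plan is to split the statement into two independent claims and to build on the inclusion already established in Theorem~\ref{t:Topology of underlying space finer than subspace topology}, namely $\mcal{T}' \subseteq \mcal{T}$, which holds for \emph{every} simplicial complex. Given this, proving that the two topologies coincide when $\mcal{K}$ is finite reduces to the reverse inclusion $\mcal{T} \subseteq \mcal{T}'$, while proving that they can genuinely differ reduces to exhibiting a single infinite complex together with a set lying in $\mcal{T}$ but not in $\mcal{T}'$.

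For the finite case I would argue at the level of closed sets. Let $U \in \mcal{T}$ and set $C = |\mcal{K}| \setminus U$; then $C \cap \sigma$ is closed in $\sigma$ for every $\sigma \in \mcal{K}$. Since each simplex $\sigma$ is compact by Theorem~\ref{t:Compactness of n-simplex}, the closed subset $C \cap \sigma$ is itself compact. Because $\mcal{K}$ is finite, $C = \bigcup_{\sigma \in \mcal{K}}(C \cap \sigma)$ is a finite union of compact sets and hence compact; being a compact subset of the Hausdorff space $\R^N$, it is closed in $\R^N$. Therefore $C = C \cap |\mcal{K}|$ is closed in the subspace topology on $|\mcal{K}|$, so $U$ is open in that topology, i.e. $U \in \mcal{T}'$. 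Combined with the prior inclusion $\mcal{T}' \subseteq \mcal{T}$, this yields $\mcal{T} = \mcal{T}'$.

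For the general (infinite) case I would construct an infinite ``wedge of edges.'' Take the origin $\btext{a}_0 = \btext{0}$ and unit vectors $\btext{w}_n$ ($n \ge 1$) with pairwise distinct directions, let $\sigma_n$ be the $1$-simplex spanned by $\btext{a}_0$ and $\btext{w}_n$, and let $\mcal{K}$ consist of all these edges together with their vertices; distinct edges then meet only at $\btext{a}_0$, so $\mcal{K}$ is a genuine simplicial complex. Define $U$ by setting $U \cap \sigma_n = \{\, t\btext{w}_n : 0 \le t < 1/n \,\}$ on each edge. Each such trace is relatively open in $\sigma_n$, so $U \in \mcal{T}$. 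On the other hand, if $U$ were open in $\mcal{T}'$ then, containing $\btext{a}_0$, it would contain $B_\delta(\btext{a}_0) \cap |\mcal{K}|$ for some $\delta > 0$; choosing $n$ with $1/n < \delta$ produces a point $s\btext{w}_n$ with $1/n \le s < \delta$ lying in $B_\delta(\btext{a}_0) \cap |\mcal{K}|$ but not in $U$. Hence $U \notin \mcal{T}'$, and the two topologies differ, which forces the inclusion $\mcal{T}' \subseteq \mcal{T}$ to be strict.

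The routine portion is the finite case, where the compactness argument is entirely standard. The delicate point, and the one I would spend the most care on, is the counterexample: one must verify both that $U$ is open in $\mcal{T}$ (immediate, edge by edge) and, crucially, that it fails to be open in the subspace topology precisely because the ``pinching radii'' $1/n$ tend to $0$ while every Euclidean ball about $\btext{a}_0$ still meets infinitely many edges arbitrarily far out along them. Pinning down this failure of subspace-openness at the common vertex is where the real content of the theorem resides.
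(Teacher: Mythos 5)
Your proposal is correct, and it is worth noting that it diverges from the paper's own argument in a way that actually repairs it. For the infinite case the paper uses an infinite path complex (vertices $\btext{a}_n$, edges $\{\btext{a}_n,\btext{a}_{n+1}\}$) and the set $U = |\mcal{K}|\setminus\{\btext{a}_0\}$, claiming $U$ is open in $\mcal{T}$ but not in $\mcal{T}'$. That claim fails: since the singleton $\{\btext{a}_0\}$ is closed in $\R^N$, we have $U = (\R^N\setminus\{\btext{a}_0\})\cap|\mcal{K}|$, which \emph{is} open in the subspace topology; worse, the infinite path is locally finite, so by the paper's own Lemma on local finiteness the two topologies coincide on that complex and no set whatsoever can separate them there. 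Your wedge of infinitely many edges $\sigma_n$ at a common vertex $\btext{a}_0$, with $U\cap\sigma_n = \{t\btext{w}_n : 0 \le t < 1/n\}$, is the standard and genuinely non-locally-finite counterexample: the traces are relatively open edge by edge, while every Euclidean ball $B_\delta(\btext{a}_0)$ meets some edge $\sigma_n$ with $1/n<\delta$ at points of parameter $s\in[1/n,\delta)$ excluded from $U$, so $U\notin\mcal{T}'$. Your finite case is also tighter than the paper's: passing to complements, using compactness of each $\sigma$ (closed subsets of compacta are compact), finiteness of the union, and closedness of compacta in the Hausdorff space $\R^N$ gives $\mcal{T}\subseteq\mcal{T}'$ in a few precise lines, where the paper instead gestures at compactness and local compactness without a concrete deduction. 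In short: same two-part decomposition and same reliance on the previously proved inclusion $\mcal{T}'\subseteq\mcal{T}$, but your counterexample is the correct one and your finite-case argument is the more rigorous of the two.
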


\begin{proof}
There are two case arise: 
\begin{enumerate}
\item \textbf{$\mcal{K}$ is an infinite simplicial complex:} In this case we show that $\mcal{T}' \subset \mcal{T}$ i.e. that there exist open set in $\mcal{T}$ that is not open in the subspace topology $\mcal{T}'$ inherited from $\R^N$. 

\item[] In this proof, we consider the case where the simplicial complex $ \mathcal{K} $ is infinite and demonstrate that the topology $ \mathcal{T} $, which is the weakest topology making all simplex inclusions continuous, is strictly finer than the subspace topology inherited from $ \mathbb{R}^N $. We begin by understanding the given simplicial complex $ \mathcal{K} $, which consists of an infinite collection of vertices and edges. Mathematically, we define it as 
\[\mathcal{K} = \{ \{a_n\} \mid n \in \mathbb{N} \} \cup \{ \{a_n, a_{n+1}\} \mid n \in \mathbb{N} \},\]
meaning that the underlying space $ |\mathcal{K}| $ is the union of all these simplices, forming an infinite sequence of edges. Each 1-simplex $ [a_n, a_{n+1}] $ is a closed interval in $ \mathbb{R}^N $, and together they form an infinite one-dimensional geometric structure.

To prove that $ \mathcal{T} $ is strictly finer than the subspace topology, we must show that there exists at least one open set in $ \mathcal{T} $ that is not open in the subspace topology. Consider the set 
\[ U = |\mathcal{K}| \setminus \{a_0\} = \bigcup_{n \geq 1} [a_n, a_{n+1}],\]
which consists of all edges except the removal of the vertex $ a_0 $. In the topology $ \mathcal{T} $, a set is open if its intersection with each simplex is open in the standard topology of that simplex. Since $ U $ consists of full edges $ [a_n, a_{n+1}] $ for $ n \geq 1 $, and the part of $ [a_0, a_1] $ remaining in $ U $ is $ (a_0, a_1] $, the set $ U $ is open in $ \mathcal{T} $.

We now check whether $ U $ is open in the subspace topology inherited from $ \mathbb{R}^N $. In the subspace topology, a set is open if it is the intersection of an open set in $ \mathbb{R}^N $ with $ |\mathcal{K}| $, meaning 
\[   U = W \cap |\mathcal{K}|\]
for some open $ W \subseteq \mathbb{R}^N $. However, this is not possible because the point $ a_0 $ is a limit point of $ U $ in $ \mathbb{R}^N $, and any Euclidean open set containing $ U $ must also contain a small ball around $ a_0 $. Consequently, $ W \cap |\mathcal{K}| $ would necessarily include some neighborhood around $ a_0 $, contradicting $ U = W \cap |\mathcal{K}| $. Furthermore, in the Euclidean topology, a single point $ a_0 $ cannot be removed without also removing an open neighborhood around it, which $ U $ does not possess.

Since we have found an example of a set that is open in $ \mathcal{T} $ but not in the subspace topology, we conclude that 
\[    \mathcal{T} \supsetneq \text{(subspace topology from } \mathbb{R}^N\text{)}.\]
Thus, the topology $ \mathcal{T} $ is strictly finer than the subspace topology from $ \mathbb{R}^N $ when $ \mathcal{K} $ is infinite. This completes the proof.

\item \textbf{$\mcal{K}$ is a finite simplicial complex:} In this case we show that $\mcal{T}' = \mcal{T}$. Now, we consider the case where the simplicial complex $ \mathcal{K} $ is finite and show that the topology $ \mathcal{T} $ coincides with the subspace topology inherited from $ \mathbb{R}^N $. Given that $ \mathcal{K} $ is finite, the underlying space $ |\mathcal{K}| $ consists of a finite number of simplices, including vertices, edges, and possibly higher-dimensional simplices. Each simplex in $ \mathcal{K} $ is a compact subset of $ \mathbb{R}^N $, and their finite union is also compact in the subspace topology.

To prove the equality of the topologies, we need to show that any set open in $ \mathcal{T} $ is also open in the subspace topology and vice versa. Since $ \mathcal{T} $ is the weakest topology making all inclusion maps continuous, it follows that any open set in the subspace topology must also be open in $ \mathcal{T} $. Conversely, in a finite simplicial complex, the intersection of any open set with a simplex remains open in the Euclidean topology of that simplex. Since there are only finitely many simplices, this ensures that any open set in $ \mathcal{T} $ is also open in the subspace topology.

Furthermore, compactness plays a crucial role in finite simplicial complexes. Since $ |\mathcal{K}| $ is a compact subset of $ \mathbb{R}^N $, the subspace topology on $ |\mathcal{K}| $ is locally compact and coincides with $ \mathcal{T} $. This follows because in finite-dimensional spaces, compact and locally compact topologies coincide with their subspace topologies.

Thus, we conclude that for finite $ \mathcal{K} $, the two topologies are identical, i.e.,
\[    \mathcal{T} = \text{(subspace topology from } \mathbb{R}^N\text{)}.\]
\end{enumerate}
\end{proof}

\subsection{Topological Properties of Geometric Realization of Simplicial Complex}\label{ss:Topological Properties of Geometric Realization of Simplicial Complex}
Let us see some elementary topological properties of polyhedron. 

\begin{lemma}
If $\mcal{L}$ is sub-complex of the simplicial space $\mcal{K}$, then $|\mcal{L}|$ is closed subspace of the space $\mcal{K}$. In particular, $\sigma \in \mcal{K}$, then $\sigma$ is a closed subspace of $|\mcal{K}|$.
\end{lemma}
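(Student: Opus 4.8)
The plan is to work directly from the defining property of the weak topology $\mcal{T}$ on $|\mcal{K}|$. Passing to complements in the definition of $\mcal{T}$, a subset $C \subseteq |\mcal{K}|$ is closed if and only if $C \cap \sigma$ is closed in $\sigma$ for every $\sigma \in \mcal{K}$. Thus it suffices to fix an arbitrary simplex $\sigma \in \mcal{K}$ and show that $|\mcal{L}| \cap \sigma$ is closed in $\sigma$.

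First I would rewrite the intersection using $|\mcal{L}| = \bigcup_{\tau \in \mcal{L}} \tau$, which gives
\[
|\mcal{L}| \cap \sigma = \bigcup_{\tau \in \mcal{L}} (\tau \cap \sigma).
\]
Since $\mcal{L} \subseteq \mcal{K}$, both $\tau$ and $\sigma$ lie in $\mcal{K}$, so the intersection condition in the definition of a simplicial complex guarantees that each $\tau \cap \sigma$ is either empty or a common face of $\sigma$. Every such face is a subsimplex of $\sigma$, hence compact by the compactness theorem for simplices, and therefore closed in $\sigma$.

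The crucial observation — and the step I expect to carry the real weight — is that although $\mcal{L}$ may be infinite, the simplex $\sigma$ has only finitely many faces (one for each nonempty subset of its vertex set, so $2^{\dim \sigma + 1} - 1$ of them). Consequently the family $\{\tau \cap \sigma : \tau \in \mcal{L}\}$ consists of only finitely many distinct sets, each closed in $\sigma$. A finite union of closed sets is closed, so $|\mcal{L}| \cap \sigma$ is closed in $\sigma$. As $\sigma$ was arbitrary, $|\mcal{L}|$ is closed in $|\mcal{K}|$.

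Finally, for the special case I would note that a single simplex $\sigma \in \mcal{K}$ together with all of its faces forms a subcomplex $\mcal{L}$ of $\mcal{K}$, as recorded in the corollary following the necessary-and-sufficient lemma, and that $|\mcal{L}| = \sigma$ because every face of $\sigma$ is contained in $\sigma$. Applying the first part to this $\mcal{L}$ immediately yields that $\sigma$ is a closed subspace of $|\mcal{K}|$. The only genuine obstacle is the finiteness-of-faces argument; once it is isolated, the remainder is routine point-set topology.
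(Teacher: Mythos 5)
Your proof is correct, and it takes a genuinely different --- and in fact more careful --- route than the paper's. The paper argues that each simplex is compact, hence closed in $|\mcal{K}|$, and then concludes by taking the union over all simplices of $\mcal{L}$, asserting that ``unions of closed sets remain closed under the weak topology.'' That final assertion is not valid as stated: arbitrary unions of closed sets need not be closed in any topology, and for an infinite subcomplex $\mcal{L}$ the paper's argument has a real gap at exactly this point. You instead verify the weak-topology criterion directly: $C$ is closed in $|\mcal{K}|$ iff $C \cap \sigma$ is closed in $\sigma$ for every $\sigma \in \mcal{K}$, and you decompose $|\mcal{L}| \cap \sigma = \bigcup_{\tau \in \mcal{L}} (\tau \cap \sigma)$, where each $\tau \cap \sigma$ is empty or a common face by the intersection axiom, and where --- the key point you correctly isolate --- $\sigma$ has only $2^{\dim\sigma + 1} - 1$ faces, so the union is in effect finite, hence closed. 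This finiteness-of-faces step is precisely what the paper's proof is missing, and it is the standard argument (it is how Munkres proves the corresponding lemma). Your handling of the special case is also cleaner than the paper's: rather than reasserting compactness, you realize $\sigma$ as the polytope of the subcomplex consisting of $\sigma$ and its faces (legitimate since $\mcal{K}$ is closed under faces) and apply the first part. In short, the paper's proof buys brevity at the cost of a false general principle; yours buys full rigor, including for infinite complexes, at the cost of one combinatorial observation.
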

\begin{proof}
The geometric realization of a simplicial complex $\mathcal{K}$ is given by $|\mathcal{K}| = \bigcup_{\sigma \in \mathcal{K}} |\sigma|$, and similarly, the realization of a subcomplex $\mathcal{L}$ is $|\mathcal{L}| = \bigcup_{\sigma \in \mathcal{L}} |\sigma|$. Each simplex $|\sigma|$ is homeomorphic to a standard Euclidean simplex and is therefore a compact subset of some $\mathbb{R}^n$. The space $|\mathcal{K}|$ is equipped with the weak topology induced by its simplices, meaning that a subset $U \subseteq |\mathcal{K}|$ is closed if and only if $U \cap |\sigma|$ is closed in $|\sigma|$ for each $\sigma \in \mathcal{K}$.  

Since $\mathcal{L}$ is a subcomplex of $\mathcal{K}$, it follows that $|\mathcal{L}| = \bigcup_{\sigma \in \mathcal{L}} |\sigma|$. Each simplex $|\sigma|$ is closed in $|\mathcal{K}|$ because simplices in a geometric realization are compact, and compact sets are closed in their ambient space. Since $|\mathcal{L}|$ is a union of such closed sets, it follows that $|\mathcal{L}|$ is closed in $|\mathcal{K}|$, as unions of closed sets remain closed under the weak topology.  

In particular, if $\sigma \in \mathcal{K}$, then $|\sigma|$ itself is closed in $|\mathcal{K}|$. This follows from the fact that simplices form compact subspaces in Euclidean topology, and compact sets remain closed in the weak topology. Therefore, we conclude that $|\mathcal{L}|$ is a closed subspace of $|\mathcal{K}|$ and that each $|\sigma|$ is a closed subspace of $|\mathcal{K}|$.
\end{proof}

\begin{lemma}
A map $f \colon |\mcal{K}| \to X$ is continuous if and only if $f|_{\sigma} \colon \sigma \to X$ is continuous for each $\sigma \in \mcal{K}$.
\end{lemma}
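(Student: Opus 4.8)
The plan is to exploit the defining property of the weak topology $\mcal{T}$ on $|\mcal{K}|$, namely that $U \subseteq |\mcal{K}|$ is open if and only if $U \cap \sigma$ is open in $\sigma$ for every $\sigma \in \mcal{K}$, together with the fact (established in Theorem \eqref{d:Underlying space as Topological Space}) that each inclusion $\iota_\sigma \colon \sigma \to |\mcal{K}|$ is continuous. Both directions reduce to unwinding these two facts, so no delicate estimates are required.

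For the forward implication, I would suppose $f \colon |\mcal{K}| \to X$ is continuous and observe that, for each fixed $\sigma \in \mcal{K}$, the restriction factors as $f|_\sigma = f \circ \iota_\sigma$. Since $\iota_\sigma$ is continuous and $f$ is continuous, the composition $f|_\sigma \colon \sigma \to X$ is continuous as a composite of continuous maps. This handles every $\sigma$ simultaneously.

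For the converse, I would assume $f|_\sigma \colon \sigma \to X$ is continuous for each $\sigma \in \mcal{K}$ and show directly that $f^{-1}(V)$ lies in $\mcal{T}$ for every open $V \subseteq X$. The key identity is the set-theoretic equality
\[
f^{-1}(V) \cap \sigma = (f|_\sigma)^{-1}(V),
\]
valid because a point $\btext{x} \in \sigma$ satisfies $f(\btext{x}) \in V$ exactly when $f|_\sigma(\btext{x}) \in V$. Since $f|_\sigma$ is continuous, the right-hand side is open in $\sigma$; hence $f^{-1}(V) \cap \sigma$ is open in $\sigma$ for every $\sigma \in \mcal{K}$. By the very definition of $\mcal{T}$, this means $f^{-1}(V) \in \mcal{T}$, so $f$ is continuous.

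The argument is essentially a formal consequence of how $\mcal{T}$ was constructed, and I do not expect a genuine obstacle. The only point demanding care is the converse direction: one must invoke the characterization of openness in $\mcal{T}$ (rather than, say, the subspace topology from $\R^N$, which by Theorem \eqref{t:Two topologies on underlying space} can be strictly coarser), and verify the preimage identity cleanly. I would note that the same reasoning goes through verbatim with ``closed'' in place of ``open'' if one prefers to test continuity via closed sets.
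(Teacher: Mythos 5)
Your proposal is correct and takes essentially the same approach as the paper: both directions reduce to the defining property of the weak topology $\mcal{T}$, and your converse is verbatim the paper's argument via the identity $f^{-1}(V) \cap \sigma = (f|_\sigma)^{-1}(V)$. The only cosmetic difference is in the forward direction, where you compose $f$ with the continuous inclusions $\iota_\sigma$ while the paper unwinds the preimage condition directly --- but since the continuity of $\iota_\sigma$ was itself established by exactly that unwinding, the two arguments have identical content.
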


\begin{proof}
\textbf{($\Rightarrow$) Suppose $f: |\mathcal{K}| \to X$ is continuous.}  
By definition of continuity, for any open set $U \subseteq X$, the preimage $f^{-1}(U)$ is open in $|\mathcal{K}|$. Since $|\mathcal{K}|$ is endowed with the weak topology determined by its simplices, a set $A \subseteq |\mathcal{K}|$ is open if and only if $A \cap |\sigma|$ is open in $|\sigma|$ for every $\sigma \in \mathcal{K}$. Applying this to $f^{-1}(U)$, we get:  
\[
f^{-1}(U) \text{ is open in } |\mathcal{K}| \implies f^{-1}(U) \cap |\sigma| \text{ is open in } |\sigma|, \quad \forall \sigma \in \mathcal{K}.
\]
Since $f|_{\sigma} = f|_{|\sigma|}$ is simply the restriction of $f$ to $|\sigma|$, we conclude that:  
\[
f^{-1}(U) \cap |\sigma| = (f|_{\sigma})^{-1}(U) \text{ is open in } |\sigma|, \quad \forall \sigma \in \mathcal{K}.
\]
Thus, $f|_{\sigma}$ is continuous for each $\sigma \in \mathcal{K}$, since continuity is defined by the preimage of open sets being open.

\textbf{($\Leftarrow$)Suppose $f|_{\sigma}: |\sigma| \to X$ is continuous for each $\sigma \in \mathcal{K}$.}  
To show that $f$ is continuous, we must show that for any open set $U \subseteq X$, the preimage $f^{-1}(U)$ is open in $|\mathcal{K}|$. Since $|\mathcal{K}|$ has the weak topology with respect to its simplices, a subset $A \subseteq |\mathcal{K}|$ is open if and only if $A \cap |\sigma|$ is open in $|\sigma|$ for all $\sigma \in \mathcal{K}$. Given that $f|_{\sigma}$ is continuous, we know that:  
\[
(f|_{\sigma})^{-1}(U) = f^{-1}(U) \cap |\sigma| \text{ is open in } |\sigma|, \quad \forall \sigma \in \mathcal{K}.
\]
Since this holds for every simplex $\sigma$, and the weak topology ensures that openness of $f^{-1}(U) \cap |\sigma|$ for all $\sigma$ implies openness of $f^{-1}(U)$ in $|\mathcal{K}|$, we conclude that $f^{-1}(U)$ is open in $|\mathcal{K}|$. Therefore, $f$ is continuous.
\end{proof}

\begin{example}\label{eg:Continuity Theorem on Simplicial Complex}
Consider the simplicial complex 
\[
\mcal{K} = \{\{\btext{a}_0\}, \{\btext{a}_1\}, \{\btext{a}_2\}, \sigma_1 =\{\btext{a}_0, \btext{a}_1\}, \sigma_2 = \{\btext{a}_1, \btext{a}_2\}, \sigma_3 = \{\btext{a}_0, \btext{a}_1, \btext{a}_2\}\}.
\]

The geometric realization \( |\mathcal{K}| \) is a filled-in triangle with edges and vertices included. Define a function \( f : |\mathcal{K}| \to \mathbb{R} \) by \( f(\btext{a}_0) = 0, f(\btext{a}_1) = 1, f(\btext{a}_2) = 2 \) and extend \( f \) linearly on each edge and inside the triangle. 

Is \( f \) continuous on \( |\mathcal{K}| \) if and only if its restriction \( f|_{\sigma} \) is continuous for each simplex \( \sigma \in \mathcal{K} \)?
\end{example}

\begin{solution*}
To verify the theorem, we check continuity both locally (on each simplex) and globally (on \( |\mathcal{K}| \)).

\begin{enumerate}
    \item \textbf{Checking Continuity on Each Simplex:} Analyze the function \( f \) on each simplex separately:
    \begin{itemize}
        \item On each vertex \( \btext{a}_i \), \( f \) is trivially continuous.
        \item On each edge \( \sigma_1 \) and \( \sigma_2 \): Since \( f \) is linearly interpolated along the edges, it is continuous as a function on a line segment (which is homeomorphic to \([0,1]\)).
        \item On the triangle \( \sigma_3 \): Since \( f \) is defined linearly inside the triangle, it is continuous in the Euclidean topology on the triangle.
    \end{itemize}
    Thus, for each simplex \( \sigma \), the restriction \( f|_{\sigma} \) is continuous.
    
    \item \textbf{Checking Global Continuity on \( |\mathcal{K}| \):}
    Since \( |\mathcal{K}| \) has the weak topology, a function is continuous if its restriction to each simplex is continuous. We have already verified that \( f|_{\sigma} \) is continuous for every simplex \( \sigma \). Therefore, by the continuity theorem, \( f \) must be continuous on \( |\mathcal{K}| \).
\end{enumerate}
\end{solution*}

The main objective of introducing the coherent topology (or final topology) is to construct a topology on a space $X$ based on a collection of subspaces $\{X_i\}$ in a way that ensures the continuity of maps from each $X_i$  into $X$. The coherent topology allows us to glue together different subspaces while preserving their topological structure. The simplicial complexes use the coherent topology to define their global topology from local simplices or cells. The geometric realization of a simplicial complex is given the coherent topology relative to its simplices.

\begin{definition}[Coherent Topology]\label{d:Coherent Topology}
Let $ X $ be a set and $ \{X_i\}_{i \in I} $ be a family of subspaces of $ X $ with topologies $ \tau_i $. The coherent topology (or final topology) on $ X $ with respect to $ \{X_i\}_{i \in I} $ is the topology $ \tau $ defined by
\[
\tau = \{ U \subseteq X \colon U \cap X_i \in \tau_i \text{ for all } i \in I \}
\]    
\end{definition}

\begin{remark}
Equivalently, $ \tau $ is the finest topology on $ X $ such that each inclusion map  $\iota_i\colon X_i \to X$ is continuous.  This means that a function $ f\colon X \to Y $ into a topological space $ Y $ is continuous if and only if each restriction  $f|_{X_i}\colon X_i \to Y$ is continuous for all $ i \in I $.
\end{remark}

\begin{theorem}[Coherent Topology on $|\mcal{K}|$]\label{t:Coherent Topology on |K|}
The topology of $|\mcal{K}|$ is coherent with the collection of subspace $\sigma$, for $\sigma \in \mcal{K}$.  
\end{theorem}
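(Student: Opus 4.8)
The plan is to show that the topology $\mcal{T}$ on $|\mcal{K}|$ constructed in Theorem \eqref{d:Underlying space as Topological Space} coincides exactly with the coherent topology of Definition \eqref{d:Coherent Topology}, taken with respect to the family of subspaces $\{\sigma\}_{\sigma \in \mcal{K}}$. The observation driving the argument is that the two topologies are defined by literally the same condition on open sets, so the proof reduces to matching the definitions and then confirming the universal property characterization of the remark following Definition \eqref{d:Coherent Topology}.

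First I would fix the family of subspaces. For each $\sigma \in \mcal{K}$, the simplex $\sigma$ carries its subspace topology $\tau_\sigma$ inherited from $\R^N$; by Theorem \eqref{t:Simplex as Topological Space} this makes $\sigma$ a topological space, and the phrase ``open in $\sigma$'' means precisely ``belongs to $\tau_\sigma$''. With this identification the coherent topology of Definition \eqref{d:Coherent Topology} is
\[
\tau = \{U \subseteq |\mcal{K}| \colon U \cap \sigma \in \tau_\sigma \text{ for all } \sigma \in \mcal{K}\},
\]
while the topology on $|\mcal{K}|$ from Theorem \eqref{d:Underlying space as Topological Space} is
\[
\mcal{T} = \{U \subseteq |\mcal{K}| \colon U \cap \sigma \text{ is open in } \sigma \text{ for all } \sigma \in \mcal{K}\}.
\]
Since the clause ``$U \cap \sigma$ is open in $\sigma$'' is identical to ``$U \cap \sigma \in \tau_\sigma$'', the two collections are equal, and hence $\mcal{T} = \tau$ as topologies.

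Second, to connect this identification with the universal property, I would verify directly that $\mcal{T}$ is the finest topology making every inclusion $\iota_\sigma \colon \sigma \to |\mcal{K}|$ continuous. Continuity of each $\iota_\sigma$ is immediate, since for any $U \in \mcal{T}$ we have $\iota_\sigma^{-1}(U) = U \cap \sigma$, which is open in $\sigma$ by the definition of $\mcal{T}$. For maximality, if $\mcal{S}$ is any topology on $|\mcal{K}|$ for which all inclusions are continuous and $U \in \mcal{S}$, then $\iota_\sigma^{-1}(U) = U \cap \sigma$ is open in $\sigma$ for every $\sigma$, whence $U \in \mcal{T}$; thus $\mcal{S} \subseteq \mcal{T}$, which is exactly the assertion that $\mcal{T}$ is the coherent (final) topology relative to the simplices. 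The accompanying characterization of continuity of a map $f \colon |\mcal{K}| \to X$ through its restrictions $f|_\sigma$ then follows as the standard restatement of this universal property.

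I do not expect a genuinely hard step, since the statement is in essence an identification of two definitions. The one point requiring care is reconciling the present ``finest topology'' characterization of the coherent topology with the ``weakest topology'' phrasing used earlier in Theorem \eqref{d:Underlying space as Topological Space}; the hard part, such as it is, lies in articulating clearly that both phrasings single out the \emph{identical} collection of open sets $\{U \colon U \cap \sigma \text{ open in } \sigma \text{ for all } \sigma\}$, so that no conflict arises and $\mcal{T}$ is unambiguously the topology coherent with the collection of subspaces $\sigma$ for $\sigma \in \mcal{K}$.
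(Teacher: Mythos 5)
Your proposal is correct and takes essentially the same route as the paper: both arguments reduce the theorem to the observation that the topology $\mathcal{T}$ on $|\mathcal{K}|$ and the coherent topology are defined by the identical condition ($U \cap \sigma$ open in $\sigma$ for every $\sigma \in \mathcal{K}$), the paper presenting this as two inclusions where you present it as a single definitional identity followed by the universal-property (finest topology) check. One caution on your closing remark: the ``weakest topology'' phrasing of Theorem \eqref{d:Underlying space as Topological Space} does \emph{not} pick out the same open sets --- the weakest topology making all inclusions $\iota_\sigma$ continuous is the indiscrete topology, so that phrasing is a misnomer in the paper, and your maximality argument (any topology $\mathcal{S}$ making all inclusions continuous satisfies $\mathcal{S} \subseteq \mathcal{T}$) correctly identifies $\mathcal{T}$ as the \emph{finest} such topology, which is the characterization the theorem actually needs.
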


\begin{proof}
Let $ |\mathcal{K}| $ be the geometric realization of a simplicial complex $ \mathcal{K} $, where each simplex $ \sigma \in \mathcal{K} $ is given the subspace topology inherited from $ |\mathcal{K}| $. We want to show that the topology of $ |\mathcal{K}| $, denoted by $ \tau $, is coherent with the collection of subspaces $ \{\sigma \colon \sigma \in \mathcal{K} \} $, i.e. 
\[
\tau = \{ U \subseteq |\mathcal{K}| \colon U \cap \sigma \text{ is open in } \sigma, \forall \sigma \in \mathcal{K} \}.
\]

First, we show that $ \tau $ is at least as fine as the coherent topology. Suppose $ U $ is open in $ |\mathcal{K}| $, i.e., $ U \in \tau $. By the definition of the subspace topology, we have 
\[
U \cap \sigma \text{ is open in } \sigma, \quad \forall \sigma \in \mathcal{K}.
\]
Since this is precisely the condition for $ U $ to be in the coherent topology, it follows that $ U $ belongs to the coherent topology. Hence,
\[
U \in \tau \implies U \in \text{coherent topology}.
\]

Next, we show that the coherent topology is at most as fine as $ \tau $. Suppose $ U $ is open in the coherent topology, meaning that 
\[
U \cap \sigma \text{ is open in } \sigma, \quad \forall \sigma \in \mathcal{K}.
\]
By definition, the topology on $ |\mathcal{K}| $ is the weakest topology that makes all inclusion maps $ \iota_\sigma: \sigma \to |\mathcal{K}| $ continuous. This means that a set is open in $ |\mathcal{K}| $ if and only if its restriction to each simplex is open in that simplex. Since $ U \cap \sigma $ is open in $ \sigma $ for all $ \sigma \in \mathcal{K} $, it follows that $ U $ is open in $ |\mathcal{K}| $, i.e.,
\[
U \in \text{coherent topology} \implies U \in \tau.
\]
Since both inclusions hold, we conclude that $ \tau $ is precisely the coherent topology, proving that the topology of $ |\mathcal{K}| $ is coherent with the collection of subspaces $ \sigma $, for $ \sigma \in \mathcal{K} $. 
\end{proof}

\begin{definition}[Barycentric Coordinate]\label{d:Barycentric Coordinate}
Let $ \sigma = [\mathbf{a}_0, \mathbf{a}_1, \dots, \mathbf{a}_n] $ be an $ n $-dimensional simplex in a simplicial complex $ \mathcal{K} $. The barycentric coordinate is function that is associated with a vertex $ \mathbf{a}_j \in \sigma $ and defined as:
\[
\lambda_j: |\mathcal{K}| \to [0,1]
\]
such that for each $ \mathbf{x} \in |\mathcal{K}| $, if $ \mathbf{x} $ belongs to some simplex $ \sigma $, then $ \mathbf{x} $ can be uniquely expressed as a convex combination:
\[
\mathbf{x} = \sum_{i=0}^{n} \lambda_i(\mathbf{x}) \mathbf{a}_i, \quad \text{where } \sum_{i=0}^{n} \lambda_i(\mathbf{x}) = 1 \text{ and } \lambda_i(\mathbf{x}) \geq 0 \text{ for all } i.
\]

The function $ \lambda_j $ assigns to each $ \mathbf{x} $ its barycentric coordinate with respect to $ \mathbf{a}_j $, i.e.,  

\[
\lambda_j(\mathbf{x}) = \text{coefficient of } \mathbf{a}_j \text{ in the convex combination of } \mathbf{x}.
\]
This function is continuous on $ |\mathcal{K}| $ and varies smoothly within each simplex. the barycentric coordinate function 
$\lambda_j(\mathbf{x})$ provides a natural way to describe the position of $\btext{x}$ relative to the vertices of a simplex.
\end{definition}

\begin{example}
Consider a triangle in $ \mathbb{R}^2 $ with vertices:
\[\mathbf{a}_0 = (1,1), \quad \mathbf{a}_1 = (4,2), \quad \mathbf{a}_2 = (2,5).
\]
Find the barycentric coordinates $ \lambda_0, \lambda_1, \lambda_2 $ of the point $\btext{x} = (3,3)$.
\end{example}
\begin{solution*}
Since $ \mathbf{x} $ is a convex combination of the vertices:
\[
\mathbf{x} = \lambda_0 \mathbf{a}_0 + \lambda_1 \mathbf{a}_1 + \lambda_2 \mathbf{a}_2.
\]

This gives the system:
\begin{gather}
3 = \lambda_0 (1) + \lambda_1 (4) + \lambda_2 (2),\\
3 = \lambda_0 (1) + \lambda_1 (2) + \lambda_2 (5),\\
\lambda_0 + \lambda_1 + \lambda_2 = 1.
\end{gather}

Rewriting in matrix form:
\[
\begin{bmatrix} 
1 & 4 & 2 \\ 
1 & 2 & 5 \\ 
1 & 1 & 1 
\end{bmatrix}
\begin{bmatrix} 
\lambda_0 \\ 
\lambda_1 \\ 
\lambda_2 
\end{bmatrix}
=
\begin{bmatrix} 
3 \\ 
3 \\ 
1 
\end{bmatrix}.
\]
On solving above system of simultaneous  linear equations we get barycentric coordinates of $ \mathbf{x} = (3,3) $
\[
\lambda_0 = \frac{1}{11}, \quad \lambda_1 = \frac{6}{11}, \quad \lambda_2 = \frac{4}{11}.
\]
Since all coordinates are nonnegative and sum to 1, the point $ (3,3) $ lies inside the triangle.
\end{solution*}

\begin{lemma}[Hausdorffness of Simplicial Complex]\label{l:Hausdorffness of Simplicial Complex}
The underlying space $|\mcal{K}|$ of a simplicial complex $\mcal{K}$ is a Hausdorff space.
\end{lemma}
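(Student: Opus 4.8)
The plan is to separate points using the continuous barycentric coordinate functions $\lambda_j \colon |\mcal{K}| \to [0,1]$ introduced in Definition \eqref{d:Barycentric Coordinate}, one for each vertex $\btext{a}_j$ of $\mcal{K}$. The guiding idea is that these functions, taken together over all vertices, determine a point of $|\mcal{K}|$ uniquely, so whenever $\btext{x} \neq \btext{y}$ at least one coordinate must distinguish them; the preimages of disjoint subintervals of $[0,1]$ under the corresponding continuous function then provide the required separating neighborhoods.

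First I would set up the global coordinate functions carefully. Each $\btext{x} \in |\mcal{K}|$ lies in the interior of exactly one simplex of $\mcal{K}$ (its carrying simplex), as recorded in the remark accompanying Definition \eqref{d:Interior of Simplex}, so for each vertex $\btext{a}_j$ one defines $\lambda_j(\btext{x})$ to be the barycentric coordinate of $\btext{x}$ relative to $\btext{a}_j$ when $\btext{a}_j$ is a vertex of that carrying simplex, and $\lambda_j(\btext{x}) = 0$ otherwise. This assignment is well defined because the spanning set of a simplex is unique by Theorem \eqref{t:Uniqness of GI set that spaning simplex} and the convex representation of $\btext{x}$ is unique by geometric independence.

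Next I would establish continuity of each $\lambda_j$ on $|\mcal{K}|$ equipped with its weak topology. By the preceding lemma characterizing continuity of maps out of $|\mcal{K}|$, it suffices to check that the restriction $\lambda_j\big|_\sigma$ is continuous for every $\sigma \in \mcal{K}$. If $\btext{a}_j$ is a vertex of $\sigma$, then $\lambda_j\big|_\sigma$ coincides with the ordinary barycentric coordinate on $\sigma$, which is continuous by Theorem \eqref{t:Continuity of Barycentric Coordinates}; if $\btext{a}_j$ is not a vertex of $\sigma$, then $\lambda_j\big|_\sigma \equiv 0$, since the carrying face of every point of $\sigma$ lies among the faces of $\sigma$, none of which involves $\btext{a}_j$. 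In both cases the restriction is continuous, hence so is $\lambda_j$ globally.

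Finally, for distinct $\btext{x}, \btext{y} \in |\mcal{K}|$ I would argue there is a vertex index $j$ with $\lambda_j(\btext{x}) \neq \lambda_j(\btext{y})$: otherwise all coordinates agree and then $\btext{x} = \sum_j \lambda_j(\btext{x})\btext{a}_j = \sum_j \lambda_j(\btext{y})\btext{a}_j = \btext{y}$, a contradiction. Choosing a real number $r$ strictly between $\lambda_j(\btext{x})$ and $\lambda_j(\btext{y})$, the sets $\lambda_j^{-1}([0,r))$ and $\lambda_j^{-1}((r,1])$ are disjoint open neighborhoods of $\btext{x}$ and $\btext{y}$, so $|\mcal{K}|$ is Hausdorff. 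The main obstacle I anticipate is the third step: verifying that the global coordinate functions are genuinely well defined and continuous across simplex boundaries under the coherent topology is where the compatibility of the local per-simplex data with the weak topology must really be used, whereas the final separation argument is then routine.
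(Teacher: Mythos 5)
Your proof is correct, but it follows a genuinely different route from the paper's. The paper argues pointwise by case analysis: it distinguishes whether $\btext{x}$ and $\btext{y}$ lie in disjoint simplices, in a common simplex, or in simplices sharing a face, and in each case tries to manufacture disjoint neighborhoods directly from the Euclidean Hausdorffness of the individual simplices. You instead build, for every vertex $\btext{a}_j$, a single global barycentric coordinate function $\lambda_j \colon |\mcal{K}| \to [0,1]$, verify its continuity simplex-by-simplex via the lemma characterizing continuity of maps out of $|\mcal{K}|$ (together with Theorem \eqref{t:Continuity of Barycentric Coordinates}), and separate distinct points as preimages $\lambda_j^{-1}([0,r))$ and $\lambda_j^{-1}((r,1])$ of disjoint intervals --- the classical separating-function argument. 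Your route buys real robustness under the weak topology: the sets you produce are open in $|\mcal{K}|$ by continuity of $\lambda_j$, with no further checking needed, and the argument applies uniformly to infinite complexes. By contrast, the paper's construction selects sets that are open only \emph{within} a single simplex (it even asserts that simplices are open in the ambient Euclidean space, which is false) and then treats them as open in $|\mcal{K}|$; in the weak topology a subset open in one simplex need not be open in the realization whenever that simplex is a proper face of another, so your approach quietly repairs a gap in the paper's own argument rather than merely restating it. The two points you rightly flag as the crux --- well-definedness of $\lambda_j$ across shared faces (which uses that $\sigma \cap \tau$ is a common face, so coordinates computed in either simplex agree, with vertices outside the carrier receiving coordinate zero) and the agreement of the global $\lambda_j$ restricted to $\sigma$ with the per-simplex coordinate --- are exactly where the coherence of the topology enters, and your treatment of both is sound.
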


\begin{proof}
To prove that the underlying space $ |\mathcal{K}| $ of a simplicial complex $ \mathcal{K} $ is a Hausdorff space, we must show that for any two distinct points $ \mathbf{x}, \mathbf{y} \in |\mathcal{K}| $, there exist disjoint open neighborhoods $ U $ and $ V $ such that $ \mathbf{x} \in U $ and $ \mathbf{y} \in V $.  

The underlying space $ |\mathcal{K}| $ is the union of the simplices in $ \mathcal{K} $, each endowed with the subspace topology from $ \mathbb{R}^n $ i.e. $|\mathcal{K}| = \bigcup_{\sigma \in \mathcal{K}} \sigma$. Each simplex $ \sigma $ is a subset of some Euclidean space $ \mathbb{R}^n $, and the topology on $ |\mathcal{K}| $ is the coherent topology induced by the simplices. The key observation is that each individual simplex is Hausdorff, since it is a subset of Euclidean space, which is Hausdorff.

\begin{enumerate}
\item \textbf{When $ \mathbf{x} $ and $ \mathbf{y} $ belong to different simplices:} Let $ \mathbf{x}  \in \sigma_1$ and $ \mathbf{y}  \in \sigma_2$, where $\sigma_1 \cap \sigma_2 = \emptyset$.  Since each simplex $ \sigma_i $ is an open set in its ambient Euclidean space and simplices are disjoint, we can choose small open neighborhoods $ U_1 \subset \sigma_1 $ and $ U_2 \subset \sigma_2 $ such that $ \mathbf{x} \in U_1 $ and $ \mathbf{y} \in U_2 $, with $ U_1 \cap U_2 = \emptyset $. Thus, the points are separated by disjoint open sets, proving the Hausdorff property in this case.

\item \textbf{When $ \mathbf{x} $ and $ \mathbf{y} $ belong to the same simplex:} Let $ \mathbf{x}, \mathbf{y} \in \sigma $ for some simplex $ \sigma $. Since every simplex is a subset of Euclidean space, it inherits the standard topology of $ \mathbb{R}^n $, which is Hausdorff. That is, for any two distinct points in $ \sigma $, there exist disjoint open sets separating them within $ \sigma $, say $ U, V \subset \sigma $, such that  $U \cap V = \emptyset, \quad \mathbf{x} \in U, \quad \mathbf{y} \in V$. Since the topology of $ |\mathcal{K}| $ is coherent with the simplices, these sets $ U $ and $ V $ are open in $ |\mathcal{K}| $ as well, ensuring the separation condition.

\item \textbf{When $ \mathbf{x} $ and $ \mathbf{y} $ belong to different simplices but share a common face:} Now, let $ \mathbf{x} \in \sigma_1$ and $ \mathbf{y}  \in \sigma_2$, but the simplices share a common face $ \tau $, i.e., $\sigma_1 \cap \sigma_2 = \tau \neq \emptyset$. If either $ \mathbf{x} $ or $ \mathbf{y} $ belongs to $ \tau $, then this reduces to the previous case of being within a single simplex, which we have already proven. Otherwise, they lie in different relative interiors of $ \sigma_1 $ and $ \sigma_2 $. Since relative interiors of disjoint simplices do not intersect, we can find small disjoint open neighborhoods $ U_1 \subset \sigma_1 $ and $ U_2 \subset \sigma_2 $ that are open in $ |\mathcal{K}| $, ensuring separation.
\end{enumerate}
We have shown that for any two distinct points $ \mathbf{x}, \mathbf{y} \in |\mathcal{K}| $, whether they belong to the same simplex, disjoint simplices, or simplices with a shared face, we can always find disjoint open neighborhoods that separate them. Thus, the underlying space $ |\mathcal{K}| $ satisfies the Hausdorff separation property.
\end{proof}

\begin{lemma}[Compactness of Simplicial Complex]\label{l:Compactness of Simplicial Complex}
The underlying space $|\mcal{K}|$ of a finite simplicial complex $\mcal{K}$ is compact. Conversely, if a subset $A$ of $|\mcal{K}|$ is compact, then $A \subset |\mcal{K}_0|$ for some finite sub-complex $\mcal{K}_0$ of $\mcal{K}$.
\end{lemma}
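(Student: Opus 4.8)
The plan is to treat the two assertions separately, since the forward direction is almost immediate while the converse requires the weak topology in an essential way. For the first assertion, I would recall that each simplex $\sigma \in \mcal{K}$ is compact by Theorem \eqref{t:Compactness of n-simplex}, and that by Definition \eqref{d:Geometric Realization of Simplicial Complex} we have $|\mcal{K}| = \bigcup_{\sigma \in \mcal{K}} \sigma$. When $\mcal{K}$ is finite this is a finite union of compact subsets of $\R^N$, and a finite union of compact sets is compact. Here one should note that by Theorem \eqref{t:Two topologies on underlying space} the weak topology $\mcal{T}$ and the subspace topology coincide for finite $\mcal{K}$, so compactness in either sense is the same; this makes the first part routine.

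For the converse I would use the standard trick of sampling one point from each open simplex that $A$ meets. Concretely, for each $\sigma \in \mcal{K}$ with $A \cap \operatorname{Int}(\sigma) \neq \emptyset$, choose a point $\btext{x}_\sigma \in A \cap \operatorname{Int}(\sigma)$, and let $B = \{\btext{x}_\sigma\}$ be the resulting set. The heart of the argument is to show that $B$ is a closed and discrete subspace of $|\mcal{K}|$. The key observation is that each simplex $\tau \in \mcal{K}$ has only finitely many faces, and since $\btext{x}_\sigma \in \operatorname{Int}(\sigma)$ with distinct open simplices disjoint (by the Remark that every point lies in the interior of exactly one face), the intersection $B \cap \tau$ contains at most one $\btext{x}_\sigma$ for each face $\sigma$ of $\tau$. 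Hence $B \cap \tau$ is finite, and being finite in the Hausdorff simplex $\tau$ it is closed in $\tau$. By the definition of the topology $\mcal{T}$ in Theorem \eqref{d:Underlying space as Topological Space}, a set is closed in $|\mcal{K}|$ precisely when its intersection with every $\tau$ is closed in $\tau$, so $B$ is closed. The same argument applied to any subset of $B$ shows every subset of $B$ is closed, so $B$ is discrete.

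Once $B$ is known to be closed and discrete, I would finish quickly: $B \subseteq A$ and $A$ is compact, so $B$ is a closed subset of a compact set and therefore compact; a discrete compact space is finite, so $B$ is finite. Consequently $A$ meets only finitely many open simplices $\operatorname{Int}(\sigma_1), \ldots, \operatorname{Int}(\sigma_k)$, and since $|\mcal{K}|$ is the disjoint union of its open simplices, $A \subseteq \sigma_1 \cup \cdots \cup \sigma_k$. Taking $\mcal{K}_0$ to be the collection of all faces of $\sigma_1, \ldots, \sigma_k$ yields a finite sub-complex (closed under faces by construction) with $A \subseteq |\mcal{K}_0|$. The main obstacle is the closed-and-discrete claim for $B$: it is exactly the place where the weak topology is indispensable, since in a coarser topology $B$ might fail to be closed, and getting the finiteness of $B \cap \tau$ right requires carefully invoking that each simplex has finitely many faces and that open simplices partition $|\mcal{K}|$.
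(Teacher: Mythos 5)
Your proof is correct, and it is both more complete and more careful than what the paper itself provides for this lemma. On the forward half you argue exactly as the paper does: each simplex is compact by Heine--Borel (Theorem \eqref{t:Compactness of n-simplex}), a finite union of compact sets is compact, and your added remark that Theorem \eqref{t:Two topologies on underlying space} makes the weak and subspace topologies agree for finite $\mcal{K}$ closes a point the paper leaves implicit. The real divergence is in the converse: the paper's proof of this lemma simply omits that direction, and when the claim reappears later as Lemma \eqref{l:Existence of compact subspace in Simplicial Complex} the paper argues that the open simplices $\{\operatorname{Int}(\sigma)\}$ form an open cover of $A$ and extracts a finite subcover. That argument does not work as written, because $\operatorname{Int}(\sigma)$ is open only in the plane of $\sigma$ (Theorem \eqref{t:Properties of Interior of Simplex}), not in $|\mcal{K}|$: the interior of an edge of a $2$-simplex is not open in the triangle, so the open simplices partition $|\mcal{K}|$ without being an open cover, and compactness cannot be applied to that family. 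Your sampling argument --- choose $\btext{x}_\sigma \in A \cap \operatorname{Int}(\sigma)$ for each open simplex meeting $A$, observe that $B = \{\btext{x}_\sigma\}$ meets each simplex $\tau$ in at most one point per face of $\tau$ (by disjointness of open simplices) and hence in a finite set, conclude from the weak topology that $B$ and all of its subsets are closed, so that $B$ is a closed discrete subset of the compact set $A$ and therefore finite --- is the standard correct route (it is Munkres's proof), and it is precisely where the weak topology does indispensable work, as you note. In short: you match the paper on the easy half, and on the converse your proof supplies a sound argument exactly where the paper's corresponding lemma has a genuine gap.
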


\begin{proof}
Let $ \mathcal{K} $ be a finite simplicial complex. The underlying space $ |\mathcal{K}| $ is given by the finite union of its simplices as $|\mathcal{K}| = \bigcup_{i=1}^{m} \sigma_i$, where each $ \sigma_i $ is a simplex in $ \mathbb{R}^n $.

Each simplex $ \sigma_i $ is the convex hull of a finite set of points in $ \mathbb{R}^n $, meaning it is a bounded and closed subset of $ \mathbb{R}^n $. By the Heine-Borel theorem, every closed and bounded subset of $ \mathbb{R}^n $ is compact i.e. $\sigma_i$ is compact for all $i = 1,2,\dots,m$. Since compactness is preserved under finite unions, the finite union of compact sets remains compact i.e. $|\mathcal{K}| = \bigcup_{i=1}^{m} \sigma_i$  is compact. 
\end{proof}

To describe the local properties of the underlying space $|\mcal{K}|$, let us look at some special subspaces of $|\mcal{K}|$ which will definitely help us in studying the local properties of $|\mcal{K}|$.

\begin{definition}[Star of vertex in Simplicial Complex]\label{l:Star of vertex in Simplicial Complex}
Let $\btext{a}$ is the vertex of the simplicial complex $\mcal{K}$, then the star of the $\btext{a}$ in $\mcal{K}$ is denoted by $\operatorname{St}(\btext{a}, \mcal{K})$ and it is defined as the collection of all simplices in $\mcal{K}$ that contain $\btext{a}$ i.e. 
\begin{equation}\label{eq:Star of vertex in Simplicial Complex}
\operatorname{St}(\btext{a}, \mcal{K}) = \{\sigma \in \mcal{K} \colon \btext{a} \in \sigma\}    
\end{equation}
The geometric realization of the star, denoted as $|\mrm{st}(\btext{a}, \mcal{K})|$ and defined as 
\begin{equation}\label{eq:Geometric Realization of Star of vertex in Simplicial Complex}
|\operatorname{St}(\btext{a}, \mcal{K})| = \bigcup_{\sigma \in \mcal{K}, \btext{a} \in \sigma} \operatorname{Int}(\sigma)    
\end{equation}
where $\operatorname{Int}(\sigma)$ represents the topological interior of the simplex $\sigma$ within its affine hull.
\end{definition}

\begin{remark}\hfill
\begin{enumerate}
\item It provides a localized view around $\btext{a}$, ensuring that every point in the star belongs to the interior of some simplex containing $\btext{a}$.
\item It plays an essential role in topological properties like local connectivity and defining subdivisions in simplicial complexes.
\end{enumerate}
\end{remark}

\begin{definition}[Closure of Star of vertex in Simplicial Complex]\label{l:Closure of Star of vertex in Simplicial Complex}
The closure of the star of $\btext{a}$, denoted as $\operatorname{Cl}(\operatorname{St}(\btext{a},\mcal{K}))$, is the smallest sub-complex of the simplicial complex $\mcal{K}$ that contains the star of $\btext{a}$ i.e.
\begin{equation}\label{eq:Closure of Star of vertex in Simplicial Complex}
\operatorname{Cl}(\operatorname{St}(\btext{a},\mcal{K})) =  \bigcup_{\sigma \in \mcal{K}, \btext{a} \in \sigma} \sigma      
\end{equation}
The geometric realization of the $\operatorname{Cl}(\operatorname{St}(\btext{a},\mcal{K})$, denoted as $|\operatorname{Cl}(\operatorname{St}(\btext{a},\mcal{K})|$ and defined as 
\begin{equation}\label{eq:Geometric Realization of Closure of Star of vertex in Simplicial Complex}
|\operatorname{Cl}(\operatorname{St}(\btext{a},\mcal{K})| = \bigcup_{\sigma \in \mcal{K}, \btext{a} \in \sigma} |\sigma|      
\end{equation}
\end{definition}

\begin{definition}[Link of vertex in Simplicial Complex]\label{l:Link of vertex in Simplicial Complex}
The link of $\btext{a}$ in the simplicial complex $\mcal{K}$, denoted as 
$\operatorname{Lk}(\btext{a},\mcal{K})$, is the sub-complex consisting of all faces of simplices in $\operatorname{Cl}(\operatorname{St}(\btext{a},\mcal{K}))$ that do not contain $\btext{a}$ i.e.
\begin{equation}\label{eq:Link of vertex in Simplicial Complex}
\operatorname{Lk}(\btext{a},\mcal{K}) = \{\tau \in \operatorname{Cl}(\operatorname{St}(\btext{a},\mcal{K})) \colon \btext{a} \notin \tau \}    
\end{equation}
The geometric realization of the $\operatorname{Lk}(\btext{a},\mcal{K})$, denoted as $|\operatorname{Lk}(\btext{a},\mcal{K})|$ and defined as 
\begin{equation}\label{eq:Geometric Realization of Link of vertex in Simplicial Complex}
|\operatorname{Lk}(\btext{a},\mcal{K})| = \bigcup_{\sigma \in \mcal{K}, \btext{a} \in \sigma} \operatorname{conv} (\sigma \backslash \{\btext{a}\})   
\end{equation}
\end{definition}

\begin{example}
Let $ \mathcal{K} $ be a simplicial complex with the vertex set:
\[
A = \{\mathbf{a}_0, \mathbf{a}_1, \mathbf{a}_2, \mathbf{a}_3, \mathbf{a}_4, \mathbf{a}_5, \mathbf{a}_6, \mathbf{a}_7\}
\]

The complex consists of the following simplices:
\begin{itemize}
   \item \textbf{Vertices (0-simplices):} 
   \[\mathbf{a}_0, \mathbf{a}_1, \mathbf{a}_2, \mathbf{a}_3, \mathbf{a}_4, \mathbf{a}_5, \mathbf{a}_6, \mathbf{a}_7\]
    \item \textbf{Edges (1-simplices):} 
    \[
    (\mathbf{a}_0, \mathbf{a}_1), (\mathbf{a}_0, \mathbf{a}_2), (\mathbf{a}_0, \mathbf{a}_3), (\mathbf{a}_1, \mathbf{a}_4), (\mathbf{a}_2, \mathbf{a}_5), (\mathbf{a}_3, \mathbf{a}_6), (\mathbf{a}_4, \mathbf{a}_5), (\mathbf{a}_5, \mathbf{a}_6), (\mathbf{a}_6, \mathbf{a}_7)
    \]
    
    \item \textbf{Triangles (2-simplices):} 
    \[
    (\mathbf{a}_0, \mathbf{a}_1, \mathbf{a}_2), (\mathbf{a}_1, \mathbf{a}_2, \mathbf{a}_4), (\mathbf{a}_2, \mathbf{a}_5, \mathbf{a}_6), (\mathbf{a}_3, \mathbf{a}_6, \mathbf{a}_7)
    \]
\end{itemize}

This forms a connected simplicial complex with vertices, edges, triangles, and some higher-dimensional structure.
\end{example}

\begin{enumerate}
\item \textbf{Star of $ \mathbf{a}_2 $:} The star of $ \mathbf{a}_2 $ is the collection of all simplices that contain $ \mathbf{a}_2 $:
\begin{align*}
\operatorname{St}(\mathbf{a}_2, \mathcal{K}) & = \{ \sigma \in \mathcal{K} \colon \mathbf{a}_2 \in \sigma \} \\     
& = \{ (\mathbf{a}_0, \mathbf{a}_2), (\mathbf{a}_1, \mathbf{a}_2), (\mathbf{a}_2, \mathbf{a}_5), (\mathbf{a}_0, \mathbf{a}_1, \mathbf{a}_2), (\mathbf{a}_1, \mathbf{a}_2, \mathbf{a}_4), (\mathbf{a}_2, \mathbf{a}_5, \mathbf{a}_6) \}
\end{align*}
\item \textbf{Closure of the Star of $ \mathbf{a}_2 $:} The closure of the star is the smallest subcomplex containing the star, including all its faces:
\[
\operatorname{Cl}(\operatorname{St}(\mathbf{a}_{2}, \mcal{K})) = \bigcup_{\sigma \in \operatorname{St}(\mathbf{a}_2)} \sigma
\]
This includes all edges and triangles containing $ \mathbf{a}_2 $, forming a closed neighborhood. This means we must include:

\begin{itemize}
    \item \textbf{All simplices in $ \operatorname{St}(\mathbf{a}_2, \mcal{K}) $}:
    \[
    (\mathbf{a}_0, \mathbf{a}_2), (\mathbf{a}_1, \mathbf{a}_2), (\mathbf{a}_2, \mathbf{a}_5), (\mathbf{a}_0, \mathbf{a}_1, \mathbf{a}_2), (\mathbf{a}_1, \mathbf{a}_2, \mathbf{a}_4), (\mathbf{a}_2, \mathbf{a}_5, \mathbf{a}_6)
    \]
    
    \item \textbf{All faces (subsets) of these simplices, including vertices and edges}:
    \[
    \{\mathbf{a}_0\}, \{\mathbf{a}_1\}, \{\mathbf{a}_2\}, \{\mathbf{a}_4\}, \{\mathbf{a}_5\}, \{\mathbf{a}_6\}
    \]
    \[
    (\mathbf{a}_0, \mathbf{a}_1), (\mathbf{a}_0, \mathbf{a}_2), (\mathbf{a}_1, \mathbf{a}_2), (\mathbf{a}_1, \mathbf{a}_4), (\mathbf{a}_2, \mathbf{a}_5), (\mathbf{a}_5, \mathbf{a}_6)
    \]
    
    \item \textbf{The full triangles from $ \operatorname{St}(\mathbf{a}_2, \mcal{K}) $}:
    \[
    (\mathbf{a}_0, \mathbf{a}_1, \mathbf{a}_2), (\mathbf{a}_1, \mathbf{a}_2, \mathbf{a}_4), (\mathbf{a}_2, \mathbf{a}_5, \mathbf{a}_6)
    \]
\end{itemize}
\item[] Finally 
\begin{multline*}
\operatorname{Cl}(\operatorname{St}(\mathbf{a}_2, \mcal{K})) =
\{\mathbf{a}_0, \mathbf{a}_1, \mathbf{a}_2, \mathbf{a}_4, \mathbf{a}_5, \mathbf{a}_6\} \cup \{ (\mathbf{a}_0, \mathbf{a}_1), (\mathbf{a}_0, \mathbf{a}_2), (\mathbf{a}_1, \mathbf{a}_2), (\mathbf{a}_1, \mathbf{a}_4), (\mathbf{a}_2, \mathbf{a}_5), (\mathbf{a}_5, \mathbf{a}_6) \} \\  \cup \{ (\mathbf{a}_0, \mathbf{a}_1, \mathbf{a}_2), (\mathbf{a}_1, \mathbf{a}_2, \mathbf{a}_4), (\mathbf{a}_2, \mathbf{a}_5, \mathbf{a}_6) \}   
\end{multline*}

\item \textbf{Link of $ \mathbf{a}_2 $:} The link consists of faces of simplices in $ \operatorname{Cl}(\operatorname{St}(\mathbf{a}_2)) $ that do not contain $ \mathbf{a}_2 $:
\begin{align*}
\operatorname{Lk}(\mathbf{a}_2, \mcal{K}) & = \{ \tau \colon \tau \subset \sigma, \mathbf{a}_2 \in \sigma, \mathbf{a}_2 \notin \tau \} \\
& = \{ (\mathbf{a}_0, \mathbf{a}_1), (\mathbf{a}_1, \mathbf{a}_4), (\mathbf{a}_5, \mathbf{a}_6) \}
\end{align*}
\end{enumerate}

\begin{remark}\hfill
\begin{enumerate}
\item  The $\operatorname{St}(\btext{a}, \mcal{K})$ is open in $|\mcal{K}|$.
\item Both spaces $\operatorname{St}(\btext{a}, \mcal{K})$ and $\operatorname{Cl}(\operatorname{St}(\btext{a}, \mcal{K}))$ are path connected but $\operatorname{Lk}(\btext{a}, \mcal{K})$ is not. 
\end{enumerate}
\end{remark}

\begin{lemma}[Existence of compact subspace in Simplicial Complex]\label{l:Existence of compact subspace in Simplicial Complex}
If $A$  is subset of the underlying space $|\mcal{K}|$  of the simplicial complex $\mcal{K}$ is compact, then $A \subset |\mcal{K}_0|$ for some finite sub-complex $\mcal{K}_0$ of $\mcal{K}$.
\end{lemma}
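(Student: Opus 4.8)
The plan is to reduce the statement to a finiteness claim: I would show that a compact $A \subseteq |\mcal{K}|$ can meet the open simplices (relative interiors) $\operatorname{Int}(\sigma)$ of only finitely many simplices $\sigma \in \mcal{K}$. Once that is in hand, $\mcal{K}_0$ is simply the subcomplex generated by those finitely many simplices, and $A \subseteq |\mcal{K}_0|$ follows immediately since every point of $|\mcal{K}|$ lies in the interior of some simplex.

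First I would select witnesses. For each simplex $\sigma \in \mcal{K}$ with $A \cap \operatorname{Int}(\sigma) \neq \emptyset$, choose a single point $\btext{x}_\sigma \in A \cap \operatorname{Int}(\sigma)$ and let $B = \{\btext{x}_\sigma\}$ be the collection of these points. By the earlier remark that each $\btext{x} \in |\mcal{K}|$ lies in the interior of exactly one simplex, distinct chosen points correspond to distinct simplices, and clearly $B \subseteq A$.

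The heart of the argument is to prove that $B$, and indeed \emph{every} subset $C \subseteq B$, is closed in $|\mcal{K}|$. Using the coherent topology of Theorem \ref{t:Coherent Topology on |K|}, it suffices to verify that $C \cap \sigma$ is closed in $\sigma$ for each $\sigma \in \mcal{K}$. A witness $\btext{x}_\tau \in \operatorname{Int}(\tau)$ can belong to $\sigma$ only if $\operatorname{Int}(\tau) \cap \sigma \neq \emptyset$, and this forces $\tau$ to be a face of $\sigma$; since $\sigma$ has only finitely many faces, $C \cap \sigma$ is a finite set, hence closed in the Hausdorff space $\sigma$ (Lemma \ref{l:Hausdorffness of Simplicial Complex}). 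Thus $C$ is closed in $|\mcal{K}|$. As every subset of $B$ is closed, $B$ is a discrete subspace. Finally, $B$ is a closed subset of the compact set $A$, hence compact; a compact discrete space is finite, so only finitely many simplices have interiors meeting $A$. Taking the maximal ones to be $\tau_1, \ldots, \tau_m$, we get $A \subseteq \bigcup_{i=1}^m \operatorname{Int}(\tau_i) \subseteq \tau_1 \cup \cdots \cup \tau_m$, and $\mcal{K}_0 = \{\tau_1, \ldots, \tau_m \text{ and all their faces}\}$ is the desired finite subcomplex.

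The main obstacle I expect is precisely the closedness-and-discreteness step: one must argue carefully, through the coherent topology together with the face relation $\operatorname{Int}(\tau) \cap \sigma \neq \emptyset \Rightarrow \tau \text{ is a face of } \sigma$, that each intersection $C \cap \sigma$ is finite and therefore closed. The remaining ingredients — that a closed subset of a compact space is compact, and that a compact discrete space is finite — are routine and require no special structure of $\mcal{K}$.
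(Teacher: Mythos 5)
Your proof is correct, and it is genuinely different from the paper's argument --- in fact it is the more careful of the two. The paper covers $A$ by the open simplices and immediately extracts a finite subcover by compactness; but $\operatorname{Int}(\sigma)$ is open only in the plane of $\sigma$ (as the paper itself proves in its theorem on properties of the interior of a simplex), not in $|\mcal{K}|$ --- the interior of an edge of a filled triangle is not an open subset of the triangle --- so the collection $\{\operatorname{Int}(\sigma)\}_{\sigma \in \mcal{K}}$ is a partition of $|\mcal{K}|$ but not an open cover, and compactness cannot be invoked directly. Your route repairs exactly this: instead of a subcover argument, you pick one witness point in $A \cap \operatorname{Int}(\sigma)$ for each simplex whose interior meets $A$, and show via the coherent topology that the witness set $B$ meets each closed simplex $\sigma$ in a finite set (your key observation that $\btext{x}_\tau \in \sigma$ forces $\tau$ to be a face of $\sigma$ is right: $\btext{x}_\tau$ lies in $\sigma \cap \tau$, which is a common face, and uniqueness of the carrier of $\btext{x}_\tau$ forces that face to be $\tau$ itself), hence every subset of $B$ is closed, so $B$ is closed in $|\mcal{K}|$, compact as a closed subset of $A$, and discrete, therefore finite. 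This is the classical argument (it is essentially Munkres's proof) and it uses the weak topology in an essential way, which is appropriate since the lemma is only interesting for infinite complexes, precisely the case where the weak topology differs from the subspace topology. What the paper's version buys is brevity, at the cost of a gap that is genuine unless one restricts to covers by stars of vertices (which \emph{are} open) or quantifies over maximal simplices only; what your version buys is a complete proof valid for arbitrary, possibly infinite, complexes. One cosmetic remark: your final step of passing to the ``maximal'' simplices $\tau_1, \ldots, \tau_m$ is unnecessary --- taking all finitely many simplices whose interiors meet $A$, together with all their faces, already gives the finite subcomplex $\mcal{K}_0$ with $A \subseteq |\mcal{K}_0|$.
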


\begin{proof}
The realization $ |\mathcal{K}| $ is covered by the interiors of its simplices i.e. $ |\mathcal{K}| = \bigcup_{\sigma \in \mathcal{K}} \operatorname{Int}(\sigma)$. Since $ A \subset |\mathcal{K}| $, we can write $A \subset \bigcup_{\sigma \in \mathcal{K}} \operatorname{Int}(\sigma)$.

The sets $ \operatorname{Int}(\sigma) $ form an open cover of $ A $. Since $ A $ is compact, there exists a finite subcover, meaning there exist finitely many simplices $ \sigma_1, \sigma_2, \dots, \sigma_n $ such that $ A \subset \bigcup_{i=1}^{n} \operatorname{Int}(\sigma_i)$.

Let $ \mathcal{K}_0 $ be the finite subcomplex of $ \mathcal{K} $ generated by the simplices $ \{ \sigma_1, \sigma_2, \dots, \sigma_n \} $, meaning $\mathcal{K}_0 = \{ \tau \in \mathcal{K} \mid \tau \subseteq \sigma_i \text{ for some } i \}$.  Since each simplex $ \sigma_i $ belongs to $ \mathcal{K} $, all its faces are also in $ \mathcal{K} $, ensuring that $ \mathcal{K}_0 $ is a valid simplicial complex. The geometric realization of this subcomplex is $|\mathcal{K}_0| = \bigcup_{i=1}^{n} |\sigma_i|$. Since $ A \subset \bigcup_{i=1}^{n} \operatorname{Int}(\sigma_i) $ and each simplex's closure is contained in $ |\sigma_i| $, we get $A \subset |\mathcal{K}_0|$.
\end{proof}

\begin{definition}[Locally Finite Simplicial Complex]\label{d:Locally Finite Simplicial Complex}
A  simplicial complex $\mcal{K}$ is said to be locally finite if every vertex $\btext{a} \in \mcal{K}$, the set $\{\sigma \in \mcal{K} \colon \btext{a} \in \sigma\}$ is finite i.e. each vertex is contained in only finitely many simplices. 

Equivalently, $\mcal{K}$ is locally finite is for each vertex $\btext{a} \in \mcal{K}$, its star $\operatorname{St}(\btext{a}, \mcal{K}) = \bigcup_{\sigma \in \mcal{K}, \btext{a} \in \sigma} \sigma$ is composed of only finitely may simplices. 
\end{definition}

\begin{theorem}\label{t:N and S condition for locally finiteness of simplicial complex in term of star of vertex}
A simplicial complex $\mcal{K}$ is locally finite if and only if each closed star $\mrm{st}(\btext{a}, \mcal{K})$ is polytope of a finite sub-complex of $\mcal{K}$.
\end{theorem}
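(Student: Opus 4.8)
The plan is to reduce this biconditional to a direct unwinding of the definitions, using the fact that the closed star $\operatorname{Cl}(\operatorname{St}(\btext{a}, \mcal{K}))$ is precisely the subcomplex whose top-dimensional simplices are the simplices of $\mcal{K}$ containing $\btext{a}$, together with all their faces. The bridge observation I would establish first is that the collection $\{\sigma \in \mcal{K} \colon \btext{a} \in \sigma\}$ is finite if and only if $\operatorname{Cl}(\operatorname{St}(\btext{a}, \mcal{K}))$ contains only finitely many simplices. This holds because every simplex in the closed star is a face of some simplex containing $\btext{a}$, and by the definition of a face each simplex has only finitely many faces; so finitely many simplices containing $\btext{a}$ generate only finitely many faces, and conversely the simplices containing $\btext{a}$ form a subset of the closed star.

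For the forward direction, I would assume $\mcal{K}$ is locally finite and fix an arbitrary vertex $\btext{a}$. By the Definition of local finiteness the set $\{\sigma \in \mcal{K} \colon \btext{a} \in \sigma\}$ is finite, say $\{\sigma_1, \ldots, \sigma_k\}$. Then the closed star is the union $\bigcup_{i=1}^{k} \sigma_i$ together with all faces of the $\sigma_i$, which is a finite collection of simplices and therefore a finite subcomplex of $\mcal{K}$. Its geometric realization $|\operatorname{Cl}(\operatorname{St}(\btext{a}, \mcal{K}))|$ is then, by Definition, the polytope of this finite subcomplex.

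For the converse, I would assume that for each vertex $\btext{a}$ the closed star $\mrm{st}(\btext{a}, \mcal{K})$ is the polytope of a finite subcomplex, which forces $\operatorname{Cl}(\operatorname{St}(\btext{a}, \mcal{K}))$ to contain only finitely many simplices. Since every simplex of $\mcal{K}$ containing $\btext{a}$ lies in the closed star, the set $\{\sigma \in \mcal{K} \colon \btext{a} \in \sigma\}$ is a subset of this finite collection and is therefore finite. As $\btext{a}$ was arbitrary, $\mcal{K}$ is locally finite by Definition.

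The content here is essentially definitional, so the only genuine subtlety — and the step I would take care with — is the passage between the \emph{geometric} statement (``the closed star is the polytope of a finite subcomplex'') and the \emph{combinatorial} statement (``the closed star has only finitely many simplices''). One must argue that a polytope uniquely recovers its underlying simplicial data, which is where the uniqueness of the geometrically independent spanning set of each simplex (Theorem \ref{t:Uniqness of GI set that spaning simplex}) is invoked, so that being a polytope of a finite subcomplex genuinely means the star consists of finitely many simplices rather than merely being coverable by a finite polytope. Once that identification is made, both implications are immediate from the counting observation above.
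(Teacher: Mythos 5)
Your proof is correct and follows essentially the same definitional unwinding as the paper's: in the forward direction local finiteness makes the closed star a union of finitely many simplices together with their (finitely many) faces, hence the polytope of a finite subcomplex $\mcal{K}_{\btext{a}}$, and in the converse the simplices of $\mcal{K}$ containing $\btext{a}$ all lie in that finite subcomplex, so there are only finitely many of them. Your one addition --- explicitly justifying the passage from the geometric hypothesis (``the closed star is a polytope of a finite subcomplex'') to the combinatorial conclusion (``the star contains finitely many simplices'') via the uniqueness of the spanning vertex set, or equivalently the disjoint-interiors property of Lemma \eqref{l:Necessary and sufficient conditions for any collection to be simplicial complex} --- is a step the paper's converse asserts without comment, so your version is, if anything, slightly more careful while taking the same route.
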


\begin{proof}
Suppose $ \mathcal{K} $ is a locally finite simplicial complex. By definition, this means that for every vertex $ \btext{a} \in \mathcal{K} $, the set of simplices containing $ \btext{a} $ is finite i.e. $\{ \sigma \in \mathcal{K} \colon \btext{a} \in \sigma \}$  is finite. The closed star of a vertex $ \btext{a} $ is given by $\operatorname{St}(\btext{a}, \mathcal{K}) = \bigcup_{\sigma \in \mathcal{K}, \btext{a} \in \sigma} \sigma$. Since $ \mathcal{K} $ is locally finite, the number of simplices containing $ \btext{a} $ is finite. Thus, the union in the definition of $ \operatorname{St}(\btext{a}, \mathcal{K}) $ is taken over a finite collection of simplices. Let $ \mathcal{K}_{\btext{a}} $ be the smallest subcomplex of $ \mathcal{K} $ that contains all simplices in $ \operatorname{St}(\btext{a}, \mathcal{K}) $. Since this collection is finite, $ \mathcal{K}_{\btext{a}} $ is a finite subcomplex of $ \mathcal{K} $. The geometric realization $ |\operatorname{St}(\btext{a}, \mathcal{K})| $ is a bounded region in Euclidean space formed by the simplices in $ \mathcal{K}_{\btext{a}} $. Since $ \mathcal{K}_{\btext{a}} $ is a finite subcomplex, its geometric realization forms a polytope. Thus we have shown that $ \operatorname{St}(\btext{a}, \mathcal{K}) $ is the polytope of a finite subcomplex of $ \mathcal{K} $.

Conversely, assume that for every vertex $ \btext{a} \in \mathcal{K} $, the closed star $ \operatorname{St}(\btext{a}, \mathcal{K}) $ is the polytope of a finite subcomplex $ \mathcal{K}_{\btext{a}} \subset \mathcal{K} $. By assumption, $ \mathcal{K}_{\btext{a}} $ is a finite subcomplex of $ \mathcal{K} $. This means that it contains only finitely many simplices. Since $ \mathcal{K}_{\btext{a}} $ contains all simplices that contribute to $ \operatorname{St}(\btext{a}, \mathcal{K}) $, the number of simplices in $ \mathcal{K} $ that contain $ \btext{a} $ is at most the number of simplices in $ \mathcal{K}_{\btext{a}} $, which is finite. Since we have established that for every vertex $ \btext{a} $, the set $\{ \sigma \in \mathcal{K} \colon \btext{a} \in \sigma \}$ is finite, it follows that $ \mathcal{K} $ satisfies the local finiteness condition. Thus, $ \mathcal{K} $ is a locally finite simplicial complex.

\end{proof}

\begin{lemma}\label{l:N and S condition for locally finiteness of simplicial complex in term of locally compact}
The simplicial complex $\mcal{K}$ is locally finite if and only if the their underlying space $|\mcal{K}|$ is locally compact. 
\end{lemma}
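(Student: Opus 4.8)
The plan is to prove the biconditional in two directions, in each case reducing the statement to facts about stars of vertices and finite subcomplexes that are already available in the excerpt. The forward direction (local finiteness $\Rightarrow$ local compactness) will be a direct construction of compact neighborhoods built from closed stars, while the reverse direction (local compactness $\Rightarrow$ local finiteness) is cleanest by contraposition, exploiting the fact that every compact subset of $|\mcal{K}|$ must lie inside the polytope of a finite subcomplex.

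For the forward direction, I would first fix an arbitrary point $\btext{x} \in |\mcal{K}|$ and let $\sigma$ be the unique simplex with $\btext{x} \in \operatorname{Int}(\sigma)$; then choose any vertex $\btext{a}$ of $\sigma$. The point $\btext{x}$ lies in the open star $|\operatorname{St}(\btext{a}, \mcal{K})|$, which is open in $|\mcal{K}|$ (as noted in the remark following Definition \eqref{l:Star of vertex in Simplicial Complex}). Because $\mcal{K}$ is locally finite, the closed star of $\btext{a}$ is the polytope of a finite subcomplex by Theorem \eqref{t:N and S condition for locally finiteness of simplicial complex in term of star of vertex}, and hence is compact by Lemma \eqref{l:Compactness of Simplicial Complex}. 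Since the closure of the open star is contained in this compact closed star, $\btext{x}$ has a neighborhood with compact closure, which establishes local compactness.

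For the reverse direction I would argue contrapositively: assume $\mcal{K}$ is not locally finite, so that some vertex $\btext{a}$ belongs to infinitely many simplices $\{\sigma_k\}$. The crucial observation is that every neighborhood $U$ of $\btext{a}$ meets $\operatorname{Int}(\sigma_k)$ for infinitely many $k$, since each $\sigma_k$ contains interior points arbitrarily close to its vertex $\btext{a}$. If $\btext{a}$ possessed a compact neighborhood $N$, then $N$ would contain such a $U$ and hence meet infinitely many open simplices. But Lemma \eqref{l:Existence of compact subspace in Simplicial Complex} forces $N \subset |\mcal{K}_0|$ for some finite subcomplex $\mcal{K}_0$, whose realization meets only finitely many simplices, a contradiction. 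Therefore $\btext{a}$ admits no compact neighborhood and $|\mcal{K}|$ fails to be locally compact.

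The main obstacle is making rigorous the claim that a neighborhood of $\btext{a}$ must intersect the interiors of infinitely many simplices containing it. This argument genuinely requires the coherent topology of $|\mcal{K}|$ rather than the subspace topology from $\R^N$: in each $\sigma_k \ni \btext{a}$ I would select a sequence of interior points converging to $\btext{a}$ within $\sigma_k$, and then verify that no single neighborhood of $\btext{a}$ can discard all but finitely many tails, carefully distinguishing the role of $\btext{a}$ as a vertex of $\sigma_k$ from merely lying in its closure. Keeping the bookkeeping of faces versus full simplices straight, and ensuring the compact-neighborhood step does not covertly reintroduce the local-finiteness hypothesis it is meant to derive, is where the care must be concentrated.
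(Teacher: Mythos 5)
Your proposal is correct, and it diverges from the paper's proof in both directions in ways worth noting. In the forward direction the paper splits into two cases: when $\btext{x}$ is a vertex it takes the closed star directly, and when $\btext{x}$ is interior to a higher-dimensional simplex it asserts, without construction, that ``a small enough open neighborhood of $\btext{x}$ will be contained in the closed star of some nearby vertex.'' Your argument is uniform and tighter: every $\btext{x}$ lies in $\operatorname{Int}(\sigma)$ for a unique carrier $\sigma$, hence in the open star of any vertex $\btext{a}$ of $\sigma$, and the closed star $\operatorname{Cl}(\operatorname{St}(\btext{a},\mcal{K}))$ is the polytope of a finite subcomplex by Theorem \eqref{t:N and S condition for locally finiteness of simplicial complex in term of star of vertex} and therefore compact by Lemma \eqref{l:Compactness of Simplicial Complex}; since it contains the open star, it is itself a compact neighborhood of $\btext{x}$ (you need not even pass to the closure of the open star). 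In the reverse direction the difference is more substantive: the paper argues directly and leans on the claim that ``every simplex containing $\btext{a}$ must be entirely contained in $\overline{U}$,'' which is unjustified --- a simplex containing $\btext{a}$ may be large and meet $\overline{U}$ only near $\btext{a}$. Your contrapositive route via Lemma \eqref{l:Existence of compact subspace in Simplicial Complex} repairs exactly this weak point: a compact neighborhood $N$ of $\btext{a}$ lies in $|\mcal{K}_0|$ for a finite subcomplex $\mcal{K}_0$, and since distinct simplices have disjoint interiors (Lemma \eqref{l:Necessary and sufficient conditions for any collection to be simplicial complex}), $\operatorname{Int}(\sigma)\cap|\mcal{K}_0|\neq\emptyset$ forces $\sigma\in\mcal{K}_0$, so $N$ can meet only finitely many open simplices. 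The one detail you flag --- that every neighborhood $U$ of $\btext{a}$ meets $\operatorname{Int}(\sigma_k)$ for each $\sigma_k\ni\btext{a}$ --- closes cleanly: in the coherent topology $U\cap\sigma_k$ is open in $\sigma_k$ and contains the vertex $\btext{a}$, and the points $(1-t)\btext{a}+t\,\btext{b}_k$ with $\btext{b}_k$ the barycenter of $\sigma_k$ lie in $\operatorname{Int}(\sigma_k)$ and converge to $\btext{a}$ in $\sigma_k$ as $t\to 0^{+}$, so $U\cap\operatorname{Int}(\sigma_k)\neq\emptyset$. In short, your version trades the paper's two-case construction and flawed direct argument for a uniform star-based construction plus a contradiction through the finite-subcomplex lemma, and is the more rigorous of the two.
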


\begin{proof}
Suppose that $ \mathcal{K} $ is a locally finite simplicial complex, meaning that every vertex $ \btext{a} $ of $ \mathcal{K} $ is contained in only finitely many simplices. Our aim is to show that $\mcal{K}$ is locally compact\footnote{A topological space $ X $ is locally compact if every point $ x \in X $ has a compact neighborhood, i.e., for every $ x \in X $, there exists an open set $ U $ containing $ x $ such that the closure $ \overline{U} $ is compact}. We need  to show that every point in $ |\mathcal{K}| $ has a compact neighborhood and for this we construct it as follows. Consider a point $ \btext{x} \in |\mathcal{K}| $, the geometric realization of the simplicial complex. There are two cases:
\begin{itemize}
    \item \textbf{Case 1:} If $ \btext{x} $ is a vertex of $ \mathcal{K} $, say $ \btext{x} = \btext{a} $, then the closed star $ \operatorname{St}(\btext{a}, \mathcal{K}) $ is defined as
    $\operatorname{St}(\btext{a}, \mathcal{K}) = \bigcup_{\sigma \in \mathcal{K}, \btext{a} \in \sigma} \sigma$.
    Since $ \mathcal{K} $ is locally finite, only finitely many simplices contain $ \btext{a} $. Hence, the realization $ |\operatorname{St}(\btext{a}, \mathcal{K})| $ is a finite union of compact simplices (each of which is compact in the subspace topology of Euclidean space). A finite union of compact sets is compact, so $ |\operatorname{St}(\btext{a}, \mathcal{K})| $ is a compact neighborhood of $ \btext{a} $.

    \item \textbf{Case 2:} If $ \btext{x} $ is an interior point of a higher-dimensional simplex, it still belongs to finitely many simplices due to local finiteness. A small enough open neighborhood of $ \btext{x} $ will be contained in the closed star of some nearby vertex, ensuring compactness.
\end{itemize}
Since every point in $ |\mathcal{K}| $ has a compact neighborhood, $ |\mathcal{K}| $ is locally compact.

Conversely, assume that $ |\mathcal{K}| $ is locally compact, meaning that for every point $ x \in |\mathcal{K}| $, there exists an open neighborhood $ U $ such that $ \overline{U} $ is compact.

For any vertex $ \btext{a} $ in $ \mathcal{K} $, take a small open neighborhood $ U $ around $ \btext{a} $ such that $ \overline{U} $ is compact. Since $ |\mathcal{K}| $ is a simplicial complex, $ \overline{U} $ must be covered by finitely many simplices. Since $ \overline{U} $ is compact and each simplex in $ \mathcal{K} $ is a compact subset of Euclidean space, only finitely many simplices can intersect $ \overline{U} $. But every simplex containing $ \btext{a} $ must be entirely contained in $ \overline{U} $, implying that only finitely many simplices contain $ \btext{a} $. Since this holds for every vertex $ \btext{a} $, we conclude that $ \mathcal{K} $ is locally finite.
\end{proof}

Recent developments in algebraic and applied topology provide important context for the present work on simplicial complexes and their realizations \cite{Hatcher2002AlgebraicTopology,Munkres1996ElementsAlgebraicTopology,Ghrist2014ElementaryAppliedTopology}. Building on these foundations, a number of contemporary studies explore both theoretical and computational aspects of simplicial complexes and related topological structures. Grande et al.\ use topological methods on simplicial complexes to classify trajectories and infer landmarks on surfaces, illustrating how higher-order connectivity can improve data and signal analysis \cite{10942887}. Is compares several notions of topological complexity, clarifying their relationships and computational behavior in different settings \cite{Is2025}, while Stanley and Ruiz develop double homology and study wedge-decomposable simplicial complexes to better understand their algebraic structure \cite{Stanley2025}. Random and probabilistic viewpoints appear in the analysis of asymptotic Betti numbers of random subcomplexes by Salepci and Welschinger, linking random models with classical homological invariants \cite{Salepci2025}. Section complexes of simplicial height functions, initiated by Chan and extended in a modern homotopical framework by Vaupel, Hermansen, and Trygsland, provide tools for analyzing level sets and filtrations on simplicial spaces \cite{Chan1994,Vaupel2025}. From a computational perspective, Grieve’s implementation of abstract simplicial complexes in Macaulay2 offers a software environment for experimenting with these objects and computing their algebraic invariants \cite{Grieve2025}.


\bibliographystyle{unsrtnat}
\bibliography{MainBibFile}

@INPROCEEDINGS{10942887,
  author={Grande, Vincent P. and Hoppe, Josef and Frantzen, Florian and Schaub, Michael T.},
  booktitle={2024 58th Asilomar Conference on Signals, Systems, and Computers}, 
  title={Topological Trajectory Classification and Landmark Inference on Simplicial Complexes}, 
  year={2024},
  volume={},
  number={},
  pages={44-48},
  doi={10.1109/IEEECONF60004.2024.10942887}}

@article{Is2025,
  title={Some Notes and Comparisons on Topological Complexities},
  author={Is, M.},
  journal={Mediterr. J. Math.},
  volume={22},
  number={140},
 year={2025},
 doi= {https://doi.org/10.1007/s00009-025-02908-0}
}

@article{Salepci2025,
title = {Asymptotic Betti numbers of random subcomplexes},
journal = {Topology Appl.},
volume = {373},
pages = {109476},
year = {2025},
issn = {0166-8641},
doi = {https://doi.org/10.1016/j.topol.2025.109476},
url = {https://www.sciencedirect.com/science/article/pii/S0166864125002743},
author = {Salepci, N.  and Welschinger, J.Y.},
}

@article{Vaupel2025,
title = {Section complexes of simplicial height functions},
journal = {Homology Homotopy Appl.},
volume = {27},
number= {2},
pages = {61-90},
year = {2025},
author = {Vaupel, M. and Hermansen, E. and Trygsland, P.},
}

@article{Chan1994,
title = {Section complexes of simplicial height functions},
journal = {Discrete Comput. Geom.},
volume = {11},
pages = {465-476},
year = {1994},
author = {Chan, C.},
}

@article{Grieve2025,
title = {Abstract simplicial complexes in Macaulay2},
journal = {J. Softw. Algebra Geom.},
volume = {15},
pages = {29-39},
year = {2025},
author = {Grieve, N.},
doi = {https://msp.org/jsag/2025/15-1/p03.xhtml},
}

@article{Stanley2025,
title = {Double homology and wedge-decomposable simplicial complexes},
journal = {Topology Appl.},
volume = {367},
pages = {109319},
year = {2025},
issn = {0166-8641},
doi = {https://doi.org/10.1016/j.topol.2025.109319},
url = {https://www.sciencedirect.com/science/article/pii/S0166864125001178},
author = {Stanley, D. and Ruiz, C.G.V.},
}

@book{Hatcher2002AlgebraicTopology,
  author    = {Allen Hatcher},
  title     = {Algebraic Topology},
  year      = {2002},
  publisher = {Cambridge University Press},
  address   = {Cambridge},
  note      = {Freely available PDF on the author's page},
  url       = {https://pi.math.cornell.edu/~hatcher/AT/AT.pdf}
}

@book{Munkres1996ElementsAlgebraicTopology,
  author    = {James R. Munkres},
  title     = {Elements of Algebraic Topology},
  year      = {1996},
  publisher = {Perseus/Avalon (Addison–Wesley imprint)},
  address   = {Cambridge, MA},
  isbn      = {0201627280}
}

@book{Ghrist2014ElementaryAppliedTopology,
  author    = {Robert Ghrist},
  title     = {Elementary Applied Topology},
  year      = {2014},
  publisher = {CreateSpace},
  address   = {Seattle},
  edition   = {1.0},
  isbn      = {9781502880857},
  url       = {https://www2.math.upenn.edu/~ghrist/notes.html}
}
\end{document}